%

\documentclass[aos,preprint]{imsart}

\RequirePackage{amsthm,amsmath,amsfonts,amssymb}
\RequirePackage[authoryear]{natbib} 
\RequirePackage[colorlinks,citecolor=blue,urlcolor=blue]{hyperref}
\RequirePackage{graphicx}

\usepackage{xcolor}
\usepackage{multirow}

\pdfminorversion=4

\usepackage{keyval}
\usepackage{calc}
\usepackage{hyperref}
\usepackage[capitalise,noabbrev]{cleveref}

\usepackage{paralist}
\usepackage{xr-hyper}

\startlocaldefs

\def\journal@name{} 

\numberwithin{equation}{section}
\theoremstyle{plain}
\newtheorem{thm}{Theorem}

\newtheorem{lemm}{Lemma}
\newtheorem{coro}{Corollary}
\theoremstyle{remark}
\newtheorem{rem}{Remark}

\newtheorem{ex}{Example}
\newtheorem{con}{Condition}




\RequirePackage{amsmath}
\RequirePackage{amssymb}
\RequirePackage{amsthm}
\RequirePackage{bm} 
\RequirePackage{url}
\usepackage{multirow}
\usepackage{natbib}
\usepackage{graphicx}
\usepackage{subfigure}
\usepackage{makecell}
\usepackage{booktabs}
\usepackage{array}
\usepackage{url}
\usepackage{algorithm}
\usepackage{algorithmic}
\usepackage{dsfont}

\let\hat\widehat
\let\tilde\widetilde



\newcommand{\cD}{\mathcal{D}}

\newcommand{\cF}{\mathcal{F}}

\newcommand{\cK}{\mathcal{K}}
\newcommand{\cL}{\mathcal{L}}

\newcommand{\cP}{\mathcal{P}}

\newcommand{\cS}{{\mathcal{S}}}
\newcommand{\cT}{{\mathcal{T}}}

\newcommand{\cV}{\mathcal{V}}


\newcommand{\DD}{\mathbb{D}}
\newcommand{\EE}{\mathbb{E}}

\newcommand{\PP}{\mathbb{P}}





\DeclareMathOperator{\ind}{\mathds{1}}  







\crefname{thm}{Theorem}{Theorems}
\crefname{lemm}{Lemma}{Lemmas}
\crefname{prop}{Proposition}{Propositions}
\crefname{coro}{Corollary}{Corollaries}
\crefname{ex}{Example}{Examples}
\crefname{con}{Condition}{Conditions}
\crefname{defi}{Definition}{Definitions}
\crefname{rem}{Remark}{Remarks}

\newcommand{\comments}[1]{}

\allowdisplaybreaks

\newcommand{\Prob}[2]{
	\mathbb{P}_{#1} \left( #2 \right) }

\newcommand{\E}[2]{
	\mathbb{E}_{#1} \left[ #2 \right] }

\newcommand{\cov}[3]{
	\mathbb{C}\text{ov}_{#1} \left( #2; #3 \right) }

\newcommand{\op}[1]{o_P \left( #1 \right)}
\newcommand{\Op}[1]{O_P \left( #1 \right)}

\newcommand{\Ind}[1]{ \mathds{1} \left\{ #1 \right\} }
\newcommand{\argmin}{\mathop{\text{arg}\,\min}}

\newcommand{\wc}{\rightsquigarrow}
\newcommand{\too}{\longrightarrow}

\renewcommand{\d}{d}

\newcommand{\R}{\mathbb{R}}
\newcommand{\Q}{\mathbb{Q}}
\newcommand{\N}{\mathbb{N}}

\newcommand{\eps}{\varepsilon}

\newcommand{\hc}{\hat{c}}
\newcommand{\hth}{\hat{\theta}}
\newcommand{\hz}{\hat{\sigma}}

\newcommand{\leb}{\mu_\text{L}}

\newcommand{\xminy}{\mu}
\newcommand{\xmaxy}{\mathcal{M}}


\newcommand{\Par}[1]{\text{Pareto} \left( #1 \right)}

\newcounter{hypA}

\endlocaldefs

\begin{document}

\begin{frontmatter}
\title{Rank-based Estimation under Asymptotic Dependence and Independence, with Applications to Spatial Extremes}
\runtitle{Rank-Based Estimation under Asymptotic Dependence and Independence}

\begin{aug}
\author[A]{\fnms{Micha\"el} 
	\snm{Lalancette}
	\ead[label=e1, mark]{lalancette@utstat.toronto.edu}},
\author[B]{\fnms{Sebastian} 
	\snm{Engelke}
	\ead[label=e3]{sebastian.engelke@unige.ch}}
\and
\author[A]{\fnms{Stanislav} 
	\snm{Volgushev}
	\ead[label=e2, mark]{stanislav.volgushev@utoronto.ca}}
\address[A]{Department of Statistical Sciences, 
	University of Toronto,
	\printead{e1,e2}
 }

\address[B]{Research Center for Statistics, 
	University of Geneva,
	\printead{e3}
}

\end{aug}

\begin{abstract}
Multivariate extreme value theory is concerned with modeling the joint tail behavior of several random variables. Existing work mostly focuses on asymptotic dependence, where the probability of observing a large value in one of the variables is of the same order as observing a large value in all variables simultaneously. However, there is growing evidence that asymptotic independence is equally important in real world applications. Available statistical methodology in the latter setting is scarce and not well understood theoretically. We revisit non-parametric estimation and introduce rank-based M-estimators for parametric models that simultaneously work under asymptotic dependence and asymptotic independence, without requiring prior knowledge on which of the two regimes applies. Asymptotic normality of the proposed estimators is established under weak regularity conditions. We further show how bivariate estimators can be leveraged to obtain parametric estimators in spatial tail models, and again provide a thorough theoretical justification for our approach.
\end{abstract}

\begin{keyword}[class=MSC2020]
\kwd[Primary ]{62G32}
\kwd[; secondary ]{62F12} \kwd{62G20} \kwd{62G30}
\end{keyword}

\begin{keyword}
\kwd{Multivariate extremes} \kwd{Asymptotic independence} \kwd{Inverted max-stable distribution} \kwd{Spatial process} \kwd{M-estimation}
\end{keyword}

\end{frontmatter}


\section{Introduction}

Assessing the frequency of extreme events is crucial in many different fields such as environmental sciences, finance and insurance. The most severe risks are often associated to a combination of extreme values of several different variables or the joint occurrence of an extreme phenomenon across different spatial locations. Statistical methods for accurate modeling of such multivariate or spatial dependencies between rare events is provided by extreme value theory. Applications include the analysis of extreme flooding \citep{kee2009, eng2014b, eng2018a}, risk diversification between stock returns~\citep{poo2004, zho2010} and climate extremes \citep{wes2011,zsc2017}.

Extremal dependence between largest observations of two random variables $X$ and $Y$ with distribution functions $F_1$ and $F_2$, respectively, can take many different forms. A classical assumption to measure and model this dependence is multivariate regular variation \citep[cf.,][]{R1987}, which is
equivalent to the existence of the stable tail dependence function
\begin{equation} \label{eq:l}
\ell(x,y) := \lim_{t \downarrow 0} \frac{1}{t} \Prob{}{F_1(X)\geq 1-tx \mbox{ or } F_2(Y)\geq 1-ty}, \quad x,y \in [0,\infty);
\end{equation}
see \cite{H1992} and \cite{DF2006}. This condition allows a first broad classification regarding extremal dependence of bivariate random vectors into two different regimes. If $\ell(x,y) = x + y$, $X$ and $Y$ are said to be asymptotically independent; in this case the joint exceedance probability is negligible compared to the marginal exceedance probabilities. Otherwise, a stronger form of extremal dependence, called asymptotic dependence, holds and the joint exceedance probability is of the same order as the probability of one of the components exceeding a high threshold.

Most of the existing probabilistic and statistical theory deals with asymptotic dependence. {A variety of methods exists, including non-parametric estimation \citep{H1992, ein2009, GPS2011}, bootstrap procedures \citep{PL2008,BD2013}, parametric approaches including likelihood estimation \citep{LT1996, HNP2008,pad2010, dom2016b} and M-estimation \citep{EKS2008, EMKS2015}. See also \cite{EKS2012, EKKS2016} for inference in the $d$-dimensional and spatial setting. There is a rich literature on multivariate tail models \citep[see for instance][among many others]{G1960, T1988, HR1989} and generalizations to spatial domains \citep{BR1977, S1990, S2002}.}

Recent studies have shown that in many applications such as spatial precipitation \citep{LDELW2018} and significant wave height \citep{WT2012}, dependence tends to become weaker for the largest observations and asymptotic independence is therefore the more appropriate regime. In this case, the stable tail dependence function in~\eqref{eq:l} does not contain information on the degree of asymptotic independence and is therefore not suitable for statistical modeling.  A remedy to this problem was proposed by \cite{LT1996,LT1997} who introduced a more flexible condition on the joint exceedance probabilities. Their setting implies the existence of
\begin{equation} \label{eq:defc}
c(x,y) := \lim_{t\downarrow 0} \frac{1}{q(t)}  \Prob{}{F_1(X) \geq 1-tx, F_2(Y) \geq 1-ty },\quad x,y\in[0,\infty),
\end{equation}
where $q$ is a suitable measurable function that makes the limit nontrivial. Necessarily, $q$ is regularly varying at zero with index $1/\eta \in [1,\infty)$. 
The residual tail dependence coefficient $\eta$ describes the strength of residual dependence in the tail and $\eta<1$ implies asymptotic independence.
One speaks about positive and negative association between extremes if $\eta>1/2$ and $\eta<1/2$, respectively.
Early works focus on estimating the degree of asymptotic independence $\eta$ and various estimators have been proposed and studied \citep{LT1997, P1999, DDFH2004}.
A more complete description of the residual extremal dependence structure is given by the function $c$ in \cref{eq:defc}; in fact, the value of $\eta$ can be deduced from $c$ (see \Cref{sec:2} below). Several parametric families exist for multivariate \citep[e.g.,][]{wel2014} and spatial applications \citep[e.g.,][]{WT2012}. Other statistical approaches for modeling asymptotic independence are also related to this function, including hidden regular variation \citep{res2002, hef2007} and the conditional extreme value model \citep{hef2004}. Note that \cref{eq:defc} includes the asymptotic dependence case if $\lim_{t \downarrow 0} q(t)/t > 0$, and the function $c(x,y) \propto x+y - \ell(x,y)$ then contains the same information as $\ell$.

Since it is typically not known \emph{a priori} whether asymptotic dependence or independence is present in a data set, recent parametric models are able to capture both regimes as different sub-sets of the parameter space \citep[e.g.,][]{RL2009, WTDE2017, hus2017,  EOW2019, HW2019}. Most of the literature in this domain is concerned with constructing parametric models, and estimation is usually based on censored likelihood and discussed informally while no theoretical treatment of the corresponding estimators is provided. Moreover, it is typically assumed that extreme observations used to construct estimators already follow a limiting model, and the bias which results from this type of approximation is ignored.

The present paper is motivated by a lack of generic estimation methods that work under both asymptotic dependence and independence and have a thorough theoretical justification. We first revisit a non-parametric, rank-based estimator of the function $c$ in \cref{eq:defc} which appeared in \citep{DDFH2004} and provide a new fundamental result on its asymptotic properties, which completely removes any smoothness assumptions on $c$. This result is the crucial building block for the second major contribution of this paper: a new M-estimation framework that is applicable across dependence classes.

M-estimators for the stable tail dependence function $\ell$ have been proposed by \cite{EKS2008, EKS2012, EKKS2016}. Under asymptotic dependence, $c$ can be recovered from $\ell$ and thus any method for estimating $\ell$ also yields an estimator for $c$. However, this is no longer true under asymptotic independence. Estimators of $\ell$ can therefore not be used to fit statistical models with asymptotic independence or models bridging both dependence classes. We define a new class of M-estimators based on $c$ for parametric extreme value models that can be applied regardless of the dependence class. A major challenge under asymptotic independence is due to the fact that the scaling function $q$ is unknown. Additionally, $c$ loses some of the regularity (such as concavity) that it enjoys under asymptotic dependence. Nevertheless, we are able to prove asymptotic normality of our estimators under weak regularity conditions, which are shown to be satisfied for popular models such as the class of inverted max-stable distributions \citep[see][]{WT2012}. 

The challenges described above become even greater for spatial data. Even at the level of pairwise distributions, real data can exhibit asymptotic dependence at locations that are close but asymptotic independence at locations that are far apart. This necessitates models that can incorporate both, asymptotic dependence and independence at the same time. Estimation in such models calls for methods that can deal with both regimes simultaneously, and we show that our findings in the bivariate case can be leveraged to construct estimators in this setting.

In \Cref{sec:2}, we provide the necessary background on  asymptotic dependence and independence for bivariate distributions, discuss an extension to the spatial setting, and provide several examples. The estimation methodology is introduced in \Cref{framework}, while theoretical results are collected in \Cref{results}. The methodology is illustrated via simulation studies in \Cref{simulations}, while an application to extreme rainfall data is given in \Cref{application}. All proofs are deferred to \cref{proofs,tech,proof IMS,proof rsc} in the Supplementary Material.

\section{Multivariate extreme value theory} \label{sec:2}

\subsection{Bivariate models}\label{sec:2.1}

Let $(X,Y)$ be a bivariate random vector with joint distribution function $F$ and marginal distribution functions $F_1$ and $F_2$, respectively. There is a variety of approaches to describe the joint tail behavior of $(X,Y)$.

The assumption of multivariate regular variation \citep[cf.,][]{R1987} is classical in extreme value theory and the stable tail dependence function in~\eqref{eq:l} has been extensively studied. Its margins are normalized, $\ell(x,0) = \ell(0,x) = x$, and it satisfies $x \vee y \leq l(x,y) \leq x+y$ for all $x,y \in [0,\infty)$. Moreover, it is a convex and homogeneous function of order one, the latter meaning that $\ell(tx, ty) = t\ell(x,y)$ for all $t>0$.
The importance of stable tail dependence functions stems from their connection to max-stable distributions. A bivariate random vector $(Z_1, Z_2)$ has max-stable dependence with standard uniform margins iff its distribution function is given by
\begin{align}\label{max_stable}
\Prob{}{ Z_1 \leq x, Z_2 \leq y} = \exp\{- \ell(-\log x,-\log y) \}, \quad x,y \in [0,1],
\end{align}
where $\ell$ is the stable tail dependence function pertaining to $(Z_1, Z_2)$. Note that any max-stable distribution associated with $\ell$ satisfies \cref{eq:l} with that same $\ell$, this follows after a simple Taylor expansion. Two examples of max-stable distributions (equivalently, stable tail dependence functions) that will repeatedly appear in the present paper are as follows.
\begin{itemize}
		\item[(i)] The bivariate H\"usler--Reiss distribution \citep[][]{HR1989, EMKS2015} is defined by
		\[
		\ell(x, y) = x \Phi\Big( \lambda + \frac{\log x - \log y}{2\lambda} \Big) + y \Phi\Big( \lambda + \frac{\log y - \log x}{2\lambda} \Big),
		\]
		where $\Phi$ is the standard normal distribution function and $\lambda \in [0, \infty]$ parametrizes between perfect independence ($\lambda = \infty$) and dependence ($\lambda = 0$).
		\item[(ii)] The asymmetric logistic distribution \citep{T1988}, is given by
		\[
		\ell(x, y) = (1-\nu)x + (1-\phi)y +(\nu^r x^r + \phi^r y^r)^{1/r}, \quad \nu, \phi \in [0, 1], r \geq 1.
		\]
		Note that $\nu=\phi=1$ yields the classical logistic model \citep{G1960}.
\end{itemize}

While multivariate regular variation and max-stability have been very popular due to their nice theoretical properties, they are not informative under asymptotic independence which limits their use in many applications.

Assumption~\eqref{eq:defc} allows for more flexible tail models since the limiting function $c$ is non-trivial even under asymptotic independence and contains information on the structure of residual extremal dependence in the vector $(X,Y)$. For the sake of identifiability, we scale $q$ such that $c(1, 1) = 1$. We will refer to $c$ and $q$ as the survival tail function and the scaling function associated to $(X, Y)$. It turns out that $q$ has to be regularly varying of order $1/\eta \in [1, \infty)$ and that $c$ is a homogenous funcion of order $1/\eta$, that is,
\[
c(tx, ty) = t^{1/\eta}c(x,y), \quad t>0;
\]
see for example \cite{DDFH2004} or \cref{RV1} in the supplement. Note that the extremal dependence coefficient \citep[see][]{CHT1999} can be defined as $\chi := \lim_{t \downarrow 0} q(t)/t$. Asymptotic independence is then equivalent to $\chi=0$, or $q(t) = o(t)$, whereas asymptotic dependence corresponds to $\chi>0$.

Furthermore, the homogeneity property of $c$ implies a spectral representation. More precisely, there exists a finite measure $H$ on $[0, 1]$ such that
\[
c(x, y) = \int_{[0, 1]} \Big( \frac{x}{1-w} \wedge \frac{y}{w} \Big)^{1/\eta} H(\d w), \quad x,y \in [0,\infty);
\]
see Theorem 1 in \cite{RL2009} or \cref{int rep} in the supplement.

We provide several examples that illustrate the concepts discussed above without going too deeply into technical details. A more thorough discussion of the corresponding theory is given throughout \Cref{results}.

\begin{ex}[\textit{Domain of attraction of max-stable distributions}]
\label{ex:ms}
Suppose that $(X, Y)$ satisfies \cref{eq:l} for a stable tail dependence function $\ell$ which is not everywhere equal to $x+y$. Then \cref{eq:defc} holds with $q(t) = \chi t$ and $c(x, y) = (x+y - \ell(x, y))/\chi$, where the extremal dependence coefficient $\chi$ is positive. We further note that \cref{eq:l} holds whenever $(X, Y)$ is in the max domain of attraction of a max-stable random vector $Z$ satisfying \cref{max_stable}, see \cite{DF2006} for a definition and additional details.
\end{ex}

\begin{ex}[\textit{Inverted max-stable distributions}]
	\label{ex:ims}
	
	The family of inverted max-stable distributions \citep[e.g.,][Definition 2]{WT2012} is parametrized by all stable tail dependence functions. More precisely, let $G$ be the distribution function of a bivariate distribution with max-stable dependence, uniform margins and stable tail dependence function $\ell$. A random vector $(X, Y)$ with uniform marginal distributions is said to have an inverted max-stable distribution with stable tail dependence $\ell$ if $(1-X, 1-Y) \sim G$. Assuming that $\ell$ satisfies a quadratic expansion (see \cref{ex:ims:cont}), the law of $(X, Y)$ satisfies \cref{eq:defc} with
	\[
	q(t) = t^{\ell(1, 1)}, \quad c(x, y) = x^{\dot\ell_1(1, 1)} y^{\dot\ell_2(1, 1)}, 
	\]
	where $\dot\ell_j$ denotes the $j$-th directional partial derivative of $\ell$ from the right, $j=1,2$. Any such stable tail dependence function satisfies $\ell(1, 1) = \dot\ell_1(1, 1) + \dot\ell_2(1, 1) \in (1, 2]$, and therefore this is an asymptotically independent model with $\eta = 1 / \ell(1, 1)$.
	
	Any existing parametric class of stable tail dependence functions can be used to define a parametric class of inverted max-stable distributions. In particular we consider the two families discussed earlier 
	
	\begin{itemize}
		
		\item[(i)] Provided that $\lambda>0$, the inverted H\"usler--Reiss distribution has
		\begin{equation} \label{eq:sym}
		q(t) = t^{2\theta}, \quad c(x, y) = (xy)^\theta,
		\end{equation}
		where $\theta := \Phi(\lambda) \in (1/2, 1]$.
		
		\item[(ii)] The inverted asymmetric logistic distribution has
		\begin{equation} \label{eq:asym}
		q(t) = t^{\theta_1 + \theta_2}, \quad c(x, y) = x^{\theta_1} y^{\theta_2},
		\end{equation}
		where $\theta_1 := 1-\nu + \nu^r (\nu^r + \phi^r)^{1/r-1}$ and $\theta_2 := 1-\phi + \phi^r (\nu^r + \phi^r)^{1/r-1}$. Note that by suitable choices of the parameters $r,\nu,\phi$ any value of $(\theta_1, \theta_2) \in (0, 1]^2$ such that $\theta_1 + \theta_2 \in (1,2]$ can be obtained.

	\end{itemize}

\end{ex}

\begin{ex}[\textit{A random scale construction}]
	\label{ex2}
	
	Bivariate random scale constructions are a popular way of creating distributions with rich extremal dependence structures; see \cite{EOW2019} and references therein for an overview. They are random vectors of the form $(X, Y) = R(W_1, W_2)$ where the radial variable $R$ is  {assumed independent} of the angular variables $W_j$, $j \in \{1, 2\}$. This motivates the following model with parameters $\alpha_R, \alpha_W > 0$:
	\begin{equation} \label{model}
	(X, Y) = R(W_1, W_2), \quad R \sim \Par{\alpha_R}, W_j \sim \Par{\alpha_W} 
	\end{equation}
	where  {$W_1,W_2$ are independent} and a $\Par{\alpha}$ distribution has distribution function $1 - x^{-\alpha}$ for $x \geq 1$.  By \cref{ex2:cont} below, $(X, Y)$ satisfies \cref{eq:defc} with functions $q$ and $c$ depending only on the value of the ratio $\lambda := \alpha_R / \alpha_W$. In particular, we obtain asymptotic dependence if $\lambda<1$ and asymptotic independence otherwise. Detailed expressions for $q$ and $c$ are provided in \cref{ex2:cont}.
\end{ex}

\subsection{Spatial models}

Spatial extreme value theory is an extension of multivariate extremes to continuous index sets. It is particularly useful for modeling extremes of natural phenomena over spatial domains and examples include heavy rainfall, high wind speeds and heatwaves \citep[e.g.,][]{dav2012, LDELW2018}.

Let $\cT$ be a spatial domain (typically a subset of $\R^2$) and $Y = \{Y(u): u \in \cT\}$ be a stochastic process whose extremal behavior we are interested in. We impose the condition in \cref{eq:defc} on all bivariate margins of $Y$ so that for each pair $s = (u, u')$ of locations,  {and all $x,y \in [0, \infty)$} the limit
\begin{equation} \label{eq:defc_spatial}
c^{(s)}(x, y) := \lim_{t\downarrow 0} \frac{1}{q^{(s)}(t)} \Prob{}{F^{(u)}(Y(u)) \geq 1-tx, F^{(u')}(Y(u')) \geq 1-ty }
\end{equation}
exists and is non-trivial; here $F^{(u)}$ is the distribution function of $Y(u)$. Similarly to the bivariate case, $q^{(s)}$ must be regularly varying with index $1/\eta^{(s)} \in [1,\infty)$ and $c^{(s)}$ is homogeneous of order $1/\eta^{(s)}$. 

In applications, spatial extreme value theory can be linked to  multivariate extreme value theory through the fact that spatial processes are usually measured at a finite set of locations. However, generic multivariate models do not take into account the additional structure arising from spatial features of the domain. Statistical models for processes, in contrast, make use of geographical information to construct structured, low-dimensional parametric models \citep[see, e.g.,][]{eng2021}. 

 {Similarly to max-stable distributions in \cref{max_stable}, max-stable processes play an important role in modeling spatial extremes. The stochastic process $Z = \{Z(u): u \in \cT\}$ is called max-stable if all its finite dimensional distributions are max-stable, which implies in particular that for each pair $s = (u, u')$, the bivariate margin $(Z(u), Z(u'))$ satisfies \cref{max_stable} with stable tail dependence function $\ell^{(s)}$. Hence \cref{eq:defc_spatial} follows for any max-stable process $Z$ for which $(Z(u),Z(u'))$ are not independent for all $u,u'\in\cT$.}

Brown--Resnick processes \citep{BR1977} provide an important subclass of max--stable processes. A Brown--Resnick process $\mathcal{B} = \{\mathcal{B}(u): u \in \cT\}$ is parametrized by a variogram function $\gamma: \mathcal{T}^2 \to \R_+$, and any pair $(\mathcal{B}(u),\mathcal{B}(u'))$ is a bivariate H\"usler--Reiss distribution with parameter $\lambda = \sqrt{\gamma(u, u')}/2$ \citep[][]{HR1989}. Parametric models can be constructed by imposing a parametric form for $\gamma$. An example when $\cT \subset \R^d$ is the fractal family of variograms given by $\gamma(s) = (\|s_1 - s_2\| / \beta)^\alpha$, where $s = (s_1, s_2)$, $\| \cdot \|$ is the Euclidean norm and $\alpha \in (0, 2]$, $\beta > 0$ are the model parameters \citep{KSH2009}. We next discuss two classes of processes for which \cref{eq:defc_spatial} holds.

\begin{ex}[\textit{Domain of attraction of max-stable processes}] \label{ex1.0}
 {	
A process $Y = \{Y(u): u \in \cT\}$ is in the max-domain of attraction of the max-stable process $Z$ if there exist sequences of continuous functions $a_n, b_n: \cT \to \R$ such that
\begin{equation} \label{eq:doaproc}
\{\max_{i=1,...,n} Y_i(\cdot) - a_n(\cdot)\}/b_n(\cdot) \wc Z(\cdot), \quad n \to \infty
\end{equation}
for i.i.d. copies $Y_1,Y_2,...$ of the process $Y$ where weak convergence takes place in the space of continuous functions on $\cT$ equipped with the supremum norm; see \cite{DL2001} and Chapter 9 of \cite{DF2006} for the special case $\cT = [0,1]$. }

 {\Cref{eq:doaproc} implies that any pair $(Y(u),Y(u'))$ with $u \neq u' \in \cT$ is in the max-domain of attraction of the pair $(Z(u),Z(u'))$. If every such pair is not independent, \cref{eq:defc_spatial} holds for all $s = (u,u')$ by the discussion in \cref{ex:ms}.}
	 
\end{ex}

While max-stable processes allow for flexible spatial dependence structures, they can only be used as models for asymptotic dependence. This often violates the characteristics observed in real data, especially for locations $u, u' \in \cT$ that are far apart. To model data in such cases, asymptotically independent spatial models have been constructed that satisfy \cref{eq:defc_spatial} and where the residual tail dependence coefficients $\eta^{(s)}$ vary with the distance between the pair $s$ of locations. Most of the models are identifiable from the bivariate margins so that statistical methods for $c^{(s)}$ will provide estimators for spatial tail dependence parameters; see \cref{estimation spatial} for the methodology. A broad class of asymptotically independent stochastic processes are the inverted max-stable processes \citep{WT2012}. 

\begin{ex}[\textit{Inverted max-stable processes}]
\label{ex1.1}
Let $Z = \{Z(u): u \in \cT\}$ be a process with max-stable dependence, uniform margins and bivariate tail dependence functions $\ell^{(s)}$. The process $Y = \{1 - Z(u): u \in \cT\}$ is called inverted max-stable. For a pair $s \in \cT^2$, assuming that $\ell^{(s)}$ satisfies the smoothness condition mentioned in \cref{ex:ims}, $Y$ satisfies \cref{eq:defc_spatial} with
\[
q^{(s)}(t) = t^{\ell^{(s)}(1,1)}, \quad c^{(s)}(x, y) = x^{\dot\ell_1^{(s)}(1, 1)} y^{\dot\ell_2^{(s)}(1, 1)},
\]
so that $\eta^{(s)} = 1/\ell^{(s)}(1,1)$ is a (usually non-constant) function on $\cT^2$. In particular, if a Brown--Resnick process is parametrized by a variogram function $\gamma: \cT^2 \to \mathbb R_+$ then the corresponding inverted Brown--Resnick process has $1/\eta^{(s)} = 2\Phi(\sqrt{\gamma(s)}/2)$. 
\end{ex}

\section{Estimation} \label{framework}

In this section we present the proposed estimators. First, we recall the non-parametric estimator of a survival tail function from \cite{DDFH2004} in \Cref{estimation-nonpar}. Using this as building block, we construct M-estimators for bivariate survival tail functions (\Cref{estimation-par}) and leverage those estimators to introduce methodology for spatial tail estimation (\Cref{estimation spatial}).

\subsection{Non-parametric estimators of survival tail functions}
\label{estimation-nonpar}

Recall that $(X, Y)$ is a random vector with joint distribution function $F$ that satisfies \cref{eq:defc}, and assume that its marginal distribution functions $F_1$ and $F_2$ are continuous. Denoting by $Q$ the joint distribution function of $(1 - F_1(X), 1 - F_2(Y))$, we can rephrase \cref{eq:defc} as
\begin{equation} \label{Q}
\frac{Q(tx, ty)}{q(t)} = c(x, y) + O(q_1(t)), \quad x,y\in[0,\infty),
\end{equation}
for some function $q_1(t) \to 0$ as $t \to 0$. Suppose that $(X_1, Y_1), \dots, (X_n, Y_n)$ are independent samples from $F$. Since $F_1, F_2$ are unknown, the observations $(1 - F_1(X_i), 1 - F_2(Y_i))$ are not available and can not be used to construct a feasible estimator of $Q$. A typical solution to this problem is to replace $F_j$ by its empirical counterpart $\hat F_j$, which leads to the estimator
\begin{equation}\label{Qnhat}
\hat Q_n(x,y) := \frac{1}{n} \sum_{i=1}^n \Ind{n \hat F_1(X_i) \geq n+1-\lfloor nx \rfloor, n \hat F_2(Y_i) \geq n+1-\lfloor ny \rfloor}; 
\end{equation}
see \cite{H1992,DH1998,EKS2008,EKS2012} among others for related approaches in the estimation of stable tail dependence functions.

Given  $\hat Q_n$ and the expansion in \Cref{Q}, an intuitive plug-in estimator of the function $c$ is given by
\begin{equation} \label{eq:hatc}
\hc_n(x, y) = \frac{\hat Q_n\left(kx/n, ky/n \right)}{q(k/n)},
\end{equation}
where we set $t = k/n$ in \Cref{Q} for an intermediate sequence $k = k_n$ such that $k \to \infty$, $k/n \to 0$. Note, however, that this estimator is infeasible under asymptotic independence since the function $q$ is in general unknown. A simple remedy is to recall that we considered the normalization $c(1,1) = 1$ and construct a ratio type estimator
\begin{equation} \label{eq:tildec}
\tilde c_n(x,y) := \frac{\hc_n(x, y)}{\hc_n(1, 1)} = \frac{\hat Q_n\left(kx/n, ky/n \right)}{\hat Q_n\left(k/n, k/n \right)}
\end{equation}
to cancel out the unknown scaling factor $q(k/n)$. This leads to a fully non-parametric estimator of $c$, which is interesting in its own right. Some comments on the asymptotic properties of this estimator will be provided in \Cref{sec:asy-non}.

\begin{rem}\label{rem:hatk}
 {In practice, and especially in a spatial context, it is sometimes appropriate to select directly the effective number of observations used for estimating $c$ \citep{WT2012}. This can be achieved by selecting $k = \hat k$ such that $n Q_n(\hat k/n,\hat k/n) = m$ for a given value $m$. This leads to a data-dependent parameter $\hat k$ which will also be covered by our theory.}
\end{rem}

\subsection{M-estimation in (bivariate) parametric model classes}
\label{estimation-par}

While the non-parametric estimators from the previous section possess attractive theoretical properties, they have certain practical drawbacks. For any finite sample size $n$ they are neither continuous nor homogeneous, hence they are not admissible survival tail functions. Additionally, it is difficult to use purely non-parametric estimators in spatial settings. A solution to this problem, which also yields easily interpretable estimators, is to fit parametric models.

In what follows, assume that $c$ belongs to a family $\{c_\theta : \theta \in \Theta\}$, where $\Theta \subseteq \R^p$ and the true parameter $\theta_0 \in \Theta$ is unknown. Our aim is to leverage the non-parametric estimators from \Cref{estimation-nonpar} to construct an estimator for $\theta_0$.  {For stable tail dependence functions which are only informative under asymptotic dependence such a program was carried out in \cite{EKS2008,EKS2012}.} A direct application of the corresponding ideas in our setting would be to estimate $\theta$ through
\[
\breve{\theta} := \argmin_{\theta \in \Theta} \Big\| \int_{[0, T]^2} g(x, y) c_\theta(x, y) \d x \d y - \int_{[0, T]^2} g(x, y) \tilde c_n(x, y) \d x \d y \Big\|,
\]
for an integrable vector-valued weight function $g: \R^2 \to \R^q$, where $\|\cdot\|$ denotes the Euclidean norm. However, as we will discuss in Remark~\ref{rem-hypi1}, the use of $\tilde c_n$ would place unnecessarily strong assumptions on the function $c$ in the case of asymptotic dependence. Hence we propose to consider the following alternative approach. Define 
\begin{equation}\label{eq:Psinstar}
\Psi_n^*(\theta, \zeta) := \zeta \int_{[0, T]^2} g(x, y) c_\theta(x, y) dx dy - \int_{[0, T]^2} g(x, y) \hat Q_n(kx/n, ky/n) dx dy
\end{equation}
and let 
\begin{equation} \label{eq:hatthetazeta}
(\hth_n, \hat \zeta_n) := \argmin_{\theta \in \Theta,\zeta>0} \|\Psi_n^*(\theta, \zeta)\|.
\end{equation}

To understand the rationale of this approach, note that $\hat c_n(x,y)$ is proportional to $\hat Q_n(kx/n,ky/n)$ but the proportionality constant involves $q$ and is thus unknown. We thus essentially propose to add this unknown normalization factor as an additional scale parameter~$\zeta$. More precisely, write $\leb$ for the Lebesgue measure on $[0, T]^2$, let
$$\Psi_n(\theta, \sigma) = \sigma \int gc_\theta \d\leb - \int g\hat c_n \d\leb,$$
and note that $\Psi_n^*$ and $\Psi_n$ are linked through $\Psi_n^*(\theta, \zeta) = q(k/n)\Psi_n(\theta, \zeta/q(k/n))$. Thus $(\hth_n, \hat \zeta_n)$ minimizes $\| \Psi_n^* \|$ if and only if $(\hth_n, \hat \zeta_n /q(k/n))$ minimizes $\| \Psi_n \|$. Furthermore, under suitable assumptions on $g$ and $\Theta$ we have $\sigma \int gc_\theta \d\leb = \int gc_{\theta_0} \d\leb$ if and only if $\theta = \theta_0$ and $\sigma=1$. Hence, if $\hc_n$ is close to $c_{\theta_0}$, it is expected that $\hth_n$ will be close to $\theta_0$ and that $\hat\zeta_n / q(k/n)$ will be approximately $1$.

Note that \cref{eq:Psinstar} only involves an integral of $\hat Q_n$ while $\tilde c_n$ also involves point-wise evaluation of this function. Since integration acts as smoothing, it can be expected that studying $\Psi_n^*$ will require less regularity conditions than working with $\breve{\theta}$; see \Cref{rem-hypi1} for additional details.

\subsection{Parametric estimation for spatial tail models}
\label{estimation spatial}

In this section, we show how the bivariate estimation procedures discussed earlier can be leveraged to obtain two different estimators for parametric spatial models, which can include both asymptotic dependence and independence. Assume that we observe $n$ independent copies $Y_1, \dots, Y_n$ of a spatial process $Y$ at a finite set of locations $u_1,\dots,u_d\in\mathcal T$. Denote the corresponding observations by $X_1,\dots, X_n$ where $X_i = (X_i^{(1)}, \dots, X_i^{(d)}) := (Y_i(u_1),\dots,Y_i(u_d))$ are independent copies of the random vector $X = (X^{(1)}, \dots, X^{(d)}) := (Y(u_1), \dots, Y(u_d)) \in \R^d$; see \cite{EKKS2016} for a similar framework.

Let $\cP$ denote the set of all subsets of $\{1, \dots, d\}$ of size 2  interpreted as ordered pairs, so that elements of $\cP$ will take the form $s = (s_1,s_2)$ with $s_1 < s_2$. In what follows, we will need to repeatedly make use of vectors $x \in \R^{|\cP|}$ that are indexed by all pairs $s \in \cP$. For such vectors we will assume that the pairs in $\cP$ are ordered in a pre-specified order and will write $x^{(s)}$ for the entry of the vector $x$ that corresponds to pair $s$. 

Assume that for each pair $s$ the random vector $(X^{(s_1)}, X^{(s_2)})$ satisfies \cref{Q} with scale function $q^{(s)}$ and survival tail function $c^{(s)}$. Following the ideas laid out in \Cref{estimation-nonpar}, define $\hat Q_n^{(s)}$ as in \cref{Qnhat} but based on the bivariate observations $(X_i^{(s_1)}, X_i^{(s_2)}), i = 1, \dots, n$. We now discuss two parametric estimators for the functions $c^{(s)}$.

Assume that we start with a parametric model $\{c_\theta: \theta \in \tilde \Theta\}$, $\tilde\Theta \subseteq \R^{\tilde p}$, for bivariate survival tail functions and that each $c^{(s)}$ falls in this class. This implies that $\tilde\Theta$ can be linked to a spatial parameter space $\Theta \subseteq \R^p$ through the relations $c^{(s)} = c_{h^{(s)}(\vartheta_0)}$, where $h^{(s)}: \Theta \to \tilde \Theta$ for each pair $s$. To make this idea more concrete, consider the following example, which we will revisit in \Cref{sim spatial,application}.

\begin{ex} \label{ex:BR_revisited}
If the process $Y$ is an inverted Brown--Resnick process on $\R^2$ (see \cref{ex1.1}) then $X$ has an inverted H\"usler--Reiss distribution and the bivariate survival tail functions are of the form $c^{(s)}(x, y) = (xy)^{\theta^{(s)}}$, for some $\theta^{(s)} \in (1/2, 1)$. This determines the parametric class $\tilde \Theta$. A more specific model of Brown--Resnick processes corresponds to the sub-family of fractal variograms \citep{KSH2009, EMKS2015}, where
\begin{equation}\label{eq:fracvari}
\theta^{(s)} = h^{(s)}((\alpha,\beta)) = \Phi\left( \frac{(\|u_{s_1} - u_{s_2}\|/\beta)^{\alpha/2}}{2} \right), \quad s \in \cP,
\end{equation} 
  {where $u_j \in \R^2$ is the coordinate of the location $j$;} see \cref{application} for more motivation of this particular parametrization. The global parameter $\vartheta$ thus takes the form $\vartheta = (\alpha, \beta)$ and $\Theta = (0, 2] \times (0, \infty)$.
\end{ex}

Given the setting above, we can thus compute parametric estimators $\hat \theta_n^{(s)}, s \in \cP$, by the methods for bivariate estimation discussed in \cref{estimation-par}, i.e., $(\hat\theta_n^{(s)}, \hat\zeta_n^{(s)})$ is the minimizer of $\big\| \Psi_n^{*(s)}(\theta, \zeta) \big\|$, where $\Psi_n^{*(s)}$ is defined as $\Psi_n^*$ in \eqref{eq:Psinstar} with $\hat Q_n^{(s)}$ and an intermediate sequence $k^{(s)}$ replacing $\hat Q_n$ and $k$. We obtain an estimator of the spatial parameter by least squares minimization,
\begin{equation}\label{eq:defthetahatls}
\hat\vartheta_n := \argmin_{\vartheta \in \Theta} \sum_{s \in \cP} \left\| h^{(s)}(\vartheta) - \hat \theta_n^{(s)} \right\|^2.
\end{equation}

As an alternative, one may use the relations $h^{(s)}$ between the spatial and bivariate parameters and minimize all the objective functions $\Psi_n^{*(s)}$ simultaneously, leading to the estimator
\begin{equation} \label{eq:defthetahat}
(\tilde\vartheta_n, \tilde\zeta_n) := \argmin_{\vartheta \in \Theta, \zeta \in \R_+^{|\cP|}} \sum_{s \in \cP} \Big\| \Psi_n^{*(s)}(h^{(s)}(\vartheta), \zeta^{(s)}) \Big\|^2,
\end{equation}
A theoretical analysis of the estimators $\hat\vartheta_n$ and $(\tilde\vartheta_n, \tilde\zeta_n)$ is provided in \cref{thm spatial}. We further remark that the computational complexity of the proposed estimators is much lower than that of methods based on full likelihood and it compares favorably to pairwise likelihood. Additional details regarding the latter point can be found in \cref{sec:cc} of the supplement.

\begin{rem}
	At first glance the minimization problem in \cref{eq:defthetahat} seems to be computationally intractable since it contains $|\cP| + \dim(\Theta)$ parameters and since $|\cP|$ can be very large even for moderate dimension $d$. However, a closer inspection reveals that for given $\vartheta$, partially minimizing the objective function in \eqref{eq:defthetahat} over $\zeta \in \R_+^{|\cP|}$ has the exact solution
	\[
	\hat \zeta_n^{(s)}(\vartheta) = \frac{\sum_{j=1}^q \int g_j(x,y) \hat Q_n^{(s)}(k^{(s)}x/n, k^{(s)}y/n) dxdy}{\sum_{j=1}^q \int g_j(x,y) c_{h^{(s)}(\vartheta)}(x, y) dxdy},
	\]
	whenever the right-hand side is positive for all $s$. This is satisfied if for instance $g$ is positive everywhere and each $\hat Q_n^{(s)}$ is based on at least one data point. Thus only numerical optimization over $\vartheta$, which is usually low-dimensional, is required. 
\end{rem}

\section{Theoretical results} \label{results}

We now present our main results on the asymptotic distributions of the various estimators introduced in \Cref{framework}. First, functional central limit theorems are stated for $\hc_n$, followed by our main result on the bivariate M-estimator. Finally, asymptotic normality of the processes $\hc_n^{(s)}$ and of the two parametric estimators in the spatial setting is established. The proofs of all main results are deferred to \Cref{proofs} in the supplement.

\subsection{The bivariate setting}

All results in this section will be derived under the following fundamental assumption.

\begin{con} \label{con-proc}
	\begin{enumerate} 
		\item[(i)] \Cref{Q} holds uniformly on $\cS^+ = \{(x, y) \in [0, \infty)^2: x^2 + y^2 = 1\}$ with a function $q_1(t) = O(1/\log(1/t))$ and the function $q$ is such that $\chi := \lim_{t\downarrow 0} q(t)/t \in [0,1]$ exists.
		\item[(ii)] As $n \to \infty$, $m = m_n := nq(k/n) \to \infty$ and $\sqrt{m} q_1(k/n) \to 0$.
	\end{enumerate} 
\end{con}

We note that in the proofs, \cref{Q} is required to hold locally uniformly on $[0, \infty)^2$, but by \cref{RV1} uniformity on $\cS^+$ implies uniformity over compact subsets of $[0, \infty)^2$. \Cref{con-proc}(ii) is a standard assumption that makes certain bias terms negligible. It is not a model assumption; under \cref{con-proc}(i), there always exists a sequence $k$ such that \cref{con-proc}(ii) is satisfied and thus all of the following discussion will focus on \cref{con-proc}(i). Notably and in contrast to most of the existing literature involving non-parametric estimation, \cref{con-proc} does not assume any differentiability of the function $c$. In fact, our proofs show that all the regularity required on $c$ can be derived from \cref{Q}.  {Considering \cref{rem:hatk}, it is possible to use a data-dependent value $\hat k$. In following results, when this is done, we will assume that there is an unknown sequence $k$ that satisfies \cref{con-proc}(ii), that $m$ is defined as therein, and that $\hat k$ is chosen so that $n \hat Q_n(\hat k/n, \hat k/n) = m$.}

We next discuss this condition in the examples introduced in \cref{sec:2.1}. Proofs for the claims in the examples below can be found  in \cref{proof IMS,proof rsc} of the supplement.

\begin{ex}[\textit{\cref{ex:ms}, continued}] 
{ Most of the literature on asymptotic analysis of estimators of the stable tail dependence function $\ell$ or related quantities under domain of attraction conditions makes some version of the following assumption 
\begin{equation} \label{eq:l-new}
\frac{1}{t} \Prob{}{F_1(X)\geq 1-tx \mbox{ and } F_2(Y)\geq 1-ty} - R(x,y) = O(\tilde q_1(t)) \quad x,y \in [0,\infty);
\end{equation} 
for a non-vanishing function $R$ on $[0, \infty)^2$ where $\tilde q_1(t) = o(1)$, see for instance condition (C2) in \cite{EKS2008} or the discussion in \cite{BVZ2019}. A simple computation involving the inclusion-exclsion formula further shows that this is equivalent to assuming that convergence in \cref{eq:l} takes place with rate $O(\tilde q_1(t))$ and that $\ell(x,y) = x + y - R(x,y)$. Clearly \cref{eq:l-new} implies \cref{con-proc}(i) with $q(t) = tR(1,1)$, $c(x, y) = R(x,y)/R(1,1)$ and $q_1(t) = \tilde q_1(t)$. }	
\end{ex}

\begin{ex}[\textit{\cref{ex:ims}, continued}] \label{ex:ims:cont}

Let $(X, Y)$ be a bivariate inverted max-stable distribution and assume that there exists a constant $C < \infty$ such that for all $u, v > 0$,
\[
\Big| \ell(1 + u, 1 + v) - \ell(1, 1) - \dot\ell_1(1, 1) u - \dot\ell_2(1, 1) v \Big| \leq C \left( u^2 + v^2 \right),
\]
where $\dot\ell_j$ represent the directional partial derivatives of $\ell$ from the right.  {In particular, it suffices for $\ell$ to be twice differentiable.} Then the random vector $(X, Y)$ satisfies \cref{con-proc}(i) with $q(t) = t^{\ell(1, 1)}$, $c(x, y) = x^{\dot\ell_1(1, 1)} y^{\dot\ell_2(1, 1)}$ and $q_1(t) = 1/\log(1/t)$. Moreover, $\dot\ell_j(1, 1) \in (0, 1]$ and $\dot\ell_1(1, 1) + \dot\ell_2(1, 1) = \ell(1, 1) \in (1, 2]$.

\end{ex}

\begin{ex}[\textit{\cref{ex2}, continued}]\label{ex2:cont} Let $(X, Y)$ be {a random scale construction} as defined in \cref{model} and set $\lambda = \alpha_R / \alpha_W$. Then $(X, Y)$ satisfies \cref{con-proc}(i) with functions $q$, $c$ and $q_1$ determined by $\lambda$ as in \cref{table} below.
	
	\begin{table}[ht] \begin{center}
			$\begin{array}{c | c | c | c}
			
			\text{Range of } \lambda & q(t) & c(x, y) & q_1(t) \\
			
			\hline
			
			(0, 1) & K_\lambda t & \frac{2-\lambda}{2(1-\lambda)} \xminy - \frac{\lambda}{2(1-\lambda)} \xminy^{1/\lambda} \xmaxy^{1 - 1/\lambda} & t^{1/\lambda - 1} \\
			
			\hline
			
			1 & \frac{K_\lambda t}{\log(1/t) + \log\log(1/t)} & \xminy \Big( 1 + \frac{1}{2} \log \Big( \frac{\xmaxy}{\xminy} \Big) \Big) & 1/\log(1/t)  \\
			
			\hline
			
			(1, 2) & K_\lambda t^\lambda & \frac{\lambda}{2(\lambda-1)} \xminy  \xmaxy^{\lambda-1} - \frac{2-\lambda}{2(\lambda-1)} \xminy^\lambda & t^{(\lambda-1) \wedge (2 - \lambda)} \\
			
			\hline
			
			2 & K_\lambda t^2 \log(1/t) & \xminy\xmaxy & 1/\log(1/t) \\
			
			\hline
			
			(2, \infty) & K_\lambda t^2 & \xminy\xmaxy & t^{\lambda-2}
			
		\end{array}$
		
		\caption{Tail expansion of the random scale model in \cref{model}, here we set $\xminy := x \wedge y, \xmaxy := x \vee y$, and $K_\lambda$ is a positive constant given in \cref{eq:K_lambda} of the supplement.}
		\label{table}
		
\end{center} \end{table}
\end{ex}

\subsubsection{Asymptotic theory for non-parametric estimators} \label{sec:asy-non}

In this section we consider the estimator $\hc_n$ from \cref{eq:hatc}. Since the process convergence results differ under asymptotic dependence and independence, we discuss these settings separately. Our first result deals with asymptotic independence.

\begin{thm}[Asymptotic normality of $\hc_n$ under asymptotic independence] \label{big lemma AI}

Assume \Cref{con-proc}. Then under asymptotic independence, i.e., when $\chi = 0$,
$$
W_n := \sqrt{m} (\hc_n - c) \wc W,
$$
in $\ell^\infty([0, T]^2)$, for any $T < \infty$. Here, $W$ is a centered Gaussian process with covariance structure given by $\E{}{W(x, y)W(x', y')} = c(x \wedge x', y \wedge y')$.  {The same remains true if $k$ is replaced by $\hat k$ as described after \cref{con-proc}.}

\end{thm}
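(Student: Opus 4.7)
The plan is to isolate the Gaussian limit in an oracle version of the estimator and then argue that both the deterministic bias and the extra error due to using empirical margins are negligible. Write $U_i := 1 - F_1(X_i)$ and $V_i := 1 - F_2(Y_i)$, let $\tilde Q_n(x,y) := n^{-1}\sum_i \mathbf{1}\{U_i \leq x, V_i \leq y\}$ be the oracle counterpart of $\hat Q_n$, and decompose
\[
W_n \;=\; B_n \;+\; O_n \;+\; R_n,
\]
where
\[
B_n(x,y) = \sqrt{m}\Bigl[\tfrac{Q(kx/n,ky/n)}{q(k/n)} - c(x,y)\Bigr],\quad O_n = \tfrac{\sqrt{m}}{q(k/n)}\bigl[\tilde Q_n(k\cdot/n,k\cdot/n) - Q(k\cdot/n,k\cdot/n)\bigr],
\]
and $R_n = \sqrt{m}\,[\hat Q_n(k\cdot/n,k\cdot/n) - \tilde Q_n(k\cdot/n,k\cdot/n)]/q(k/n)$. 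The bias $B_n$ is deterministic; \cref{con-proc}(i) gives uniform convergence on $\cS^+$, which \cref{RV1} upgrades to uniform convergence on $[0,T]^2$, and \cref{con-proc}(ii) ensures $\sqrt{m}\,q_1(k/n) \to 0$, so $\|B_n\|_\infty \to 0$.

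The oracle term $O_n$ carries the Gaussian limit. Writing $O_n(x,y) = (nq(k/n))^{-1}\sqrt{m}\sum_i\{\mathbf 1\{U_i\leq kx/n, V_i\leq ky/n\} - Q(kx/n,ky/n)\}$, a short computation using $m/n = q(k/n)$ together with \cref{Q} yields $\mathrm{Cov}(O_n(x,y),O_n(x',y'))\to c(x\wedge x',y\wedge y')$, while the Lindeberg condition holds since individual summands are bounded by $\sqrt{m}/(nq(k/n)) = m^{-1/2}\to 0$. Tightness in $\ell^\infty([0,T]^2)$ follows from the VC property of the class of lower-left rectangles together with a Bernstein-type maximal inequality for indicator functions rescaled to the rare-event regime (cf.\ the tail empirical process literature, e.g.\ Einmahl--de~Haan--Li); the required asymptotic equicontinuity reduces to controlling $\tilde Q_n$-increments on rectangles whose $Q$-mass is small, a quantity that is bounded via \cref{Q} and a chaining argument. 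This gives $O_n \wc W$ with the stated covariance.

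The main obstacle is uniform negligibility of the rank correction $R_n$, and this is where the hypothesis $\chi = 0$ enters in an essential way: asymptotic independence forces $q(k/n) = o(k/n)$, equivalently $m/k \to 0$. Replacing $F_j$ by $\hat F_j$ perturbs the effective thresholds $kx/n$ by $O_P(\sqrt{k}/n)$, uniformly in $x \in [0,T]$, by the DKW inequality applied to each marginal empirical process. Consequently $|\hat Q_n(kx/n,ky/n) - \tilde Q_n(kx/n,ky/n)|$ is dominated by the empirical measure of two marginal strips of width $O_P(\sqrt{k}/n)$ intersected with the joint tail. Using \cref{Q} and the continuity of $c$ that follows from its homogeneity of order $1/\eta$ and its integral representation (no derivative assumption needed), the expected mass of such a strip is $O(q(k/n)/\sqrt{k})$ with matching fluctuations, so that $\|R_n\|_\infty = O_P(\sqrt{m/k}) = o_P(1)$; uniformity over $(x,y)\in[0,T]^2$ is obtained by combining the monotonicity of $\hat Q_n$ and $\tilde Q_n$ with a grid of cardinality polynomial in $k$, upon which the pointwise argument is invoked.

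For the version with data-dependent $\hat k$ satisfying $n\hat Q_n(\hat k/n,\hat k/n)=m$, evaluate the already-established convergence at $(1,1)$ (with deterministic $k$) to deduce $\hat k/k \to 1$ in probability by the strict monotonicity of the defining equation in $k$, and then transfer the functional convergence through a one-line stochastic equicontinuity argument using continuity of $W$. The main technical difficulty throughout is thus the uniform-in-$(x,y)$ treatment of $R_n$ without smoothness of $c$; the pointwise rate $\sqrt{m/k}\to 0$ is immediate under asymptotic independence, but extending it to a supremum bound is the step requiring the most care.
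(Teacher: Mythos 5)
Your decomposition $W_n = B_n + O_n + R_n$ reorganizes the paper's own decomposition (the paper writes the rank effect as the substitution of order-statistic ratios $u_n(x),v_n(y)$ into the empirical copula, which is algebraically your $R_n$), and the treatments of $B_n$ and $O_n$ are sound and match the paper: $B_n$ vanishes via \cref{con-proc} and \cref{RV1}, and $O_n \wc W$ is \cref{limit Gn}. The gap is in the uniform bound on $R_n$, and it is exactly the point the paper singles out in \cref{rem:process_conv} as the crux of the argument. Your step ``the expected mass of such a strip is $O(q(k/n)/\sqrt{k})$'' requires the increment bound $c(x+h,y)-c(x,y)=O(h)$ to hold \emph{uniformly} on $[0,T]^2$, i.e.\ a Lipschitz bound on $c$. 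This fails in precisely the motivating examples: for the inverted H\"usler--Reiss model $c(x,y)=(xy)^\theta$ with $\theta<1$, $\partial_x c(x,y)=\theta x^{\theta-1}y^\theta \to\infty$ as $x\downarrow 0$, and similarly in the Pareto random scale model. Continuity of $c$ (which your homogeneity/integral-representation argument does give) is not a modulus of continuity; without uniform Lipschitz control, the only universally valid strip-mass estimate comes from Lipschitz continuity of the copula $Q$ itself, yielding a strip mass $O_P(\sqrt{k}/n)$ and hence $\|R_n\|_\infty=O_P(\sqrt{k/m})$, which \emph{diverges} under asymptotic independence since $k/m\to\infty$. Your final sentence acknowledges that ``extending it to a supremum bound is the step requiring the most care,'' but the paragraph does not supply that step, and a polynomial grid combined with monotonicity does not repair it because the needed modulus degrades without bound as $x\downarrow 0$.

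What the paper actually does to close this gap is a genuinely delicate three-way splitting of the $x$-range into $[0,1/k)$, $[1/k,\beta_n)$ with $\beta_n=(m/k)/\log(k/m)$, and $[\beta_n,T]$. On the middle region it uses Lipschitz continuity of $Q$ (not $c$) together with the weighted Cs\"org\H{o}--Horv\'ath quantile approximation (\cref{Csorgo}), $|u_n(x)-x|=O_P\big(\sqrt{x\log\log(1/x)/k}\big)$, which is strictly stronger than the crude uniform $O_P(1/\sqrt{k})$ near zero. On the outer region it replaces Lipschitzness of $c$ by two structural bounds coming from the spectral representation rather than from smoothness: the increment inequality $c(x+h,y)-c(x,y)\le \eta^{-1}h\,c(x+h,y)/(x+h)$ (\cref{bound increments c}) and the logarithmic decay $c(x,y)\lesssim 1/\log(1/x)$ (\cref{bound_c}). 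It is the interplay of the $\sqrt{x\log\log(1/x)}$ weighting, the $1/(x\log(1/x))$ singularity of the increment bound, and the specific choice of $\beta_n$ that makes the supremum of the rank-correction term vanish; none of these ingredients appear in your sketch, and the bound $O_P(\sqrt{m/k})$ you state is obtained only by assuming away the problem.
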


Note that process convergence of the estimator $\tilde c_n$ from \cref{eq:tildec} can be obtained from the above result through a straightforward application of the functional delta method. This will not be needed in the theory for M-estimators in the next section and details are omitted for the sake of brevity.

Asymptotic properties of $\hc_n$ were considered in \cite{DDFH2004}. However, the proof of the corresponding result (Lemma 6.1) in the latter reference makes the additional assumption that the partial derivatives of $c$ exist and are continuous on $[0,T]^2$ \citep[cf.][Theorem 2.2]{P1999}. In contrast, we are able to show that no condition on existence or continuity of partial derivatives is required. This is a considerable strengthening of the result which further allows to handle many interesting examples that were not covered by the results of~\cite{DDFH2004}. Indeed, both the popular class of inverted max-stable distributions in \Cref{ex:ims} and the random scale construction in \Cref{ex2} lead to functions $c$ that fail to have continuous or even bounded partial derivatives. Before moving on to discussing results under asymptotic dependence, we briefly comment on some of the main ideas of the proof.

\begin{rem}[Main ideas of the proof of \Cref{big lemma AI}] \label{rem:process_conv}
The proof relies on the decomposition
\[
\hc_n(x,y) - c(x,y) = \Big\{ \frac{Q_n(\frac{ku_n(x)}{n},\frac{kv_n(y)}{n})}{q(k/n)} - c(u_n(x),v_n(y)) \Big\} + \big( c(u_n(x),v_n(y)) - c(x,y) \big),
\]
where
\[
u_n(x) := \frac{n}{k} U_{n, \lfloor kx \rfloor} \quad \text{and} \quad v_n(y) := \frac{n}{k} V_{n, \lfloor ky \rfloor},
\]
and $U_{n, k}$ and $V_{n, k}$ denote the $k$th order statistics of $F_1(X_1),\dots,F_1(X_n)$ and $F_2(Y_1),\dots,F_2(Y_n)$, respectively with $U_{n,0} = V_{n,0} = 0$. The core difficulty is to show that the difference $c(u_n(x),u_n(y)) - c(x,y)$ is negligible. Under the assumption of the existence and continuity of partial derivatives of $c$ on $[0,T]^2$ made in \cite{DDFH2004} this is a direct consequence of the fact that under asymptotic independence $\sqrt{m}(u_n(x)-x) = o_P(1)$. Dropping this assumption considerably complicates the theoretical analysis. The proof strategy is to derive bounds on increments of $c(x,y)$ for $x,y$ close to $0$ where the partial derivatives of $c$ can become unbounded (see \Cref{bound increments c,bound_c}) and to combine those bounds with subtle results on weighted weak convergence of $u_n(x)-x$ as a process in $x$; see \Cref{Csorgo} where we essentially leverage the findings of \cite{CH1987}.
\end{rem}

We next turn to the case of asymptotic dependence. Results on convergence of $\hat c_n$ in the space $\ell^\infty$ are well known under this regime; they are equivalent to similar results about estimated stable tail dependence functions \citep[cf.][]{H1992}. However, they require the existence and continuity of partial derivatives of $\ell$ or, equivalently, $c$. As shown in \cite{EKS2008,EKS2012}, the latter condition is restrictive and in fact not necessary to derive asymptotic normality of M-estimators.

The treatment of M-estimators in \cite{EKS2008,EKS2012} involves a direct analysis of certain integrals without using process convergence in $\ell^\infty([0,T]^d)$. While this approach could be transferred to our setting, we will instead follow a strategy put forward in \cite{BSV2014} and prove weak convergence of $\hc_n$ with respect to the hypi-metric introduced therein. This approach will turn out to generalize much more easily when we deal with spatial estimation problems. Convergence with respect to this metric holds without any assumptions on the existence of partial derivatives and is sufficiently strong to guarantee convergence of integrals which is needed for the analysis of M-estimators. 

Let $\dot c_1$ denote the partial derivative of $c$ with respect to $x$ from the left and $\dot c_2$ denote its partial derivative with respect to $y$ from the right. Under asymptotic dependence, $c(x, y) \propto x + y - \ell(x, y)$ is concave since $\ell$ is convex \citep[][Proposition 6.1.21]{DF2006}, hence those directional partial derivatives exist everywhere on $(0, \infty)^2$, by Theorem 23.1 of \cite{R1970}. The definition can be extended to $[0, \infty)^2$ be setting $\dot c_1(0, y)$ to be the derivative from the right instead of from the left.

To describe the limiting distribution, recall that $\chi = \lim_{t \to 0} q(t)/t \in [0, 1]$ is positive only in the case of asymptotic dependence. For $(x, y), (x', y') \in [0, \infty)^2$, define
\begin{equation} \label{cov matrix}
\Lambda((x, y), (x', y')) = \begin{bmatrix}
c(x \wedge x', y \wedge y') & \chi c(x \wedge x', y) & \chi c(x, y \wedge y') \\
\chi c(x \wedge x', y') & \chi (x \wedge x') & \chi^2 c(x, y') \\
\chi c(x', y \wedge y') & \chi^2 c(x', y) & \chi (y \wedge y')
\end{bmatrix},
\end{equation}
and let $(W, W^{(1)}, W^{(2)})$ be an $\R^3$-valued, zero mean Gaussian process on $[0, \infty)^2$ with covariance function $\Lambda$. Note that $W$ is the limiting process in \cref{big lemma AI}, that $W^{(1)}(x, y)$ is constant in $y$ and that $W^{(2)}(x, y)$ is constant in $x$.

\begin{thm}[Asymptotic normality of $\hc_n$ under asymptotic dependence] \label{big lemma AD}

Assume \Cref{con-proc}. Then under asymptotic dependence, i.e., when $\chi > 0$,
\[
W_n \wc B := W - \dot c_1 W^{(1)} - \dot c_2 W^{(2)}
\]
in $(L^\infty([0, T]^2), d_\text{hypi})$, for any $T < \infty$. Here, $W_n$ is defined as in \cref{big lemma AI}.  {The same remains true if $k$ is replaced by $\hat k$ as described after \cref{con-proc}.}

\end{thm}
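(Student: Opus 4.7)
My plan is to build on the decomposition from \cref{rem:process_conv}, but now retain the term that was negligible there. Specifically, I would write $W_n(x,y) = \alpha_n(x,y) + \beta_n(x,y)$, where $\alpha_n(x,y) := \sqrt{m}\{\hc_n(x,y) - c(u_n(x), v_n(y))\}$ is the tail empirical process in true margins, evaluated at the random indices $u_n(x), v_n(y)$ from \cref{rem:process_conv}, and $\beta_n(x,y) := \sqrt{m}\{c(u_n(x), v_n(y)) - c(x,y)\}$ is the linearization. Under asymptotic dependence, $\chi > 0$ forces $m \asymp k$, so $\sqrt{m}(u_n - \operatorname{id})$ no longer vanishes as it did in the independent case; it contributes non-trivially to the limit through $\beta_n$, which is the phenomenon absent from \cref{big lemma AI}.

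The first step is joint convergence of the triple $(\alpha_n, \sqrt{m}(u_n - \operatorname{id}), \sqrt{m}(v_n - \operatorname{id}))$ to $(W, W^{(1)}, W^{(2)})$. For $\alpha_n$ alone, the argument from \cref{big lemma AI} transfers to the current setting and gives convergence in $\ell^\infty([0,T]^2)$; the marginal tail empirical processes $\sqrt{m}(u_n - \operatorname{id})$ and $\sqrt{m}(v_n - \operatorname{id})$ converge uniformly on $[0,T]$ to centered Gaussian processes with marginal covariances $\chi\,(x \wedge x')$ and $\chi\,(y \wedge y')$ respectively, using classical results on the weighted uniform empirical process in the tail in the spirit of \cite{CH1987}, which are already invoked for \cref{big lemma AI}. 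The off-diagonal entries of the covariance matrix $\Lambda$ in \cref{cov matrix} are obtained from direct moment computations using \cref{Q}, with the factors of $\chi$ arising because $q(k/n)/(k/n) \to \chi$ under asymptotic dependence. For $\beta_n$, I would use that in the asymptotically dependent regime $c \propto x+y-\ell$ is concave since $\ell$ is convex \citep[][Proposition 6.1.21]{DF2006}; consequently the one-sided partial derivatives $\dot c_1, \dot c_2$ exist on $(0,\infty)^2$ by Theorem 23.1 of \cite{R1970}. Sandwiching $c$ between one-sided tangent planes from above and below yields, pointwise,
\[
\beta_n(x,y) = \dot c_1(x,y)\,\sqrt{m}(u_n(x)-x) + \dot c_2(x,y)\,\sqrt{m}(v_n(y)-y) + R_n(x,y),
\]
with $R_n(x,y) = o_P(1)$ since $u_n(x)-x, v_n(y)-y = O_P(1/\sqrt{m})$; boundary behaviour close to the axes is controlled by the increment bounds on $c$ used for \cref{bound increments c,bound_c}.

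The main obstacle is passing from this pointwise expansion to convergence in the hypi-metric rather than the uniform metric. Because $\dot c_1$ and $\dot c_2$ can be discontinuous along the null sets where $c$ fails to be differentiable, the products $\dot c_j \cdot W^{(j)}$ need not have continuous sample paths, and $\beta_n$ cannot converge in $\ell^\infty([0,T]^2)$. Following the strategy of \cite{BSV2014}, I would verify two stability properties of the hypi-metric: uniform convergence to a continuous limit implies hypi-convergence, and multiplication by a bounded, coordinate-wise monotone function—which $\dot c_1$ and $\dot c_2$ are by concavity of $c$—is continuous into $(L^\infty, d_\text{hypi})$. Combining these via the continuous mapping theorem for hypi-convergence together with the joint convergence above then yields $W_n \wc W - \dot c_1 W^{(1)} - \dot c_2 W^{(2)}$, with the overall sign pattern fixed by the convention identifying the limits of $\sqrt{m}(u_n-\operatorname{id})$ and $\sqrt{m}(v_n-\operatorname{id})$ with $-W^{(1)}$ and $-W^{(2)}$. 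The statement with the data-driven $\hat k$ follows by a sandwich argument: with probability tending to one, $\hat k$ lies between two deterministic sequences that both satisfy \cref{con-proc}(ii), and monotonicity of $\hat Q_n$ in the cutoff transfers the result from fixed $k$ to $\hat k$.
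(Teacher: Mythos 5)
Your decomposition $W_n = \alpha_n + \beta_n$ and the joint weak convergence of $(\alpha_n, \sqrt{m}(u_n - \mathrm{id}), \sqrt{m}(v_n - \mathrm{id}))$ to $(W, -W^{(1)}, -W^{(2)})$ via Vervaat's lemma match the paper's proof exactly. The genuine gap lies in how you treat $\beta_n$. You claim the sandwich by one-sided tangent planes yields a pointwise expansion
\[
\beta_n(x,y) = \dot c_1(x,y)\,\sqrt{m}(u_n(x)-x) + \dot c_2(x,y)\,\sqrt{m}(v_n(y)-y) + R_n(x,y)
\]
with $R_n = o_P(1)$ because $u_n(x) - x = O_P(1/\sqrt m)$. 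This is false at points of non-differentiability of $c$: there the one-sided derivatives differ by a fixed positive gap, and the Taylor remainder is of the \emph{same} order as $\sqrt{m}(u_n(x)-x)$, i.e.\ $O_P(1)$ with a non-degenerate limit. (Concretely, if $\dot c_1^- > \dot c_1^+$ at $(x,y)$ and $u_n(x)>x$, the mean-value increment is governed by the right derivative, not the left one you use as $\dot c_1$, so $R_n \approx (\dot c_1^+ - \dot c_1^-)\sqrt{m}(u_n(x)-x)$.) That the remainder does not vanish is precisely why uniform convergence fails here and the hypi-metric is required; a pointwise expansion cannot identify the hypi-limit of $\beta_n$.

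Your subsequent remedy, ``verify that multiplication by a bounded coordinate-wise monotone function is hypi-continuous,'' does not close this gap: even granting that $\dot c_1 \sqrt{m}(u_n - \mathrm{id}) + \dot c_2 \sqrt{m}(v_n - \mathrm{id}) \to -\dot c_1 W^{(1)} - \dot c_2 W^{(2)}$ in hypi (which holds because $\sqrt{m}(u_n - \mathrm{id})$ converges uniformly to a continuous limit), the hypi-metric is not translation-invariant, so two sequences with the same hypi-limit can have a difference that does not go to zero in any useful sense. What is actually needed is a bona fide Hadamard-differentiability result in the hypi-metric for the map $a \mapsto \sqrt{m}(c(\cdot + a^{(1)}/\sqrt m, \cdot + a^{(2)}/\sqrt m) - c)$. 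The paper establishes this in \cref{lemma:Hadamard}, whose proof invokes Lemma F.3 of \cite{BSV2014} (hypi-convergence of difference quotients of convex functions to a sup-inf expression over the set of differentiability) and then carries out a second, non-trivial step showing that this sup-inf limit is hypi-equivalent to the clean formula $\dot c_1 a^{(1)} + \dot c_2 a^{(2)}$ with the specific left/right derivative conventions; your sketch does not touch either of these. Separately, the sandwich argument you propose for $\hat k$ is insufficient under asymptotic dependence: the paper uses a Skorokhod almost-sure representation and explicitly controls the extra $\dot c_j W^{(j)}$ terms by homogeneity of $c$ and the boundedness of $\dot c_j$, none of which follows from merely trapping $\hat k$ between deterministic sequences.
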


Note that weak convergence in the above theorem takes place in $(L^\infty([0, T]^2), d_\text{hypi})$ where $L^\infty([0, T]^2)$ corresponds to equivalence classes of functions in $\ell^\infty([0, T]^2)$ with respect to the hypi-(semi-)metric $d_\text{hypi}$, see~\cite{BSV2014} for additional details.

The proof of \Cref{big lemma AD} follows by adapting the arguments given in \cite{BSV2014} for the function $\ell$ and builds on the fact that under asymptotic dependence the function $c$ is differentiable almost everywhere. Note however that, in contrast to similar results in~\cite{BSV2014}, our limiting process is stated without appealing to lower semi-continuous extensions. This type of statement is inspired by the representation of certain integrals in \cite{EKS2012} and is possible {in the bivariate setting} due to concavity of $c$ under asymptotic dependence. Additional comments on the representation of the limiting process are given in \cref{rem-integrals} below. 

\begin{rem} \label{rem-integrals}
In order to obtain asymptotic results for our M-estimator, weak convergence of $\int gW_n \d\leb$ to $\int gB \d\leb$ is sufficient. Under asymptotic dependence, this is seen to follow from \cref{big lemma AD} (see the proof of \cref{big thm}). However, this process convergence result is not necessary. An approach that is used in \cite{EKS2012} is to write an expression for the random vector $\int gW_n \d\leb$ and directly work out its weak limit. With this strategy, $\dot c_j$ may be defined as left or right derivatives without problem as $\int \dot c_j W^{(j)} \d\leb$ will be unchanged. In contrast, proving weak hypi-convergence of $W_n$ to $B$ makes our results  {more general and more easily generalized to the spatial framework}. The cost of doing so is that the  {directional} derivatives $\dot c_j$ must be chosen in a specific way; see \cref{lemma:Hadamard}.
\end{rem}

\begin{rem} \label{rem-hypi1}
Recall that under asymptotic independence, process convergence of $\tilde c_n$ could be obtained from \cref{big lemma AI} by a simple application of the delta method. This is no longer the case in the general setting of \cref{big lemma AD} because weak convergence with respect to the hypi-metric does not imply convergence of $W_n(1,1)$, unless the limiting process $B$ has sample paths which are a.s.~continuous in $(1,1)$. The latter happens only if the partial derivatives of $c$ exist and are continuous in $(1,1)$.  {Under this additional assumption} convergence of $\tilde c_n$ with respect to the hypi-metric can be obtained.
\end{rem}

\subsubsection{Asymptotic theory for bivariate M-estimators} \label{sec:bivasymp}

Equipped with the process convergence tools from the previous section, we proceed to analyze the M-estimator introduced in \Cref{estimation-par}. Consistency is established by standard arguments,  {and for the sake of brevity we do not state the corresponding results here.}
In the present section, we focus on the asymptotic distribution. Define the objective function $\Psi: \Theta \times \R_+ \to \Psi(\Theta \times \R_+) \subseteq \R^q$ by
\begin{equation} \label{Psi}
\Psi(\theta, \sigma) := \sigma \int g c_\theta \ \d\leb - \int g c \ \d\leb.
\end{equation}
Clearly, $\Psi(\theta_0, 1) = 0$. In addition, assume that $(\theta_0, 1)$ is a unique, well separated zero of $\Psi$ and let $J_\Psi(\theta, \sigma)$ denote the Jacobian matrix of $\Psi$ for points $(\theta, \sigma) \in \Theta \times \R_+$ where it exists.

Define $\Gamma((x, y), (x', y')) = c(x \wedge x', y \wedge y')$ under asymptotic independence and otherwise
\[
\Gamma((x, y), (x', y')) = (1, -\dot c_1(x, y), -\dot c_2(x, y)) \Lambda((x, y), (x', y')) (1, -\dot c_1(x', y'), -\dot c_2(x', y'))^\top,
\]
where $\Lambda$ is defined in \cref{cov matrix}. Recall from the previous section that these directional derivatives always exist when $\chi>0$ since in this case $c$ is concave. In fact, $\Gamma((x, y), (x', y'))$ is the covariance between $W(x, y)$ and $W(x', y')$ (under asymptotic independence) or between $B(x, y)$ and $B(x', y')$ (under asymptotic dependence). Hence in those two regimes,
\[
A := \int_{[0, T]^4} g(x, y) g(x', y')^\top \Gamma((x, y), (x', y')) \d x \d y \d x' \d y' \in \R^{q \times q}
\]
is the covariance matrix of the random vector $\int g W \d\leb$ or $\int g B \d\leb$, respectively. We are now ready to state the main result of this section: asymptotic normality of $(\hth_n, \hat\zeta_n)$, which holds under both asymptotic dependence and independence.

\begin{thm}[Asymptotic normality of $\hth_n$] \label{big thm}
Assume that $\Psi$ has a unique, well separated zero at $(\theta_0, 1)$ and is differentiable at that point with Jacobian $J := J_\Psi(\theta_0, 1)$ of full rank $p+1$, $p = \dim(\Theta)$. Further assume \Cref{con-proc}. Then the estimators $(\hth_n, \hat\zeta_n)$ defined in \cref{eq:hatthetazeta} satisfy
\[
\sqrt{m} \Big( \Big( \hth_n, \frac{n\hat\zeta_n}{m} \Big) - (\theta_0, 1) \Big) \wc N(0, \Sigma)
\]
where $\Sigma := (J^\top J)^{-1} J^\top A J (J^\top J)^{-1}$.  {The same remains true if $k$ is replaced by $\hat k$ as described after \cref{con-proc}.}
\end{thm}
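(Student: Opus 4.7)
The plan is to work with the rescaled objective $\Psi_n(\theta,\sigma)$ rather than $\Psi_n^*(\theta,\zeta)$. Setting $\hat\sigma_n := \hat\zeta_n/q(k/n) = n\hat\zeta_n/m$, the identity $\Psi_n^*(\theta,\zeta) = q(k/n)\Psi_n(\theta,\zeta/q(k/n))$ from \cref{estimation-par} shows that $(\hth_n,\hat\sigma_n)$ minimizes $\|\Psi_n(\theta,\sigma)\|$, so the conclusion of \cref{big thm} is equivalent to $\sqrt m\,((\hth_n,\hat\sigma_n)-(\theta_0,1)) \wc N(0,\Sigma)$. The key structural observation is the decomposition
\[
\Psi_n(\theta,\sigma) \;=\; \Psi(\theta,\sigma) + R_n, \qquad R_n := -\int g(\hc_n - c)\,\d\leb,
\]
in which the random remainder $R_n$ does \emph{not} depend on $(\theta,\sigma)$. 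Minimizing $\|\Psi_n\|$ is therefore equivalent to minimizing the deterministic vector-valued least-squares functional $(\theta,\sigma)\mapsto\|\Psi(\theta,\sigma)+R_n\|$ subject to a single random perturbation, which cleanly separates the stochastic and deterministic parts of the argument.

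Next, I would identify the limit law of $\sqrt m\,R_n$. Writing $\sqrt m\,R_n = -\int g W_n\,\d\leb$, the functional central limit theorems of the previous subsection propagate through the continuous linear functional $x\mapsto \int g x\,\d\leb$. Under asymptotic independence this is immediate from \cref{big lemma AI} and the continuous mapping theorem in $\ell^\infty([0,T]^2)$. Under asymptotic dependence I would use \cref{big lemma AD} together with the fact, central to the approach of \cite{BSV2014}, that integration against a sufficiently regular $g$ is continuous with respect to $d_{\mathrm{hypi}}$. In either regime, Fubini identifies the covariance of the limit as
\[
\int_{[0,T]^4} g(x,y) g(x',y')^\top \Gamma((x,y),(x',y'))\,\d x\,\d y\,\d x'\,\d y' \;=\; A,
\]
so that $\sqrt m\,R_n \wc N(0,A)$.

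Consistency of $(\hth_n,\hat\sigma_n)$ is then immediate: since $R_n = \op{1}$, $\Psi_n\to\Psi$ uniformly on $\Theta\times\R_+$ (trivially, since the difference is free of $(\theta,\sigma)$), and the well-separated zero hypothesis yields $(\hth_n,\hat\sigma_n)\to(\theta_0,1)$ in probability by standard argmin-continuity. For the distributional statement, set $\Delta_n := (\hth_n,\hat\sigma_n)-(\theta_0,1)$ and Taylor expand at the interior point $(\theta_0,1)$ using the assumed differentiability,
\[
\Psi_n(\hth_n,\hat\sigma_n) \;=\; J\Delta_n + R_n + o_P(\|\Delta_n\|).
\]
Full column rank of $J$ provides the lower bound $\|J\Delta\|\geq \underline c\,\|\Delta\|$ near the origin, so comparing the attained minimum $\|\Psi_n(\hth_n,\hat\sigma_n)\|$ with the value obtained at the explicit least-squares guess $-(J^\top J)^{-1}J^\top R_n$ forces $\Delta_n = -(J^\top J)^{-1}J^\top R_n + o_P(\|\Delta_n\|+\|R_n\|)$. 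Together with $R_n = \Op{1/\sqrt m}$ this bootstraps to $\Delta_n = \Op{1/\sqrt m}$, after which
\[
\sqrt m\,\Delta_n \;=\; -(J^\top J)^{-1}J^\top\sqrt m\,R_n + \op{1} \;\wc\; N(0,\Sigma),
\]
giving $\Sigma = (J^\top J)^{-1}J^\top A J(J^\top J)^{-1}$ as claimed. The data-dependent variant with $k$ replaced by $\hat k$ requires no new argument because the process-level statements in \cref{big lemma AI,big lemma AD} remain valid in that case.

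The hard part is the asymptotic dependence case of the second step: upgrading weak $d_{\mathrm{hypi}}$-convergence of $W_n$ (a non-continuous process) to weak convergence of the vector-valued integral $\int g W_n\,\d\leb$ under conditions on $g$ weak enough to be useful in applications. A secondary but delicate point is closing the usual chicken-and-egg bootstrap of Z-estimator theory, namely controlling the $o_P(\|\Delta_n\|)$ Taylor error \emph{before} the rate $\|\Delta_n\| = \Op{1/\sqrt m}$ is in hand; this is routine but genuinely relies on the full-rank assumption on $J$ to ensure identifiability of both $\theta$ and the auxiliary scale parameter $\sigma$ simultaneously.
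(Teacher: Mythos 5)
Your proposal is correct and follows essentially the same route as the paper: the same reduction from $\hat\zeta_n$ to $\hat\sigma_n = n\hat\zeta_n/m$, the same observation that $\Psi_n - \Psi$ is a $(\theta,\sigma)$-free remainder equal to $-m^{-1/2}\int g W_n\,\d\leb$, the same appeal to \cref{big lemma AI,big lemma AD} together with continuity of $f\mapsto\int gf\,\d\leb$ under the supremum and hypi topologies (the paper's \cref{prop hypi}), and the same full-rank Jacobian argument to extract the asymptotic linearization. The only presentational difference is that the paper factors the Z-estimator bootstrap (consistency, tightness of the rescaled deviation, argmin continuity) into the reusable \cref{M}, which it then invokes again in the spatial case, whereas you spell the same steps out inline.
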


While for simplicity the estimator is defined as an exact minimizer, the same result can be obtained for an approximate minimizer. Precisely, it is obvious from the proof of \cref{big thm} that as long as $\Psi_n^*(\hth_n, \hat\zeta_n) = \inf_{\theta, \zeta} \Psi_n^*(\theta, \zeta) + \op{\sqrt{m}/n}$, the conclusion still holds.
Finally, recall that the coefficient of tail dependence $\eta$ can be recovered from the function $c$ since the latter is homogeneous of order $1/\eta$, and this relation always holds. Therefore, inside the assumed parametric model, $\eta$ can be represented as a function $\eta(\theta)$.  {The asymptotic distribution of the resulting estimator can be obtained by a direct application of the delta method and details are omitted for the sake of brevity.}

\subsection{The spatial setting}
\label{sec:asy-spatial}

In this section we assume the framework of \Cref{estimation spatial} and establish asymptotic properties of the estimators therein. For each pair $s \in \cP$, let $k^{(s)}$ be an intermediate sequence and define
\[
\hc_n^{(s)}(x, y) := \frac{\hat Q_n^{(s)} \left(k^{(s)} x/n, k^{(s)} y/n \right)}{q^{(s)}(k^{(s)}/n)}.
\]
From \cref{sec:asy-non}, the asymptotic distribution of $\hc_n^{(s)}$ is known under suitable conditions. However, as the spatial estimators $\hat\vartheta_n$ and $\tilde\vartheta_n$ are based on all pairs, a joint convergence statement about all processes $\hc_n^{(s)}$ is necessary. This will require an additional assumption which we present and discuss next.

Let $F^{(1)}, \dots, F^{(d)}$ denote the marginal distribution functions of the random vector $X$, which itself consists of the spatial process $Y$ evaluated at $d$ different locations. In order to obtain the asymptotic covariance between different processes $\hat c_n^{(s)}$, we need to ensure that certain multivariate tail probabilities converge. Partition the set $\cP$ into $\cP_I$ and $\cP_D$, consisting of the asymptotically independent and asymptotically dependent pairs, respectively. In the formulation of the following assumption, $s = (s_1, s_2)$ and $s^i = (s_1^i, s_2^i)$ are used to denote pairs. For brevity, $x^i = (x_1^i, x_2^i)$ is also used to denote a point in $[0, \infty)^2$. 

\begin{con} \label{con-spatial-new}

For every $s \in \cP$, $(X^{(s_1)}, X^{(s_2)})$ satisfies \cref{con-proc}(i) with functions $q^{(s)}, q_1^{(s)}, c^{(s)}$ and $\chi^{(s)} := \lim_{t \downarrow 0} q^{(s)}(t)/t$ exists. Intermediate sequences $k^{(s)}$ are chosen so that $m^{(s)} := nq^{(s)}(k^{(s)}/n) \to \infty$ and $\sqrt{m^{(s)}} q_1^{(s)}(k^{(s)}/n) \to 0$. For pairs $s^1, s^2 \in \cP$, points $x^1, x^2 \in [0, \infty)^2$ and sets $J$ of two-dimensional vectors with entries in $\{1, 2\}$, let
\[
\Gamma_n\left( s^1, s^2, x^1, x^2; J \right) = \frac{n}{\sqrt{m^{(s^1)} m^{(s^2)}}} \mathbb{P}\Big( F^{(s_j^{i})}(X^{(s_j^{i})}) \geq 1 - \frac{k^{(s^{i})} x_{j}^i}{n}, \quad (i, j) \in J \Big).
\]
We assume that the sequences $k^{(s)}$ are chosen such that the limits
\begin{align*}
\Gamma^{(s^1, s^2)}(x^1, x^2) &:= \lim_{n \to \infty} \Gamma_n\left(s^1, s^2, x^1, x^2; \{(1,1), (1,2), (2,1), (2,2)\} \right), ~ s^1, s^2 \in \cP, 
\\
\Gamma^{(s^1, s^2, j)}(x^1, x^2) &:= \chi^{(s^2)} \lim_{n \to \infty} \Gamma_n\left(s^1, s^2, x^1, x^2; \{(1,1), (1,2), (2,j)\} \right), ~~ s^1 \in \cP, s^2 \in \cP_D, 
\\
\Gamma^{(s^1, j^1, s^2, j^2)}(x^1, x^2) &:= \chi^{(s^1)} \chi^{(s^2)} \lim_{n \to \infty} \Gamma_n\left(s^1, s^2, x^1, x^2; \{(1,j^1), (2,j^2)\} \right), ~ s^1, s^2 \in \cP_D,
\end{align*}
exist for all $j, j^i \in \{1, 2\}$, and that the convergence is locally uniform over $x^1, x^2 \in [0, \infty)^2$.

\end{con}

We next discuss the above condition in three special cases of particular interest. The first two are processes in the domain of attraction of max-stable processes and inverted max-stable processes. The third one is a mixture process appearing in \cite{WT2012}, which can have asymptotically dependent and independent pairs simultaneously.

\begin{ex}[\textit{\cref{ex1.0}, continued}] \label{ex_spatial:ms}
If $Y$ is in the max-domain of attraction of a max-stable process, then
$X$ is in the max-domain of attraction of a max-stable distribution $G$ on $\R^d$ with stable tail dependence function
\[
\ell(x_1, \dots, x_d) := \lim_{t \downarrow 0} \frac{1}{t} \Prob{}{F^{(1)}(X^{(1)}) \geq 1-tx_1 \text {  or } \dots \text{ or  } F^{(d)}(X^{(d)}) \geq 1-tx_d}, \quad x_j \geq 0;	
\]
see \cref{eq:l}. If moreover the convergence is locally uniform over $(x_1, \dots, x_d) \in [0, \infty)^d$ and if every pair is asymptotically dependent, then \cref{con-spatial-new} holds. Note that this is automatically satisfied if $Y$ itself is max-stable. The sequences $k^{(s)}$ can be chosen all equal to $k$, say, and for every pair $s$, $m^{(s)}/k \to \chi^{(s)} > 0$. The sequences $m^{(s)}$ can also be chosen all asymptotically equivalent to $m$, say, by choosing $k^{(s)} = m/\chi^{(s)}$. The limiting covariance terms can all be deduced from $\ell$ by straightforward calculations.
\end{ex}

\begin{ex}[\textit{\cref{ex1.1}, continued}]
\label{ex_spatial:ims}
If $Y$ is an inverted max-stable process, then $X$ has an inverted max-stable distribution, and we assume that the associated stable tail dependence function $\ell$ is component-wise strictly increasing. The latter is trivially satisfied if $X$ has a positive density. Then if all the pairwise functions $\ell^{(s)}$ satisfy the quadratic expansion introduced in \cref{ex:ims:cont}, \cref{con-spatial-new} is satisfied and the sequences $k^{(s)}$ can be chosen so that the $m^{(s)}$ are all equal, that is, for every pair $s\in\cP$, $m^{(s)} = m$ for some intermediate sequence $m$. Here, $\cP_D$ is empty so the only required covariance terms are (see \cref{proof IMS})
\[
\Gamma^{(s^{1}, s^{2})}(x^1, x^2) = \begin{cases}
c^{(s)}(x_1^1 \wedge x_1^2, x_2^1 \wedge x_2^2), \quad &s^1 = s^2 = s, \\
0, \quad &s^1 \neq s^2
\end{cases}.
\]

For instance, any inverted Brown--Resnick process (or rather the implied inverted $d$-dimensional H\"usler--Reiss distribution corresponding to the $d$ observed locations) satisfies \cref{con-spatial-new} as long as the aforementioned $d$-variate distribution has a density. The latter can easily be checked \citep[e.g.,][Corollary 2]{eng2018a}.
\end{ex}

\begin{ex}[\textit{\cite{WT2012}, Section 4}]
\label{ex_spatial:mix}
Let $Z$ be a max-stable process and $Z'$ be an inverted max-stable process, both with unit Fr\'echet margins. Suppose that $Z'$ satisfies the monotonicity condition stated in \cref{ex_spatial:ims}, and additionally that none of its pairwise distributions $(Z'(u_1), Z'(u_2))$ is perfectly independent. Let $a \in (0, 1)$ and define the process $Y$ by
\[
Y(u) := \max\{aZ(u), (1-a)Z'(u)\}.
\]
Then $Y$ also has unit Fr\'echet margins. 
If $Z$ becomes independent at a certain spatial distance, the process $Y$ transitions between asymptotic dependence and independence at that distance. An instance of such a max-stable process $Z$ is found in the second example after Theorem~1 of \cite{S2002}, assuming that the Radius $R$ of the random disks is bounded \citep[see also][eq.~(23) and the discussion that precedes]{DPR2012}.

The process $Y$ can be shown to satisfy \cref{con-spatial-new} if the sequences $k^{(s)}$ are chosen so that the $m^{(s)}$ are all equal. The terms $\Gamma^{(s^1, s^2)}$, $\Gamma^{(s^1, s^2, j)}$ and $\Gamma^{(s^1, j^1, s^2, j^2)}$ are mostly determined by the process $Z$, as in \cref{ex_spatial:ms}; see \cref{proof IMS} in the supplement for details.
\end{ex}

\subsubsection{Joint distribution of non-parametric estimators}

The joint limiting behavior of the processes $\hc_n^{(s)}$ relies on $\left( (W^{(s)})_{s \in \cP}, (W^{(s, j)})_{s \in \cP_D, j \in \{1, 2\}} \right)$, a collection of centered Gaussian processes on $[0, \infty)^2$. The covariance between $W^{(s)}(x, y)$ and $W^{(s')}(x', y')$ is given by $\Gamma^{(s, s')}((x, y), (x', y'))$, the covariance between $W^{(s)}(x, y)$ and $W^{(s', j)}(x', y')$ takes the form $\Gamma^{(s, s', j)}((x, y), (x', y'))$, and the covariance between $W^{(s, j)}(x, y)$ and $W^{(s', j')}(x', y')$ is equal to $\Gamma^{(s, j, s', j')}((x, y), (x', y'))$. For $s \in \cP_I$, let $B^{(s)} = W^{(s)}$ and for $s \in \cP_D$, let
\[
B^{(s)} = W^{(s)} - \dot c_1^{(s)} W^{(s, 1)} - \dot c_2^{(s)} W^{(s, 2)},
\]
where $\dot c_j^{(s)}$ are defined similarly to $\dot c_j$ in \cref{sec:asy-non}.

\begin{thm}[Asymptotic normality of $\hc_n^{(s)}$] \label{thm spatial np}
Assume \cref{con-spatial-new}. Then
\[
\big( W_n^{(s)} \big)_{s \in \cP} := \big( \sqrt{m^{(s)}} (\hc_n^{(s)} - c^{(s)}) \big)_{s \in \cP} \wc \big( B^{(s)} \big)_{s \in \cP}
\]
in the product space $\left( L^\infty([0, T]^2), d_\text{hypi} \right)^{|\cP|}$, for any $T < \infty$.  {The same remains true if each $k^{(s)}$ is replaced by the data-dependent sequence $\hat k^{(s)}$ as described after \cref{con-proc}.}
\end{thm}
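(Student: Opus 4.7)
The plan is to build on the marginal results in Theorems \cref{big lemma AI} and \cref{big lemma AD}, which already give hypi-convergence of each component $W_n^{(s)}$ under the bivariate Condition \cref{con-proc}. Since Condition \cref{con-spatial-new} implies that bivariate condition for every pair, marginal tightness in $(L^\infty([0,T]^2), d_{\mathrm{hypi}})$ holds for each coordinate, and therefore joint tightness in the product space is automatic. The only remaining task is to establish joint finite-dimensional convergence of $(W_n^{(s)})_{s \in \cP}$ to $(B^{(s)})_{s \in \cP}$ with the covariances prescribed by the three families $\Gamma^{(s^1,s^2)}$, $\Gamma^{(s^1,s^2,j)}$, $\Gamma^{(s^1,j^1,s^2,j^2)}$ in Condition \cref{con-spatial-new}.

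For each pair $s$ I would use the decomposition from Remark \cref{rem:process_conv}, writing $W_n^{(s)} = A_n^{(s)} + C_n^{(s)}$, where
\[
A_n^{(s)}(x,y) := \sqrt{m^{(s)}}\Big(\frac{\hat Q_n^{(s)}(k^{(s)} u_n^{(s)}(x)/n, k^{(s)} v_n^{(s)}(y)/n)}{q^{(s)}(k^{(s)}/n)} - c^{(s)}(u_n^{(s)}(x), v_n^{(s)}(y))\Big),
\]
\[
C_n^{(s)}(x,y) := \sqrt{m^{(s)}}\bigl(c^{(s)}(u_n^{(s)}(x), v_n^{(s)}(y)) - c^{(s)}(x,y)\bigr),
\]
with $u_n^{(s)}, v_n^{(s)}$ the rescaled empirical quantile transforms of the marginals of the pair $s$. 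The processes $A_n^{(s)}$ are centred indicator sums over the $n$ underlying observations, so joint convergence of finitely many evaluations $A_n^{(s_i)}(x_i, y_i)$ to a centred Gaussian vector reduces, by a Cram\'er--Wold device and the Lindeberg CLT for triangular arrays, to computing pairwise covariances; these are exactly the limits $\Gamma^{(s^1, s^2)}$ from Condition \cref{con-spatial-new}, and the asymptotic bias vanishes because $\sqrt{m^{(s)}}\, q_1^{(s)}(k^{(s)}/n) \to 0$.

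For the correction $C_n^{(s)}$ a split by dependence regime is required. If $s \in \cP_I$, the uniform increment bound on $c^{(s)}$ established in the proof of Theorem \cref{big lemma AI}, combined with the weighted approximation of $u_n^{(s)}-\mathrm{id}$ and $v_n^{(s)}-\mathrm{id}$, yields $C_n^{(s)} = \op{1}$ in $\ell^\infty([0,T]^2)$, so it contributes nothing. If $s \in \cP_D$, concavity of $c^{(s)}$ allows the expansion
\[
C_n^{(s)}(x,y) = -\dot c_1^{(s)}(x,y)\sqrt{m^{(s)}}\bigl(u_n^{(s)}(x)-x\bigr) - \dot c_2^{(s)}(x,y)\sqrt{m^{(s)}}\bigl(v_n^{(s)}(y)-y\bigr) + \op{1},
\]
exactly as in the proof of Theorem \cref{big lemma AD}. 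The scaled quantile deviations are themselves indicator-based tail empirical sums, so their joint finite-dimensional convergence together with the $A_n^{(s)}$ again follows from the Cram\'er--Wold / Lindeberg argument, with cross-covariances matching $\Gamma^{(s^1,s^2,j)}$ and $\Gamma^{(s^1,j^1,s^2,j^2)}$, whose $\chi$-factors compensate the rescaling $\sqrt{m^{(s)}/k^{(s)}}$ that relates a sum of tail indicators normalised at rate $k^{(s)}$ to one normalised at rate $m^{(s)}$. Collecting pieces identifies the limit as $W^{(s)} - \dot c_1^{(s)} W^{(s,1)} - \dot c_2^{(s)} W^{(s,2)}$ for $s \in \cP_D$ and as $W^{(s)}$ for $s \in \cP_I$, with the stated joint covariance. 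The extension to the data-dependent levels $\hat k^{(s)}$ is standard: $\hat k^{(s)}/k^{(s)} \to 1$ in probability, and the asymptotic equicontinuity of the tail empirical processes already used for the marginal statements permits replacing deterministic by random levels at no cost.

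The step I expect to be the main obstacle is managing the joint convergence across pairs from $\cP_I$ and $\cP_D$ simultaneously: for asymptotically independent pairs the correction vanishes, while for asymptotically dependent pairs it contributes the $-\dot c_j^{(s)} W^{(s,j)}$ terms. Organising the cross-covariances so that vanishing corrections produce no spurious contributions, while surviving ones produce exactly the $\Gamma$-limits, requires careful book-keeping; in particular, one must check that a mixed term such as $\Gamma^{(s^1, s^2, j)}$ with $s^1 \in \cP_I$ and $s^2 \in \cP_D$ enters with the correct $\chi^{(s^2)}$ factor, which is precisely what motivates the normalisations hard-wired into Condition \cref{con-spatial-new}.
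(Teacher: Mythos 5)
Your overall decomposition and choice of tools match the paper's proof, but the opening framing contains a genuine gap. You propose to establish weak convergence of $(W_n^{(s)})_{s\in\cP}$ in the product hypi-space by combining marginal tightness with joint finite-dimensional convergence of the processes $W_n^{(s)}$. That scheme does not work in $(L^\infty([0,T]^2),d_{\text{hypi}})$: for pairs $s \in \cP_D$ the limit $B^{(s)} = W^{(s)} - \dot c_1^{(s)}W^{(s,1)} - \dot c_2^{(s)}W^{(s,2)}$ may be discontinuous (the directional derivatives $\dot c_j^{(s)}$ need not be continuous), and in that case point evaluation is not a continuous functional on the hypi-equivalence classes, so ``finite-dimensional distributions'' of $W_n^{(s)}$ neither characterise the limit nor combine with tightness to give weak convergence. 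The same imprecision shows up in your expansion of $C_n^{(s)}$ with a remainder ``$o_P(1)$'': the remainder is controlled only at the hypi level, via the Hadamard-type Lemma \ref{lemma:Hadamard}, not uniformly in $\ell^\infty$.

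What the paper actually does is push the tightness-plus-fidi argument one level down, to the $\ell^\infty$-valued vector process
\[
G_n = \bigl( (H_n^{(s)})_{s\in\cP},\ (\sqrt{m^{(s)}}(u_n^{(s,j)}-\mathrm{id}))_{s\in\cP_D,\ j\in\{1,2\}} \bigr),
\]
whose components are centred indicator sums with \emph{deterministic} arguments. There, fidi convergence via Cram\'er--Wold and Lindeberg together with marginal tightness is legitimate, and Condition~\ref{con-spatial-new} supplies exactly the limiting covariances; Vervaat's lemma converts the $L_n^{(s,j)}$ into the quantile processes. The passage from $G_n$ to $(W_n^{(s)})_{s\in\cP}$ is then carried out by the extended continuous mapping theorem applied to the sequence of maps $f_n$ and their hypi-limit $f$, with Lemma~\ref{lemma:Hadamard} certifying that uniform convergence of $a_n\to a\in\DD_0$ forces $f_n(a_n) \to f(a)$ in hypi-metric. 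So your intuition about the covariance bookkeeping, the regime split across $\cP_I$ and $\cP_D$, and the role of the $\chi$-factors is all correct, and you cite the right lemma; what is missing is the recognition that the tightness-plus-fidis argument must be performed on $G_n$ in $\ell^\infty$, with the hypi topology arising only in the image of the continuous mapping, and that your $A_n^{(s)}$ (which has the random shifts $u_n^{(s)}, v_n^{(s)}$ inside) must first be replaced, up to a uniform $o_P(1)$, by $H_n^{(s)}$ before any triangular-array CLT can be invoked.
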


The preceding result can be applied in all generality as long as the four-dimensional tails of the spatial process of interest are sufficiently smooth. The admissible settings include, but are far from limited to, \cref{ex_spatial:ms,ex_spatial:ims,ex_spatial:mix}.

According to \cite{BSV2014}, convergence in the hypi-metric is equivalent to uniform convergence when the limit is a continuous function. The process $B^{(s)}$ clearly has almost surely continuous sample paths under asymptotic independence, as well as under asymptotic dependence if the partial derivatives of $c$ exist everywhere and are continuous. It follows that in those cases $W_n^{(s)}$ converges in $(\ell^\infty([0, T]^2),\|\cdot\|_\infty)$. In fact, one may replace the product space in the result above by $\otimes_{s \in \cP} \DD^{(s)}$, where $\DD^{(s)}$ represents either $\ell^\infty([0, T]^2)$ equipped with the supremum distance (if $s \in \cP_I$ or $c$ has continuous partial derivatives) or $L^\infty([0, T]^2)$ equipped with the hypi-metric (otherwise). In particular, for processes where every pair is asymptotically independent such as inverted max-stable processes, the hypi-metric can be replaced by the supremum distance everywhere.

\subsubsection{Asymptotics for parametric estimators}

We now show how \cref{thm spatial np} leads to asymptotic results for the parametric estimators $\hat\vartheta_n$ and $\tilde\vartheta_n$ introduced in \cref{eq:defthetahat,eq:defthetahatls}. Recall the setting of \Cref{estimation spatial}, and in particular the functions $h^{(s)}: \Theta \to \tilde\Theta$ and the relation $c^{(s)} = c_{h^{(s)}(\vartheta_0)}$. Similarly to the bivariate setting, define 
\[
\Psi^{(s)}: \tilde\Theta \times \R_+ \to \R^q, \quad \Psi^{(s)}(\theta, \sigma) = \sigma \int g c_\theta \d\leb - \int g c^{(s)} \d\leb.
\]

In the bivariate setting, we required $\Psi$ to be differentiable and have a unique well-separated zero. In the spatial setting we need a  comparable assumption.

\begin{con} \label{con-spatial-par}
For every pair $s \in \cP$, the functions $\Psi^{(s)}$ and $h^{(s)}$ are continuously differentiable at the points $(h^{(s)}(\vartheta_0), 1)$ and $\vartheta_0$, respectively, with Jacobian matrices $J_{\Psi^{(s)}}(h^{(s)}(\vartheta_0), 1)$ and $J_{h^{(s)}}(\vartheta_0)$ of full ranks $\tilde p + 1$ and $p$.  {Additionally (i) or (ii) holds.}
\begin{itemize}
\item[(i)] The functions $\Psi^{(s)}$ and $\vartheta \mapsto (h^{(s)}(\vartheta) - h^{(s)}(\vartheta_0))_{s \in \cP}$ have a unique, well separated zero at the points $(h^{(s)}(\vartheta_0), 1)$ and $\vartheta_0$, respectively.
\item[(ii)] The function $(\vartheta, \sigma) \mapsto (\Psi^{(s)}(h^{(s)}(\vartheta), \sigma^{(s)}))_{s\in \cP}$ as a function on $\Theta \times \R_+^{|\cP|}$ has a unique, well separated zero at the point $(\vartheta_0, 1, \dots, 1)$.
\end{itemize}
\end{con}

Assuming both parts of \cref{con-spatial-par}, we now introduce the notation that is needed to define the limiting covariance matrices of the two estimators. In the following, elements of a vector $x \in \R^{q|\cP|}$ are ordered by pair $s \in \cP$ first, and then by dimension $j \in \{1, \dots, q\}$. The same convention is used when ordering the rows or columns of a matrix.

Letting $B^{(s)}$ denote the limiting Gaussian processes appearing in \cref{thm spatial np}, consider the matrix $A \in \R^{q|\cP| \times q|\cP|}$ with blocks of the form
\[
A^{(s, s')} := \int_{[0, T]^4} g(x, y) g(x', y')^\top \cov{}{B^{(s)}(x, y)}{B^{(s')}(x', y')} \d x \d y \d x' \d y'.
\]
Let $\cD \in \R^{\tilde p |\cP| \times q|\cP|}$ be a block-diagonal matrix with blocks given by
\begin{equation}\label{eq:defcDs}
\cD^{(s)} := \left[ \left( J_{\Psi^{(s)}}(h^{(s)}(\vartheta_0), 1)^\top J_{\Psi^{(s)}}(h^{(s)}(\vartheta_0), 1) \right)^{-1} J_{\Psi^{(s)}}(h^{(s)}(\vartheta_0), 1)^\top \right]_{1:\tilde p, 1:q} \in \R^{\tilde p \times q},
\end{equation}
where $s \in \cP$ and $[M]_{1:\tilde p, 1:q}$ indicates the sub-matrix consisting of rows 1 to $\tilde p$ and columns 1 to $q$ of the matrix $M$. Define $J_1 \in \R^{\tilde p |\cP| \times p}$ by stacking the matrices $J_{h^{(s)}}(\vartheta_0)$, $s \in \cP$, on top of each other. Denote by $(e^{(s)})^\top$ the unit vector in $\R^{|\cP|}$ with a one in the position corresponding to the pair $s$ and let $J_2 \in \R^{q|\cP| \times (p + |\cP|)}$ be obtained by stacking the matrices
\[
J_{\Psi^{(s)}}(h^{(s)}(\vartheta_0), 1) \begin{bmatrix}
J_{h^{(s)}}(\vartheta_0) & 0 \\
0 & e^{(s)}
\end{bmatrix} \in \R^{q \times (p + |\cP|)}, \quad s \in \cP,
\]
on top of each other. Finally, define
\[
\Sigma_1 = (J_1^\top J_1)^{-1} J_1^\top \cD A \cD J_1 (J_1^\top J_1)^{-1}, \quad\quad \Sigma_2 = (J_2^\top J_2)^{-1} J_2^\top A J_2 (J_2^\top J_2)^{-1}.
\]

\begin{thm}[Asymptotic normality of the estimators of $\vartheta$] \label{thm spatial ls} \label{thm spatial}

Assume \cref{con-spatial-new} and suppose that the sequences $m^{(s)}$ are all asymptotically equivalent to $m$, say. Then under \cref{con-spatial-par}(i), the estimator defined in \cref{eq:defthetahatls} satisfies
\[
\sqrt{m} \left( \hat\vartheta_n - \vartheta_0 \right) \wc N(0, \Sigma_1)
\]
and under \cref{con-spatial-par}(ii), the estimators defined in \cref{eq:defthetahat} satisfy
\[
\sqrt{m} \Big( \Big( \tilde\vartheta_n, \frac{n \tilde\zeta_n}{m} \Big) - (\vartheta_0, 1, \dots, 1) \Big) \wc N(0, \Sigma_2),
\]
where $\Sigma_1$ and $\Sigma_2$ are as above.  {The same remains true if each $k^{(s)}$ is replaced by the data-dependent sequence $\hat k^{(s)}$, based on the same sequence $m$, as described after \cref{con-proc}.}

\end{thm}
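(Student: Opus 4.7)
\textbf{Proof proposal for \cref{thm spatial}.}
The plan is to handle the two estimators in parallel, using \cref{thm spatial np} as the main input for the joint weak convergence of the non-parametric ingredients. The common first step is to pass from weak convergence of the processes $W_n^{(s)} = \sqrt{m^{(s)}}(\hc_n^{(s)}-c^{(s)})$ in the hypi-metric to joint weak convergence of the integrated quantities
\[
N_n^{(s)} := \sqrt{m^{(s)}} \Big( \int g\, \hc_n^{(s)}\, \mathrm{d}\leb - \int g\, c^{(s)}\, \mathrm{d}\leb \Big),\qquad s \in \cP,
\]
toward $(N^{(s)})_{s \in \cP} := (\int g\, B^{(s)}\, \mathrm{d}\leb)_{s\in \cP}$, a centered Gaussian vector whose covariance is precisely the block matrix $A$ introduced before the theorem. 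Since integration against a fixed $L^1$ weight is a continuous linear functional on $(L^\infty([0,T]^2), d_\text{hypi})$ (a fact already used in the proof of \cref{big thm} and traceable to \cite{BSV2014}), this follows from \cref{thm spatial np} by the continuous mapping theorem. The assumption $m^{(s)}/m \to 1$ lets us replace every $\sqrt{m^{(s)}}$ by $\sqrt{m}$ at no asymptotic cost.

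For the least-squares estimator $\hat\vartheta_n$, I would first apply the bivariate \cref{big thm} pair-by-pair to obtain the linearization
\[
\sqrt{m}\big(\hat\theta_n^{(s)} - h^{(s)}(\vartheta_0)\big) = -\cD^{(s)} N_n^{(s)} + o_P(1), \quad s \in \cP,
\]
with $\cD^{(s)}$ the $\tilde p \times q$ matrix defined in \eqref{eq:defcDs}. Combined with the first step, this yields joint asymptotic normality of $(\hat\theta_n^{(s)})_{s \in \cP}$ with asymptotic covariance $\cD A \cD^\top$. Consistency of $\hat\vartheta_n$ follows from consistency of each $\hat\theta_n^{(s)}$, continuity of each $h^{(s)}$, and the well-separated-zero clause in \cref{con-spatial-par}(i). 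A standard Taylor expansion of the normal equations associated with $\vartheta \mapsto \sum_s \|h^{(s)}(\vartheta) - \hat\theta_n^{(s)}\|^2$ around $\vartheta_0$ then gives
\[
\sqrt{m}(\hat\vartheta_n - \vartheta_0) = (J_1^\top J_1)^{-1} J_1^\top \big(\sqrt{m}(\hat\theta_n^{(s)} - h^{(s)}(\vartheta_0))\big)_{s \in \cP} + o_P(1),
\]
and plugging in the linearization delivers the stated limit $N(0,\Sigma_1)$.

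For $\tilde\vartheta_n$, I would mimic the proof of \cref{big thm} but in the enlarged parameter space. Reparametrizing $\sigma^{(s)} = \zeta^{(s)}/q^{(s)}(k^{(s)}/n)$ turns the objective into $\sum_s \|\Psi_n^{(s)}(h^{(s)}(\vartheta), \sigma^{(s)})\|^2$ where $\Psi_n^{(s)}(\theta,\sigma) = \sigma \int g\, c_\theta\, \mathrm{d}\leb - \int g\, \hc_n^{(s)}\, \mathrm{d}\leb$. Uniform convergence of the $\Psi_n^{(s)}$ on compact sets together with \cref{con-spatial-par}(ii) gives consistency of $(\tilde\vartheta_n, \tilde\sigma_n^{(1)}, \ldots, \tilde\sigma_n^{(|\cP|)})$ toward $(\vartheta_0, 1, \ldots, 1)$. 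The Jacobian of the vector-valued map $(\vartheta, \sigma) \mapsto (\Psi^{(s)}(h^{(s)}(\vartheta), \sigma^{(s)}))_{s\in \cP}$ at $(\vartheta_0, 1, \ldots, 1)$ is, by the chain rule, precisely $J_2$. A Taylor expansion of the first-order conditions around $(\vartheta_0, 1, \ldots, 1)$ therefore produces
\[
\sqrt{m}\Big( \big(\tilde\vartheta_n, n\tilde\zeta_n/m\big) - (\vartheta_0, 1, \ldots, 1) \Big) = -(J_2^\top J_2)^{-1} J_2^\top \big(N_n^{(s)}\big)_{s \in \cP} + o_P(1),
\]
and the stated asymptotic distribution $N(0, \Sigma_2)$ follows from the first step.

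The main obstacle is the careful bookkeeping around the two kinds of scalings — the rates $m^{(s)}$ and the unknown constants $q^{(s)}(k^{(s)}/n)$ — to make sure the joint linearization carries a single common rate $\sqrt m$; this is exactly where $m^{(s)}/m \to 1$ is used, and where one must argue that the difference between using $m^{(s)}$ and $m$ does not leak into the leading stochastic term. A secondary point is the consistency step, which in both cases reduces to a standard argmin-continuity argument built on uniform convergence on compact sets and the well-separated-zero assumptions in \cref{con-spatial-par}; no new probabilistic input beyond \cref{thm spatial np} is needed. The data-dependent choice $\hat k^{(s)}$ can be accommodated with the same arguments, since \cref{thm spatial np} is already formulated to allow it.
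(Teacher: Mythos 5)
Your proposal is correct and follows essentially the same route as the paper: establish joint weak convergence of the integrated functionals $\int g W_n^{(s)}\,\mathrm d\leb$ via hypi-convergence and continuous mapping, linearize each pairwise M-estimator, and then apply a second M-estimation/Taylor-expansion step for each of the two spatial estimators — exactly the two nested applications of the paper's \cref{M}. The minus sign in your linearizations is opposite to the one produced by \cref{M}, but since it is used consistently it washes out in the Gaussian limit, and your covariance expression $\cD A \cD^\top$ is in fact the dimensionally correct form (the paper's display $\cD A \cD J_1$ contains a transposition typo).
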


The assumption of asymptotic equivalence of all $m^{(s)}$ can be substantially relaxed. Otherwise, a simple way to satisfy it is to select one $m$ and use data-driven sequences $\hat{k}^{(s)}$.

\section{Simulations} \label{simulations}

\subsection{Bivariate distributions}
\label{sim bivariate}

In this section we study the finite sample behavior of the estimator introduced in the paper. We simulate samples from the bivariate vector $(X+X', Y+Y')$, where $(X,Y)$ is the signal and $(X',Y')$ is and independent noise vector. We consider three different models for the bivariate distributions $(X,Y)$.
\begin{compactenum}
\item[(M1)] The inverted H\"usler--Reiss model from \cref{ex:ims}(i) with unit Fr\'echet margins, whose corresponding class of functions $c$ takes the form $c_\theta(x,y) = (xy)^\theta$ where $\theta \in (1/2, 1]$.
\item[(M2)] The inverted asymmetric logistic model from \cref{ex:ims}(ii) with fixed $r = 2$ and unit Fr\'echet margins. We fit the full parametric model $\{c_\theta(x, y) = x^{\theta_1} y^{\theta_2}: \theta \in \Theta\}$, where $\Theta := \{(\theta_1, \theta_2) \in (0, 1]^2: \theta_1 + \theta_2 > 1\}$, even though due to our choice of $r$ the only attainable parameters are approximately the square $[0.7, 1]^2$; see \cref{Color plots}.
\item[(M3)] The random scale construction from \cref{ex2} where we fix $\alpha_W = 1$ and vary $\alpha_R$. The collection of possible functions $c = c_\lambda$, $\lambda \in (0,2)$ is given in \cref{table}.
\end{compactenum}
\Cref{fig:IHR-rea,fig:IAlog-rea,fig:P-rea} in the supplement show realizations of models M1--M3 corresponding to different parameter values and rescaled to unit exponential margins for illustration.

As a noise vector we simulate samples of $(X',Y')$, where $X'$ and $Y'$ are independent with Pareto distribution function $1-1/x^4$, $x\geq 1$. Note that this tail is lighter than that of the marginal distributions in all three models; it can be shown that this additive noise does not affect the functions $q$ and $c$ of $(X,Y)$.

All of the results that follow are based on $1000$ simulation repetitions and samples of size $n=5000$. In all the simulations, we use the same weight function (represented by $g$ in \cref{eq:Psinstar}), which we now describe. Consider the following rectangles: $I_1 := [0, 1]^2$, $I_2 := [0, 2]^2$, $I_3 := [1/2, 3/2]^2$, $I_4 := [0, 1] \times [0, 3]$ and $I_5 := [0, 3] \times [0, 1]$. The function $g: \R^2 \to \R^5$ is given by
\begin{equation} \label{eq:defg}
g(x, y) := \big(\Ind{(x, y) \in I_1}/a_{1,\theta_\text{REF}},\dots,\Ind{(x, y) \in I_5}/a_{5,\theta_\text{REF}}\big)^\top 
\end{equation}
where $a_{j,\theta_\text{REF}} := \int_{I_j} c_{\theta_\text{REF}} d\leb$ and $\theta_\text{REF}$ is simply a reference point in the parameter space that ensures that all components of $g$ have comparable magnitude. In the three models above, the reference points are $0.6$, $(0.6, 0.6)$ and $1$, respectively. The rectangles are chosen in order to capture various aspects of the function $c$: $I_3$ contains information about the unknown scale $\zeta$ (recall that we scale $c$ so that $c(1,1) = 1$). The rectangles $I_1, I_2$ are geared towards determining homogeneity properties of $c$ since $I_2 = 2 I_1$ and are especially useful for estimating $\eta$. The rectangles $I_4,I_5$ are informative about asymmetry of the function $c$ with respect to its arguments. Different choices of the weight function would be possible, and the best choice will be different for each model under consideration and even for each specific parameter value within a given model class. Nevertheless, the aforementioned choice seems close to optimal for all the models considered here. In \cref{simulations-add} of the supplement, a sensitivity analysis is carried out where we repeat the simulation study with different weight functions that are constructed by considering only some of the rectangles $I_1, \dots, I_5$ instead of all five. See also \cite{EKS2008, EKS2012} for a related discussion in the estimation of stable tail dependence functions.

\subsubsection{The inverted H\"usler--Reiss model (M1)}\label{sec:ihrbiv} 

\cref{Choice of k IHR} shows the effect of $k$ on the estimation performance of $\hat \theta_n$ from \cref{eq:hatthetazeta} in terms of absolute bias and root MSE for the three parameter values $\theta = 0.6$, $0.75$, and $0.9$. We observe that for {larger values of $\theta$ (or smaller values of $\eta$, corresponding to more independence in the extremes)} larger values of $k$ lead to the best RMSE. This is in line with our theory as, for fixed $k$, smaller $\eta$ corresponds to smaller values of $m$ and hence larger asymptotic variance.

\begin{figure}[H]
\centering
\includegraphics[scale = 0.35, trim = 0 50 0 60]{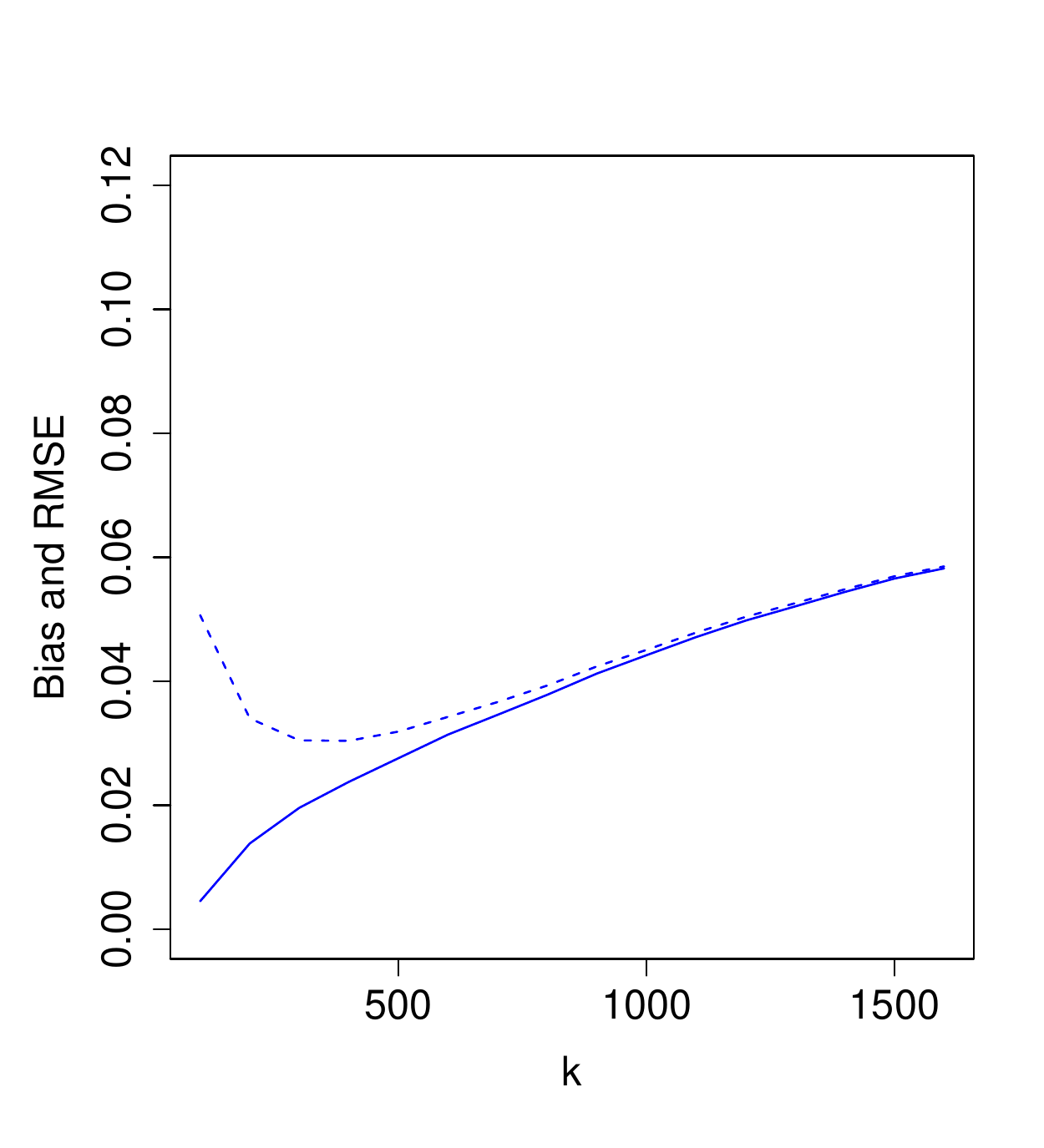}
\includegraphics[scale = 0.35, trim = 0 50 0 60]{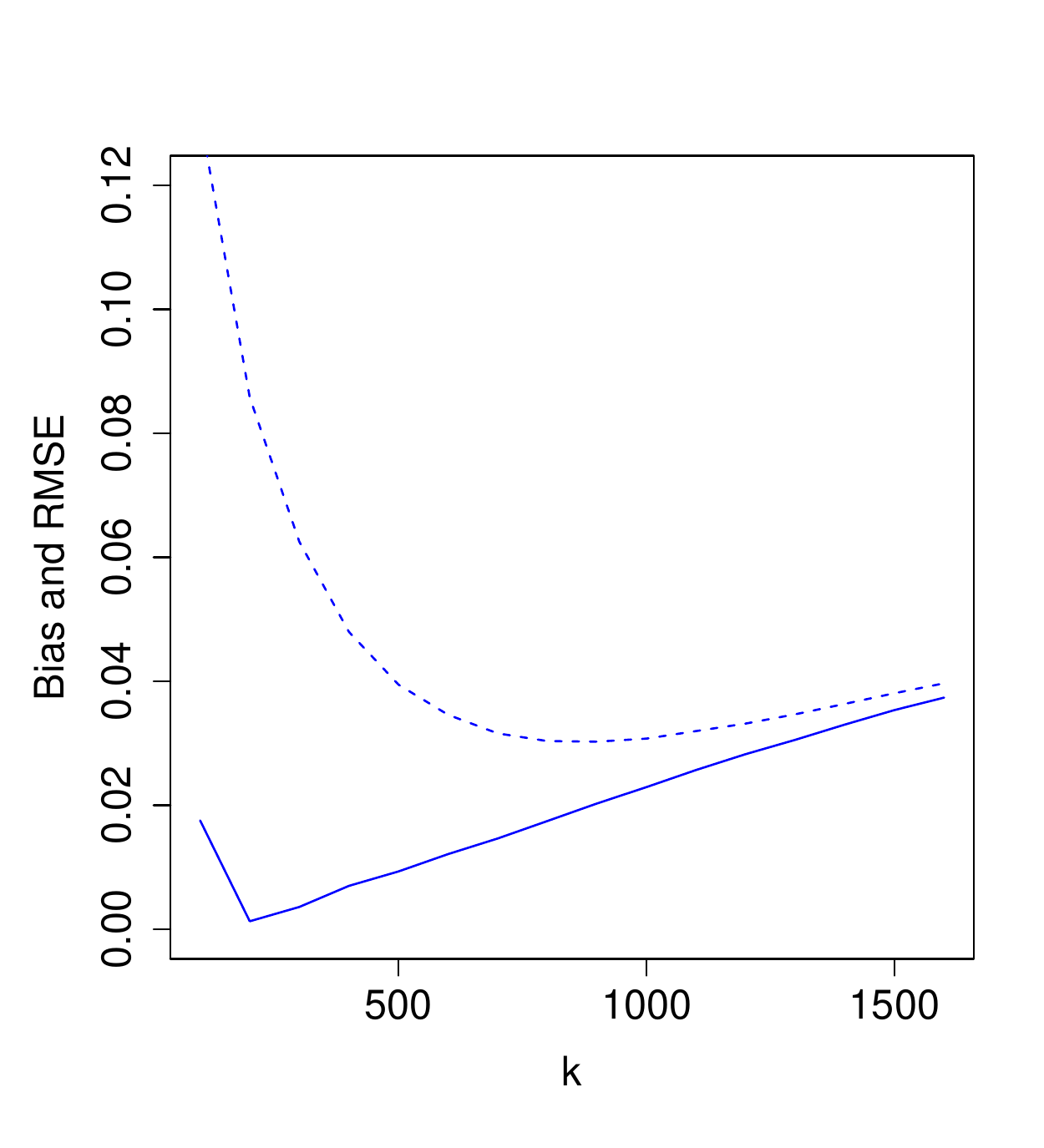}
\includegraphics[scale = 0.35, trim = 0 50 0 60]{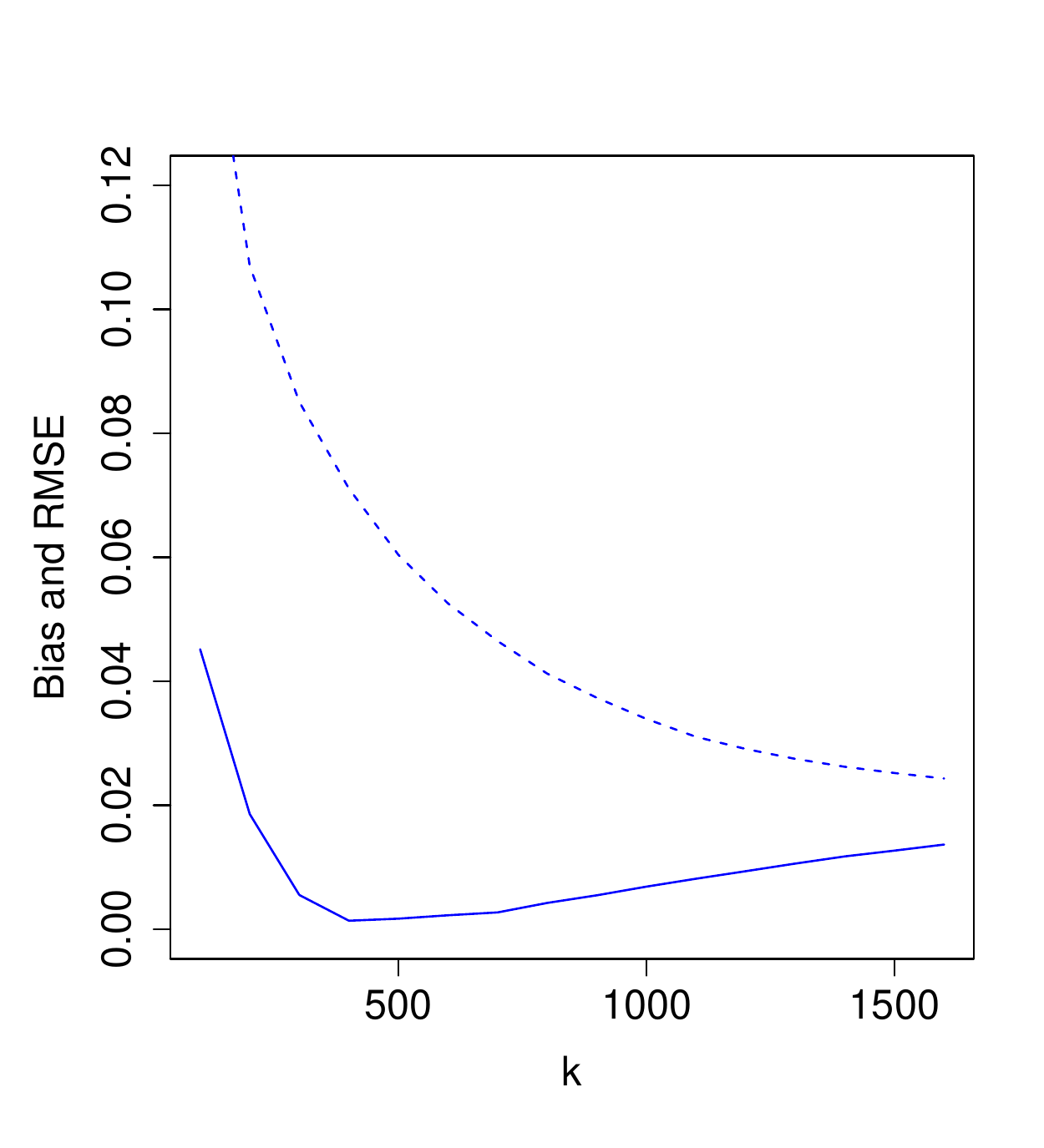}
\caption{Absolute bias (solid lines) and RMSE (dashed lines) of the M-estimator of $\theta$ as a function of $k$, based on 1\,000 samples of size 5\,000 from model M1 with parameter values 0.6, 0.75 and 0.9, from left to right.}
\label{Choice of k IHR}
\end{figure}

An analysis of $\hat \theta_n$ for a finer range of parameter values is provided in \cref{fig:IHR-box}. Motivated by the findings in \cref{Choice of k IHR} we fix $k = 800$; this choice leads to reasonable performance across all parameter values. Overall the results are satisfactory, with a more pronounced negative bias for smaller values of $\theta$ and more variance for increasing $\theta$.

\begin{figure}[H]
\centering
\includegraphics[scale = 0.55, trim = 0 35 0 50]{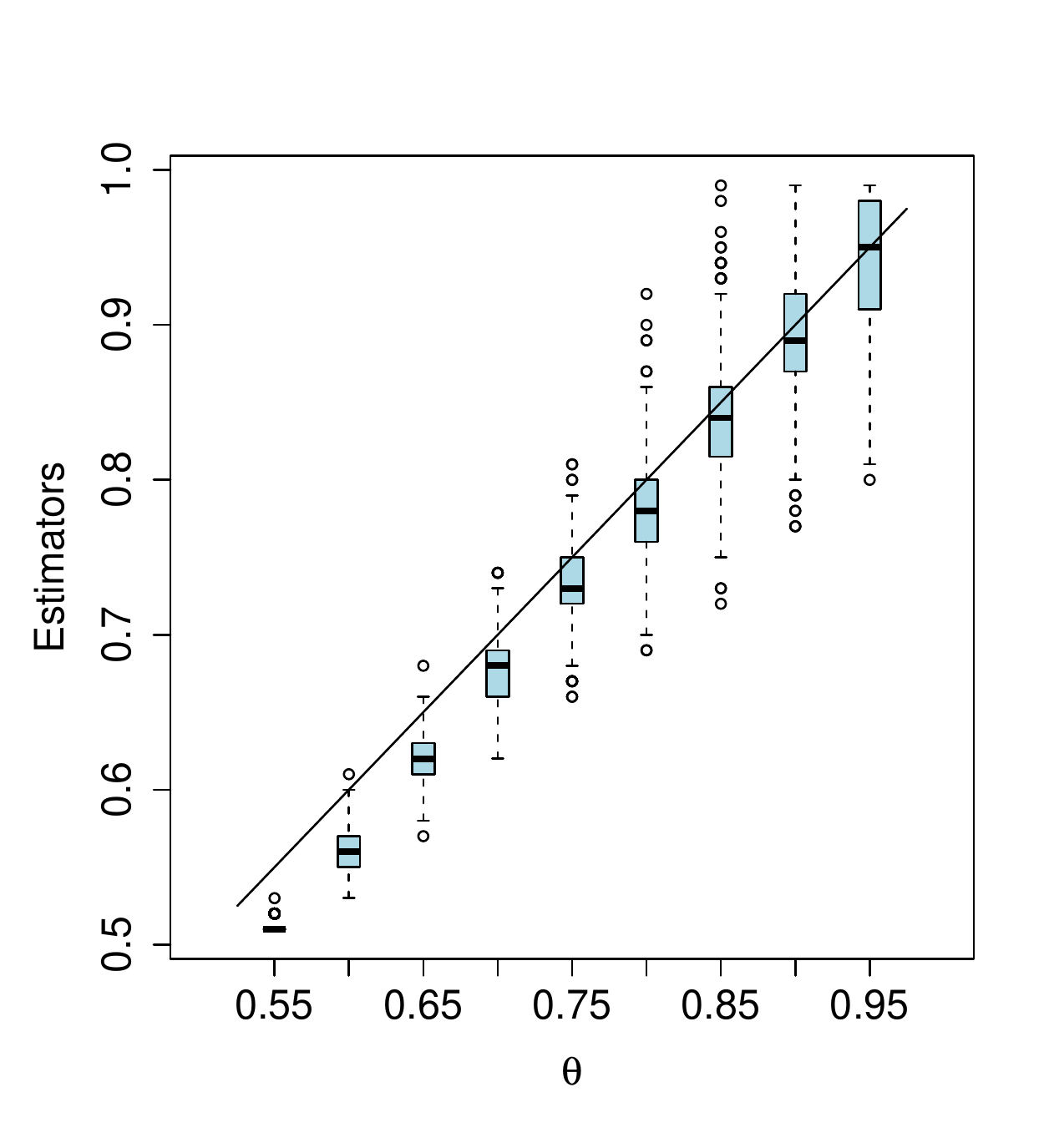}
\caption{Box plots of the M-Estimators of $\theta$ 1\,000 samples of size 5\,000 for each parameter value.} \label{fig:IHR-box}
\end{figure}

\subsubsection{The inverted asymmetric logistic model (M2)}

\cref{Choice of k IALog} shows the impact of $k$ on estimated parameter values for three different choices of $\theta$. Since here the parameter is two-dimensional, we consider (and estimate) the Euclidean bias and RMSE of the estimator $\hth_n$, defined as $\|\mathbb{E}[\hth_n - \theta]\|$ and $(\mathbb{E}\|\hth_n - \theta\|^2)^{1/2}$, respectively.

Similarly to the pattern observed in \cref{Choice of k IHR} we see that smaller values of $\eta$ necessitate larger values of $k$ in order to achieve a good balance between bias and variance.

\begin{figure}[H]
\centering
\includegraphics[scale = 0.35, trim = 0 50 0 60]{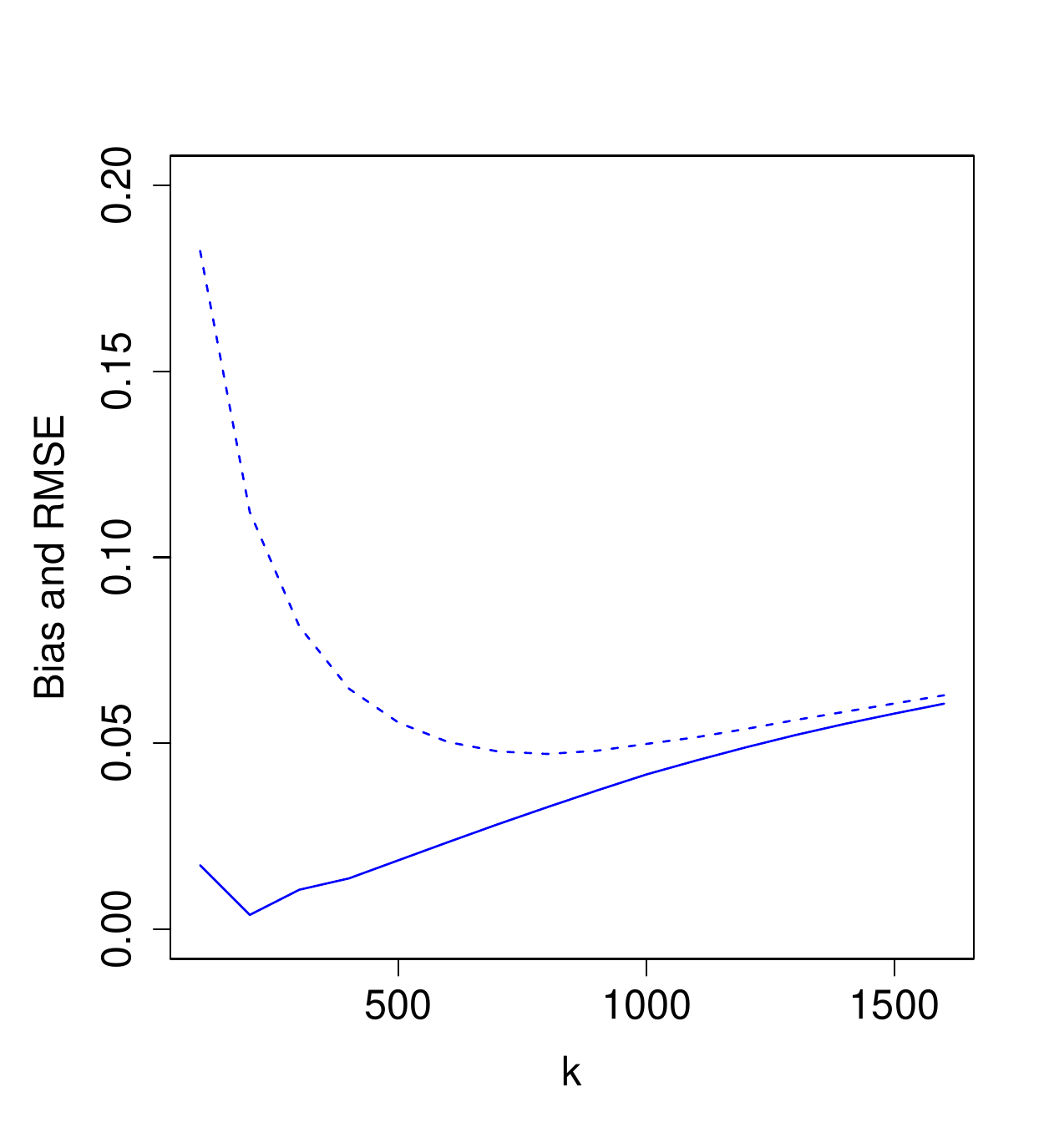}
\includegraphics[scale = 0.35, trim = 0 50 0 60]{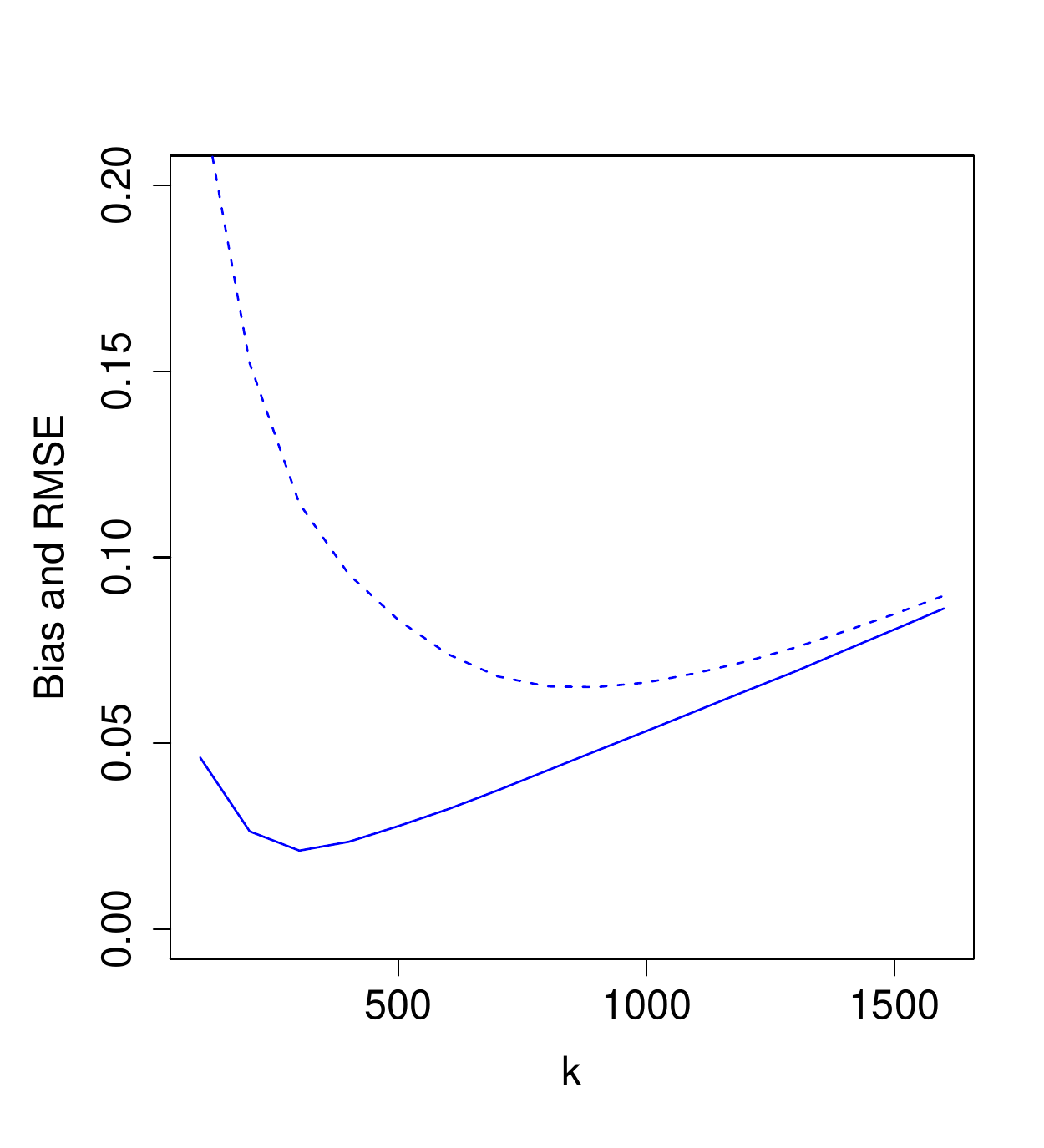}
\includegraphics[scale = 0.35, trim = 0 50 0 60]{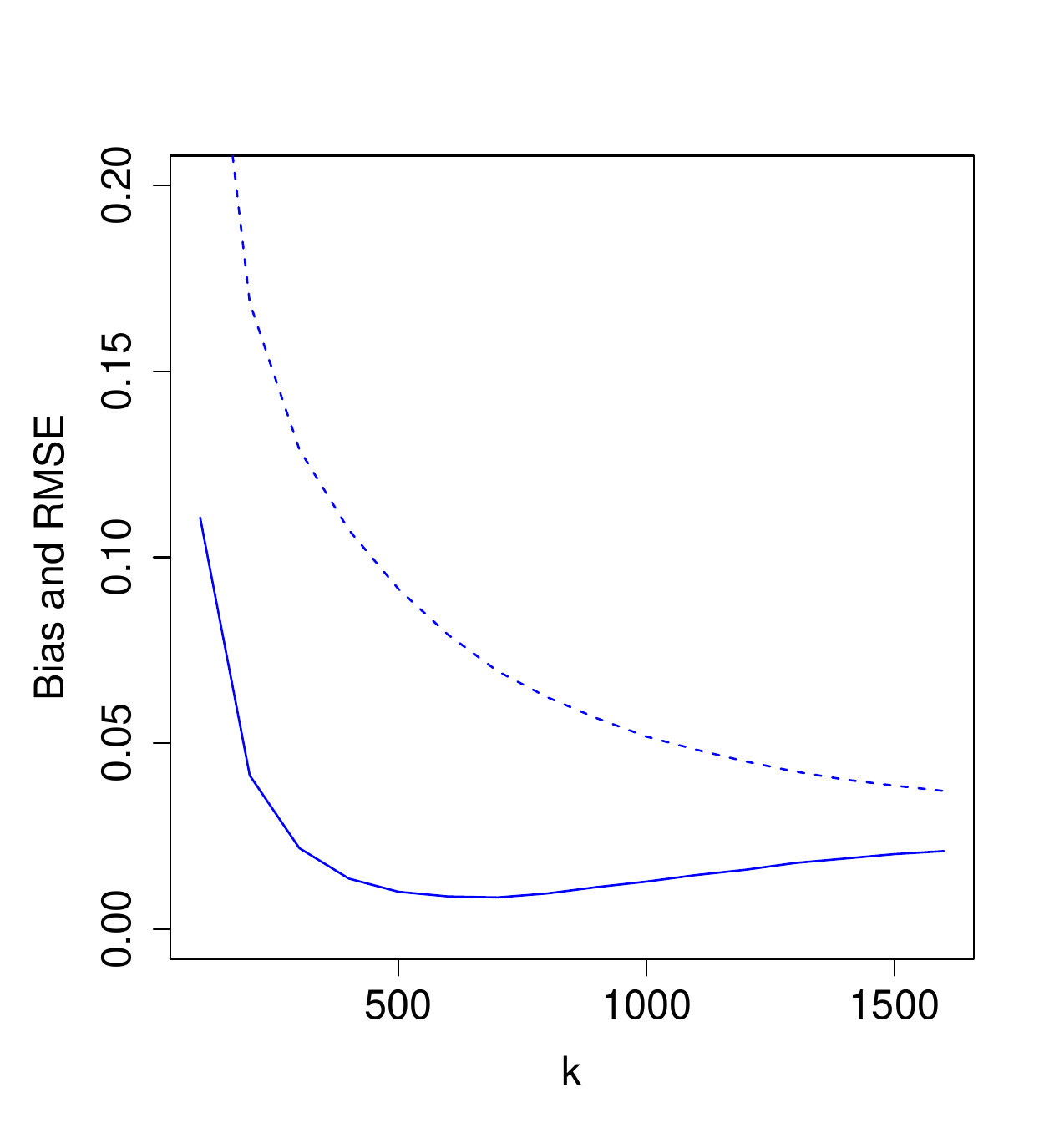}
\caption{Absolute bias (solid lines) and RMSE (dashed lines) of the M-estimator of $\theta$ as a function of $k$, based on 1\,000 samples of size 5\,000 from model M2 with parameter $\theta$ equal to $(0.72, 0.72)$, $(0.75, 0.91)$ and $(0.91, 0.91)$, from left to right. In the original parametrization, the corresponding values of $(\nu, \phi)$ are $(0.94, 0.94)$, $(0.44, 0.94)$ and $(0.31, 0.31)$, respectively.}
\label{Choice of k IALog}
\end{figure}

\cref{Color plots} shows the performance of the proposed M-estimator for a range of different parameters $(\theta_1,\theta_2)$ with {Euclidean bias in the left panel and RMSE in the right panel}; the value $k = 800$ is fixed throughout. Since the relation $(\nu, \phi) \mapsto (\theta_1, \theta_2)$ is not easily invertible, we selected a grid of values of $(\nu, \phi) \in [0, 1]^2$, calculated all the corresponding points $\theta$ and kept the values for which $\theta_j \leq 0.95$, $j=1,2$.

We observe that the estimators perform better for parameter values close to the diagonal, with larger bias and variance for more asymmetric parameter values. The overall estimation accuracy is reasonably good, with worst case RMSE values around 0.07.  

\begin{figure}[H]
\centering
\includegraphics[scale = 0.55, trim = 0 50 0 35]{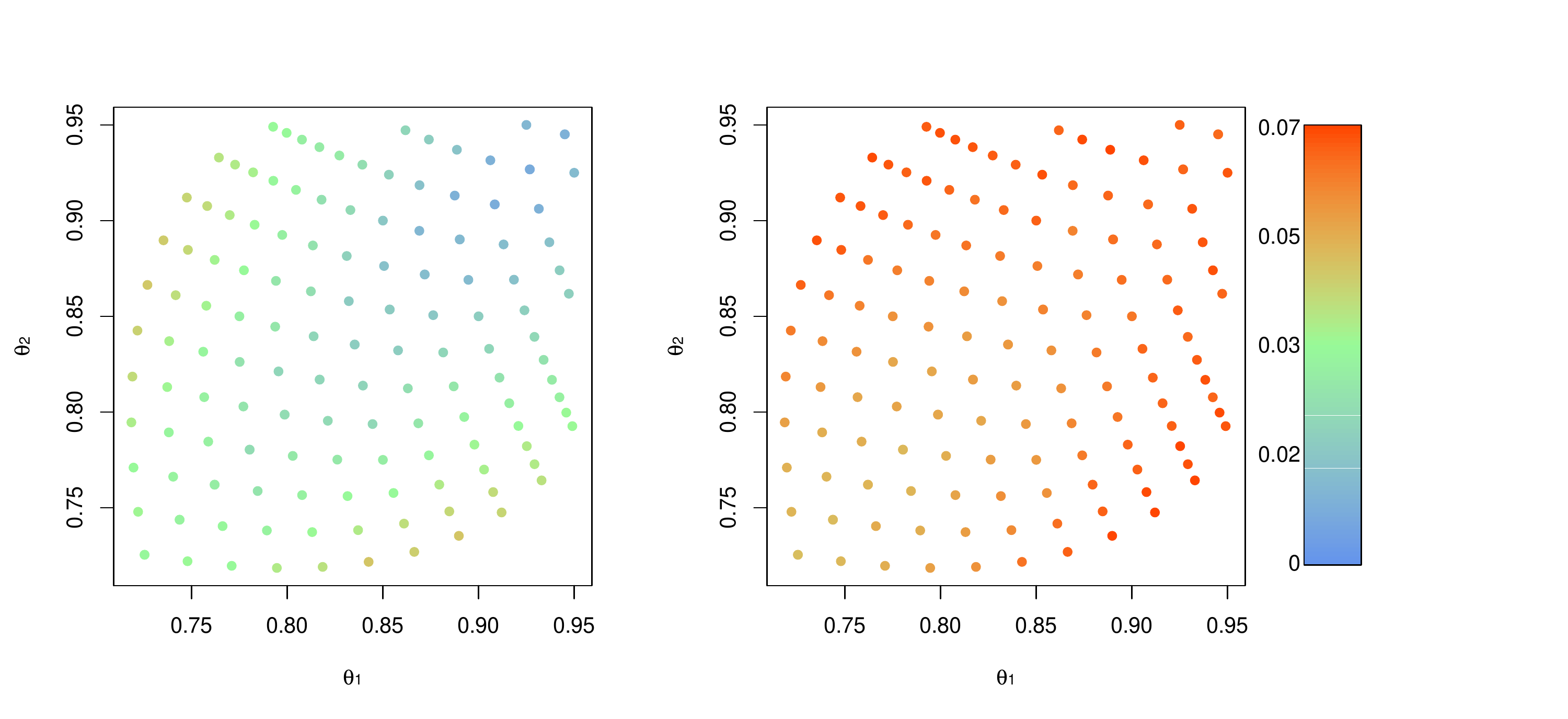}
\caption{Absolute bias (left) and RMSE (right) of the M-estimator of $\theta = (\theta_1, \theta_2)$ as a function of $\theta$, based on 1\,000 samples of size 5\,000 from model M2.}
\label{Color plots}
\end{figure}

\subsubsection{The Pareto random scale model (M3)}

\cref{Choice of k P} shows the effect of $k$ on the performance of our M-estimator $\hat \lambda_n$ in terms of absolute bias and root MSE for the three parameter values $\lambda = 0.4$, $1$, and $1.6$. We notice that the estimator is considerably more biased at $\lambda=1$ than at other parameter values. This is expected as, according to \cref{table}, the bias function $q_1$ vanishes only at a logarithmic rate when $\lambda=1$, compared to a polynomial rate elsewhere. Moreover, like in the other models, we observe that for more independent data (characterized by larger $\lambda$), larger values of $k$ are required to drive down the variance of the estimator.

\begin{figure}[H]
\centering
\includegraphics[scale = 0.35, trim = 0 50 0 60]{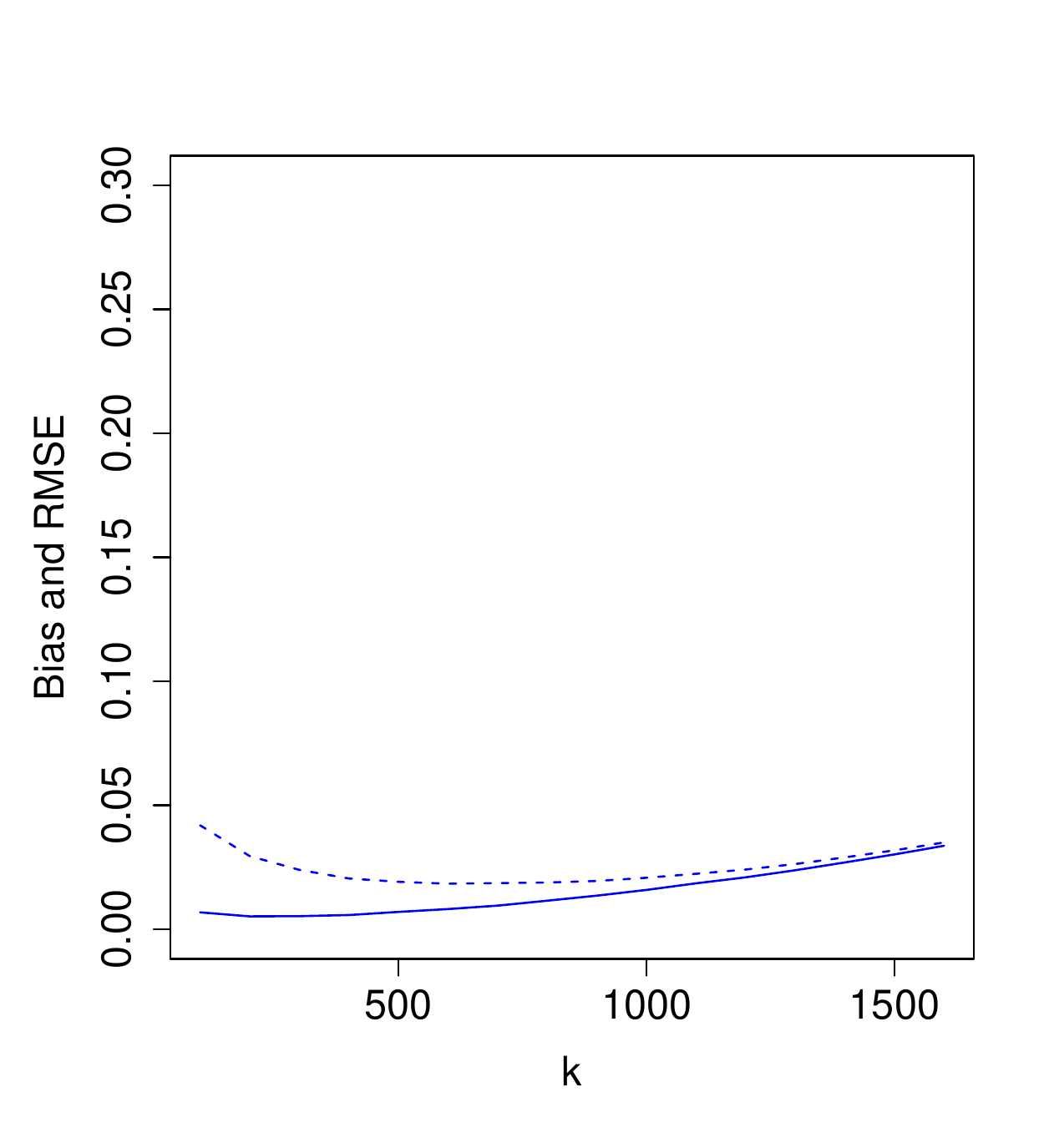}
\includegraphics[scale = 0.35, trim = 0 50 0 60]{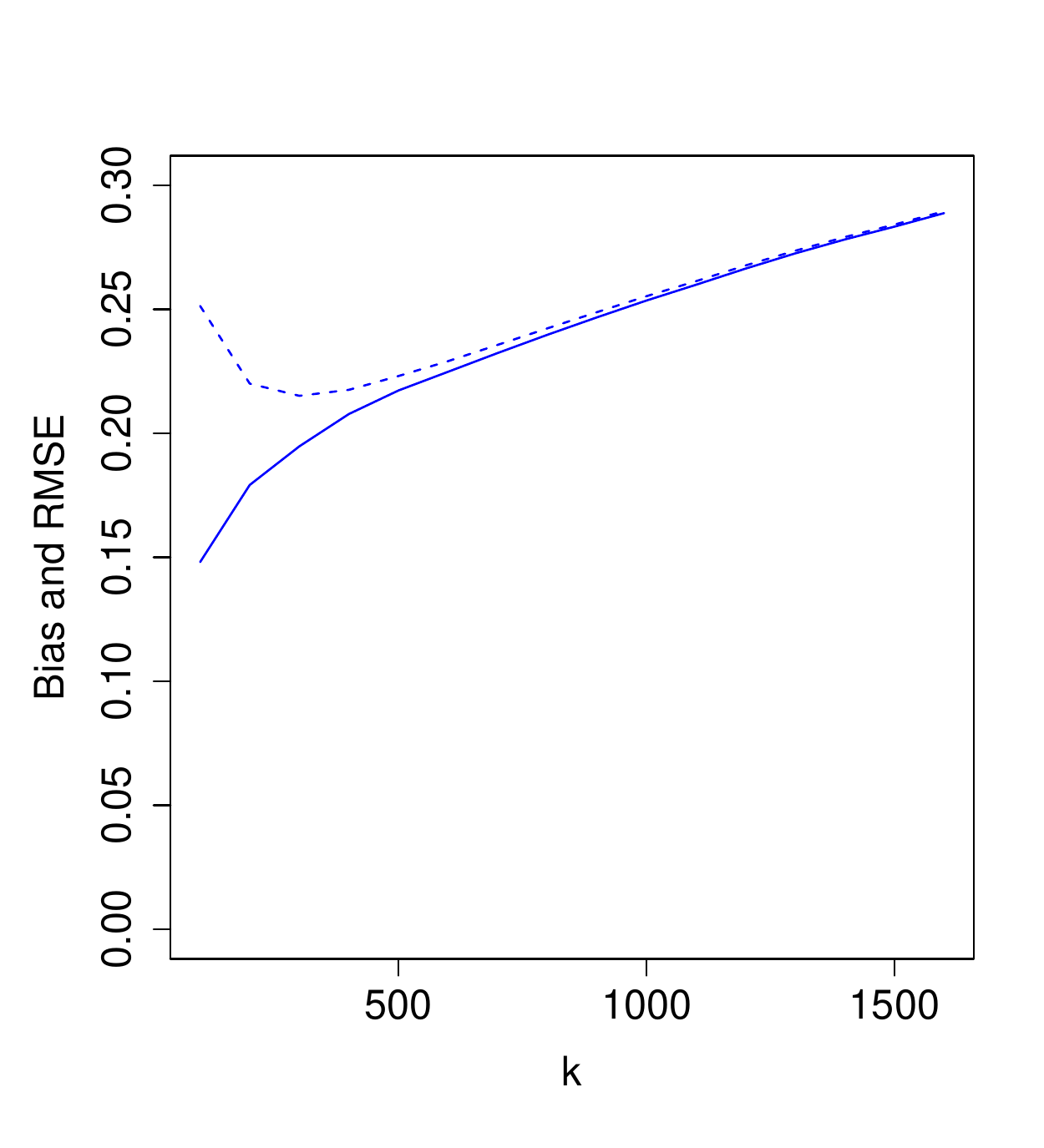}
\includegraphics[scale = 0.35, trim = 0 50 0 60]{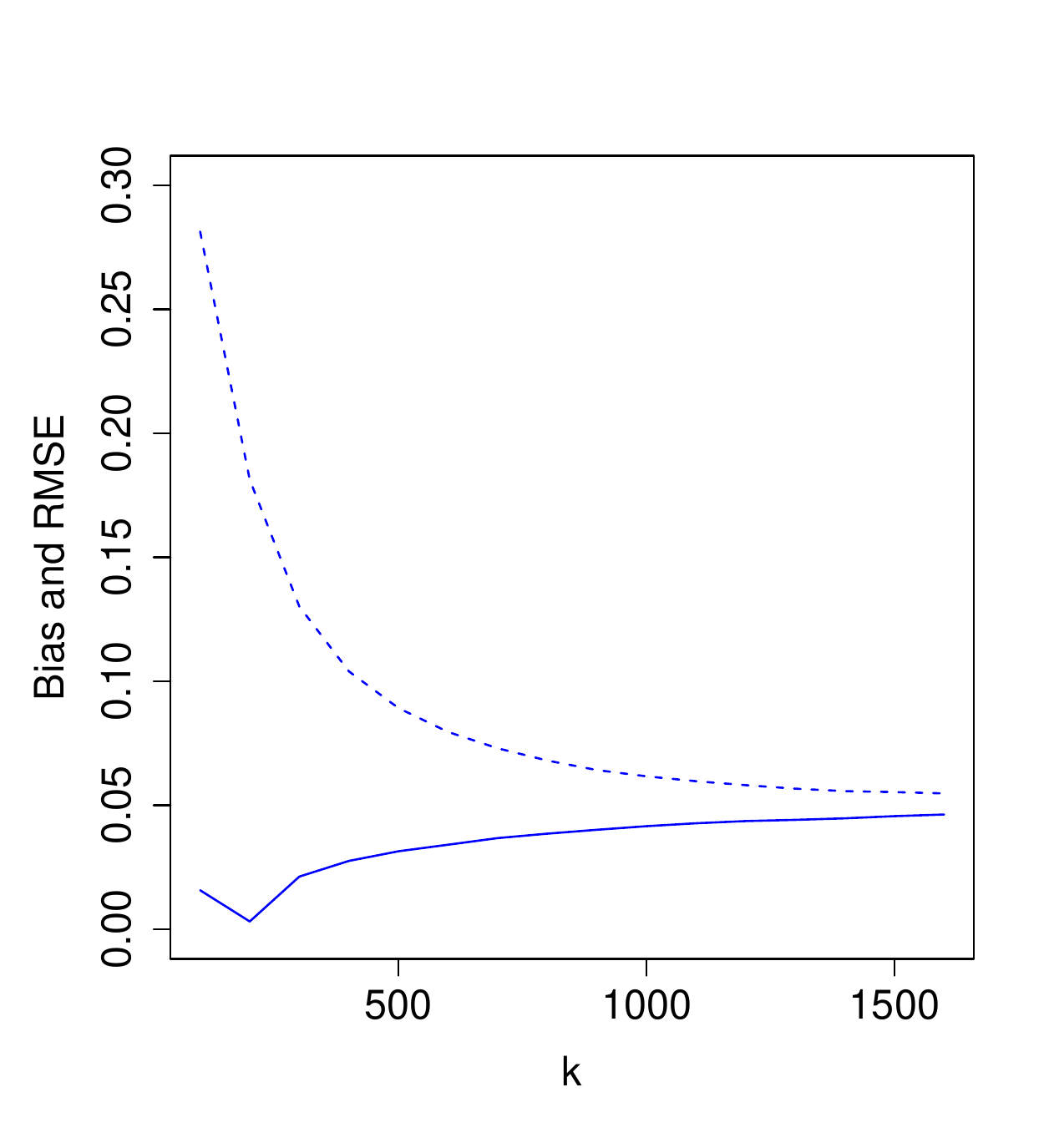}
\caption{Absolute bias (solid lines) and RMSE (dashed lines) of the M-estimator of $\lambda$ as a function of $k$, based on 1\,000 samples of size 5\,000 from model M3 with parameter values 0.4, 1 and 1.6, from left to right.}
\label{Choice of k P}
\end{figure}

An analysis of $\hat \lambda_n$ for a finer range of parameter values is provided in \cref{fig:P-box}. Motivated by \cref{Choice of k P} we fix $k = 400$, which approximately minimizes the maximal RMSE. Overall the estimator is very precise for small values of $\lambda$, but incurs a bias around $\lambda = 0.8$ where it struggles to distinguish between values slightly smaller and slightly larger than 1. This phenomenon is not completely unexpected; a close look at \cref{table} reveals that $c_\lambda$ has almost (but not quite) a symmetry around the point $\lambda=1$, e.g. $c_{0.8}$ is very similar in shape to $c_{1.2}$. This point also corresponds to the transition between asymptotic dependence and independence, which makes estimation challenging.

\begin{figure}[h]
\centering
\includegraphics[scale = 0.5, trim = 0 40 0 60]{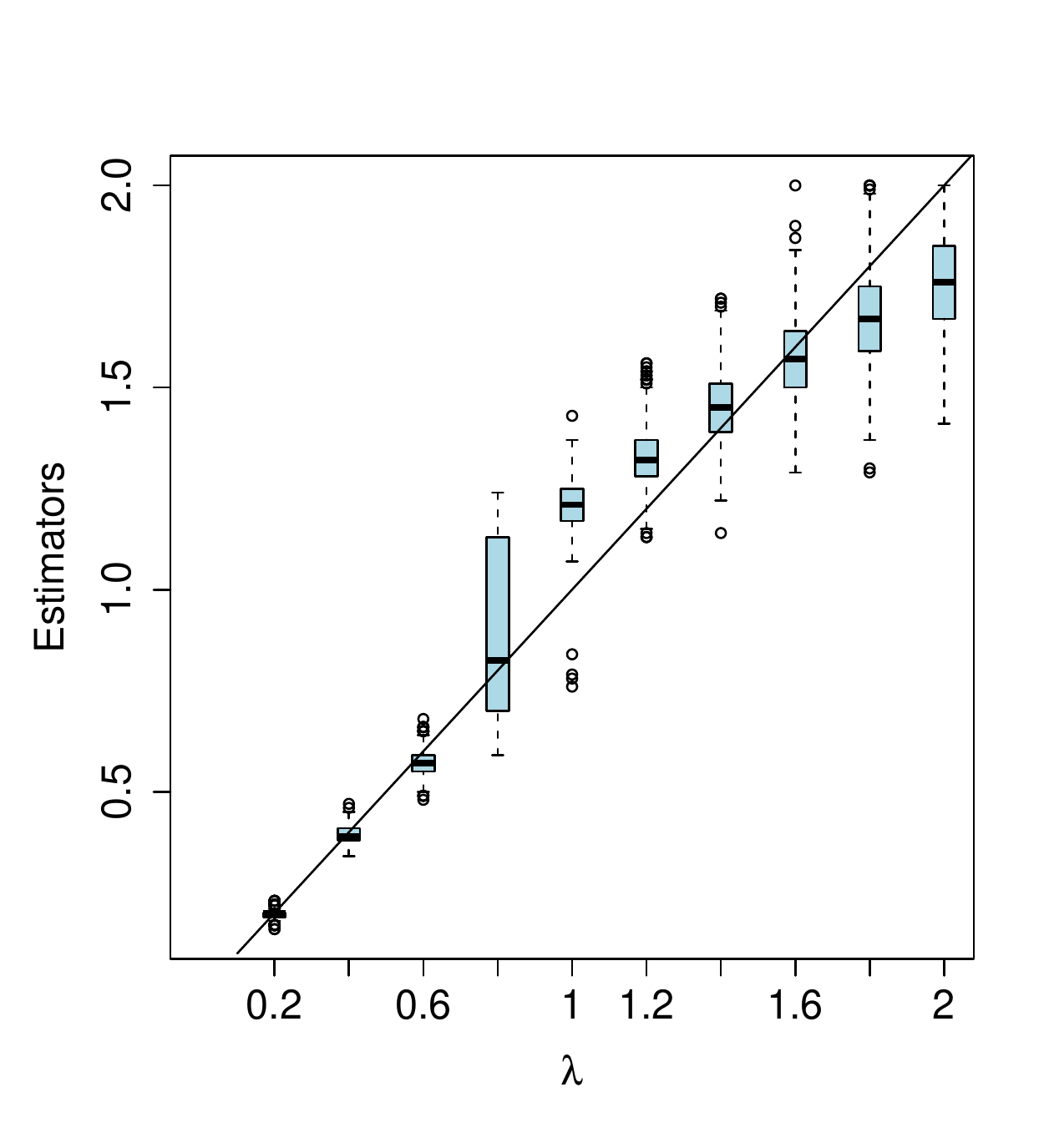}
\caption{Box plots of the M-Estimators of $\lambda$ based on 1\,000 samples of size 5\,000 for each parameter value.}\label{fig:P-box}
\end{figure}

\subsection{Spatial models}
\label{sim spatial}

{In this section we illustrate the performance of the proposed methodology for spatial data. The candidate class for $c_\theta$ results from inverted Brown--Resnick processes with fractal variograms (see \cref{ex:BR_revisited}) and takes the form 
\begin{equation} \label{model_sp}
c^{(s)}_\vartheta(x, y) = (xy)^{\theta^{(s)}}, \quad \theta^{(s)} = \theta(\Delta^{(s)}; \vartheta) := \Phi\Big( \frac{1}{2} (\Delta^{(s)}/\beta)^{\alpha/2} \Big), \quad s \in \cP,
\end{equation}
where $\vartheta = (\alpha, \beta) \in (0, 2] \times \R_+$ and $\Delta^{(s)}$ is the Euclidean distance between the two locations in pair $s$ (measured in units of latitude). Motivated by the data application in the following section, the true parameter values are set as $\vartheta_0 = (1, 3)$ and the values for $\Delta^{(s)}$ are obtained from $40$ randomly sampled pairs of locations in that data set; see \cref{fig:distances} in the supplement for a histogram of the distances in this sample.

To evaluate the performance of our estimators we simulate $1000$ independent data sets, each of size $5000$, of an inverted Brown--Resnick process with unit Fr\'echet margins and fractal variogram from \cref{eq:fracvari} with $\alpha =1, \beta = 3$. Following the bivariate simulations, to each of the 40 components of the data we add an independent random variable with Pareto distribution function $1-1/x^4$, $x\geq 1$. Using the same weight function $g$ as in the bivariate simulations (see \cref{eq:defg}), we compute the two estimators introduced in \cref{eq:defthetahatls,eq:defthetahat}. Since the performance of both estimators turns out to be very similar, we only report results for the least squares estimator from \cref{eq:defthetahatls} here and defer all simulations for the estimator \eqref{eq:defthetahat} to \cref{simulations-add} in the supplement.

Following the discussion in \cref{rem:hatk}, we fix a value $m$ and select each $k^{(s)}$ such that $\hat Q_n^{(s)}(k^{(s)}/n,k^{(s)}/n) = m$. The first two panels of \cref{Choice of k sp 1} show the absolute bias and RMSE of the estimators $\hat\alpha$ and $\hat\beta$, respectively, as functions of $m \in \{75, 100, \dots, 500\}$. We observe that the RMSE for both estimators is relatively large across all values of $m$. Interestingly, this does not result in a bad performance in estimating the function $\theta(\cdot; \vartheta)$. Indeed, the last panel of \cref{Choice of k sp 1} shows averaged (over simulation runs) values for $\sup_{0 \leq \Delta \leq 3} |\theta(\Delta; \hat\vartheta) - \theta(\Delta; \vartheta)|$ and indicates a good overall performance; note that the observed values of $\Delta$ are all smaller than 3 (see \cref{fig:distances} in the supplement). This can be explained by the fact that different values of $(\alpha,\beta)$ can lead to somewhat similar curves in the range of interest. This is further illustrated in the left panel of \cref{fig:sp-multibox-1} where a random sample of 50 estimated functions $\theta(\Delta; \hat\vartheta)$ is displayed.

\begin{figure}[H]
	\centering
	\includegraphics[scale = 0.35, trim = 0 50 0 60]{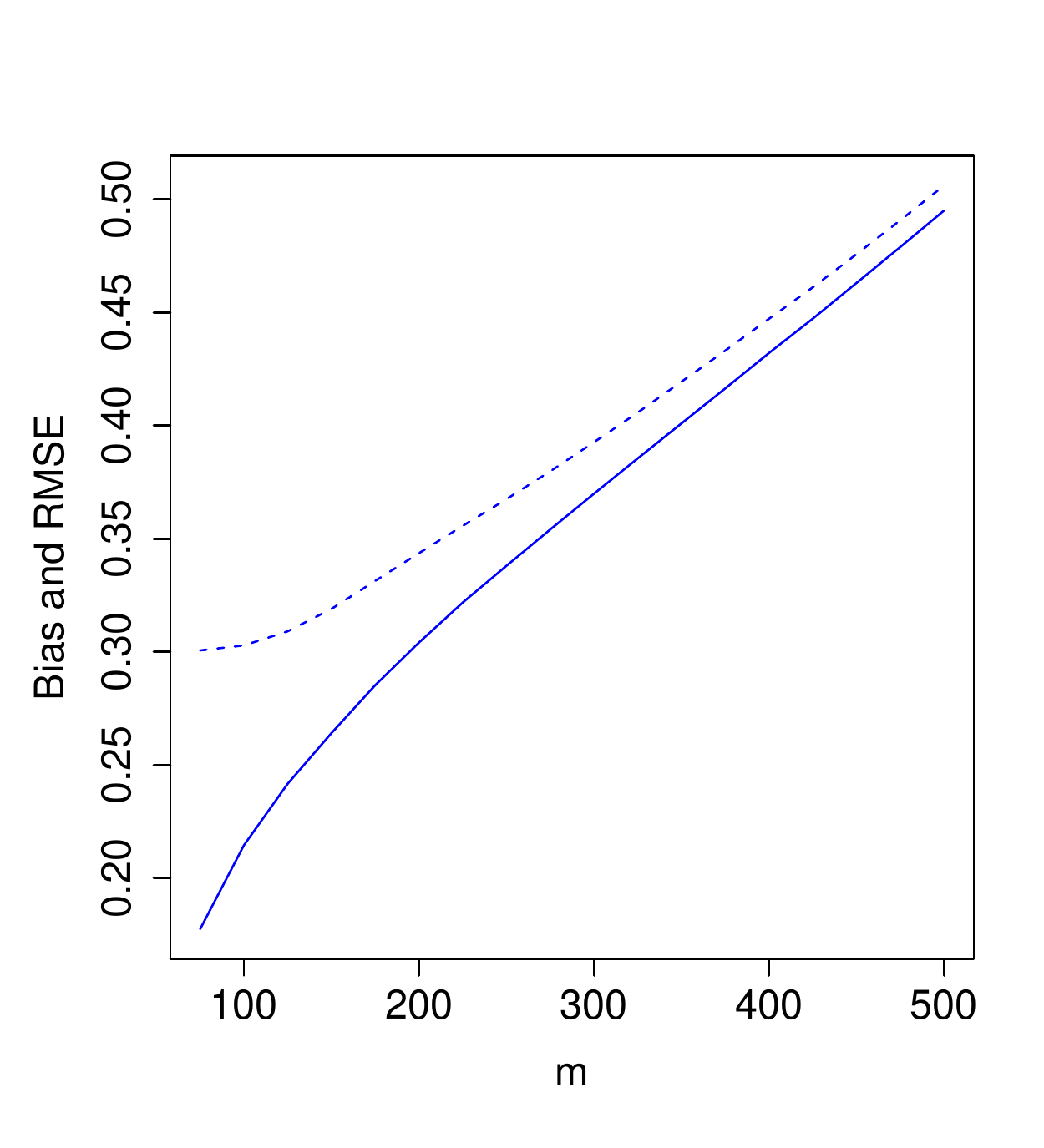}
	\includegraphics[scale = 0.35, trim = 0 50 0 60]{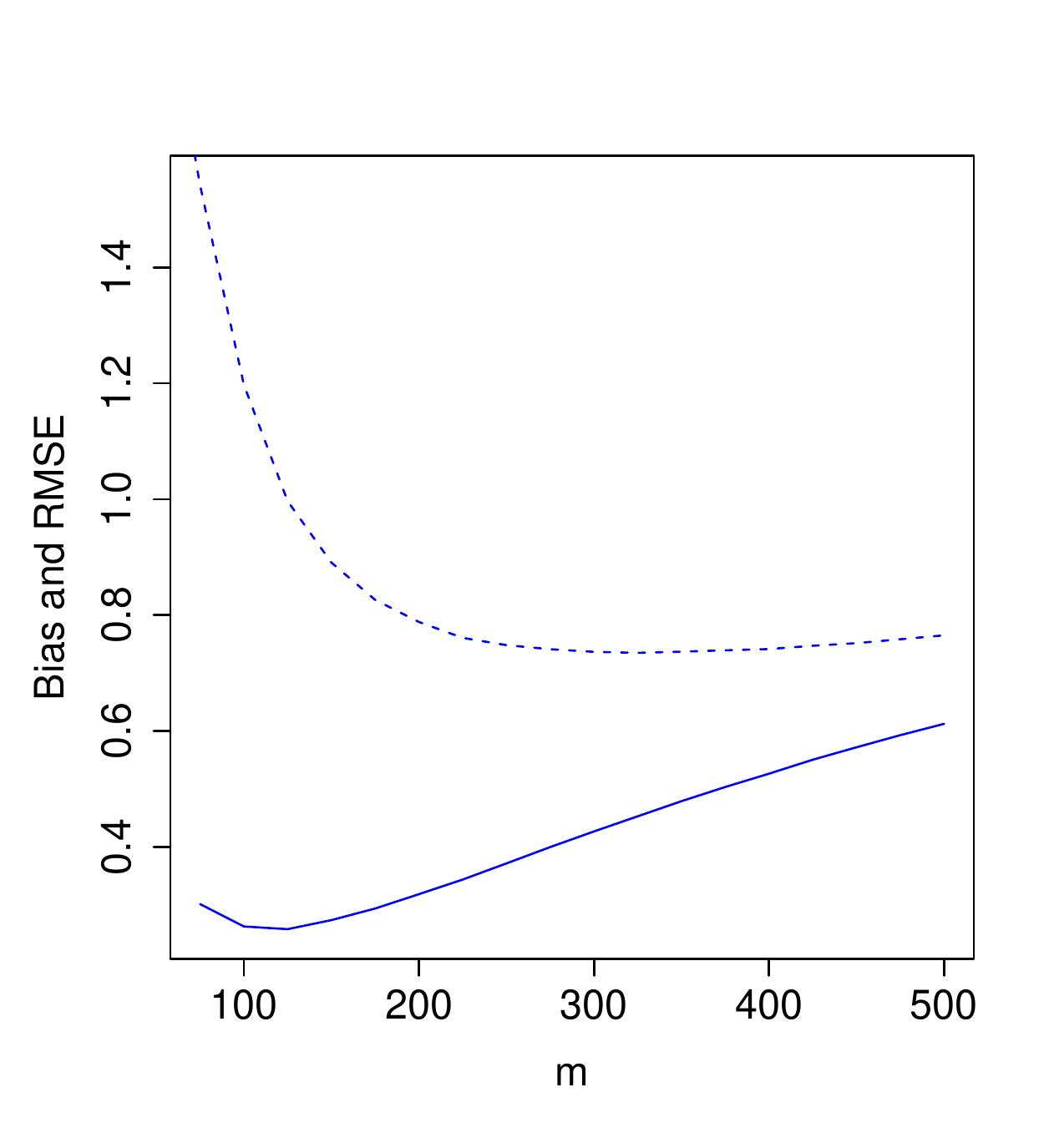}
	\includegraphics[scale = 0.35, trim = 0 50 0 60]{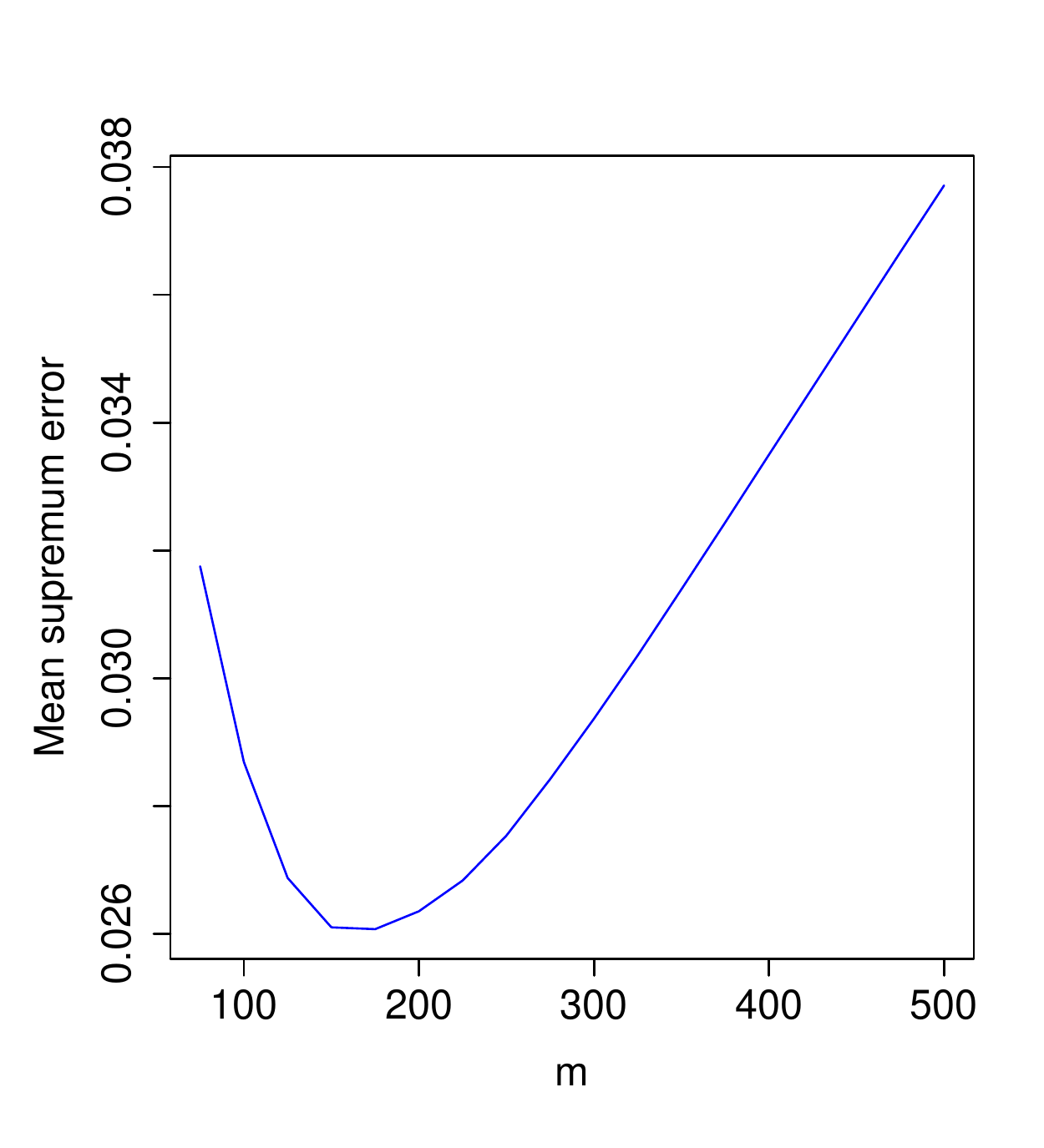}
	\caption{Left and middle columns: Bias (solid line) and RMSE (dotted line) of the estimators of the two spatial parameters $\alpha$ (left) and $\beta$ (middle) as a function of $m$. Right: Mean of the supremum error $\sup_{0 \leq \Delta \leq 3} |\theta(\Delta; \hat\alpha, \hat\beta) - \theta(\Delta; \alpha, \beta)|$ as a function of $m$.}
	\label{Choice of k sp 1}
\end{figure}

We conclude this section by fixing $m=150$ and comparing the performance of estimators for $\theta^{(s)}$ based on a bivariate sample at a given distance and the spatial estimator discussed above. Boxplots corresponding to five pairs of stations with distances $\Delta^{(s)} \approx 0.5, 1, \dots, 2.5$ are shown in the left panel of \cref{fig:sp-multibox-1}. As expected from the theory, using the spatial estimator is advantageous as it allows to combine information from different distances and leads to a reduced variance.
	
\begin{figure}[h]
\centering
\includegraphics[scale = 0.55, trim = 0 35 0 50]{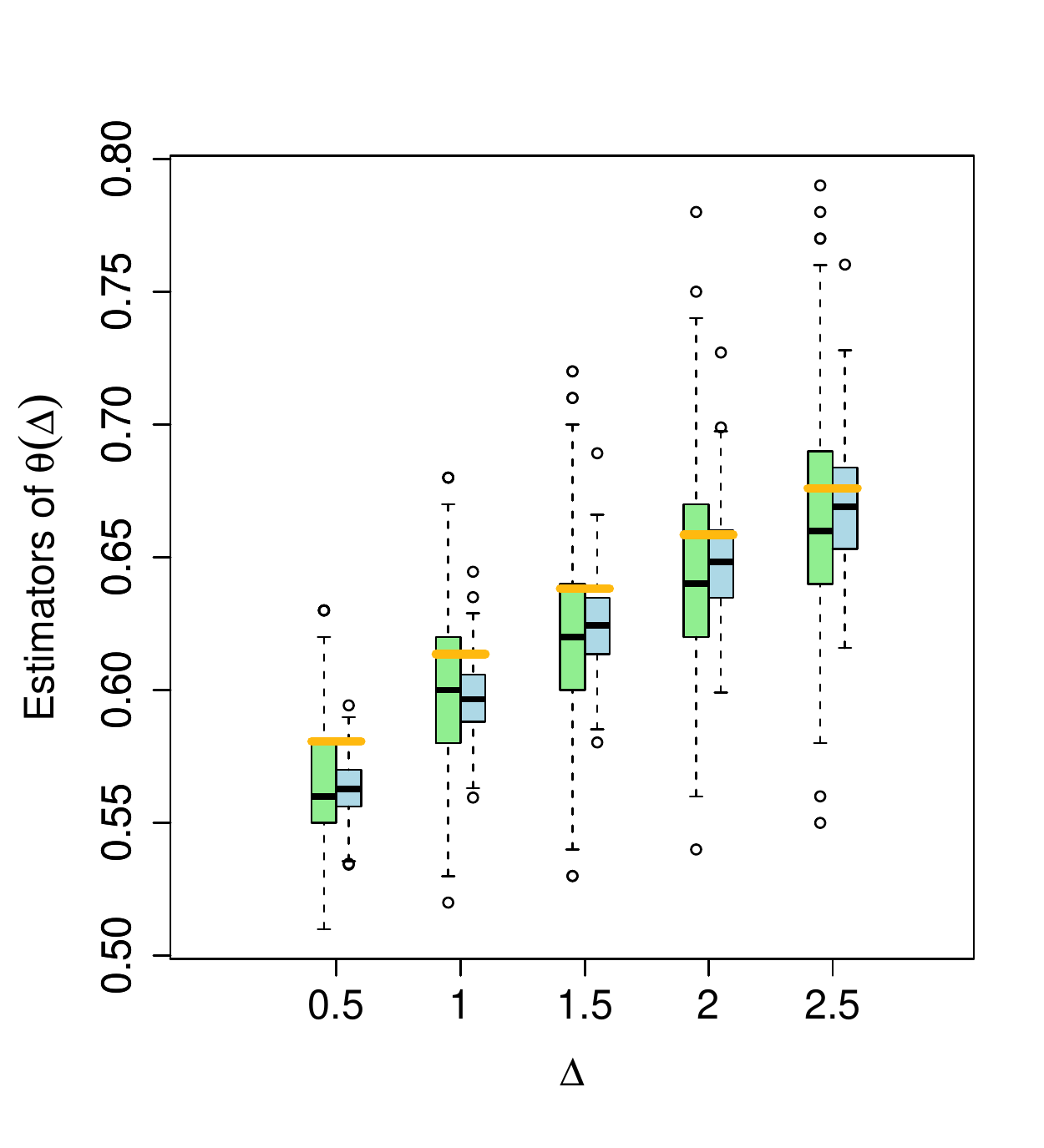}
\includegraphics[scale = 0.55, trim = 0 35 0 50]{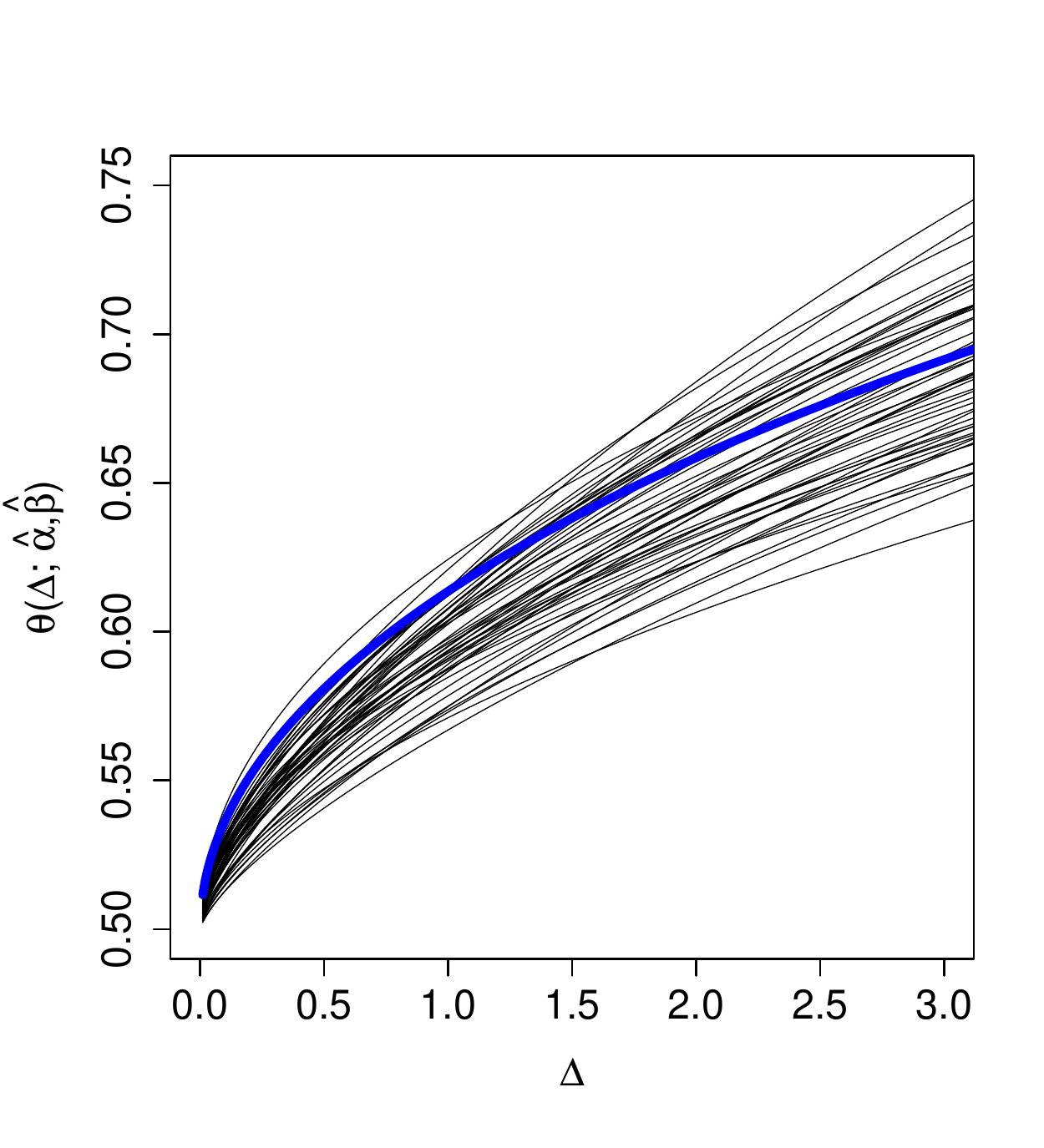}
\caption{Left panel: Estimators of $\theta(\Delta)$ for 5 different distances. For each distance, bivariate M-estimator $\hth_n^{(s)}$ (green) and spatial estimator $\theta(\Delta^{(s)}; \hat\alpha, \hat\beta)$ (blue) based on the $d=40$ locations. Right panel: 50 sampled curves $\theta(\cdot; \hat\alpha, \hat\beta)$. Blue represents the true curve $\theta(\cdot; \alpha, \beta)$.}
\label{fig:sp-multibox-1}
\end{figure}

\section{Application to rainfall data} \label{application}

In a data set introduced in \cite{LDELW2018}, rainfall was measured daily from 1960 to 2009 at a set of 92 different locations in the state of Victoria, southeastern Australia, for a total of $n = 18\,263$ measurements. The conclusions in that paper are that an asymptotically independent model is suitable. A subset of 40 locations, for a total of 780 pairs, was randomly sampled; see the right panel of \cref{fig:data}. To the data at those selected locations we fit the same tail model as in \cref{sim spatial}, given in \cref{model_sp}. The weight function $g$ that we use is the same as before and as in \cref{sim spatial}, we make use of \cref{rem:hatk} by fixing a value $m$ and choosing each $k^{(s)}$ accordingly.

We set $m=400$. The left panel of \cref{fig:data} shows the 780 pairwise estimators $\hth_n^{(s)}$ plotted against the distances $\Delta^{(s)}$. Despite some estimates at the boundary of the parameter space, the results do not provide much evidence for asymptotic dependence, whereas all estimates are away from the boundary for distances of at least 0.3 units of latitude, strongly suggesting asymptotic independence at these distances. Our two estimators \eqref{eq:defthetahatls} and \eqref{eq:defthetahat} of $(\alpha, \beta)$ yield estimates $(\hat\alpha, \hat\beta)$ of (1.55, 2.24) and (1.56, 2.24), respectively. They are extremely similar, as hinted by the simulation study from \cref{sim spatial}. The curve $\theta(\cdot; \hat\alpha, \hat\beta)$ corresponding to the least squares estimator is also shown in the left panel of \cref{fig:data}. The middle panel of \cref{fig:data} displays similar curves for the least squares estimator when $m$ varies from 200 to 1\,000. It shows that the estimated curve is robust with respect to the choice of $m$.

\begin{figure}[ht]
\centering
\includegraphics[scale = 0.35, trim = 0 50 0 60]{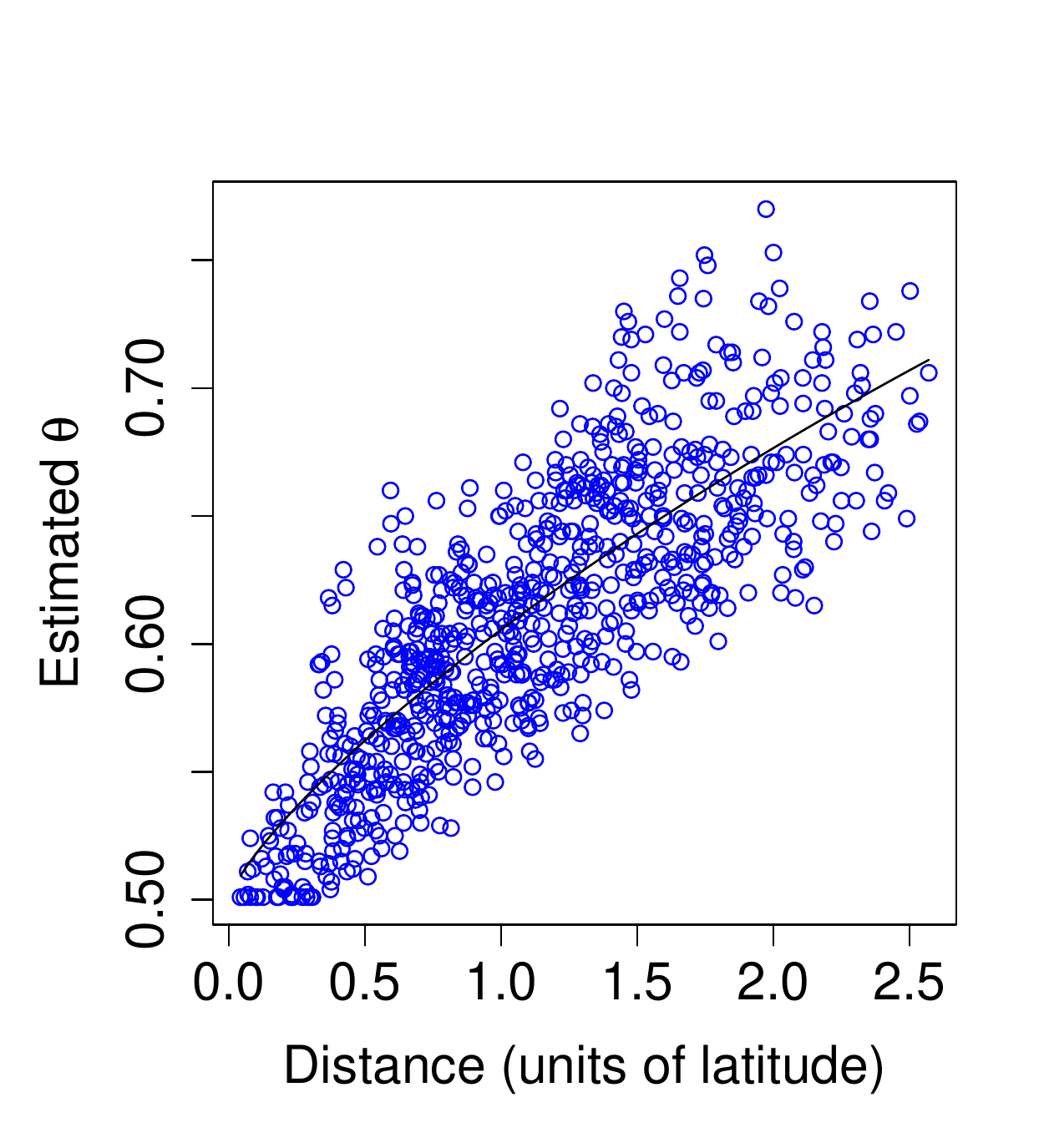}
\includegraphics[scale = 0.35, trim = 0 50 0 60]{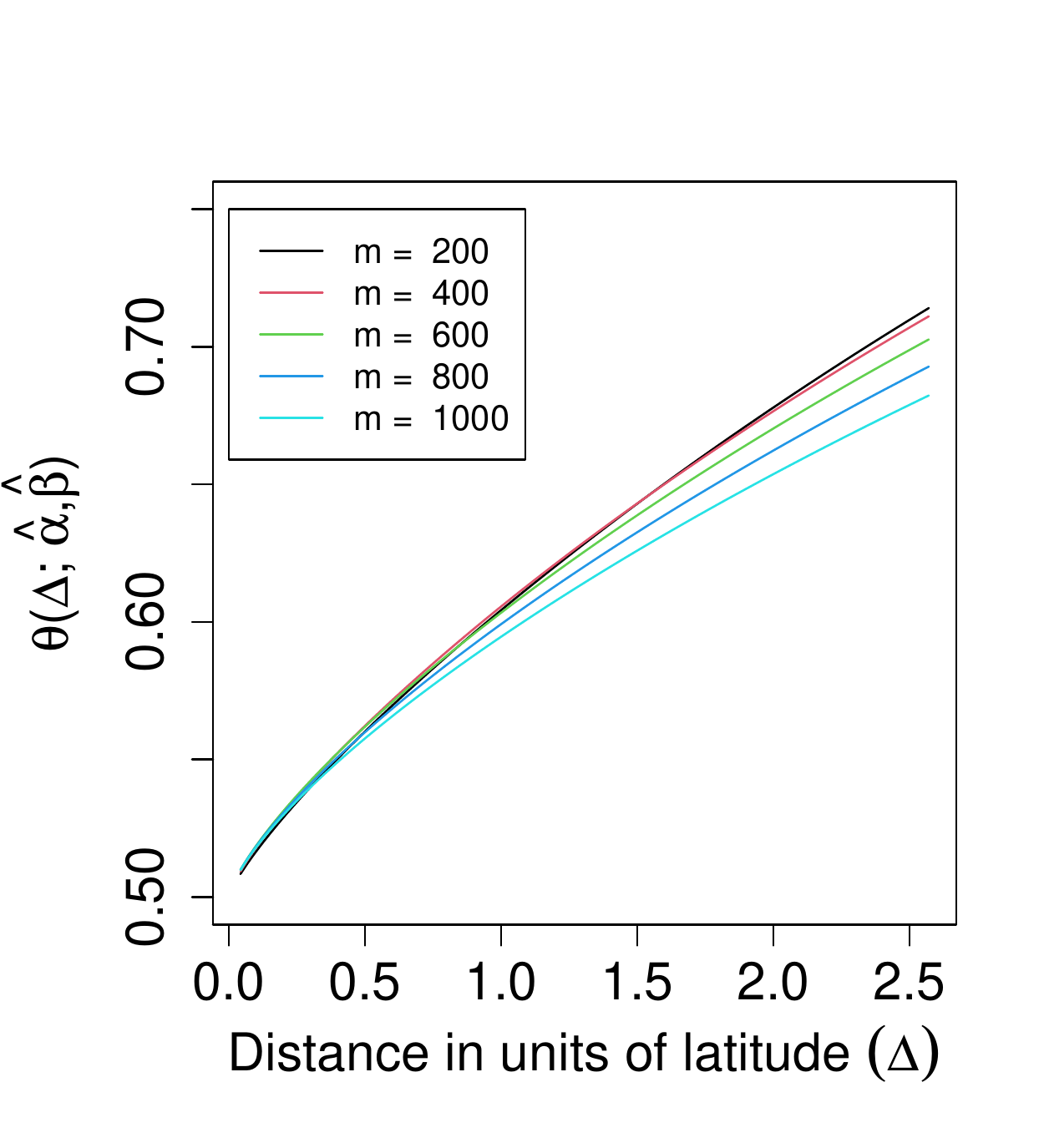}
\includegraphics[scale = 0.35, trim = 0 50 0 60]{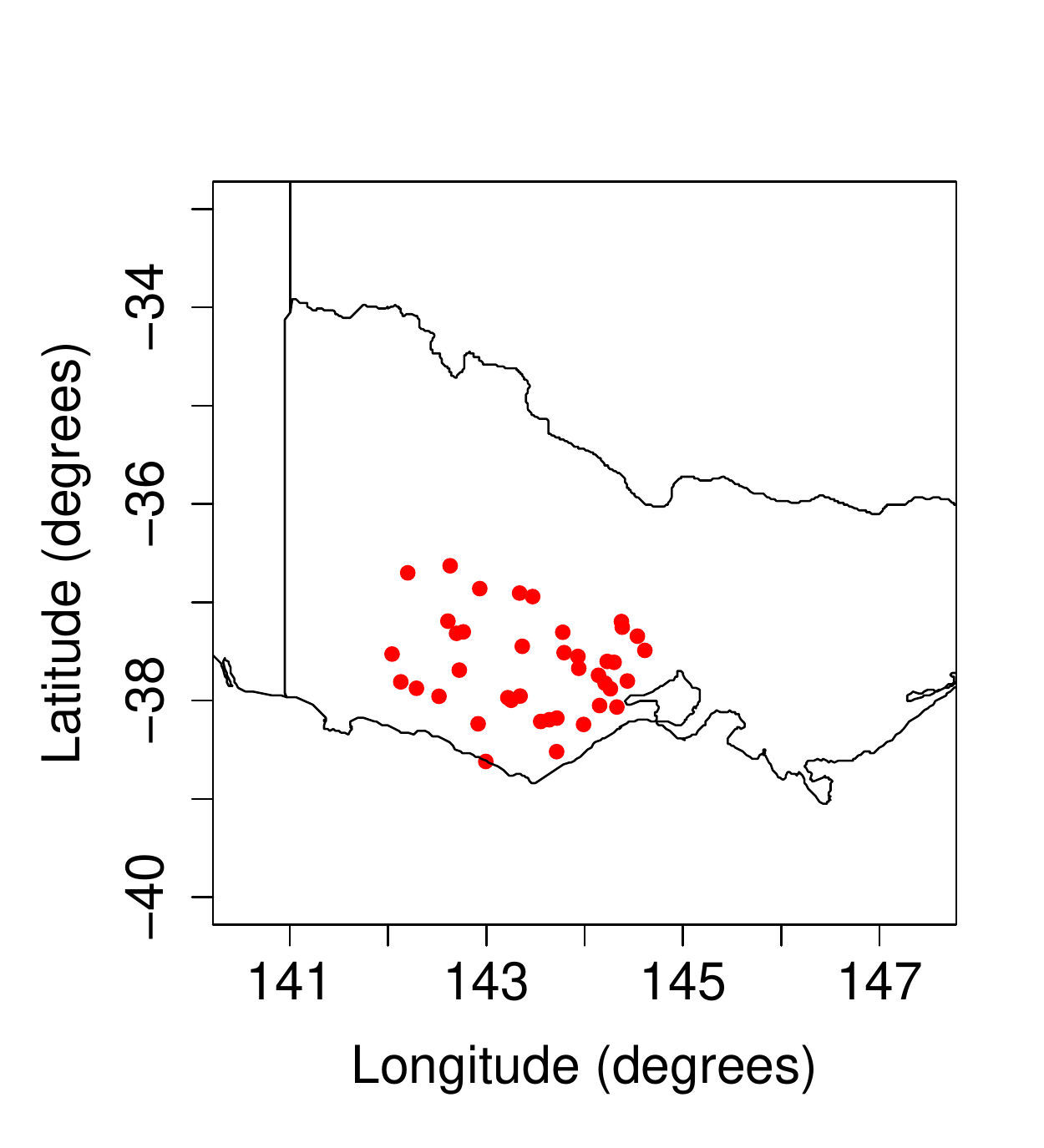}
\caption{Left: Estimated parameters $\hth_n^{(s)}$ against the distances $\Delta^{(s)}$. The black line represents the estimated curve $\theta(\cdot; 1.55, 2.24)$. Middle: Estimated curve $\theta(\cdot; \hat\alpha, \hat\beta)$ for the least squares estimator with different values of $m$. Right: The 40 sampled locations in the state of Victoria, southeastern Australia.}
\label{fig:data}
\end{figure}

\section*{Acknowledgments}
We are grateful to two reviewers for their valuable input. Their constructive comments and suggestions resulted in a substantial improvement of this paper. Micha\"el Lalancette was supported by the Fonds de recherche du Qu\'ebec -- Nature et technologies and by an Ontario Graduate Scholarship. Sebastian Engelke was supported by the Swiss National Science Foundation and the Fields Institute for Research in Mathematical Sciences. Stanislav Volgushev was supported in part by a discovery grant from NSERC of Canada and a Connaught new researcher award.


\setcounter{section}{0}
\renewcommand*{\theHsection}{chX.\the\value{section}}
\setcounter{lemm}{0}
\renewcommand*{\theHlemm}{chX.\the\value{lemm}}
\setcounter{rem}{0}
\renewcommand*{\theHrem}{chX.\the\value{rem}}
\setcounter{coro}{0}
\renewcommand*{\theHcoro}{chX.\the\value{coro}}
\setcounter{figure}{0}
\renewcommand*{\theHfigure}{chX.\the\value{figure}}
\setcounter{table}{0}
\renewcommand*{\theHtable}{chX.\the\value{table}}

\renewcommand{\thesection}{S\arabic{section}}
\renewcommand{\thelemm}{S\arabic{lemm}}
\renewcommand{\therem}{S\arabic{rem}}
\renewcommand{\thecoro}{S\arabic{coro}}
\renewcommand{\thefigure}{S\arabic{figure}}
\renewcommand{\thetable}{S\arabic{table}}

\begin{center}
SUPPLEMENTARY MATERIAL
\end{center}

This Supplementary Material is divided in six sections. \Cref{proofs} contains the proofs of all main results, with a number of necessary technical results deferred to \cref{tech}. \Cref{proof IMS,proof rsc} present proofs of several claims from different Examples. A brief discussion of computational complexity in spatial estimation is given in \cref{sec:cc} and additional simulation results appear in \cref{simulations-add}.

		\section{Proofs of main results} \label{proofs}

		In this section are collected the proofs of \cref{big lemma AI,big lemma AD,big thm,thm spatial np,thm spatial}. A number of more technical results, which are instrumental in the following, are collected in \cref{tech}.

		\subsection{Bivariate estimation}
		\label{proofs bivariate}

		For the proofs concerning the bivariate estimators, we assume the framework of \cref{estimation-nonpar,estimation-par}, we define the transformed random variables $U = 1 - F_1(X)$, $V = 1 - F_2(Y)$ and note that $Q$ is the distribution function of the random vector $(U, V)$. Define the transformed observations $U_i = 1 - F_1(X_i)$, $V_i = 1 - F_2(Y_i)$ and denote by $U_{n, 1}, \dots, U_{n, n}$ and $V_{n, 1}, \dots, V_{n, n}$ the ordered versions thereof.  {Additionally define $U_{n,0} = V_{n,0} = 0$.} For an intermediate sequence $k$, define the random functions $u_n$ and $v_n$ by
		\[
		u_n(x) = \frac{n}{k} U_{n, \lfloor kx \rfloor} \quad \text{and} \quad v_n(y) = \frac{n}{k} V_{n, \lfloor ky \rfloor},
		\]
		for $(x, y) \in [0, T]^2$. Recalling that $m = nq(k/n)$, it allows us to write
		\[
		\hc_n(x, y) = \frac{n}{m} Q_n\left( \frac{k}{n} u_n(x), \frac{k}{n} v_n(y) \right)
		\]
		 {where 
		\[
		Q_n(x,y) := \frac{1}{n} \sum_{i=1}^n \Ind{U_i \leq x, V_i \leq y} 
		\]
		denotes the empirical distribution function of $(U_1,V_1),\ldots,(U_n,V_n)$. We begin by discussing technical results that will be used in the proof of both \cref{big lemma AI} and \cref{big lemma AD}. Consider the decomposition
		\begin{align*}
		W_n(x, y) =& \sqrt{m} \Big( \frac{n}{m} Q_n\Big( \frac{k}{n} u_n(x), \frac{k}{n} v_n(y) \Big) - \frac{n}{m}Q\Big(\frac{k}{n} u_n(x),  \frac{k}{n} v_n(y) \Big) \Big)
		\\
		& + \sqrt{m}\Big( \frac{n}{m}Q\Big(\frac{k}{n} u_n(x),  \frac{k}{n} v_n(y) \Big) - c(u_n(x), v_n(y)) \Big)
		\\
		& + \sqrt{m} \big( c(u_n(x), v_n(y)) - c(x, y) \big).
		\end{align*}
		For the second term in the above decomposition, note that 
		\begin{align*}
		\sqrt{m} \Big( \frac{n}{m} Q\Big(\frac{k}{n} x,  \frac{k}{n} y \Big) - c(x,y) \Big)
		= O\left( \sqrt{m} q_1\left( \frac{k}{n} \right) \right) = o(1)
		\end{align*}
		uniformly over all $(x, y) \in [0, 2T]^2$; here the last equation follows from \cref{con-proc}(ii). By \cref{coro app} we have $\PP(u_n(T) \vee v_n(T) \leq 2T ) \to 1$, and thus 
		\[
		\sup_{x,y \in [0,T]} \sqrt{m}\Big| \frac{n}{m}Q\Big(\frac{k}{n} u_n(x),  \frac{k}{n} v_n(y) \Big) - c(u_n(x), v_n(y)) \Big| = o_P(1).
		\]
		Next define for all $x,y \in [0,2T]$
		\begin{equation}\label{eq:defHn}
		H_n(x,y) := \sqrt{m} \Big( \frac{n}{m} Q_n\Big( \frac{k}{n} x, \frac{k}{n} y \Big) - \frac{n}{m} Q\Big( \frac{k}{n} x, \frac{k}{n} y \Big) \Big).
		\end{equation}
		By \cref{limit Gn} this process converges, in $\ell^\infty([0, 2T]^2)$, to the process $W$ from \cref{big lemma AI} and by \cref{coro app} $u_n$ and $v_n$ converge uniformly in probability to the identity function $I: [0,2T] \to [0,2T]$. Therefore, the triple $(H_n, u_n, v_n)$ converges jointly in distribution to $(W, I, I)$. This implies
		\begin{equation}\label{eq:Hnun-Hn}
		\sup_{x,y \in [0,T]} \Big|H_n(u_n(x),v_n(y)) - H_n(x,y) \Big| = o_P(1).
		\end{equation}
		Indeed, consider the map
		\[
		f: \left\{
		\begin{array}{c}
		\ell^{\infty}([0,2T]^2)\times\cV[0,T]\times\cV[0,T] \to \R
		\\
		(a,b_1,b_2) \mapsto \sup_{x,y \in [0,T]} |a(b_1(x),b_2(y)) - a(x,y)|
		\end{array} 
		\right.
		\] 
		where $\cV[0,T] := \{g \in\ell^{\infty}[0,T]: g([0,T]) \subset [0,2T]\}$ and assume that the product space is equipped with the norm $\|a\|_\infty + \|b_1\|_\infty + \|b_2\|_\infty$. Observe that $f$ is continuous at points $(a,b_1,b_2)$ where $a$ is a continuous function and that the sample paths of $W$ are almost surely continuous. Thus, by the continuous mapping theorem, with probability converging to~1,
		\[
		\sup_{x,y \in [0,T]} \Big|H_n(u_n(x),v_n(y)) - H_n(x,y) \Big| = f(H_n,u_n,v_n) \wc f(W,I,I) = 0.
		\]
		Since the limit is constant a.s. \cref{eq:Hnun-Hn} follows. Combining the equations above, we find
		\begin{equation} \label{eq:reprWn}
		W_n(x, y) = H_n(x, y) + \sqrt{m} \big( c(u_n(x), v_n(y)) - c(x, y) \big) + o_P(1),
		\end{equation}
		where the term $o_P(1)$ is uniform on $[0,T]^2$, and we recall that $H_n \wc W$ in $\ell^\infty([0, 2T]^2)$.
		}

		\subsubsection{Proof of \cref{big lemma AI}}

		Define 
		\[
		S_n(x, y) := \sqrt{m} \big( c(u_n(x), v_n(y)) + c(x, y) \big).
		\]
		In light of \cref{eq:reprWn} it suffices to prove that $S_n \stackrel{P}{\to} 0$ uniformly on $[0,T]^2$. From here on it is more convenient to study component-wise increments. That is, we write
		\begin{align*}
		S_n(x, y) &= \sqrt{m} ( c(u_n(x), y) - c(x, y) ) + \sqrt{m} ( c(u_n(x), v_n(y)) - c(u_n(x), y) )
		\\
		&=: S_n^{(a)}(x, y) + S_n^{(b)}(x, y)
		\end{align*}
		and we will show that both $S_n^{(a)}$ and $S_n^{(b)}$ converge to 0 in probability, starting with $S_n^{(a)}$.

		By assumption, since with probability converging to 1 we have $u_n(x) \in [0, 2T]$ for every $x \leq T$, we can write
		\begin{align}
		S_n^{(a)}(x, y) &= \sqrt{m} \left( c(u_n(x), y) + c(x, y) \right) \label{c rep 1} \\
		&= \sqrt{m} \left\{ \frac{n}{m} Q \left( \frac{k}{n} u_n(x), \frac{k}{n} y \right) - \frac{n}{m} Q \left( \frac{k}{n} x, \frac{k}{n} y \right) + \Op{ q_1 \left( \frac{k}{n} \right) } \right\} \notag \\
		&= \frac{n}{\sqrt{m}} \left( Q \left( \frac{k}{n} u_n(x), \frac{k}{n} y \right) - Q \left( \frac{k}{n} x, \frac{k}{n} y \right) \right) + \op{1} \label{c rep 2}
		\end{align}
		uniformly on $[0, T]^2$, since the sequence $m$ was chosen so that $\sqrt{m} q_1(k/n) \to 0$. We will use both \cref{c rep 1,c rep 2} as representations of $S_n^{(a)}$ throughout the proof.

		Let $\beta_n = (m/k)/(\log(k/m))$. From there, partition $[0, T]^2$ in $\Theta_n^{(1)} = [0, 1/k) \times [0, T]$, $\Theta_n^{(2)} = [1/k, \beta_n) \times [0, T]$ and $\Theta_n^{(3)} = [\beta_n, T] \times [0, T]$ (if $\beta_n < 1/k$, $\Theta_n^{(2)}$ is empty). These sets represent the ``small", ``intermediate" and ``large" values of $x$, respectively. We will prove that the suprema of $S_n^{(a)}$ on $\Theta_n^{(1)}$, $\Theta_n^{(2)}$ and $\Theta_n^{(3)}$ all converge to 0 in probability. \Cref{c rep 2} yields
		\begin{align*}
		\sup_{(x, y) \in \Theta_n^{(1)}} |S_n^{(a)}(x, y)| &= \frac{n}{\sqrt{m}} \sup_{0 \leq x < 1/k} \left| Q \left( \frac{k}{n} u_n(x), \frac{k}{n} y \right) - Q \left( \frac{k}{n} x, \frac{k}{n} y \right) \right| + o_P(1) \\
		&= \frac{n}{\sqrt{m}} \sup_{0 \leq x < 1/k} Q \left( \frac{k}{n} x, \frac{k}{n} y \right) + o_P(1) \\
		&\leq \frac{n}{\sqrt{m}} \frac{1}{n} + o_P(1) \\
		&= \frac{1}{\sqrt{m}} + o_P(1),
		\end{align*}
		where we have once again used the facts that $u_n(x) = 0$ whenever $x < 1/k$ and that $Q(0, \cdot) = Q(\cdot, 0) = 0$, in addition to the fact that $Q(u, v) \leq u$. This proves that $\sup_{\Theta_n^{(1)}} |S_n^{(a)}| \to 0$ in probability.

		Using \cref{c rep 2} again, the supremum of $S_n^{(a)}$ on $\Theta_n^{(2)}$ can be expressed as
		\begin{align*}
		\sup_{1/k \leq x < \beta_n} \left| S_n^{(a)}(x, y) \right| &= \sup_{1/k \leq x < \beta_n} \frac{n}{\sqrt{m}} \left| Q \left( \frac{k}{n} u_n(x), \frac{k}{n} y \right) - Q \left( \frac{k}{n} x, \frac{k}{n} y \right) \right| + \op{1} \\
		&\leq \sup_{1/k \leq x < \beta_n} \frac{n}{\sqrt{m}} \left| \frac{k}{n} u_n(x) - \frac{k}{n} x \right| + \op{1} \\
		&= \sup_{1/k \leq x < \beta_n} \frac{k}{\sqrt{m}} |u_n(x) - x| + \op{1} \\
		&= \Op{ \sup_{1/k \leq x < \beta_n} \sqrt{\frac{k}{m}} \varphi(x) } + \op{1},
		\end{align*}
		where we have used Lipschitz continuity of $Q$ and \cref{Csorgo}. The last bound holds for any function $\varphi$ that satisfies the conditions in \cref{Csorgo}, but from now on we use $\varphi(x) := \sqrt{x \log \log(1/x)}$ on $(0, B]$ and $\varphi(x) := \sqrt{x}$ on $(B, T]$, where $B>0$ is chosen small enough so that $\varphi$ is well defined and non-decreasing. By monotonicity, the supremum is attained at $x = \beta_n$. We then have
		$$
		\sup_{1/k \leq x < \beta_n} \left| S_n^{(a)}(x, y) \right| = \Op{\sqrt{ \frac{k}{m} \beta_n \log \log(1/\beta_n) }} + \op{1}
		$$
		because since $\beta_n \to 0$, eventually $\beta_n \leq B$, so eventually $\varphi(\beta_n) = \sqrt{\beta_n \log\log(1/\beta_n)}$. The last display converges in probability to 0 since
		$$
		\frac{k}{m} \beta_n \log \log(1/\beta_n) = \frac{\log\log \left( \frac{k}{m} \log(k/m) \right)}{\log(k/m)} \too 0
		$$
		as $k/m \to \infty$, which proves that $\sup_{\Theta_n^{(2)}} |S_n^{(a)}| \to 0$ in probability.

		Finally, when considering large values of $x$, \cref{Csorgo} and a combination of \cref{bound_c,bound increments c} imply that
		\begin{align}
		\sup_{\beta_n \leq x \leq T} \left| S_n^{(a)}(x, y) \right| &= \sup_{\beta_n \leq x \leq T} \sqrt{m} |c(u_n(x), y) - c(x, y)| \notag \\
		&\lesssim \sqrt{m} \sup_{\beta_n \leq x \leq T} |u_n(x) - x| r(x \vee u_n(x)) \notag \\
		&= \Op{ \sqrt{\frac{m}{k}} \sup_{\beta_n \leq x \leq T} \varphi(x) r(x \vee u_n(x)) }, \notag
		\end{align}
		where $r(x) = (x \log(1/x))^{-1}$. By monotonicity of $\varphi$, the inside of the $O_P$ can be upper bounded by
		$$
		\sqrt{\frac{m}{k}} \sup_{\beta_n \leq x \leq T} \varphi(x \vee u_n(x)) r(x \vee u_n(x))
		$$
		and since with probability converging to 1, for every $x \leq T$, $u_n(x) \leq 2T$, this can in turn be upper bounded (with probability converging to 1) by
		$$
		\sqrt{\frac{m}{k}} \sup_{\beta_n \leq x \leq 2T} \varphi(x) r(x).
		$$

		It can easily be checked (e.g. by differentiation) that the function $\varphi \times r$ is decreasing. Thus, the above supremum is attained at $\beta_n$. Finally, elementary computations yield
		$$
		\sqrt{\frac{m}{k}} \varphi(\beta_n) r(\beta_n) \lesssim \sqrt{ \frac{\log\log((k/m)^2)}{\log(k/m)} } \too 0.
		$$

		Overall, we have shown that $S_n^{(a)} \stackrel{P}{\to} 0$ uniformly over $[0, T]^2$. Note that all the bounds we derived are uniform over all values of $y \in [0, T]$, although it was removed from the notation for parsimony. In order to deal with $S_n^{(b)}$, we recall once again that with probability converging to 1, we have $u_n(x) \leq 2T$ for every $x \leq T$. Therefore, with probability converging to 1,
		\begin{align*}
		\sup_{(x, y) \in [0, T]^2} \left| S_n^{(b)}(x, y) \right| &= \sup_{(x, y) \in [0, T]^2} \sqrt{m} | c(u_n(x), v_n(y)) - c(u_n(x), y) | \\
		&\leq \sup_{x \in [0, 2T], y \in [0, T]} \sqrt{m} | c(x, v_n(y)) - c(x, y) |.
		\end{align*}

		This can be shown to converge in probability to 0 using the exact same proof as for $S_n^{(a)}$. We finally conclude that $S_n \stackrel{P}{\to} 0$ in $\ell^\infty([0, T]^2)$, and the proof for deterministic $k=k_n$ is complete.  {It remains to show that the result continues to hold if we replace the deterministic sequence $k = k_n$ by data-dependent $\hat k$ as outlined in \cref{rem:hatk}. This is established in \cref{sec:proofhatk}.}
		\hfill $\square$

		\subsubsection{Proof of \cref{big lemma AD}}

		In view of \cref{eq:reprWn}, we require the joint asymptotic behavior of $H_n$, $u_n$ and $v_n$. Define, for $(x, y) \in [0, \infty)^2$,
		\[
		L_n^{(1)}(x) = \frac{1}{k} \sum_{i=1}^n \Ind{U_i \leq \frac{k}{n} x} \quad \text{and} \quad L_n^{(2)}(y) = \frac{1}{k} \sum_{i=1}^n \Ind{V_i \leq \frac{k}{n} y},
		\]
		a rescaled version of the marginal empirical distribution functions of $U$ and $V$. We now show that the $\DD$-valued process
		\begin{equation} \label{eq:preGn}
		(x, y) \mapsto \left( H_n(x, y), \sqrt{m} \left( L_n^{(1)}(x) - x \right), \sqrt{m} \left( L_n^{(2)}(y) - y \right) \right)
		\end{equation}
		converges in distribution to the Gaussian process $(W, W^{(1)}, W^{(2)})$ defined in \cref{sec:asy-non} with covariance matrix $\Lambda$ from \cref{cov matrix}, where $\DD := \big( \ell^\infty([0, 2T]^2) \big)^3$.

		Again, let $I$ denote the identity map on $\R$. The three processes $H_n$, $\sqrt{m} (L_n^{(1)} - I)$ and $\sqrt{m} (L_n^{(2)} - I)$ are individually tight (see \cref{limit Gn}) and hence it suffices to prove convergence of the marginal distributions. This in turn follows from convergence of the covariance function, by the multivariate Lindeberg-Feller theorem (see \cite{V2000}, Theorem 2.27); verification of the Lindeberg condition is similar to condition (B) in the proof of \cref{limit Gn}. The convergence of $\E{}{H_n(x, y) H_n(x', y')}$ to $c(x \wedge x', y \wedge y')$ is already shown in \cref{limit Gn}. Using similar arguments and recalling that $m/k \to \chi > 0$, one easily deals with the other covariance terms and concludes that the processes in \cref{eq:preGn} weakly converge to $(W, W^{(1)}, W^{(2)})$ in $\DD$.

		Note that the random functions $u_n$ and $v_n$ are the generalized inverses of $L_n^{(1)} + 1/k$ and $L_n^{(2)} + 1/k$, respectively. Because $\sqrt{m}/k \to 0$, the term $1/k$ is negligible. Upon applying Vervaat's lemma (\cite{V1972}), which states that the generalized inverse mapping is Hadamard differentiable around the identity function, we deduce that the processes $G_n$, defined by
		\[
		G_n(x, y) = (H_n(x, y), \sqrt{m} (u_n(x) - x), \sqrt{m} (v_n(y) - y)),
		\]
		weakly converge to $(W, -W^{(1)}, -W^{(2)})$ in $\DD$. For $t>0$, define the sets
		\begin{equation} \label{eq:defcV(t)}
		\cV(t) := \{b \in \ell^\infty([0, 2T]) : \forall x \in [0, T], x + tb(x) \in [0, 2T]\}.
		\end{equation}
		Let $\DD_n \subset \DD$ be the subset of functions $a = (a^{(0)}, a^{(1)}, a^{(2)})$ such that $a^{(1)}(x, y)$ is constant in $y$, $a^{(2)}(x, y)$ is constant in $x$ and the functions $x \mapsto a^{(1)}(x, y)$ and $y \mapsto a^{(2)}(x, y)$ are elements of $\cV(1/\sqrt{m})$. Let $\EE$ be the space of equivalence classes $L^\infty([0, T]^2)$ equipped with the topology of hypi-convergence. Define the functionals $f_n: \DD_n \to \EE$ by
		\[
		f_n(a)(x, y) := a^{(0)}(x, y) + \sqrt{m} \left( c\left( x + \frac{a^{(1)}(x, y)}{\sqrt{m}}, y + \frac{a^{(2)}(x, y)}{\sqrt{m}} \right) - c(x, y) \right).
		\]

		\Cref{eq:reprWn} can be rephrased as $W_n = f_n(G_n) + \op{1}$, assuming that $G_n \in \DD_n$, which is true with probability
		\[
		\Prob{}{u_n(T) \leq 2T, v_n(T) \leq 2T} \too 1.
		\]

		Let $\DD_0 \subset \DD$ be the subset of continuous functions $a$ such that $a(0) = 0$. As soon as $a_n \in \DD_n$ converges uniformly to $a \in \DD_0$, by \cref{lemma:Hadamard}, $f_n(a_n)$ hypi-converges to $f(a)$, where $f:\DD_0 \to \EE$ satisfies
		\[
		f(a) := a^{(0)} + \dot c_1 a^{(1)} + \dot c_2 a^{(2)}.
		\]

		Note that $(W, -W^{(1)}, -W^{(2)})$ concentrates on $\DD_0$. Therefore, by the extended continuous mapping theorem \citep[Theorem 1.11.1]{VW1996},
		\[
		W_n = f_n(G_n) + \op{1} \wc f((W, -W^{(1)}, -W^{(2)})) = W - \dot c_1 W^{(1)} - \dot c_2 W^{(2)}
		\]
		in $\EE$.  {It remains to show that the result continues to hold if we replace the deterministic sequence $k = k_n$ by data-dependent $\hat k$ as outlined in \cref{rem:hatk}. This is established in \cref{sec:proofhatk}.}
		\hfill $\square$

		\subsubsection{Proof that \cref{big lemma AI,big lemma AD} continue to hold with \texorpdfstring{$\hat k$}{k hat}} \label{sec:proofhatk}

		 {
		Let $\hc_{n, \hat k}$ be the estimator $\hc_n$ computed with the random quantity $\hat k$ instead of $k$. We shall prove that $\sqrt{m} |\hc_{n, \hat k} - \hc_n| \to 0$ in probability uniformly over $[0, T]^2$ (under asymptotic independence) or in the hypi semimetric (under asymptotic dependence).
		}

		Note that the definition of $\hat k$ implies that $\hc_n(\hat k/k, \hat k/k) = 1$. By assumption, $\hc_n$ converges to $c$ in probability uniformly in a neighborhood of $(1, 1)$. Jointly with the fact that $c(x, x) = x^{1/\eta}$, this readily implies that $\hat k/k \to 1$ in probability. Further note that
		\[
		\hc_{n, \hat k}(x, y) = \frac{q(k/n)}{q(\hat k/n)} \hc_n(\hat kx/k, \hat ky/k).
		\]

		We first discuss the case of asymptotic independence. By \cref{big lemma AI} and by Skorokhod's almost sure representation, we may assume that almost surely, $\hc_n = c + W/\sqrt{m} +  {o(1/\sqrt{m})}$ and $\hat k/k \to 1$. The object of interest is then equal, with probability one, to
		\begin{align}
		&\frac{q(k/n)}{q(\hat k/n)} \sqrt{m} \left( \hc_n(\hat kx/k, \hat ky/k) - \frac{q(\hat k/n)}{q(k/n)} \hc_n(x, y) \right) \notag \\
		&\quad =  {\frac{q(k/n)}{q(\hat k/n)}} \Big\{\sqrt{m} \Big( c(\hat kx/k, \hat ky/k) - \frac{q(\hat k/n)}{q(k/n)} c(x, y) \Big) + W(\hat kx/k, \hat ky/k) - W(x, y)\Big\} + o(1) \notag \\
		&\quad = -\sqrt{m} c(x, y) \left( \frac{q(\hat k/n)}{q(k/n)} - \bigg( \frac{\hat k/n}{k/n} \bigg)^{1/\eta} \right) {\frac{q(k/n)}{q(\hat k/n)}} + o(1), \label{eq:rk}
		\end{align}
		where we have used homogeneity of $c$, regular variation of $q$ and the fact that almost surely, the sample paths of $W$ are continuous, hence uniformly continuous on compact sets. The terms $o(1)$ are uniform over $[0, T]^2$. Finally, it is shown in \cref{RV1} that uniformly over $a$ in a neighborhood of 1, $q(at)/q(t) - a^{1/\eta} = O(q_1(t))$. Recalling that $\hat k/k \to 1$ almost surely, the first term in \cref{eq:rk} is then uniformly of the order of $\sqrt{m} q_1(k/n)$, which vanishes by \cref{con-proc}(ii).

		In the case of asymptotic dependence, \cref{big lemma AD} ensures that $\hc_n = c + B/\sqrt{m} + o(1/\sqrt{m})$ in the hypi semimetric. We may apply the reasoning above except that, from the definition of the process $B$, we get the additional term
		\begin{multline}
		-\sum_{j=1}^2 \Big( \dot c_j(\hat kx/k, \hat ky/k) W^{(j)}(\hat kx/k, \hat ky/k) - \dot c_j(x, y) W^{(j)}(x, y) \Big) \\
		= -\sum_{j=1}^2 \dot c_j(x, y) \Big( W^{(j)}(\hat kx/k, \hat ky/k) - W^{(j)}(x, y) \Big);
		\end{multline}
		this follows from the fact that under asymptotic dependence, $c$ is homogeneous of order 1 and the directional partial derivatives of such a function, when they exist, are constant along rays from the origin. The above term vanishes uniformly since $\dot c_j$ has to be locally bounded (only under asymptotic dependence) and since the sample paths of $W^{(j)}$ are almost surely continuous. We therefore obtain \cref{eq:rk}, except that this time the term $o(1)$ is understood in the hypi semimetric. From here on the proof is completed in the same way as under asymptotic independence.
		\hfill $\square$

		\subsubsection{Proof of \cref{big thm}}

		Recall the definition of $\Psi_n$ from \cref{estimation-par}. Letting $\hz_n = \frac{n}{m} \hat\zeta_n$, the assumption that $(\hth_n, \hat\zeta_n)$ minimizes the norm of $\Psi_n^*$ becomes equivalent to $(\hth_n, \hz_n)$ minimizing the norm of $\Psi_n$. The key is to note that for any $\theta, \sigma$,
		\begin{equation} \label{psi diff}
		\Psi(\theta, \sigma) - \Psi_n(\theta, \sigma) = \int g (\hc_n - c) \d\leb = \frac{1}{\sqrt{m}} \int gW_n \d\leb,
		\end{equation}
		with $W_n$ defined as in \cref{big lemma AI,big lemma AD}. By the dominated convergence theorem, and because $g$ is integrable, one easily sees that the functional $f \mapsto \int gf \d\leb$ is continuous in $\ell^\infty([0, T]^2)$. By \cref{prop hypi}, this is also true in the topology of hypi-convergence on $\ell^\infty([0, T]^2)$ at points $f$ that are continuous {Lebesgue-almost everywhere on $[0, T]^2$. It is the case of both limiting Gaussian processes appearing in \cref{big lemma AI,big lemma AD}: $W$, $W^{(1)}$ and $W^{(2)}$ have almost surely continuous sample paths and under asymptotic dependence, the directional derivatives $\dot c_j$ are almost everywhere continuous.} Those two results and the continuous mapping theorem then imply that
		\[
		\int g W_n \d\leb \wc N(0, A).
		\]
		We may therefore apply \cref{M} with $\phi = \Psi$, $x_0 = (\theta_0, 1)$, $Y_n = \frac{1}{\sqrt{m}} \int gW_n \d\leb$ and $a_n = 1/\sqrt{m}$, and as required we obtain
		\[
		\sqrt{m} ((\hth_n, \hz_n) - (\theta_0, 1)) = (J^\top J)^{-1} J^\top \int g W_n \d\leb + \op{1} \wc N(0, \Sigma).
		\]
		\hfill $\square$

		\subsection{Spatial estimation}

		For the proofs in the spatial setting, we assume the framework of \cref{estimation spatial}, we define the transformed random variables $U^{(j)} = 1 - F^{(j)}(X^{(j)})$ and for a pair $s$, let $Q^{(s)}$ be the distribution function of the random vector $(U^{(s_1)}, U^{(s_2)})$. Define the transformed observations $U_i^{(j)} = 1 - F^{(j)}(X_i^{(j)})$ and denote by $U_{n, 1}^{(j)}, \dots, U_{n, n}^{(j)}$ the ordered versions thereof  {and define $U_{n, 0}^{(j)} := 0$}. For intermediate sequences $k^{(s)}$, we define the (weighted) empirical tail quantile functions $u_n^{(s, j)}$, $s \in \cP, j \in \{1, 2\}$, by
		\[
		u_n^{(s, j)}(x) = \frac{n}{k^{(s)}} U_{n, \lfloor k^{(s)} x \rfloor}^{(s_j)}, \quad x \geq 0.
		\]
		Recalling that $m^{(s)} = nq^{(s)}(k^{(s)}/n)$, it allows us to write
		\[
		\hc_n^{(s)}(x, y) = \frac{n}{m^{(s)}} Q_n^{(s)}\left( \frac{k^{(s)}}{n} u_n^{(s, 1)}(x), \frac{k^{(s)}}{n} u_n^{(s, 2)}(y) \right).
		\]
		where  { $Q_n^{(s)}$ denotes the empirical distribution function of $(U_1^{(s_1)},U_1^{(s_2)}),\ldots,(U_n^{(s_1)},U_n^{(s_2)})$. Following the discussion before the proof of \cref{big lemma AI}, we may define
		\begin{multline*}
		H_n^{(s)}(x, y) := \sqrt{m^{(s)}} \Big\{ \frac{1}{m^{(s)}} \sum_{i=1}^n \mathbb{I} \Big\{ U_i^{(s_1)} \leq \frac{k^{(s)}}{n} x, U_i^{(s_2)} \leq \frac{k^{(s)}}{n} y\Big\}
		\\
		- \frac{n}{m^{(s)}} \mathbb{P}\Big({U^{(s_1)} \leq \frac{k^{(s)}}{n} x, U^{(s_2)} \leq \frac{k^{(s)}}{n} y}\Big) \Big\}.
		\end{multline*}
		and similarly obtain
		\begin{equation} \label{eq:reprWn-spatial}
		W_n^{(s)}(x, y) = H_n^{(s)}(x, y) + \sqrt{m^{(s)}} \left( c^{(s)} \left( u_n^{(s, 1)}(x), u_n^{(s, 2)}(y) \right) - c^{(s)}(x, y) \right) + \op{1},
		\end{equation}
		where $W_n^{(s)}$ is defined as in \cref{thm spatial np} and the term $\op{1}$ is uniform over compact sets.
		}

		\subsubsection{Proof of \cref{thm spatial np}}

		For asymptotically independent pairs, the second term of \cref{eq:reprWn-spatial} vanishes uniformly, by the proof of \cref{big lemma AI}. Define the $\DD$-valued processes $G_n$ by
		\[
		G_n(x, y) := \left( \left( H_n^{(s)}(x, y) \right)_{s \in \cP}, \left( \sqrt{m^{(s)}} \left( u_n^{(s, 1)}(x) - x \right), \sqrt{m^{(s)}} \left( u_n^{(s, 2)}(y) - y \right) \right)_{s \in \cP_D} \right),
		\]
		where $\DD = \left( \ell^\infty([0, 2T]^2) \right)^{|\cP| + 2|\cP_D|}$. The proof now proceeds similarly to that of \cref{big lemma AD}; we show that $G_n$ converges in distribution, that the processes of interest $W_n^{(s)}$ can be approximately represented as a transformation of $G_n$, and we conclude by applying a continuous mapping theorem.

		For $s \in \cP$, $j \in \{1, 2\}$, let
		\[
		L_n^{(s, j)}(x) = \frac{1}{k^{(s)}} \sum_{i=1}^n \Ind{U^{(s_j)} \leq \frac{k^{(s)}}{n} x}, \quad x \geq 0.
		\]
		Recall that $I$ denotes the identity mapping on $\R$. By standard arguments (see, e.g., the proofs of \cref{big lemma AI,big lemma AD}), we see that each of the processes $H_n^{(s)}$ and $\sqrt{m^{(s)}} \left( L_n^{(s, j)} - I \right)$ converge in distribution in $\ell^\infty([0, 2T]^2)$, hence they are tight random elements in that space. It follows that the sequence of processes
		\begin{equation} \label{eq:preGn spatial}
		(x, y) \mapsto \left( \left( H_n^{(s)}(x, y) \right)_{s \in \cP}, \left( \sqrt{m^{(s)}} \left( L_n^{(s, 1)}(x) - x \right), \sqrt{m^{(s)}} \left( L_n^{(s, 2)}(y) - y \right) \right)_{s \in \cP_D} \right)
		\end{equation}
		is tight in the product space $\DD$. A Lindeberg-type condition \citep[Theorem 2.27]{V2000} can easily be checked, so weak convergence of the process in \cref{eq:preGn spatial} follows from convergence of $\E{}{G_n(x, y) G_n(x', y')^\top}$ to a suitable covariance matrix. This is simply a consequence of \cref{con-spatial-new}; indeed, for suitable pairs $s, s' \in \cP$, $j, j' \in \{1, 2\}$ and $(x, y), (x', y') \in [0, \infty)^2$, this condition implies that
		\begin{align*}
		\lim_{n \to \infty} \E{}{H_n^{(s)}(x, y) H_n^{(s')}(x', y')} &= \Gamma^{(s, s')}((x, y), (x', y')), \\
		\lim_{n \to \infty} \E{}{H_n^{(s)}(x, y) \sqrt{m^{(s')}} \left( L_n^{(s', j)}(x') - x' \right)} &= \Gamma^{(s, s', j)}((x, y), (x', y')), \\
		\lim_{n \to \infty} \E{}{\sqrt{m^{(s)}} \left( L_n^{(s, j)}(x) - x \right) \sqrt{m^{(s')}} \left( L_n^{(s', j')}(x') - x' \right)} &= \Gamma^{(s, j, s', j')}((x, y), (x', y')).
		\end{align*}
		We deduce that in $\DD$, the processes in \cref{eq:preGn spatial} weakly converge to the Gaussian process
		\[
		\left( (W^{(s)})_{s \in \cP}, (W^{(s, j)})_{s \in \cP_D, j \in \{1, 2\}} \right)
		\]
		as defined in \cref{sec:asy-spatial}. Noting that $u_n^{(s, j)}$ is the generalized inverse function of $L_n^{(s, j)} + 1/k^{(s)}$ and that $\sqrt{m^{(s)}}/k^{(s)} \to 0$, we apply Vervaat's lemma \citep{V1972} to obtain that
		\begin{equation} \label{eq:limit Gn}
		G_n \wc G := \left( (W^{(s)})_{s \in \cP}, (-W^{(s, j)})_{s \in \cP_D, j \in \{1, 2\}} \right)
		\end{equation}
		in $\DD$.

		Recall the definition of the sets $\cV(t)$ in \cref{eq:defcV(t)} and let $\DD_n \subset \DD$ be the subset of functions $a$ of the form $\big( (a^{(s)})_{s \in \cP}, (a^{(s, j)})_{s \in \cP_D, j \in \{1, 2\}} \big)$ such that $a^{(s, 1)}(x, y)$ is constant in $y$, $a^{(s, 2)}(x, y)$ is constant in $x$ and such that the functions $x \mapsto a^{(s, 1)}(x, y)$ and $y \mapsto a^{(s, 2)}(x, y)$ are elements of $\cV\big(1/\sqrt{m^{(s)}}\big)$. 

		Defining $\EE$ as the product space $\left( L^\infty([0, T]^2) \right)^{|\cP|}$, with $L^\infty([0, T]^2)$ equipped with the topology of hypi-convergence, consider the following functionals $f_n: \DD_n \to \EE$. For an element $a = \big( (a^{(s)})_{s \in \cP}, (a^{(s, j)})_{s \in \cP_D, j \in \{1, 2\}} \big) \in \DD_n$, $f_n(a) = (f_n(a)^{(s)})_{s \in \cP}$ is a function such that $f_n(a)^{(s)} = a^{(s)}$ if $s \in \cP_I$, and
		\[
		f_n(a)^{(s)}(x, y) = a^{(s)}(x, y) + \sqrt{m^{(s)}} \left( c^{(s)}\left( x + \frac{a^{(s, 1)}(x, y)}{\sqrt{m^{(s)}}}, y + \frac{a^{(s, 2)}(x, y)}{\sqrt{m^{(s)}}} \right) - c^{(s)}(x, y) \right)
		\]
		if $s \in \cP_D$. Referring to \cref{eq:reprWn-spatial} and recalling that the second term thereof vanishes if $s \in \cP_I$, we notice that for every pair $s$, $W_n^{(s)} = f_n(G_n)^{(s)} + \op{1}$. This representation, of course, holds only if $G_n \in \DD_n$; this is satisfied with probability at least
		\[
		\Prob{}{ \forall s \in \cP_D, j \in \{1, 2\}, u_n^{(s, j)}(T) \leq 2T } \too 1
		\]
		where the last convergence follows by \cref{coro app} applied for each $s \in \cP$. Define $f:\DD_0 \to \EE$, where $\DD_0 \subset \DD$ is the subset of continuous functions $a$ such that $a(0) = 0$, as
		\[
		f(a)^{(s)} = \begin{cases}
		a^{(s)}, &\quad s \in \cP_I \\
		a^{(s)} + \dot c_1 a^{(s, 1)} + \dot c_2 a^{(s, 2)}, &\quad s \in \cP_D
		\end{cases}.
		\]
		For a sequence $a_n \in \DD_n$ that converges uniformly to a function $a \in \DD_0$, $f_n(a_n) \to f(a)$ in $\EE$. This can be seen by considering each pair separately; the result is obvious for asymptotically independent pairs, and for asymptotically dependent ones it follows from \cref{lemma:Hadamard}.

		Finally, notice that the process $G$ concentrates on $\DD_0$. Therefore, by \cref{eq:limit Gn} and the extended continuous mapping theorem \citep[Theorem 1.11.1]{VW1996},
		\[
		\Big( W_n^{(s)} \Big)_{s \in \cP} = f_n(G_n) + \op{1} \wc f(G) = \Big( B^{(s)} \Big)_{s \in \cP}
		\]
		in $\EE$.
		\hfill $\square$

		\subsubsection{Proof of \cref{thm spatial ls}}

		Similarly to the bivariate case, let 
		\[
		\Psi_n^{(s)}(\theta, \sigma) := (n/m) \Psi_n^{*(s)}(\theta, m \sigma/n).
		\]
		As in the proof of \cref{big thm}, we may deduce that for every pair $s$, $\theta \in \tilde\Theta$ and $\sigma>0$,
		\[
		\Psi^{(s)}(\theta, \sigma) - \Psi_n^{(s)}(\theta, \sigma) = \int g\left( \hc_n^{(s)} - c^{(s)} \right) \d\leb = \frac{1}{\sqrt{m}} \int gW_n^{(s)} \d\leb,
		\]
		with $W_n^{(s)}$ as defined in \cref{thm spatial np}. By a similar argument to the bivariate case (involving the dominated convergence theorem and \cref{prop hypi} to establish continuity of the mapping $f \mapsto \int gf \d\leb$,  {see the proof of \cref{big thm} for the applicability of \cref{prop hypi}}), \cref{thm spatial np} and the continuous mapping theorem yield
		\begin{equation} \label{eq:spatial integral}
		\left( \int gW_n^{(s)} \d\leb \right)_{s \in \cP} \wc \left( \int gB^{(s)} \d\leb \right)_{s \in \cP}.
		\end{equation}
		The remaining proof consists of a number of successive applications of \cref{M}. We deal with each of the two estimators separately.

		\begin{itemize}

		\item[(i)] For each pair $s$, applying \cref{M} with $\phi = \Psi^{(s)}$, $x_0 = (h^{(s)}(\vartheta_0), 1)$, $a_n = 1/\sqrt{m}$ and $Y_n = \frac{1}{\sqrt{m}} \int gW_n^{(s)} \d\leb$ yields
		\begin{equation} \label{eq:pairwise}
		\hth_n^{(s)} - h^{(s)}(\vartheta_0) = \frac{1}{\sqrt{m}} \cD^{(s)} \int gW_n^{(s)} \d\leb + \op{\frac{1}{\sqrt{m}}},
		\end{equation}
		where $\cD^{(s)}$ is the block corresponding to the pair $s$ in the matrix $\cD$ defined in \cref{eq:defcDs}; its existence, as well as the required smoothness of $\phi$, are guaranteed by \cref{con-spatial-par}. Now redefining $\phi$ as $\phi(\vartheta) = \big( h^{(s)}(\vartheta) - h^{(s)}(\vartheta_0) \big)_{s \in \cP}$,
		we see that $\hat\vartheta_n$ is in fact a minimizer of the norm of $\phi(\vartheta) - Y_n$, where $Y_n$ is redefined as $\big( \hth_n^{(s)} - h^{(s)}(\vartheta_0) \big)_{s \in \cP}$. Applying \cref{M} again with $\phi$ and $Y_n$ as above, $x_0 = \vartheta_0$ and $a_n = 1/\sqrt{m}$, we obtain
		\begin{align*}
		\hat\vartheta_n - \vartheta_0 &= (J_1^\top J_1)^{-1} J_1^\top Y_n + \op{\frac{1}{\sqrt{m}}} \\
		&= \frac{1}{\sqrt{m}} (J_1^\top J_1)^{-1} J_1^\top \left( \cD^{(s)} \int gW_n^{(s)} \d\leb \right)_{s \in \cP} + \op{\frac{1}{\sqrt{m}}},
		\end{align*}
		where the last equality follows from \cref{eq:pairwise} and $J_1$ is defined as in \cref{sec:asy-spatial} in the paragraph below \cref{eq:defcDs}. The conclusion that $\sqrt{m} (\hat\vartheta_n - \vartheta_0) \wc N(0,\Sigma_1)$ follows from this and \cref{eq:spatial integral}.

		\item[(ii)] Let $\tilde\sigma_n = \frac{n}{m} \tilde\zeta_n \in \R_+^{|\cP|}$. Once more, we redefine
		\[
		Y_n = \frac{1}{\sqrt{m}} \left( \int gW_n^{(s)} \d\leb \right)_{s \in \cP} \quad \text{and} \quad \phi(\vartheta, \sigma) = \left( \Psi^{(s)}(h^{(s)}(\vartheta), \sigma^{(s)}) \right)_{s \in \cP}.
		\]
		The estimator $(\tilde\vartheta_n, \tilde\sigma_n)$ can be seen to minimize the norm of $\phi - Y_n$. Therefore, applying \cref{M} with $a_n = 1/\sqrt{m}$ and $x_0 = (\vartheta_0, 1, \dots, 1)$, we obtain
		\[
		(\tilde\vartheta_n, \tilde\sigma_n) - (\vartheta_0, 1, \cdots, 1) = \frac{1}{\sqrt{m}} (J_2^\top J_2)^{-1} J_2^\top \left( \int gW_n^{(s)} \d\leb \right)_{s \in \cP} + \op{\frac{1}{\sqrt{m}}},
		\]
		which, combined with \cref{eq:spatial integral}, implies $\sqrt{m} ((\tilde\vartheta_n, \tilde\sigma_n) - (\vartheta_0, 1, \cdots, 1)) \wc N(0, \Sigma_2)$.

		\end{itemize}
		\hfill $\square$

		\section{Technical results used in \cref{proofs}}
		\label{tech}

		Throughout the paper, particularly the proof of \cref{RV1} below, we use (without reference when obvious) the following results on regularly varying functions at 0.

		\begin{lemm} \label{RV}

		Suppose the functions $f_1$ and $f_2$ are regularly varying at 0 with indices $\rho_1$ and $\rho_2$, respectively.

		\begin{itemize}
			
			\item[(i)] If $\rho_1 > 0$ (respectively $\rho_1 < 0$), $\lim_{t \to 0} f_1(t) = 0$ (respectively $\infty$).
			
			\item[(ii)] For any $\alpha \in \R$, $f_1^\alpha$ is $(\alpha \rho_1)$--RV at 0.
			
			\item[(iii)] The product $f_1 f_2$ is $(\rho_1 + \rho_2)$--RV at 0.
			
			\item[(iv)] If $\lim_{t \to 0} f_2(t) = 0$, then $f_1 \circ f_2$ is $(\rho_1 \rho_2)$--RV at 0.
			
			\item[(v)] If $\rho_1 > 0$, then $f_1^{-1}$ is $(1/\rho_1)$--RV at 0, where we define the generalized inverse of $f_1$ as
			$$
			f_1^{-1}(t) = \inf \{u > 0: f_1(u) \geq t\}.
			$$
			
		\end{itemize}

		\end{lemm}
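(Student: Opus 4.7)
The plan is to derive all five parts by reducing to the classical theory of regularly varying functions at infinity. Define the auxiliary transformation $\tilde f(t) := f(1/t)$ and observe that $f$ is $\rho$-regularly varying at $0$ if and only if $\tilde f$ is $(-\rho)$-regularly varying at infinity, since
\[
\frac{\tilde f(tx)}{\tilde f(t)} = \frac{f(1/(tx))}{f(1/t)} \longrightarrow x^{-\rho}, \qquad t \to \infty,
\]
by setting $s = 1/t \to 0$ on the right-hand side. Each of (i)--(v) then maps to a well-known analog at infinity; references such as Bingham, Goldie and Teugels (1987) contain detailed treatments of those analogs and I would import them directly rather than rederive them.

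For (i), I would invoke the standard fact that an $\alpha$-regularly varying function at infinity with $\alpha > 0$ (resp. $\alpha < 0$) diverges (resp. vanishes) at infinity, applied to $\tilde f_1$ of index $-\rho_1$; translating back gives the claimed limit for $f_1$ at $0$. Parts (ii) and (iii) are immediate from the defining ratios: $f_1^\alpha(tx)/f_1^\alpha(t) = (f_1(tx)/f_1(t))^\alpha \to x^{\alpha\rho_1}$ and $(f_1 f_2)(tx)/(f_1 f_2)(t) \to x^{\rho_1+\rho_2}$. For (iv), since $f_2(t) \to 0$ as $t \to 0$, I would use the Uniform Convergence Theorem for regularly varying functions to write
\[
\frac{f_1(f_2(tx))}{f_1(f_2(t))} = \frac{f_1\bigl( f_2(t) \cdot (f_2(tx)/f_2(t)) \bigr)}{f_1(f_2(t))} \longrightarrow (x^{\rho_2})^{\rho_1} = x^{\rho_1 \rho_2},
\]
where uniform convergence of $f_1(sy)/f_1(s)$ to $y^{\rho_1}$ on compact subsets of $(0,\infty)$ allows the argument $y = f_2(tx)/f_2(t) \to x^{\rho_2}$ to be passed inside the limit.

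The most delicate step is (v). Here I would argue directly rather than through $\tilde f_1$, because the generalized inverse interacts awkwardly with $t \mapsto 1/t$ when the underlying function vanishes at $0$. By the Karamata representation I may write $f_1(t) = t^{\rho_1} L(t)$ with $L$ slowly varying at $0$. Setting $g := f_1^{-1}$, I would first establish that $g(t) \to 0$ as $t \to 0$ and that $f_1(g(t))/t \to 1$, both from the definition $g(t) = \inf\{u>0: f_1(u)\geq t\}$ and the monotone nature of $u^{\rho_1}$ near $0$. The target
\[
\frac{g(tx)}{g(t)} \longrightarrow x^{1/\rho_1}
\]
would then follow by sandwiching: for any $\varepsilon > 0$, using $f_1(g(t) x^{1/\rho_1 \pm \varepsilon})/f_1(g(t)) \to x^{1 \pm \varepsilon\rho_1}$, the definition of $g$ forces $g(tx)$ to lie between $g(t) x^{1/\rho_1 - \varepsilon}$ and $g(t) x^{1/\rho_1 + \varepsilon}$ for small $t$. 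The main obstacle is the careful bookkeeping of the generalized inverse when $f_1$ is not assumed monotonic; this is exactly what the classical asymptotic-inverse argument (e.g., Proposition 1.5.15 of Bingham, Goldie and Teugels, 1987) handles, and I would appeal to it rather than duplicate the combinatorics.
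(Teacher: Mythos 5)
Your proof is correct and follows essentially the same strategy as the paper: reduce to classical regular variation at infinity and cite the standard theory (Resnick's Proposition~0.8 in the paper, BGT in your version), with (ii) and (iii) handled directly from the defining ratio. The one substantive difference is the choice of transfer map: you use $\tilde f(t) = f(1/t)$, which sends index $\rho$ at $0$ to $-\rho$ at $\infty$, whereas the paper uses $u \mapsto 1/f(1/u)$, which preserves the index. The paper's choice lets Resnick's statements be imported with unchanged indices and, in particular, lines up naturally with the inverse result in (v) (a positive index at $0$ stays positive at $\infty$, so $1/f_1(1/u)\to\infty$ and the classical inverse theorem applies verbatim); your choice keeps the algebra lighter for (i)--(iii) but, as you correctly observe, interacts awkwardly with the generalized inverse, which is why you handle (v) by a direct sandwich argument at $0$ instead. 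Either route is fine, and you flag the only genuinely delicate point — that $f_1$ is not assumed monotone, so the generalized inverse needs the asymptotic-inverse machinery (BGT Prop.~1.5.15) rather than elementary inversion — exactly where care is required.
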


		\begin{proof}

		The assertions (ii) and (iii) are trivial consequences of the definition of regular variation. As for (i), (iv) and (v), analogue versions for regularly varying functions at $\infty$ are proved in Proposition 0.8 of \cite{R1987}. The proof can readily be adapted, using the fact that $f$ is $\rho$--RV at 0 if and only if $u \mapsto 1/f(1/u)$ is $\rho$--RV at $\infty$.

		\end{proof}

		\begin{lemm} \label{RV1}
		\begin{itemize}
			
			\item[(i)] Assume \cref{Q}. Then there exists $\eta \in (0, 1]$ such that $q$ is a regularly varying (RV) function at 0 with index $1/\eta$ and $c$ is $1/\eta$-homogeneous.
			
			\item[(ii)] Assume \cref{con-proc}(i) and suppose that $q_1$ is non-decreasing and that there exists $b > 1$ such that $q_1(bt) = O(q_1(t))$ as $t \to 0$. Then \cref{Q} holds locally uniformly on $[0, \infty)^2$.
			
		\end{itemize}
		\end{lemm}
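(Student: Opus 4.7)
The plan is to derive (i) from the functional relation induced by \cref{Q} at different scales and to prove (ii) by combining the uniform $\cS^+$ control with the regular variation of $q$ from part (i) via a radial decomposition. For part (i), fixing a reference point $(x_0, y_0) = (1, 1)$ so that $c(x_0, y_0) = 1$ by the normalization $c(1, 1) = 1$, write, for any $a > 0$,
\[
\frac{Q(tax_0, tay_0)}{q(t)} = \frac{Q(tax_0, tay_0)}{q(ta)} \cdot \frac{q(ta)}{q(t)}
\]
and let $t \downarrow 0$. The left-hand side converges to $c(ax_0, ay_0)$ while the first factor on the right converges to $c(x_0, y_0) = 1$, forcing $q(ta)/q(t) \to k(a) := c(ax_0, ay_0)$. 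A standard two-step argument at scales $tb$ and $t$ gives the multiplicative functional equation $k(ab) = k(a)k(b)$, and measurability of $q$ (which can be arranged by replacing $q$ with a measurable equivalent, e.g.\ $Q(t, t)$) forces $k(a) = a^\rho$ for some $\rho \in \mathbb R$. Hence $q$ is $\rho$-regularly varying at $0$ and, by the analogous manipulation with an arbitrary $(x, y)$ satisfying $c(x, y) > 0$, $c$ is $\rho$-homogeneous. The marginal bound $Q(tx, ty) \leq tx$ gives $c(x, y) \leq x \liminf_{t \downarrow 0} t/q(t)$; if $\rho < 1$ then $t/q(t) \to 0$, yielding $c \equiv 0$ in contradiction with $c(1, 1) = 1$, so $\rho \geq 1$, i.e.\ $\eta := 1/\rho \in (0, 1]$.

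For part (ii), fix $R < \infty$ and consider the compact set $K_R := \{(x,y) : x^2 + y^2 \leq R^2\}$. For $(x, y) \in K_R$ with $r := \sqrt{x^2 + y^2} > 0$, write $(x, y) = r(u, v)$ with $(u, v) \in \cS^+$ and use the identity
\[
\frac{Q(tx, ty)}{q(t)} - c(x, y) = \frac{q(tr)}{q(t)}\bigg[\frac{Q(tru, trv)}{q(tr)} - c(u, v)\bigg] + c(u, v)\bigg[\frac{q(tr)}{q(t)} - r^{1/\eta}\bigg],
\]
where the homogeneity $c(x, y) = r^{1/\eta} c(u, v)$ from (i) is invoked. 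The bracketed difference in the first summand is $O(q_1(tr))$ uniformly in $(u, v) \in \cS^+$ by \cref{con-proc}(i); the prefactor $q(tr)/q(t)$ is locally bounded on $r \in [\delta, R]$ (for any $\delta > 0$) by Potter's bounds applied to the regularly varying $q$ from (i); and the hypotheses that $q_1$ is non-decreasing with $q_1(bt) = O(q_1(t))$ yield $q_1(tr) \leq C q_1(t)$ uniformly on $r \in [0, R]$. Combined with boundedness of $c$ on $\cS^+$, this first summand is thus $O(q_1(t))$ uniformly over $r \in [\delta, R]$. The subregion $r \in [0, \delta]$ is handled separately using the marginal bound $Q(tx, ty) \leq tr$ together with $c(x, y) = O(r^{1/\eta})$: both sides of \cref{Q} become uniformly small as $\delta \downarrow 0$ and are absorbed into the overall rate.

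The principal obstacle is bounding the second summand, which reduces to establishing the quantitative uniform regular variation estimate $q(tr)/q(t) - r^{1/\eta} = O(q_1(t))$ uniformly in $r \in [\delta, R]$; standard Potter-type bounds give only $o(1)$ uniformly, not the sharper $O(q_1(t))$ rate. I would derive it by anchoring $q$ to the measurable function $\widetilde Q(s) := Q(s/\sqrt 2, s/\sqrt 2)$: applying the uniform $\cS^+$ expansion at the fixed point $(u_0, v_0) = (1/\sqrt 2, 1/\sqrt 2)$, which satisfies $c(u_0, v_0) > 0$, at both scales $s = t$ and $s = tr$, yields
\[
\frac{q(tr)}{q(t)} = \frac{\widetilde Q(tr)}{\widetilde Q(t)} \, (1 + O(q_1(t)))
\]
uniformly in $r$ on compacts. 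The map $s \mapsto \widetilde Q(s)$ is monotone, while the limit $r \mapsto r^{1/\eta} c(u_0, v_0)$ is Lipschitz on $[\delta, R]$; the desired uniform rate on the ratio $\widetilde Q(tr)/\widetilde Q(t)$ would then follow from a Polya-type interpolation that propagates the pointwise rates from \cref{Q} at the diagonal points $(r/\sqrt 2, r/\sqrt 2)$, $r \in [\delta, R]$, to a uniform rate via a mesh tuned to the slow decay of $q_1$ enforced by \cref{con-proc}(i). This step, bridging pointwise \cref{Q} with its uniform $\cS^+$ strengthening to yield a uniform rate off the unit circle, is where the technical heart of the argument lies.
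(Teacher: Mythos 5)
Your part (i) follows essentially the same route as the paper's: extract the multiplicative functional equation for $q(ta)/q(t)$ along the diagonal, invoke Karamata's characterization, and read off $\rho\geq 1$ from $q(t)\leq t$; the homogeneity of $c$ then follows by a one-line manipulation. That is fine.

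Part (ii) correctly sets up the radial decomposition and, importantly, correctly identifies the real obstacle: one needs the \emph{quantitative} estimate $q(tr)/q(t)-r^{1/\eta}=O(q_1(t))$, uniformly in $r$ over a compact of $(0,\infty)$, and ordinary Potter bounds only give $o(1)$. However, the ``Polya-type interpolation'' you propose for this step has a genuine gap. The pointwise rate in \cref{Q} at a point $(r,r)$ comes with a constant that depends on $r$ (nothing in \cref{con-proc}(i) forces these constants to be uniform in $r$), so the classical Polya mesh trick — monotonicity of $r\mapsto \widetilde Q(tr)$, Lipschitz limit, fixed finite mesh — only yields $\sup_r|\widetilde Q(tr)/\widetilde Q(t)-r^{1/\eta}|\to 0$, not a rate. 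To turn the Lipschitz error into $O(q_1(t))$ one would have to let the mesh width shrink at rate $q_1(t)$, but then the worst pointwise constant over the mesh points is uncontrolled. You cannot bootstrap the uniform rate from the pointwise rates this way without an additional structural input. The paper supplies exactly that input: from \cref{Q} at the single point $(1,1)$ and at a generic $(a,a)$ it derives that the slowly varying factor $L$ of $q$ is \emph{slowly varying with remainder} (Bingham–Goldie–Teugels \S 3.12), then invokes BGT Theorem 3.12.1 for the uniformity near $a=1$ and the representation theorem 3.12.2 — combined crucially with the monotonicity and $O$-regular variation of $q_1$ — to push the uniform bound down to $a\in(0,1/2]$. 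Nothing in your sketch replicates the role these two theorems play.

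A second, smaller gap: your treatment of the region $r\in[0,\delta]$ via the marginal bound $Q(tx,ty)\le tr$ does not deliver the required rate. Under asymptotic independence $t/q(t)\to\infty$, so $tr/q(t)$ is not even bounded in $t$ for fixed $r>0$; the marginal bound is therefore the wrong tool, and ``absorbed into the overall rate'' would require $\delta$ to shrink with $t$, which reopens the first gap on $[\delta(t),R]$. The paper avoids this by proving the bound $|q(at)/q(t)-a^{1/\eta}|=O(q_1(t))$ uniformly over the \emph{entire} range $a\in(0,b]$ directly from the representation of $L$, with no cut at a small radius.
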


		\begin{rem}

		In part (ii) of the previous result, the monotonicity condition on $q_1$ is artificial; it can be removed at the cost of replacing $q_1(t)$ by the non-decreasing function $\bar{q_1}(t) := \sup_{0 < s \leq t}q_1(s)$. Indeed, if \cref{con-proc} is satisfied with $q_1$, it is trivially satisfied with $\bar{q_1}$. Moreover, if $q_1(bt) = O(q_1(t))$, $\bar{q_1}$ also satisfies the same property.

		Because $q_1$ is positive non-decreasing, that required property implies that $q_1(bt) = O(q_1(t))$ holds for every $b \geq 1$ \citep[Corollary 2.0.6]{BGT1987}. The function $q_1$ is then said to be $O$-regularly varying at 0.

		\end{rem}

		\begin{proof} 

		\begin{itemize}
			\item[(i)] Recall that we assume $c(1, 1) = 1$. For any $x>0$, \cref{Q} implies that $Q(tx, tx) = q(t)(c(x, x) + o(1))$ and $Q(tx, tx) = q(tx)(1 + o(1))$. This can be manipulated into
			$$
			\frac{q(tx)}{q(t)} = \frac{c(x, x) + o(1)}{1 + o(1)} \too c(x, x).
			$$
			
			By Karamata's characterization theorem \citep[Theorem 1.4.1]{BGT1987}, $q$ has to be $\rho$--RV and $c(x, x) = x^\rho$, for some $\rho \in \R$. However, since $q(t) \leq t$, we must have $\rho \geq 1$. Moreover, for any $a, x, y > 0$,
			$$
			c(ax, ay) = \lim_{t \to 0} \frac{Q(atx, aty)}{q(t)} = \lim_{t \to 0} \frac{Q(tx, ty)}{q(t/a)} = \lim_{t \to 0} \frac{Q(tx, ty)}{q(t)} \frac{q(t)}{q(t/a)} = a^\rho c(x, y).
			$$
			Defining $\eta = 1/\rho$, this proves (i).

			\item[(ii)] For arbitrary $(x, y) \in [0, \infty)^2$, we write $(x, y) = a(u, v)$. We will prove that \cref{Q} holds uniformly over all $(u, v) \in \cS^+$ and over $a \in (0, b]$, for an arbitrary $b \in [1, \infty)$.
			
			We have
			\begin{equation} \label{blabla}
			\frac{Q(tx, ty)}{q(t)} = \frac{Q(atu, atv)}{q(t)} = \frac{q(at)}{q(t)} \frac{Q(atu, atv)}{q(at)}.
			\end{equation}
			
			First, the term $Q(atu, atv) / q(at)$ is equal to $c(u, v) + O(q_1(at))$ uniformly in $(u, v) \in \cS^+$. In order to control the term $q(at)/q(t)$, we note that since $q$ is $1/\eta$-RV, there exists a slowly varying function $L$ such that for any $a>0$,
			\begin{align*}
			\frac{L(at)}{L(t)} - 1 &= a^{-1/\eta} \left( \frac{q(at)}{q(t)} - c(a, a) \right) \\
			&= a^{-1/\eta} \left( \frac{ Q(at, at) (1 + O(q_1(at))) }{q(t)} - c(a, a) \right) \\
			&= a^{-1/\eta} \left( \frac{Q(at, at)}{q(t)} - c(a, a) + O(q_1(at)) \right) \\
			&= O(q_1(t) + q_1(at)) = O(q_1(bt)) = O(q_1(t)),
			\end{align*}
			where we have used the fact that $Q(at, at) = q(at)(1 + O(q_1(at)))$, which can be reversed into $q(at) = Q(at, at)(1 + O(q_1(at)))$. The function $L$ is thus \emph{slowly varying with remainder} \citep[Section 3.12]{BGT1987}. By theorem 3.12.1 of that book, the previous relation holds uniformly over all $a \in (1/2, b]$, so we henceforth focus on values $a \in (0, 1/2]$. Using Theorem 3.12.2 of the same book (which we adapt for slow variation at 0), we obtain that for some constants $C \in \R, T_0 \in (0, \infty)$ and for $t$ small enough,
			$$
			L(t) = \exp\left\{ C + \delta_1(t) + \int_t^{T_0} \frac{\delta_2(s)}{s} \d s \right\},
			$$
			where the functions $\delta_j$ are real-valued, measurable and satisfy $|\delta_j(t)| \leq K q_1(t)$ for some constant $K \in (0, \infty)$. The ratio $L(at)/L(t)$ becomes
			$$
			\frac{L(at)}{L(t)} = \exp\left\{ \delta_1(at) - \delta_1(t) + \int_{at}^t \frac{\delta_2(s)}{s} \d s \right\}.
			$$
			
			As $t \to 0$, we can use the monotonicity of $q_1$ to control the integral in the previous display:
			$$
			\left| \int_{at}^t \frac{\delta_2(s)}{s} \d s \right| \leq K \int_{at}^t \frac{q_1(s)}{s} \d s \leq K q_1(t) \int_{at}^t \frac{\d s}{s} = K q_1(t) \log \left( \frac{1}{a} \right).
			$$
			
			Because $a \leq 1/2$, $\log(1/a)$ is lower bounded, so $K$ can be chosen large enough so that $K q_1(t) \log(1/a)$ also upper bounds the absolute value of $\delta_1(at) - \delta_1(t) + \int_{at}^t \frac{\delta_2(s)}{s} \d s$. Therefore, using the fact that for every $h \in \R$, $|e^h - 1| \leq e^{|h|} - 1$, we obtain
			$$
			\left| \frac{L(at)}{L(t)} - 1 \right| \leq \exp\left\{ K q_1(t) \log \left( \frac{1}{a} \right) \right\} - 1 = a^{-K q_1(t)} - 1.
			$$
			
			What we are interested in is bounding $q(at)/q(t) - a^{1/\eta}$. This can be done by recalling that
			\begin{equation} \label{bound tau}
			\left| \frac{q(at)}{q(t)} - a^{1/\eta} \right| = a^{1/\eta} \left| \frac{L(at)}{L(t)} - 1 \right| \leq a^{1/\eta} \left( a^{- K q_1(t)} - 1 \right) =: \tau(a, t).
			\end{equation}
			
			By simple differentiation, it is straightforward to see that for a fixed value of $t$ small enough so that $K q_1(t) < 1/\eta$, the function $\tau$ is differentiable in its first argument and that
			$$
			\frac{\partial}{\partial a} \tau(a, t) = a^{1/\eta - 1} \left( (1/\eta - K q_1(t)) a^{-K q_1(t)} - 1/\eta \right).
			$$
			
			This suggests that the function attains its unique maximum at the point $a_{\max}(t) := (1 - \eta K q_1(t))^{1/(K q_1(t))}$. Considering \cref{bound tau}, we obtain that for all $a \in (0, 1/2]$,
			\begin{align*}
			\left| \frac{q(at)}{q(t)} - a^{1/\eta} \right| &\leq \tau(a_{\max}(t), t) \\
			&= (1 - \eta K q_1(t))^{1/(\eta K q_1(t))} \left( \frac{1}{1 - \eta K q_1(t)} - 1 \right) \\
			&= O(q_1(t))
			\end{align*}
			as $t \to 0$, since $(1 - \eta K q_1(t))^{1/(\eta K q_1(t))} \to e^{-1}$ and since the function $x \mapsto 1/(1-x)$ is continuously differentiable at 0. Finally, this allows us to rewrite \cref{blabla} as
			$$
			\frac{Q(tx, ty)}{q(t)} = \left( a^{1/\eta} + O(q_1(t)) \right) ( c(u, v) + O(q_1(at)) ) = a^{1/\eta} c(u, v) + O(q_1(t)),
			$$
			and the last equation holds uniformly over $a \in (0, b]$ and $(u, v) \in \cS^+$. The proof is over since $a^{1/\eta} c(u, v) = c(x, y)$.
			
		\end{itemize}

		\end{proof}

		\begin{lemm} \label{Csorgo}

		Let $\varphi: (0, T] \to (0, \infty)$ be a non-decreasing function such that $\varphi(t)/\sqrt{t} \to \infty$ as $t \to 0$ and assume there exists $c>0$ such that
		$$
		\int_0^T \frac{1}{x} \exp \left\{ -c \frac{\varphi^2(x)}{x} \right\} \d x < \infty.
		$$

		Then under the assumptions of \cref{big lemma AI}, for every $\lambda \in (0, 1)$ we have
		$$
		\sup_{\lambda/k \leq x \leq T} \frac{\sqrt{k}}{\varphi(x)} |u_n(x) - x| = \Op{1},
		$$
		where $u_n$ is defined as in \cref{proofs bivariate}. In particular, note that $\varphi(x) := 1$, as well as any function that satisfies $\varphi(x) := \sqrt{x \log\log(1/x)}$ in a neighborhood of 0, are valid choices.

		\end{lemm}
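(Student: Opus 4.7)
The plan is to reduce the uniform bound to a maximum over integer indices, apply the R\'enyi representation to express uniform order statistics via partial sums of i.i.d.\ exponentials, and then use a dyadic peeling argument combined with a Bernstein-type maximal inequality, with the integrability condition on $\varphi$ entering crucially at the end.

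First I would exploit the fact that $u_n(x) = (n/k) U_{n, \lfloor kx \rfloor}$ is piecewise constant on intervals $[j/k, (j+1)/k)$. Since $\varphi$ is non-decreasing,
$$
\sup_{\lambda/k \leq x \leq T} \frac{\sqrt{k}}{\varphi(x)} |u_n(x) - x| \leq \max_{j} \frac{1}{\sqrt{k}\,\varphi(j/k)} \big( |nU_{n,j} - j| + 1 \big),
$$
where the max is taken over integers $j$ with $j/k \in [\lambda/k, T]$. The additional $+1$ contributes at most $1/(\sqrt{k}\,\varphi(\lambda/k))$, which is $o(1)$ because the assumption $\varphi(x)/\sqrt{x} \to \infty$ implies $\sqrt{k}\,\varphi(\lambda/k) = \sqrt{\lambda}\cdot[\varphi(\lambda/k)/\sqrt{\lambda/k}] \to \infty$.

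Next I would use the R\'enyi representation $(U_{n,1}, \ldots, U_{n,n}) \stackrel{d}{=} (S_1/S_{n+1}, \ldots, S_n/S_{n+1})$ with $S_j := E_1 + \cdots + E_j$ for i.i.d.\ $\Exp{1}$ variables, and decompose
$$
nU_{n,j} - j = (S_j - j)\,\frac{n}{S_{n+1}} + j\,\frac{n - S_{n+1}}{S_{n+1}}.
$$
Dividing the second summand by $\sqrt{k}\,\varphi(j/k)$, using $|n-S_{n+1}|/\sqrt{n} = \Op{1}$ and the elementary fact that $x/\varphi(x) = \sqrt{x}\cdot\sqrt{x}/\varphi(x)$ is uniformly bounded on $(0,T]$ (itself a consequence of $\varphi(x)/\sqrt{x} \to \infty$ together with monotonicity of $\varphi$), this summand contributes at most $\Op{\sqrt{k/n}} = \op{1}$ uniformly in $j$. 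Since $n/S_{n+1} = 1 + \op{1}$, the task reduces to showing
$$
\max_{j} \frac{|S_j - j|}{\sqrt{k}\,\varphi(j/k)} = \Op{1}.
$$

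The main step is a dyadic peeling: partition the relevant range of $j$ into blocks $B_\ell := \{j \in \NN : 2^\ell \lambda \leq j < 2^{\ell+1} \lambda\}$ and set $x_\ell := 2^\ell \lambda/k$, so that $\varphi(j/k) \geq \varphi(x_\ell)$ throughout $B_\ell$. Applying Doob's maximal inequality to the exponential submartingale $\exp(\theta(S_j - j))$ (a standard Bernstein bound for the maximum of a centered random walk with sub-exponential increments) yields
$$
\Prob{}{\max_{j \in B_\ell} |S_j - j| > M\sqrt{k}\,\varphi(x_\ell)} \leq 2\exp\Big(-c\,\min\Big\{ \frac{M^2 \varphi^2(x_\ell)}{x_\ell},\ M\sqrt{k}\,\varphi(x_\ell) \Big\}\Big)
$$
for an absolute constant $c > 0$. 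Summed over $\ell$ (a union bound), the Gaussian-regime contribution is comparable, via the geometric spacing of $x_\ell$ and the monotonicity of $\varphi$, to $\int_0^T x^{-1}\exp(-c'M^2 \varphi^2(x)/x)\,\d x$, which is finite by the integrability hypothesis for $M$ large and tends to $0$ as $M \to \infty$ by dominated convergence. The sub-exponential-regime contribution is crushed geometrically by the growth $\sqrt{k}\,\varphi(x_\ell) \geq \sqrt{k}\,\varphi(\lambda/k) \to \infty$. Letting $M \to \infty$ then yields the required $\Op{1}$ bound.

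The hardest part will be the calibration between the two regimes of Bernstein's inequality near the left endpoint of the index range: for $j$ close to $\lambda$, the threshold $M\sqrt{k}\,\varphi(j/k)$ can exceed $j$, pushing the analysis out of the sub-Gaussian tail regime. It is precisely here that the combined assumptions $\varphi(x)/\sqrt{x} \to \infty$ and $\int_0^T x^{-1} e^{-c\varphi^2(x)/x}\,\d x < \infty$ become essential: the first guarantees that the sub-exponential bound decays fast enough to absorb the union bound over small blocks (and to handle the residual term from the R\'enyi decomposition), while the second precisely controls the peeling sum in the Gaussian regime.
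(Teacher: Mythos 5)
Your proposal takes a genuinely different route from the paper's. The paper's proof is essentially a citation plus bookkeeping: it invokes Theorem~2.6(ii) of Cs\"org\H{o} and Horv\'ath (1987) -- a weighted limit theorem for intermediate quantile processes under precisely the same EFKP-type integrability condition -- and then only reconciles the paper's quantile convention $u_n(x) = (n/k)U_{n,\lfloor kx\rfloor}$ with the $\lceil kx\rceil$ convention in that reference, by bounding the spacings $U_{n,\lceil kx\rceil}-U_{n,\lfloor kx\rfloor}$. You instead re-derive the result from scratch via the R\'enyi representation, reduction to integer indices, dyadic peeling, and a Bernstein maximal inequality. This is a valid self-contained alternative; it makes explicit where the integrability hypothesis and where $\varphi(x)/\sqrt{x}\to\infty$ enter, which the paper delegates to the citation. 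The reduction to integer $j$, the R\'enyi decomposition with the $\Op{\sqrt{k/n}}=\op{1}$ control of the normalization term, and the comparison of the dyadic Gaussian-regime sum to the integral condition all go through as you describe.

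Two points require repair. First, a boundary matter: for $x\in[\lambda/k,1/k)$ one has $\lfloor kx\rfloor=0$ and $U_{n,0}=0$, so this range corresponds to $j=0$ where $\varphi(j/k)$ is undefined; the case must be isolated (easily: there $\sqrt{k}|u_n(x)-x|/\varphi(x)=\sqrt{k}x/\varphi(x)\leq 1/(\sqrt{k}\varphi(\lambda/k))=o(1)$). Second, and more substantively, the treatment of the sub-exponential regime as stated does not go through. The lower bound $\sqrt{k}\varphi(x_\ell)\geq\sqrt{k}\varphi(\lambda/k)\to\infty$, combined with the $O(\log k)$ block count, is not enough by itself: for the canonical choice $\varphi(x)=\sqrt{x\log\log(1/x)}$ one gets $\sqrt{k}\varphi(\lambda/k)\asymp\sqrt{\log\log k}$ and hence $\log k\cdot e^{-cM\sqrt{\log\log k}}\to\infty$. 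What actually saves the argument is that \emph{within} the sub-exponential regime one necessarily has $M\sqrt{k}\varphi(x_\ell)\gtrsim 2^\ell\lambda$, so the per-block tail is $\lesssim e^{-c'2^\ell}$, giving genuine geometric summability; combining this with the divergence $\sqrt{k}\varphi(\lambda/k)\to\infty$ yields a total sub-exponential contribution of order $\log\big(\sqrt{k}\varphi(\lambda/k)\big)\,e^{-cM\sqrt{k}\varphi(\lambda/k)}\to 0$ as $n\to\infty$ for each fixed $M$. This also reshapes your concluding limit: one must first fix $M$ large to make the Gaussian peeling sum small uniformly in $n$, and then send $n\to\infty$ to make the sub-exponential piece negligible -- ``letting $M\to\infty$'' alone does not close the sub-exponential part of the argument.
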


		\begin{proof}

		This is essentially proved in \cite{CH1987}, up to a slight difference between their definition of the quantiles and ours. We prove here that this difference does not change the result. More precisely, their Theorem 2.6 (ii) states that
		\begin{equation} \label{2.6}
		\sup_{\lambda/k \leq x \leq T} \frac{|w_n(x)|}{\varphi(x)} = \Op{1},
		\end{equation}
		where we denote $w_n$ what they call $v_n$ (to avoid confusion with our definitions). From their definitions, one easily sees that
		$$
		w_n(x) = \frac{n}{\sqrt{k}} \left( \frac{k}{n} x - U_{n, \lceil kx \rceil} \right) = \sqrt{k} \left( x - \frac{n}{k} U_{n, \lceil kx \rceil} \right).
		$$

		Then, by the reverse triangle inequality,
		\begin{align*}
		| \sqrt{k}|u_n(x) - x| - |w_n(x)| | &\leq |\sqrt{k}(u_n(x) - x) + w_n(x)| \\
		&= \sqrt{k} \left| u_n(x) - \frac{n}{k} U_{n, \lceil kx \rceil} \right| = \frac{n}{\sqrt{k}} \left( U_{n, \lceil kx \rceil} - U_{n, \lfloor kx \rfloor} \right).
		\end{align*}

		Using this and the inequality $\lfloor x \rfloor \geq \lceil x \rceil - 1$, we have
		\begin{align}
		&\left| \sup_{\lambda/k \leq x \leq T} \frac{\sqrt{k}}{\varphi(x)} |u_n(x) - x| - \sup_{\lambda/k \leq x \leq T} \frac{|w_n(x)|}{\varphi(x)} \right| \notag \\
		&\quad \leq \frac{n}{\sqrt{k}} \sup_{\lambda/k \leq x \leq T} \frac{1}{\varphi(x)} \left( U_{n, \lceil kx \rceil} - U_{n, \lfloor kx \rfloor} \right) \notag \\
		&\quad \leq \frac{n}{\sqrt{k}} \sup_{\lambda/k \leq x \leq T} \frac{1}{\varphi(x)} \left( U_{n, \lceil kx \rceil} - U_{n, \lceil kx \rceil - 1} \right) \notag \\
		&\quad \leq \frac{n}{\sqrt{k}} \sup_{\lambda/k \leq x \leq (1+\lambda)/k} \frac{1}{\varphi(x)} \left( U_{n, \lceil kx \rceil} - U_{n, \lceil kx \rceil - 1} \right) \notag \\
		&\quad\quad + \frac{n}{\sqrt{k}} \sup_{(1+\lambda)/k \leq x \leq T} \frac{1}{\varphi(x)} \left( U_{n, \lceil kx \rceil} - U_{n, \lceil kx \rceil - 1} \right). \label{2 sup}
		\end{align}

		In the first term, since $\lambda/k \leq x \leq (1+\lambda)/k$ and $\lambda \in (0, 1)$, we must have $\lceil kx \rceil \in \{1, 2\}$. Therefore, we end up studying $U_{n, i} - U_{n, i-1}$, for some $i \in \{1, 2\}$. It is a well known fact that those differences, regardless of the value of $i$, have a Beta distribution with parameters 1 and $n$. In particular, they are both $\Op{1/n}$. It follows that the first supremum on the right hand side of \cref{2 sup} is asymptotically bounded in probability by
		$$
		\frac{1}{\sqrt{k}} \sup_{\lambda/k \leq x \leq (1+\lambda)/k} \frac{1}{\varphi(x)} = \frac{1}{ \sqrt{k} \varphi(\lambda/k) } \too 0
		$$
		by assumption on $\varphi$. As for the second term in \cref{2 sup}, it is equal to
		\begin{align*}
		\frac{n}{\sqrt{k}} &\sup_{(1+\lambda)/k \leq x \leq T} \frac{1}{\varphi(x)} \left( U_{n, \lceil kx \rceil} - U_{n, \lceil k (x - 1/k) \rceil} \right) \\
		&= \frac{n}{\sqrt{k}} \sup_{\lambda/k \leq x \leq T - 1/k} \frac{1}{\varphi(x + 1/k)} \left( U_{n, \lceil k (x+1/k) \rceil} - U_{n, \lceil kx \rceil} \right)
		\end{align*}
		after shifting $x$ to the right by $1/k$. Using \cref{2.6}, this is in turn equal to
		\begin{align*}
		\frac{n}{\sqrt{k}} &\sup_{\lambda/k \leq x \leq T - 1/k} \frac{1}{\varphi(x + 1/k)} \left( \frac{k}{n} \left( x + \frac{1}{k} \right) - \frac{k}{n} x \right) + \frac{n}{\sqrt{k}} \Op{\frac{\sqrt{k}}{n}} \\
		&= \frac{1}{\sqrt{k}} \sup_{\lambda/k \leq x \leq T - 1/k} \frac{1}{\varphi(x + 1/k)} + \Op{1} \\
		&= \frac{1}{\sqrt{k} \varphi((1+\lambda)/k)} + \Op{1} \\
		&= \Op{1}
		\end{align*}
		once again by the properties of $\varphi$. We have shown that the difference between the quantity we are interested in and the term appearing in \cref{2.6} is $\Op{1}$. We may thus conclude, by \cref{2.6}, that
		$$
		\sup_{\lambda/k \leq x \leq T} \frac{\sqrt{k}}{\varphi(x)} |u_n(x) - x| = \sup_{\lambda/k \leq x \leq T} \frac{|w_n(x)|}{\varphi(x)} + \Op{1} = \Op{1}.
		$$

		\end{proof}

		\begin{coro} \label{coro app}

		Define the random functions $u_n$ and $v_n$ as in \cref{proofs bivariate}. Then, as $n \to \infty$,
		$$
		\sup_{0 \leq x \leq 2T} |u_n(x) - x| \quad \text{and} \quad \sup_{0 \leq y \leq 2T} |v_n(y) - y|
		$$
		are both $\Op{1/\sqrt{k}}$.

		\end{coro}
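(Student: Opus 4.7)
The plan is to invoke the preceding \cref{Csorgo} with the constant choice $\varphi \equiv 1$, applied on the enlarged interval $[\lambda/k, 2T]$ rather than $[\lambda/k, T]$ (inspection of the statement shows $T$ there plays only the role of an arbitrary finite upper endpoint, so the same conclusion holds). Fixing $\lambda = 1$, this yields
\[
\sup_{1/k \leq x \leq 2T} \sqrt{k}\, |u_n(x) - x| = O_P(1),
\]
which takes care of all but the small leftmost sub-interval $[0, 1/k)$.

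On $[0, 1/k)$, we have $\lfloor kx \rfloor = 0$ by construction, and by the convention $U_{n,0} = 0$ recalled at the beginning of \cref{proofs bivariate}, we get $u_n(x) = (n/k)\, U_{n,0} = 0$. Hence $|u_n(x) - x| = x < 1/k$, so
\[
\sup_{0 \leq x < 1/k} \sqrt{k}\, |u_n(x) - x| \leq 1/\sqrt{k} = o(1).
\]
Combining these two bounds gives $\sup_{0 \leq x \leq 2T} \sqrt{k}\, |u_n(x) - x| = O_P(1)$, which is exactly the claim for $u_n$.

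The argument for $v_n$ is identical in every respect, since $v_n$ is defined from the order statistics of $V_1, \dots, V_n$ in exactly the same way as $u_n$ is from those of $U_1, \dots, U_n$, and \cref{Csorgo} applies symmetrically to both marginal empirical quantile processes under \cref{con-proc}. There is no substantive obstacle here: the only point worth checking is that $\varphi \equiv 1$ indeed satisfies the hypotheses of \cref{Csorgo} (which the remark following that lemma explicitly confirms) and that the boundary region $[0, 1/k)$ is harmless because $u_n$ is exactly zero there.
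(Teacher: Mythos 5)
Your proof is correct and takes essentially the same route as the paper: both split $[0,2T]$ into $[0,1/k)$ where $u_n\equiv 0$ so the deviation is deterministically at most $1/k$, and $[1/k,2T]$ where \cref{Csorgo} with $\varphi\equiv 1$ (and $\lambda=1$) gives the $O_P(1/\sqrt k)$ bound, then argue symmetrically for $v_n$. Your remark that the lemma applies with $2T$ in place of $T$ is a point the paper leaves implicit but is indeed correct.
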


		\begin{proof}

		Note that by definition, $u_n(z) = v_n(z) = 0$ whenever $z<1/k$. It follows that
		\begin{align*}
		\sup_{0 \leq x \leq 2T} |u_n(x) - x| &\leq \sup_{0 \leq x < 1/k} |u_n(x) - x| + \sup_{1/k \leq x \leq 2T} |u_n(x) - x| \\
		&= \sup_{0 \leq x < 1/k} x + \sup_{1/k \leq x \leq 2T} |u_n(x) - x| \\
		&= \frac{1}{k} + \sup_{1/k \leq x \leq 2T} |u_n(x) - x|.
		\end{align*}

		This is $\Op{1/\sqrt{k}}$ by the preceding \cref{Csorgo} with the function $\varphi(x) = 1$. The same proof holds with $u_n$ replaced by $v_n$.

		\end{proof}

		\begin{lemm} \label{limit Gn}

		 {Under \cref{con-proc} the process $H_n$ as defined in \cref{eq:defHn} converges to the process $W$ from \cref{big lemma AI} in $\ell^\infty([0,2T]^2)$.}

		\end{lemm}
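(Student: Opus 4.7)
The process $H_n$ is a centered, tail-scaled empirical process indexed by lower-left rectangles. Writing
\[
H_n(x,y) = \frac{1}{\sqrt{m}} \sum_{i=1}^n \xi_{n,i}(x,y), \qquad \xi_{n,i}(x,y) := \Ind{U_i \leq kx/n,\, V_i \leq ky/n} - Q(kx/n, ky/n),
\]
the proof follows the classical two-step program in $\ell^\infty([0,2T]^2)$: first convergence of finite-dimensional distributions, then asymptotic tightness.

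For the finite-dimensional distributions, a direct calculation using independence of the $(U_i, V_i)$ yields
\[
\E{}{H_n(x,y) H_n(x',y')} = \frac{n}{m} \Big( Q\big(\tfrac{k(x \wedge x')}{n}, \tfrac{k(y \wedge y')}{n}\big) - Q\big(\tfrac{kx}{n}, \tfrac{ky}{n}\big) Q\big(\tfrac{kx'}{n}, \tfrac{ky'}{n}\big) \Big).
\]
By \cref{Q} and the definition $m = n q(k/n)$, the first term converges to $c(x \wedge x', y \wedge y')$ while the second is $O(m/n) \to 0$, thereby reproducing the covariance of $W$. The summands $\xi_{n,i}/\sqrt{m}$ are bounded by $1/\sqrt{m} \to 0$, so the Lindeberg condition is trivial; the multivariate Lindeberg--Feller CLT combined with the Cram\'er--Wold device gives convergence of finite-dimensional distributions to the stated centered Gaussian process.

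For asymptotic tightness, the plan is to exploit the VC structure of the indicator class. The functions $\xi_{n,i}$ are indexed by a class of lower-left rectangles, which is VC-subgraph independent of $n$ with constant envelope $1$. The locally uniform version of \cref{Q} given by \cref{RV1}(ii), combined with the continuity of the limit $c$ (also a consequence of \cref{RV1}(i)--(ii)), provides the modulus estimate
\[
\frac{n}{m}\E{}{\big(\xi_{n,1}(x,y) - \xi_{n,1}(x',y')\big)^2} \longrightarrow \rho_c\big((x,y),(x',y')\big),
\]
locally uniformly in $(x,y),(x',y') \in [0,2T]^2$, where $\rho_c$ is a continuous pseudometric built from increments of $c$ and vanishing on the diagonal. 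Standard uniform-entropy arguments for triangular arrays of empirical processes indexed by VC classes (e.g.\ Theorem~2.11.22 of \cite{VW1996}) then yield asymptotic equicontinuity with respect to $\rho_c$, and hence tightness in $\ell^\infty([0,2T]^2)$; combined with the finite-dimensional convergence this completes the proof.

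The main obstacle is the tail scaling: the indicators have probability of order $m/n$ rather than order one, so one must renormalize both the empirical process (by $1/\sqrt{m}$ instead of $1/\sqrt{n}$) and the $L^2$-pseudometric controlling the class's entropy, and verify that the resulting limit pseudometric $\rho_c$ makes sample paths of $W$ continuous. Crucially, the VC property of the rectangle class bypasses any need for smoothness of $c$ near the axes, so no hypothesis beyond \cref{con-proc}(i) is required.
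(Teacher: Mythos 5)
Your proposal is correct and follows essentially the same route as the paper: the paper also proves this via a Donsker theorem for changing function classes (Theorem 11.20 of Kosorok, 2008), checking the VC/uniform-entropy condition, the covariance convergence to $c(x\wedge x', y\wedge y')$ using \cref{Q} and $m=nq(k/n)$, the trivial Lindeberg condition since the envelope is $\sqrt{n/m}\to o(\sqrt n)$, and a modulus estimate that converges locally uniformly by \cref{RV1}(ii). The only cosmetic difference is that you separate fidi convergence (Lindeberg--Feller plus Cram\'er--Wold) from tightness and cite Theorem 2.11.22 of van der Vaart and Wellner (1996) rather than Kosorok's version, but these are the same machinery.
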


		\begin{proof}

		Denoting $f_{n, (x, y)}(u, v) := \sqrt{\frac{n}{m}} \Ind{u \leq \frac{k}{n} x, v \leq \frac{k}{n} y}$, we see that $H_n$ can be written as
		$$
		H_n(x, y) = \sqrt{n} \left( \frac{1}{n} \sum_{i=1}^n f_{n, (x, y)}(U_i, V_i) - \E{}{f_{n, (x, y)}(U, V)} \right).
		$$

		Therefore, convergence of the process $H_n$ to a Gaussian process in $\ell^\infty([0, 2T]^2)$ is equivalent to checking that the sequence of function classes
		$$
		\cF_n = \{f_{n, (x, y)} : (x, y) \in [0, 2T]^2\}
		$$
		are Donsker classes for the distribution of $(U, V)$. This is guaranteed by Theorem 11.20 of \cite{K2008}, provided that we can check the six conditions. Note that $\cF_n$ admits the envelope function $F_n = f_{n, (2T, 2T)}$.

		\begin{itemize}
			
			\item[(0)] First, the AMS condition is trivially satisfied; by right continuity of indicator functions, for any $n \in \N$, $(x, y) \in [0, 2T]^2$ and $(u, v) \in [0, 1]^2$,
			$$
			\inf_{(x', y') \in \Q^2} |f_{n, (x', y')}(u, v) - f_{n, (x, y)}(u, v)| = 0.
			$$
			It follows that Equation (11.7) of \cite{K2008} is satisfied with $T_n = \Q^2$, which is countable. Hence the classes $\cF_n$ are AMS.
			
			\item[(A)] For every $n$, it is easily checked that $\cF_n$ is a VC class with VC-index 2. Therefore, condition (A) is a direct consequence of Lemma 11.21 of \cite{K2008}.
			
			\item[(B)] For $(x, y), (x', y') \in [0, 2T]^2$ arbitrary, it follows from the definition of $H_n$ that
			\begin{align*}
			\E{}{H_n(x, y) H_n(x', y')} &= \E{}{f_{n, (x, y)}(U, V) f_{n, (x', y')}(U, V)} \\
			&\phantom{=} - \E{}{f_{n, (x, y)}(U, V)} \E{}{f_{n, (x', y')}(U, V)} \\
			&= \frac{n}{m} \Prob{}{U \leq \frac{k}{n} (x \wedge x'), V \leq \frac{k}{n} (y \wedge y')} \\
			&\phantom{=} - \frac{n}{m} \Prob{}{U \leq \frac{k}{n} x, V \leq \frac{k}{n} y} \Prob{}{U \leq \frac{k}{n} x', V \leq \frac{k}{n} y'}.
			\end{align*}
			
			Recall that $n/m = 1/q(k/n)$. Therefore, the first term of the last display converges to $c(x \wedge x', y \wedge y')$. The second term vanishes since both probabilities are of the order of $m/n$. The convariance functions of $H_n$ thus converge pointwise to the covariance function of $W$.
			
			\item[(C)] By definition of the envelope functions and by assumption, we have
			$$
			\mathop{\lim\sup}_{n \to \infty} \E{}{F_n^2(U, V)} = \mathop{\lim\sup}_{n \to \infty} \frac{n}{m} \Prob{}{U \leq \frac{k}{n} 2T, V \leq \frac{k}{n} 2T} = c(2T, 2T) < \infty.
			$$
			
			\item[(D)] For every $\eps>0$,
			$$
			\E{}{F_n^2(U, V) \Ind{F_n(U, V) > \eps \sqrt{n}}} \leq \frac{n}{m} \Ind{\sqrt{\frac{n}{m}} > \eps \sqrt{n}},
			$$
			which is equal to 0 as soon as $m \geq \eps^{-2}$.
			
			\item[(E)] We first recall that for arbitrary events $A, B$,
			$$
			\Prob{}{\ind_A \neq \ind_B} = \Prob{}{A \backslash B} + \Prob{}{B \backslash A} = \Prob{}{A} + \Prob{}{B} - 2 \Prob{}{A \cap B}.
			$$
			
			A direct application of this fact yields
			\begin{align*}
			\rho_n^2((x, y), (x', y')) :&= \E{}{(f_{n, (x, y)}(U, V) - f_{n, (x', y')}(U, V))^2} \\
			&= \frac{n}{m} \Prob{}{ \Ind{U \leq \frac{k}{n} x, V \leq \frac{k}{n} y} \neq \Ind{U \leq \frac{k}{n} x', V \leq \frac{k}{n} y'} } \\
			&= \frac{n}{m} \Prob{}{U \leq \frac{k}{n} x, V \leq \frac{k}{n} y} + \frac{n}{m} \Prob{}{U \leq \frac{k}{n} x', V \leq \frac{k}{n} y'} \\
			&\phantom{=} - 2 \frac{n}{m} \Prob{}{U \leq \frac{k}{n} (x \wedge x'), V \leq \frac{k}{n} (y \wedge y')} \\
			&\longrightarrow c(x, y) + c(x', y') - 2 c(x \wedge x', y \wedge y') \\
			&=: \rho^2((x, y), (x', y')).
			\end{align*}
			
			Moreover, by \cref{RV1}(ii), this convergence is uniform over $[0, 2T]^4$. This means that for any sequences $x_n, y_n, x_n', y_n'$ in $[0, 2T]$ such that $\rho((x_n, y_n), (x_n', y_n')) \to 0$, $\rho_n((x_n, y_n), (x_n', y_n'))$ is equal to
			\begin{align*}
			&\{ \rho_n((x_n, y_n), (x_n', y_n')) - \rho((x_n, y_n), (x_n', y_n')) \} + \rho((x_n, y_n), (x_n', y_n')) \\
			&\quad \leq \sup_{(x, y, x', y') \in [0, 2T]^4} | \rho_n((x, y), (x', y')) - \rho((x, y), (x', y')) | \\
			&\quad\quad + \rho((x_n, y_n), (x_n', y_n')) \\
			&\quad \longrightarrow 0.
			\end{align*}
			
		\end{itemize}

		Finally, the theorem implies that $H_n \wc W$ in $\ell^\infty([0, 2T]^2)$.

		\end{proof}

		\begin{lemm} \label{nu}

		Let $Q$ be a bivariate copula. If there exists a positive function $q$ and a finite function $c$ that is not everywhere 0 such that for every $(x, y) \in [0, \infty)^2$, as $n \to \infty$,
		$$
		\frac{Q(x/n, y/n)}{q(1/n)} \too c(x, y),
		$$
		then there exists a measure $\nu$ such that for every $(x, y) \in [0, \infty)^2$, $c(x, y) = \nu((0, x] \times (0, y])$. Note that \cref{Q} satisfies this setting.

		\end{lemm}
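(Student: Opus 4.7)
The plan is to construct $\nu$ via the standard Lebesgue--Stieltjes / Carath\'eodory extension procedure, treating $c$ as a cumulative distribution function. First I would transfer structural properties from $Q$ to $c$ by passing to the pointwise limit: since $Q$ is a bivariate copula, it is non-negative, has non-negative rectangle increments (i.e., is 2-increasing), and vanishes on the axes $Q(x, 0) = Q(0, y) = 0$; all three features are preserved in the pointwise limit. Since the hypothesis of the lemma matches the setting of \cref{Q}, \cref{RV1}(i) applies and produces $\eta \in (0, 1]$ such that $c$ is $1/\eta$-homogeneous and $q$ is regularly varying at $0$ with index $1/\eta \in [1, \infty)$.

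Next I would establish continuity of $c$ on the open quadrant $(0, \infty)^2$. Fix $(x, y) \in (0, \infty)^2$ and a sequence $(x_k, y_k) \to (x, y)$ with $x_k, y_k > 0$, and set $t_k^- := \min(x_k/x, y_k/y)$ and $t_k^+ := \max(x_k/x, y_k/y)$. Then $(t_k^- x, t_k^- y) \leq (x_k, y_k) \leq (t_k^+ x, t_k^+ y)$ componentwise and $t_k^\pm \to 1$, so by monotonicity and homogeneity of $c$,
\[
(t_k^-)^{1/\eta} c(x, y) \leq c(x_k, y_k) \leq (t_k^+)^{1/\eta} c(x, y),
\]
which forces $c(x_k, y_k) \to c(x, y)$. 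Separately, the boundary vanishing $\lim_{\epsilon \downarrow 0} c(\epsilon, y) = 0$ for $y > 0$ fixed would be verified: under asymptotic dependence $\chi > 0$, the marginal bound $Q(\epsilon/n, y/n) \leq \epsilon/n$ combined with $nq(1/n) \to \chi > 0$ gives $c(\epsilon, y) \leq \epsilon/\chi \to 0$, while under asymptotic independence a sharper estimate is required.

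Finally I would define a pre-measure on the semiring of half-open rectangles in $[0, \infty)^2$ by $\nu_0((a, b] \times (c, d]) := c(b, d) - c(a, d) - c(b, c) + c(a, c)$, which is non-negative by the rectangle-increment property. Continuity of $c$ on $(0, \infty)^2$ together with boundary vanishing yield $\sigma$-additivity on the semiring, and Carath\'eodory's extension theorem produces a Borel measure $\nu$. The identity $c(x, y) = \nu((0, x] \times (0, y])$ for interior $(x, y)$ follows from the construction by letting $a, c \downarrow 0$, using $c(0, \cdot) = c(\cdot, 0) = 0$ and continuity of measure from above; for boundary points with $x = 0$ or $y = 0$, both sides vanish trivially. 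The main obstacle I anticipate is verifying the boundary limit $c(\epsilon, y) \to 0$ under asymptotic independence, where $nq(1/n) \to 0$ renders the naive marginal bound vacuous; a refined estimate using 2-increasingness of $Q$ together with the regular variation of $q$ (or an appeal to uniform convergence on $\cS^+$ via \cref{RV1}(ii) when applicable) should close the gap.
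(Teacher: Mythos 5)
Your route is genuinely different from the paper's. The paper defines the rescaled measures $\nu_n((0,x]\times(0,y]) := Q(x/n,y/n)/q(1/n)$ directly, normalizes them to probability measures $P_{n,a}$ supported on the compact square $[0,a]^2$, extracts a weakly convergent subsequence by Helly's selection theorem, and identifies the weak limit $P_a$ from the pointwise convergence of the rescaled distribution function to $c/c(a,a)$; the limiting measure $\nu$ drops out of the compactness argument. You instead transfer the structural properties of $Q$ to $c$ (2-increasingness, vanishing on the axes, interior continuity from homogeneity) and then build $\nu$ from scratch by feeding the rectangle increments of $c$ into Carath\'eodory's extension theorem. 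Your interior-continuity argument via the sandwich $c(t_k^- x, t_k^- y) \le c(x_k, y_k) \le c(t_k^+ x, t_k^+ y)$ together with $1/\eta$-homogeneity from \cref{RV1}(i) is correct.

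The gap you flag at the end, namely the boundary limit $\lim_{\epsilon\downarrow 0} c(\epsilon,y) = 0$, is not an inconvenience peculiar to your construction: it is the crux of \emph{both} proofs. In yours it is needed for $\sigma$-additivity of the pre-measure and for the passage $\nu((0,x]\times(0,y]) = \lim_{a,b\downarrow 0}\nu((a,x]\times(b,y])$. In the paper's proof it is hidden in the step ``therefore, we must have $P_a((0,x]\times(0,y]) = c(x,y)/c(a,a)$'': weak convergence only gives $P_{n_j,a}(A) \to P_a(A)$ for $P_a$-continuity sets $A$, and $(0,x]\times(0,y]$ is such a set precisely when $P_a$ puts no mass on the axis $\{0\}\times[0,y]$ (nor on $[0,x]\times\{0\}$); but the mass of $P_a$ on $\{0\}\times[0,y]$ is, by right-continuity of the distribution function of $P_a$ and the identification on the interior, exactly $\lim_{\epsilon\downarrow 0} c(\epsilon,y)/c(a,a)$. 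So the paper's identification step uses the very fact you single out.

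Moreover this fact genuinely requires more than the pointwise convergence assumed in the statement of \cref{nu}. For example the copula $Q(u,v) = \tfrac12\min(u,v^2) + uv - \tfrac12 uv^2$ (a $\tfrac12$--$\tfrac12$ mixture of a singular copula on $\{u = v^2\}$ and an absolutely continuous one with density $2(1-v)$) satisfies the hypothesis with $q(t) = t^2$ and $c(x,y) = \tfrac12 y^2 + xy$ for $x>0$, $c(0,y) = 0$, whence $\lim_{\epsilon\downarrow0}c(\epsilon,y) = \tfrac12 y^2 > 0$: the conclusion then fails outright, since $\nu((0,\epsilon]\times(0,y])$ would have to tend both to $0$ (continuity from above) and to $\tfrac12 y^2$. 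What rescues the paper's use of the lemma is \cref{con-proc}(i): uniform convergence on $\cS^+$ implies, via \cref{RV1}(ii), locally uniform convergence on $[0,\infty)^2$, and then $c$ is a locally uniform limit of the continuous functions $(x,y)\mapsto Q(tx,ty)/q(t)$, hence continuous on all of $[0,\infty)^2$ including the axes; this supplies your boundary limit. Your fallback idea of appealing to uniform convergence via \cref{RV1}(ii) is therefore exactly the right repair, and under that uniformity both your Carath\'eodory route and the paper's Helly route go through cleanly; under pointwise convergence alone neither does.
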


		\begin{proof}

		Define the measures $\nu_n$ by
		$$
		\nu_n((0, x] \times (0, y]) = \frac{Q(x/n, y/n)}{q(1/n)}
		$$
		and fix $a \in (0, \infty)$. Note that since $c$ is not everywhere 0, $c(a, a)$ is eventually positive, so for $n$ and $a$ large enough, $\nu_n((0, a]^2) > 0$. Then clearly
		$$
		P_{n, a} := \left( \nu_n((0, a]^2) \right)^{-1} \nu_n
		$$
		is a probability measure on $[0, a]^2$. Since it is supported on the same compact set for every $n$, the sequence $\{P_{n, a}: n \in \N\}$ is tight. Thus, by Helly's selection theorem there exists a probability measure $P_a$ also supported on $[0, a]^2$ and a subsequence $\{n_j: j \in \N\}$ such that $P_{n_j, a} \wc P_a$. However, by definition of $\nu_n$, we have for every $(x, y) \in [0, a]^2$
		$$
		P_{n_j, a}((0, x] \times (0, y]) \too \frac{c(x, y)}{c(a, a)}.
		$$

		Therefore, we must have $P_a((0, x] \times (0, y]) = c(x, y)/c(a, a)$, so choosing $\nu_a = c(a, a) P_a$, the result holds for every $(x, y) \in [0, a]^2$. However, the value of $\nu_a((0, x] \times (0, y])$ is independent of $a$ (as long as $x \vee y \leq a$), so $\nu_a$ can be uniquely extended to a measure $\nu$ on the bounded Borel sets of $[0, \infty)^2$.

		\end{proof}

		\begin{lemm}[similar to Theorem 1 in \cite{RL2009}] \label{int rep}

		Define the function $c$ as in \cref{Q}. Then there exists a finite measure $H$ on $[0, 1]$ such that, for every $(x, y) \in [0, \infty)^2$,
		$$
		c(x, y) = \int_{[0, 1]} \left( \frac{x}{1-w} \wedge \frac{y}{w} \right)^{1/\eta} H(\d w).
		$$

		It is also useful to note that this integral is equal to
		$$
		\int_{\left[ 0, \frac{y}{x+y} \right]} \left( \frac{x}{1-w} \right)^{1/\eta} H(\d w) + \int_{\left( \frac{y}{x+y}, 1 \right]} \left( \frac{y}{w} \right)^{1/\eta} H(\d w).
		$$

		\end{lemm}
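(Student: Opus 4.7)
The plan is to obtain the spectral (polar-coordinate) representation of $c$ by combining the existence of an associated measure $\nu$ given by \cref{nu} with the homogeneity property of \cref{RV1}. By \cref{nu} there is a Borel measure $\nu$ on $[0,\infty)^2$ with $c(x,y) = \nu((0,x]\times(0,y])$. Since $F_1$ and $F_2$ are continuous, $Q(tx,0) \equiv Q(0,ty) \equiv 0$, so $c(x,0) = c(0,y) = 0$ and $\nu$ places no mass on the coordinate axes. Moreover, \cref{RV1}(i) asserts that $c$ is $1/\eta$-homogeneous, and the identity $\nu((0,tx]\times(0,ty]) = t^{1/\eta}\nu((0,x]\times(0,y])$ extends by inclusion--exclusion and a standard monotone-class argument to $\nu(tA) = t^{1/\eta}\nu(A)$ for every Borel set $A \subset (0,\infty)^2$ and every $t > 0$.

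Next, I introduce the polar map $\psi\colon (0,\infty)^2 \to (0,\infty)\times(0,1)$ defined by $\psi(u,v) = (u+v,\,v/(u+v))$ and define a candidate spectral measure $H$ on $[0,1]$ by
$$H(B) := \nu\big(\{(u,v)\in(0,\infty)^2 : u+v \leq 1,\ v/(u+v) \in B\}\big)$$
for Borel $B \subset (0,1)$, and set $H(\{0\}) = H(\{1\}) = 0$. The measure $H$ is finite since $H([0,1]) \leq \nu((0,1]^2) = c(1,1) = 1$. Observing that $\psi^{-1}((0,r_0]\times B) = r_0 \cdot \psi^{-1}((0,1]\times B)$, the homogeneity of $\nu$ yields
$$\nu(\psi^{-1}((0,r_0]\times B)) = r_0^{1/\eta} H(B)$$
for every $r_0 > 0$ and Borel $B \subset [0,1]$. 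By a monotone-class argument, this is equivalent to the disintegration $\nu\circ\psi^{-1}(\d r,\d w) = \eta^{-1}r^{1/\eta-1}\,\d r\otimes H(\d w)$.

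Finally, for $(x,y)\in(0,\infty)^2$, using the conventions $x/0 = y/0 = \infty$, the image $\psi((0,x]\times(0,y])$ equals $\{(r,w): 0 < r \leq x/(1-w)\wedge y/w,\ 0 < w < 1\}$, so integrating out the radial variable against the disintegration yields
$$c(x,y) = \int_{(0,1)}\int_0^{x/(1-w)\wedge y/w} \eta^{-1}r^{1/\eta-1}\,\d r\,H(\d w) = \int_{[0,1]}\Big(\frac{x}{1-w}\wedge\frac{y}{w}\Big)^{1/\eta} H(\d w),$$
where extending the domain to $[0,1]$ is legitimate because $H(\{0\}) = H(\{1\}) = 0$. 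The degenerate cases $x = 0$ or $y = 0$ are handled directly, as both sides vanish. I expect the main technical obstacle to lie in the rigorous extension of the scaling property of $\nu$ from rectangles to arbitrary Borel sets, together with the accompanying monotone-class argument justifying the disintegration formula on the polar side.
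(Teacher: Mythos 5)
Your proposal is correct and follows essentially the same route as the paper's own proof: both appeal to \cref{nu} to obtain the representing measure $\nu$, apply the polar map $(u,v)\mapsto (u+v,\,v/(u+v))$, use the homogeneity of $\nu$ (inherited from \cref{RV1}) to show the pushforward factorizes as a product of a radial measure with distribution function $r^{1/\eta}$ and a finite angular measure $H$, and then integrate out the radial component. The only cosmetic difference is that you make the radial density $\eta^{-1}r^{1/\eta-1}\,\d r$ explicit where the paper keeps the radial factor as an abstract measure $G$ and evaluates $G((0,a]) = a^{1/\eta}$ by Fubini; both handle the finiteness of $H$ and the exclusion of the axes identically.
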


		\begin{proof}

		By \cref{nu}, we can write
		\begin{equation} \label{c}
		c(x, y) = \nu((0, x] \times (0, y]) = \int_{[0, \infty)^2} \mathds{1}_{(0, x] \times (0, y]} \d \nu = \int_{[0, \infty)^2 \backslash \{0\}} \mathds{1}_{[0, x] \times [0, y]} \d \nu.
		\end{equation}

		In the last equality, nothing changed since $\nu((0, x] \times \{0\} \cup \{0\} \times (0, y]) \leq c(x, 0) + c(0, y) = 0$. Then, through the mapping $f:[0, \infty)^2 \backslash \{0\} \to (0, \infty) \times [0, 1]$ defined by $f(x, y) = (x+y, \frac{y}{x+y})$, define the push-forward measure $\mu = \nu \circ f^{-1}$. By homogeneity of $\nu$, we see that $\mu$ is a product measure:
		$$
		\mu((0, r] \times (0, w]) = r^{1/\eta} \mu((0, 1] \times (0, w]) =: G((0, r]) H((0, w]),
		$$
		where $G$ is a measure on $(0, \infty)$ and $H$ is a measure on $[0, 1]$. Finally, for any $(x, y)$, define the function $g: (0, \infty) \times [0, 1] \to \R$ as
		$$
		g(r, w) = \Ind{r \leq \frac{x}{1-w} \wedge \frac{y}{w}},
		$$
		so that $g \circ f = \mathds{1}_{[0, x] \times [0, y]}$. Using \cref{c} and Theorem 9.15 from \cite{T1998}, we have
		\begin{align*}
		c(x, y) &= \int_{[0, \infty)^2 \backslash \{0\}} g \circ f \d \nu \\
		&= \int_{(0, \infty) \times [0, 1]} g \d \mu \\
		&= \int_{[0, 1]} \int_{(0, \infty)} \mathds{1}_{\left( 0, \frac{x}{1-w} \wedge \frac{y}{w} \right]} (r) G(\d r) H(\d w) \\
		&= \int_{[0, 1]} \left( \frac{x}{1-w} \wedge \frac{y}{w} \right)^{1/\eta} H(\d w),
		\end{align*}
		where we used Fubini's theorem to write the integral with respect to the product measure $\mu$ as a double integral. Moreover, note that $H$ is finite since
		$$
		H([0, 1]) = \mu((0, 1] \times [0, 1]) = \nu \left(\left\{ (x, y) \in [0, \infty)^2: x+y \leq 1 \right\}\right) \leq c(1, 1) = 1.
		$$

		\end{proof}

		\begin{lemm} \label{bound increments c}

		Define the function $c$ as in \cref{Q}. Then for every $(x, y) \in [0, T]^2$ and $h>0$,
		$$
		c(x+h, y) - c(x, y) \leq \frac{1}{\eta} h \frac{c(x+h, y)}{x+h}.
		$$

		\end{lemm}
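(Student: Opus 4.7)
The plan is to exploit the $1/\eta$-homogeneity of $c$ (from \cref{RV1}) together with its monotonicity in each argument (which is immediate from the measure representation in \cref{nu}, since $c(x,y) = \nu((0,x]\times(0,y])$), and then invoke a simple elementary inequality. The intuition is that shrinking the first coordinate from $x+h$ down to $x$ by the scalar $\lambda = x/(x+h)$ can be rewritten, via homogeneity, as a simultaneous rescaling that also shrinks the second coordinate, and this latter configuration is pointwise smaller than $c(x,y)$ by monotonicity.

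Concretely, assume first $x>0$ and set $\lambda := x/(x+h) \in (0,1)$. Applying the homogeneity relation $c(\lambda u,\lambda v) = \lambda^{1/\eta} c(u,v)$ at $(u,v)=(x+h,y)$ yields
\[
c(x,\lambda y) \;=\; \lambda^{1/\eta}\, c(x+h,y).
\]
Monotonicity of $c$ in its second argument gives $c(x,\lambda y) \leq c(x,y)$, so
\[
c(x+h,y) - c(x,y) \;\leq\; c(x+h,y) - c(x,\lambda y) \;=\; c(x+h,y)\bigl(1 - \lambda^{1/\eta}\bigr).
\]

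It then remains to show that $1-\lambda^{1/\eta} \leq (1-\lambda)/\eta$ for $\lambda \in (0,1]$, since $(1-\lambda)/\eta = h/(\eta(x+h))$ and plugging back gives exactly the claimed bound. For this, set $g(u) := 1 - u^{1/\eta} - (1-u)/\eta$; then $g(1)=0$ and $g'(u) = \tfrac{1}{\eta}(1 - u^{1/\eta-1}) \geq 0$ on $(0,1]$ because $1/\eta \geq 1$ (as $\eta \in (0,1]$ by \cref{RV1}). Hence $g \leq 0$ on $(0,1]$, as required.

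Finally, the edge case $x=0$ follows directly: $c(0,y) = \nu(\emptyset) = 0$ and $\eta \in (0,1]$ gives $1 \leq 1/\eta$, so $c(h,y) \leq c(h,y)/\eta$, matching the inequality. There is no real obstacle here; the only subtlety worth flagging is that the monotonicity of $c$ in each coordinate is not built into \cref{Q} or into the definition of $c$ directly, but must be extracted from \cref{nu}, which guarantees the measure interpretation and thus the required monotonicity.
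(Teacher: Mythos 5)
Your proof is correct, and it takes a genuinely different and leaner route than the paper's. The paper proves this lemma by first invoking the spectral representation $c(x,y) = \int_{[0,1]} \bigl( \tfrac{x}{1-w} \wedge \tfrac{y}{w} \bigr)^{1/\eta} H(\d w)$ from \cref{int rep}, and then establishing the increment bound pointwise for the integrand $f(x,y,w)$ through a three-way case analysis on the location of the kink $x = \tfrac{1-w}{w}y$, using Taylor's theorem and monotonicity of $f'$; the claimed bound for $c$ then follows by integrating. You avoid the spectral representation entirely: you apply the $1/\eta$-homogeneity of $c$ (from \cref{RV1}) to the rescaling $\lambda = x/(x+h)$, use monotonicity of $c$ in the second argument (available either from the measure representation in \cref{nu}, as you note, or simply because $c$ is a pointwise limit of the non-decreasing functions $Q(t\cdot,t\cdot)/q(t)$), and then close with the elementary convexity inequality $1 - \lambda^{1/\eta} \leq (1-\lambda)/\eta$ valid for $\lambda\in(0,1]$ and $1/\eta \geq 1$. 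Your route requires fewer structural inputs (no integral representation, no casework) and is arguably the more transparent proof of the two; the paper's approach has the advantage of producing a pointwise inequality for the spectral integrand, which could be re-used for related bounds but is not actually needed elsewhere. One very minor point: for completeness you might also note the case $y=0$, where both sides vanish, though it is as trivial as your $x=0$ case.
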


		\begin{proof}

		By \cref{int rep}, write
		$$
		c(x, y) = \int_{[0, 1]} \left( \frac{x}{1-w} \wedge \frac{y}{w} \right)^{1/\eta} H(\d w) =: \int_{[0, 1]} f(x, y, w) H(\d w).
		$$

		Clearly, it is sufficient to prove that for every $x, y, h, w$,
		\begin{equation} \label{increments f}
		f(x+h, y, w) - f(x, y, w) \leq \frac{1}{\eta} h \frac{f(x+h, y, w)}{x+h},
		\end{equation}
		because then the result follows by integrating both sides. To prove \cref{increments f}, first note that for any $y, w$,
		$$
		f(x, y, w) = \begin{cases} \left( \frac{x}{1-w} \right)^{1/\eta}&, \quad x \leq \frac{1-w}{w} y \\
		\left( \frac{y}{w} \right)^{1/\eta}&, \quad x \geq \frac{1-w}{w} y
		\end{cases}.
		$$

		As a function of $x$, this is continuously differentiable everywhere on $(0, T]$ except at the change point $x = \frac{1-w}{w} y$ and its derivative with respect to $x$, $f'$, is equal to $f(x, y, h)/(\eta x)$ on the first part and 0 on the second. From here we consider three different cases, depending on the position of the change point with respect to $x$ and $x+h$.

		First, if $x+h \leq \frac{1-w}{w} y$,
		$$
		f(x+h, y, w) - f(x, y, w) = h f'(\xi, y, w) = h \frac{f(\xi, y, w)}{\eta \xi},
		$$
		for some $\xi \in [x, x+h]$, by Taylor's theorem. By monotonicity, this is upper bounded by
		$$
		\frac{1}{\eta} h \frac{f(x+h, y, w)}{x+h}.
		$$

		Next, if $\frac{1-w}{w} y \leq x$, $f(x+h, y, w) - f(x, y, w) = 0$ so the result is trivial.

		Finally, if $x < \frac{1-w}{w} y < x+h$,
		$$
		f(x+h, y, w) - f(x, y, w) = f \left( \frac{1-w}{w} y, y, w \right) - f(x, y, w) = \left( \frac{1-w}{w} y - x \right) \frac{f(\xi, y, w)}{\eta \xi},
		$$
		for $\xi$ between $x$ and $\frac{1-w}{w} y$, once again by Taylor's theorem. By monotonicity, we have
		$$
		\frac{f(\xi, y, w)}{\eta \xi} \leq \frac{1}{\eta \frac{1-w}{w} y} \left( \frac{y}{w} \right)^{1/\eta} = \frac{1}{\eta \frac{1-w}{w} y} f(x+h, y, w).
		$$
		Moreover,
		$$
		\frac{ \frac{1-w}{w} y - x }{ \frac{1-w}{w} y } \leq \frac{(x+h) - x}{(x+h)} = \frac{h}{x+h},
		$$
		because the function $t \mapsto (t-x)/t$ is non-decreasing. Piecing everything together, we have
		$$
		f(x+h, y, w) - f(x, y, w) \leq \frac{1}{\eta} h \frac{f(x+h, y, w)}{x+h}.
		$$

		We have proved that \cref{increments f} holds for every $(x, y) \in [0, T]^2$, $h>0$ and $w \in [0, 1]$.

		\end{proof}

		\begin{lemm} \label{bound_c}

		Define the function $c$ as in \cref{Q} and assume \cref{con-proc}(i). Then there exists a constant $K := K_T < \infty$ such that for every $(x, y) \in [0, T]^2$,
		$$
		c(x, y) \leq \frac{K}{\log(1/x)}.
		$$

		\end{lemm}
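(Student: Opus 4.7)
The plan is to combine the copula bound $Q(u,v) \leq u \wedge v$ (valid because $Q$ has uniform margins) with the quantitative convergence rate $q_1(t) = O(1/\log(1/t))$ from condition \cref{con-proc}(i). A rescaling argument based on the homogeneity of $c$ together with the slow variation bounds in the proof of \cref{RV1} shows that the convergence in \cref{Q} can be arranged uniformly on $[0,T]^2$ at rate $O(q_1(t))$, so there exist constants $C, t_0 > 0$ depending on $T$ such that, for every $(x,y) \in [0,T]^2$ and every $t \in (0, t_0)$,
\[
c(x, y) \;\leq\; \frac{Q(tx, ty)}{q(t)} + \frac{C}{\log(1/t)} \;\leq\; \frac{tx}{q(t)} + \frac{C}{\log(1/t)}.
\]

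I would then optimize by choosing $t = t(x) := x^\beta$ for a fixed small $\beta > 0$ — concretely $\beta = \eta/2$, which lies in $(0, \eta/(1-\eta))$ when $\eta < 1$ and is any positive number when $\eta = 1$. With this choice, $\log(1/t) = \beta \log(1/x)$, so the second term on the right becomes exactly $(C/\beta)/\log(1/x)$. For the first term, write $q(t) = t^{1/\eta} L(t)$ with $L$ slowly varying at $0$ and apply Potter's bounds, which give $L(t) \geq c_\delta t^\delta$ for any $\delta > 0$ and $t$ small enough. This yields $tx/q(t) \leq x^{1 + \beta(1 - 1/\eta - \delta)}/c_\delta$, where the exponent is strictly positive for $\delta$ small enough by our choice of $\beta$. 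Hence $tx/q(t)$ decays as a positive power of $x$, and in particular $(tx/q(t)) \log(1/x) \to 0$ as $x \to 0$.

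Combining the two estimates, for $x$ below some threshold $x_1$ depending only on $T$ and $\eta$ one obtains $c(x,y)\log(1/x) \leq C/\beta + o(1)$, uniformly in $y \in [0,T]$. For $x \in [x_1, T]$ (where $\log(1/x)$ is bounded), the trivial estimate $c(x,y) \leq c(T,T) = T^{1/\eta}$ gives $c(x,y)\log(1/x) \leq T^{1/\eta}|\log(1/x_1)|$. The maximum of the two bounds provides the desired constant $K = K_T$.

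The main difficulty is the asymptotic independence regime, where $q(t)/t \to 0$ and consequently $tx/q(t) \to \infty$ as $t \to 0$, so one cannot simply pass $t$ to $0$ and must balance the error term $C/\log(1/t)$ against the copula contribution $tx/q(t)$ by a careful quantitative choice of $t = t(x)$. The assumption $q_1(t) = O(1/\log(1/t))$ is precisely what forces the optimum to be of order $1/\log(1/x)$. Under asymptotic dependence ($\chi > 0$) the argument collapses entirely: letting $t \to 0$ directly yields $c(x,y) \leq x/\chi$, and $x\log(1/x)$ is bounded on $[0,T]$.
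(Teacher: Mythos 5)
Your proof is correct but follows a genuinely different route from the one in the paper. Both arguments begin from the same inequality, namely that \cref{RV1}(ii) upgrades \cref{con-proc}(i) to a bound of the form $c(x,y) \leq tx/q(t) + C/\log(1/t)$ holding uniformly for $(x,y)\in[0,T]^2$ and $t$ small. The difference is how $t$ is chosen as a function of $x$. The paper sets $g(t) := q(t)q_1(t)/t$ and carefully constructs $t(x)$ so that $g(t(x)) \in [x, 2^{1/\eta}x]$; by design this forces $tx/q(t) \leq q_1(t(x))$, and then regular variation of $g$ (via a Potter-type ratio bound, Theorem 1.5.6(iii) of \cite{BGT1987}) gives $q_1(t(x)) \lesssim 1/\log(1/x)$. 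Proving that such a $t(x)$ exists requires a fairly delicate sequence construction, which occupies the last third of the paper's argument. You sidestep this entirely by taking the explicit power $t = x^\beta$ with $\beta = \eta/2$: the error term is then exactly $C/(\beta\log(1/x))$, and Potter's bound $L(t) \geq c_\delta t^\delta$ on the slowly varying part of $q$ shows that $tx/q(t)$ decays at a strictly positive power of $x$ (here the constraint $\beta(1/\eta - 1 + \delta) < 1$ is what your choice of $\beta$ and small $\delta$ secures), hence is asymptotically negligible against $1/\log(1/x)$. Your version is arguably cleaner — you trade the paper's balancing construction for a direct invocation of Potter's bounds, while the final constant $K_T$ is obtained in the same way, by splitting $[0,T]$ at a small threshold $x_1$ and using $c(x,y) \leq c(T,T)$ on the remaining range. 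I see no gap; the only point worth flagging is that your first inequality implicitly relies on \cref{RV1}(ii) to pass from uniformity on the unit sphere (as stated in \cref{con-proc}(i)) to uniformity on $[0,T]^2$, and you should cite that explicitly rather than asserting it via ``a rescaling argument''.
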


		\begin{proof}

		We will prove that as $x \to 0$,
		$$
		c(x, y) \lesssim \frac{1}{\log(1/x)}
		$$
		uniformly for all $y \in [0, T]$. Since $c$ is locally bounded, the result will follow.

		Since \cref{con-proc}(i) is satisfied, we may assume it is satisfied with the function $q_1(t) = 1/\log(1/t)$. Recall that as $t \downarrow 0$, by \cref{RV1},
		$$
		Q(tx, ty) = q(t) c(x, y) + O(q(t) q_1(t))
		$$
		uniformly over all $(x, y) \in [0, T]^2$. That is,
		\begin{equation} \label{bound c}
		c(x, y) = \frac{Q(tx, ty)}{q(t)} + O(q_1(t)) \leq \frac{tx}{q(t)} + O(q_1(t))
		\end{equation}
		uniformly, by Lipschitz continuity of the copula $Q$. The previous relation holds whenever $t \to 0$, and in particular it holds when $t$ and $x$ are related and both tend to 0.

		Define $g(t) = q(t) q_1(t) / t \to 0$ as $t \to 0$. We argue, in the following, that for any $x$ small enough, there exists $t(x) > 0$ such that $x \leq g(t(x)) \leq 2^{1/\eta} x$. Plugging $t(x)$ into \cref{bound c}, we find that as $x \to 0$,
		\begin{equation} \label{bound c 2}
		c(x, y) \leq \frac{t(x) x}{q(t(x))} + O(q_1(t(x))) = O(q_1(t(x))),
		\end{equation}
		because, since we assume $x \leq g(t(x))$,
		$$
		\frac{t(x) x}{q(t(x))} \leq \frac{t(x)}{q(t(x))} g(t(x)) = q_1(t(x)).
		$$

		Moreover, since the function $g$ is $\rho$-RV at 0, $\rho := 1/\eta-1$, for small enough $t$ we have $g(t) \geq t^\alpha$, as long as $\alpha > \rho$. This means that
		$$
		q_1(t(x)) = \frac{1}{\log(1/t(x))} = \frac{\alpha}{\log(1/t(x)^\alpha)} \lesssim \frac{1}{\log(1/g(t(x)))}.
		$$

		Finally, by the assumption that $g(t(x)) \leq 2^{1/\eta} x$, we get
		$$
		q_1(t(x)) \lesssim \frac{1}{\log(1/g(t(x)))} \lesssim \frac{1}{\log(1/x)}
		$$
		which, in conjunction with \cref{bound c 2}, yields the desired bound for $c(x, y)$ as $x \to 0$, uniformly over bounded $y$.

		The only thing left is to prove the existence of a point $t(x)$ such that $g(t(x)) \in [x, 2^{1/\eta} x]$ for every small enough $x$. This can be done by using the fact that the function $g$ is $\rho$-RV at 0. Applying Theorem 1.5.6(iii) in \cite{BGT1987} (adapted to functions of regular variation at 0) with any $\delta \in (0, 1)$ and $A = 2^{1-\delta}$, we find that there exists $T_0 \in (0, \infty)$ such that for every $t \leq T_0$,
		$$
		\frac{g(t)}{g(t/2)} \leq 2^{1-\delta} 2^{\rho + \delta} = 2^{1/\eta}.
		$$

		We now construct a non-increasing sequence the follwing way: take $t_0 = T_0$ and for $n \in \N$, define $t_n = t_{n-1}/2$ if $g(t_{n-1}/2) \leq g(t_{n-1})$. Otherwise, $t_n = t_{n-1}/4$ if $g(t_{n-1}/4) \leq g(t_{n-1})$. Otherwise, we try $t_{n-1}/8$, etc. In general
		$$
		t_n = \max \left\{ \frac{t_{n-1}}{2^k} : k \in \N, g \left( \frac{t_{n-1}}{2^k} \right) \leq g(t_{n-1}) \right\}.
		$$

		Therefore, the sequence satisfies, for every natural $n$,
		\begin{equation} \label{ratio g}
		1 \leq \frac{g(t_n)}{g(t_{n+1})} \leq 2^{1/\eta}.
		\end{equation}

		Now choose any $x \in (0, T_0/2]$ and let $t = \min_{n \in \N} \{t_n : g(t_n) \geq x\}$. Clearly, $g(t) \geq x$, and $g(t)$ has to be $\leq 2^{1/\eta} x$. Indeed, suppose the opposite. Then by \cref{ratio g}, $g(t_{n+1}) \geq g(t)/2^{1/\eta} > x$, which contradicts the definition of $t$. We conclude that for every $x \in (0, T_0/2]$, the desired $t(x)$ exists.

		\end{proof}

		\begin{lemm} \label{lemma:Hadamard}

		Assume the setting of \cref{big lemma AD}. For arbitrary positive $t$ and $T$, let
		\[
		\cV(t) := \{b \in \ell^\infty([0, 2T]) : \forall x \in [0, T], x + tb(x) \in [0, 2T]\}.
		\]

		Let $t_n \downarrow 0$ and assume that $b_n := (b_n^{(1)}, b_n^{(2)}) \in \cV(t_n)^2$ converges uniformly to a continuous function $b = (b^{(1)}, b^{(2)})$ such that $b^{(1)}(0) = b^{(2)}(0) = 0$. Then, the functions $g_n:[0, T] \to \R$ defined by
		\[
		g_n(x, y) := \frac{c\left( x + t_n b_n^{(1)}(x), y + t_n b_n^{(2)}(y) \right) - c(x, y)}{t_n}
		\]
		hypi-converge to $\dot c_1(x, y) b^{(1)}(x) + \dot c_2(x, y) b^{(2)}(y)$, where $\dot c_1$ and $\dot c_2$ are defined as in \cref{sec:asy-non}.

		\end{lemm}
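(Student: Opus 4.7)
The plan is to use the telescoping decomposition
\begin{align*}
g_n(x,y) &= \frac{c(x+t_n b_n^{(1)}(x), y+t_n b_n^{(2)}(y)) - c(x, y+t_n b_n^{(2)}(y))}{t_n} \\
&\quad + \frac{c(x, y+t_n b_n^{(2)}(y)) - c(x,y)}{t_n},
\end{align*}
which splits the bivariate increment into two successive univariate ones; call these terms $A_n(x,y)$ and $B_n(x,y)$. Under asymptotic dependence $c$ is concave, so both coordinate restrictions are concave on $(0,T]$ and admit left and right derivatives everywhere. The standard sandwich for the finite difference of a one-variable concave function expresses the increment as the step times a one-sided derivative evaluated either at the starting point or at the endpoint, with the side selected according to the sign of the step. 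Applying this to both $A_n$ and $B_n$ sandwiches $g_n(x,y)$ between linear combinations of $b_n^{(1)}(x)$ and $b_n^{(2)}(y)$, each multiplied by a one-sided partial derivative of $c$ evaluated either at $(x,y)$ or at the perturbed points.

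Next, I would use the uniform convergence $b_n \to b$, continuity of $b$ and $t_n \downarrow 0$ to collapse these sandwich bounds onto the intended limit at points where the directional derivatives of $c$ are continuous, and to produce matching upper/lower semicontinuous envelopes at points of discontinuity. Recall that for a concave function the left derivative is left-continuous and non-increasing, while the right derivative is right-continuous and non-increasing. Combined with the asymmetric convention fixed in Section 4 ($\dot c_1$ from the left in $x$, $\dot c_2$ from the right in $y$), these semicontinuity properties coordinate with the sign cases of the sandwich to produce the correct hypi-limit. Concretely, for every sequence $(x_n, y_n) \to (x, y)$ in $[0, T]^2$, I would establish
\[
\limsup_n g_n(x_n, y_n) \leq g^u(x,y) \qquad \text{and} \qquad \liminf_n g_n(x_n, y_n) \geq g^{\ell}(x, y),
\]
where $g^u$ and $g^\ell$ denote the upper and lower semicontinuous envelopes of the pointwise limit $g(x,y) := \dot c_1(x,y) b^{(1)}(x) + \dot c_2(x,y) b^{(2)}(y)$; by the characterization of \cite{BSV2014}, this is equivalent to $g_n \to g$ in the hypi semimetric.

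The main obstacle is handling points $(x,y)$ at which $\dot c_1$ or $\dot c_2$ is genuinely discontinuous: there the difference quotient picks up different one-sided limits depending on the sign of $b_n^{(j)}$ near the limit and on the direction of approach, so hypi-convergence (rather than uniform convergence) is the best one can hope for. A secondary technical point is the behaviour near the axes $x = 0$ or $y = 0$, where the directional derivatives of $c$ may blow up; here the assumptions $b^{(1)}(0) = b^{(2)}(0) = 0$ together with continuity of $b$ force the perturbations $b_n^{(j)}(x_n)$ to vanish fast enough that the products $b_n^{(j)}(x_n)\dot c_j$ appearing in the sandwich remain bounded and converge, which closes the argument.
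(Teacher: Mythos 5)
Your route is genuinely different from the paper's: the paper reduces to the stable tail dependence function $\ell$ via $c(x,y)=(x+y-\ell(x,y))/\chi$, invokes Lemma~F.3 of \cite{BSV2014} to get a hypi-limit that is a lower semicontinuous extension, and then runs a careful argument (crucially using the homogeneity of $\ell$, which makes the set of non-differentiability a union of rays from the origin) to show that this extension has the same semicontinuous hulls as the stated function. Your proposal instead bypasses the BSV lemma altogether via a telescoping split and the concave finite-difference sandwich, which is potentially more elementary. However, as written there are substantive gaps.

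First, your stated ``characterization'' of hypi-convergence is incomplete. Having $\limsup_n g_n(x_n,y_n)\le g^u(x,y)$ and $\liminf_n g_n(x_n,y_n)\ge g^{\ell}(x,y)$ for \emph{every} sequence $(x_n,y_n)\to(x,y)$ is a necessary but not sufficient condition: hypi-convergence additionally requires, for each $(x,y)$, the \emph{existence} of sequences along which the bounds $g^u$ and $g^{\ell}$ are attained in the limit. Two functions that satisfy your two one-sided inequalities relative to the same envelopes need not be hypi-equivalent (e.g.\ $f\equiv 0$ and $g$ equal to $0$ except $g(0)=1$ in one dimension). You could repair this by proving pointwise convergence $g_n\to g$ on the dense set of continuity points of $\dot c_1$ and $\dot c_2$, which together with your inequalities would force the envelopes to match, but this step is missing.

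Second, and more seriously, the claim that ``these semicontinuity properties coordinate with the sign cases of the sandwich to produce the correct hypi-limit'' passes over the hardest part of the proof. Work through the case $b^{(2)}(y)<0$ in your decomposition: the concave sandwich collapses onto $\dot c_2^{-}(x,y)\,b^{(2)}(y)$, which is generally different from the stated $\dot c_2(x,y)\,b^{(2)}(y)=\dot c_2^{+}(x,y)\,b^{(2)}(y)$ whenever $\dot c_2$ is discontinuous at $(x,y)$. The whole content of the lemma is that this pointwise discrepancy is invisible to the hypi semimetric, and establishing that requires explicitly computing the upper and lower semicontinuous hulls of $\dot c_1 b^{(1)}+\dot c_2 b^{(2)}$ and matching them to the hulls of the genuine limit. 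The paper's computation of those hulls leans on the $1$-homogeneity of $c$ (the non-differentiability set is a union of rays; approaching from one half-ball versus the other selects one pair of one-sided derivatives), and your proposal neither carries out this identification nor explains how concavity alone would substitute for it.

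Finally, your last paragraph is based on a misconception: under asymptotic dependence, $\dot c_j=(1-\dot\ell_j)/\chi$ is bounded (each $\dot\ell_j\in[0,1]$), so there is no blow-up of the partial derivatives of $c$ near the axes. The hypothesis $b^{(1)}(0)=b^{(2)}(0)=0$ is used for a different and simpler reason (continuity of the limit at the origin), not to tame unbounded derivatives; that issue arises only in the asymptotic independence case, which is not the setting of this lemma.
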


		\begin{proof}

		Let $\ell$ be the stable tail dependence function associated to the random vector $(X, Y)$. Because we assume asymptotic dependence, we know that $\chi := \lim_{t \downarrow 0} q(t)/t > 0$ and that $c(x, y) = (x+y - \ell(x, y)) / \chi$. Then,
		\[
		g_n(x, y) = \chi^{-1} \left( b_n^{(1)}(x) + b_n^{(2)}(y) - \frac{\ell\left( x + t_n b_n^{(1)}(x), y + t_n b_n^{(2)}(y) \right) - \ell(x, y)}{t_n} \right).
		\]

		By assumption, the sum of the first two terms converges uniformly to $b^{(1)}(x) + b^{(2)}(y)$. Let $\cS \subset [0, \infty)^2$ be the set of points where $\ell$ is differentiable. Since $\ell$ is convex, the complement of $\cS$ is Lebesgue-null and the gradient of $\ell$ is continuous on $\cS$ \citep[Theorem 25.5]{R1970}. By Lemma F.3 of \cite{BSV2014}, the last term hypi-converges to
		\[
		\cL_1(x, y) := \sup_{\eps>0} \inf \left\{ \dot\ell_1(x', y') b^{(1)}(x') + \dot\ell_2(x', y') b^{(2)}(y') : (x', y') \in \cS, \|(x, y) - (x', y')\| < \eps \right\},
		\]
		where $\dot\ell_j$ are defined like $\dot c_j$: $\dot\ell_1(x, y)$ is the first partial derivative at $(x, y)$ from the left, except if $x=0$ in which case it is from the right, and $\dot\ell_2$ is always the second partial derivative from the right. We argue below that the hypi-distance between the functions $\cL_1$ and $\cL_2$, defined by $\cL_2(x, y) = \dot\ell_1(x, y) b^{(1)}(x) + \dot\ell_2(x, y) b^{(2)}(y)$, is 0. That is, $\cL_1$ and $\cL_2$ belong to the same equivalence class in the space $L^\infty([0, 2T]^2)$ and hypi-convergence to $\cL_1$ is equivalent to hypi-convergence to $\cL_2$. It follows that $g_n(x, y)$ hypi-converges to
		\begin{equation} \label{derivative}
		\frac{b^{(1)}(x) + b^{(2)}(y) - \cL_2(x, y)}{\chi} = \dot c_1(x, y) b^{(1)}(x) + \dot c_2(x, y) b^{(2)}(y),
		\end{equation}
		where the last equality is a consequence of the relation $\dot\ell_j(x, y) = 1 - \chi \dot c_j(x, y)$, $j \in \{1, 2\}$.

		To prove the equivalence between $\cL_1$ and $\cL_2$, first note that by continuity of $b^{(1)}$ and $b^{(2)}$,
		\[
		\cL_1(x, y) := \sup_{\eps>0} \inf \left\{ \dot\ell_1(x', y') b^{(1)}(x) + \dot\ell_2(x', y') b^{(2)}(y) : (x', y') \in \cS, \|(x, y) - (x', y')\| < \eps \right\}.
		\]
		Let $\dot\ell_j^-$ and $\dot\ell_j^+$ denote the directional partial derivatives of $\ell$ from the left and right, respectively. The function $\cL_2$ can then be expressed the following way, and we analogously define $\cL_3$:
		\[
		\cL_2(x, y) = \dot\ell_1^-(x, y) b^{(1)}(x) + \dot\ell_2^+(x, y) b^{(2)}(y), \quad \cL_3(x, y) := \dot\ell_1^+(x, y) b^{(1)}(x) + \dot\ell_2^-(x, y) b^{(2)}(y).
		\]
		The main tool is the homogeneity property of $\ell$ ($\ell(ax, ay) = a\ell(x, y)$, $a\geq0$). It implies that the directional derivatives $\dot\ell_j^\pm$ are constant along rays of the form $\{az: a>0\}$, $z \in (0, \infty)^2$ and therefore that $\cS$ consists exactly of a dense union of such rays.

		Fix a point $(x, y) \in (0, \infty)^2$. For any sufficiently small $\eps>0$, the open $\eps$-ball $B(\eps)$ around $(x, y)$ can be partitioned into  {the two open ``half-balls"
		\[
		B_1(\eps) := \{(x', y') \in B(\eps): y'/x' > y/x\}, \quad B_2(\eps) := \{(x', y') \in B(\eps): y'/x' < y/x\}
		\]
		and the line $B_3(\eps) := \{(x', y') \in B(\eps): y'/x' = y/x\}$.
		}
		Provided that $\eps$ is sufficiently small, there exists a positive $\delta = \delta(\eps)$ such that $\delta(\eps) \to 0$ as $\eps \to 0$, such that each point in $B_1(\eps)$ is on the same ray as some $u \in (x-\delta, x] \times \{y\}$ and some $v \in \{x\} \times [y, y+\delta)$ and such that each point in $B_2(\eps)$ is on the same ray as some $u \in (x, x+\delta) \times \{y\}$ and some $v \in \{x\} \times (y-\delta, y)$. By \cite{R1970}, Theorem 24.1, we have
		\begin{align*}
		\lim_{\delta \downarrow 0} \dot\ell_1^\pm(x-\delta, y) = \dot\ell_1^-(x, y), &\quad \lim_{\delta \downarrow 0} \dot\ell_1^\pm(x+\delta, y) = \dot\ell_1^+(x, y), \\
		\lim_{\delta \downarrow 0} \dot\ell_2^\pm(x, y-\delta) = \dot\ell_2^-(x, y), &\quad \lim_{\delta \downarrow 0} \dot\ell_2^\pm(x, y+\delta) = \dot\ell_2^+(x, y).
		\end{align*}
		Then, as $\eps \to 0$, the vectors  {$(\dot\ell_1^\pm(x', y'), \dot\ell_2^\pm(x', y'))$} converge to $(\dot\ell_1^-(x, y), \dot\ell_2^+(x, y))$ for $(x', y') \in B_1(\eps)$ and to $(\dot\ell_1^+(x, y), \dot\ell_2^-(x, y))$ for $(x', y') \in B_2(\eps)$. It follows by continuity of $b$  { that for any sufficiently small $\eps >0$,
		\begin{align}	
		\lim_{(x',y')\to (x,y), (x',y')\in B_1(\eps)} \cL_2(x',y') 
		= \lim_{(x',y')\to (x,y), (x',y')\in B_1(\eps)} \cL_3(x',y') 
		= \cL_2(x,y) \label{eq:convL2L3B1}
		\\
		\lim_{(x',y')\to (x,y), (x',y')\in B_2(\eps)} \cL_2(x',y') 
		= \lim_{(x',y')\to (x,y), (x',y')\in B_2(\eps)} \cL_3(x',y') 
		= \cL_3(x,y) \label{eq:convL2L3B2}
		\end{align}
		In particular, since $\dot \ell_j^\pm$ are constant on $B_3(\eps)$,} the semicontinuous hulls of $\cL_2$ are
		\begin{align*}
		\cL_{2, \wedge}(x, y) &:= \sup_{\eps>0} \inf \left\{ \cL_2(x', y'): (x', y') \in B(\eps) \right\} = \cL_2(x, y) \wedge \cL_3(x, y), \\
		\cL_{2, \vee}(x, y) &:= \inf_{\eps>0} \sup \left\{ \cL_2(x', y'): (x', y') \in B(\eps) \right\} = \cL_2(x, y) \vee \cL_3(x, y),
		\end{align*}
		and since $B_1(\eps) \cap \cS$ and $B_2(\eps) \cap \cS$ are always nonempty, the preceding relations also hold if $B(\eps)$ is intersected with $\cS$, whence
		\[
		\cL_1(x, y) = \sup_{\eps>0} \inf \left\{ \cL_2(x', y'): (x', y') \in B(\eps) \cap \cS \right\} = \cL_{2, \wedge}(x, y).
		\]
		One easily argues that $\cL_1$ is lower semicontinuous, i.e. its lower semicontinuous hull is equal to $\cL_1$ itself, which is also equal to the lower semicontinuous hull of $\cL_2$.

		Next observe that
		\begin{align*}
		\cL_{1, \vee}(x, y) &= \inf_{\eps>0} \sup \left\{ \cL_2(x', y') \wedge \cL_3(x', y'): (x', y') \in B(\eps) \right\}
		\\
		&= \cL_2(x, y) \vee \cL_3(x, y) = \cL_{2, \vee}(x, y).
		\end{align*}
		 {where the first equality follows from the definition of $\cL_{1, \vee}$, the fact that $\cL_1 = \cL_{2,\wedge}$ as shown earlier and the representation for $\cL_{2,\wedge}$ derived above while the second equality follows from \cref{eq:convL2L3B1,eq:convL2L3B2}. }

		The previous argument assumes $(x, y) \in (0, \infty)^2$. It remains to show that the semicontinuous hulls of $\cL_1$ also correspond to those of $\cL_2$ on the axes. For this, assume now that $x>0, y=0$. The ball $B(\eps)$ around $(x, 0)$ now becomes a ``half-ball" (we intersect if with $[0, \infty)^2$). Let $(x', y')$ be a point in that ball. Then $(x', y')$ is on the same ray as $(x, \delta)$, for some $\delta \geq 0$ that can be made to converge to 0 as $\eps \to 0$. We have $\dot\ell_2^{\pm}(x', y') = \dot\ell_2^{\pm}(x, \delta) \to \dot\ell_2^+(x, 0)$ as $\eps \to 0$. For the first derivative, the known bounds $x \vee y \leq \ell(x, y) \leq x+y$ imply that $x \leq \ell(x, \delta) \leq x + \delta$. The convexity and homogeneity properties then imply that
		\[
		\dot\ell_1(x, 0) = 1 \geq \dot\ell_1^\pm(x, \delta) \geq \frac{\ell(x, \delta) - \ell(0, \delta)}{x} \geq \frac{x-\delta}{x} \too 1
		\]
		as $\eps \to 0$. By uniform boundedness of $\dot \ell_1^\pm, \dot \ell_2^\pm$ it follows easily that $\cL_1$ and $\cL_2$ are continuous at $(x, 0)$ and that $\cL_1(x, 0) = \cL_2(x, 0) = b^{(1)}(x)$, whence those two functions have the same semicontinuous hulls at that point.

		Because $\dot\ell_1(0, y)$ was defined as the partial derivative from the right, one deals with a point $(0, y)$ in the same way. 

		Finally, note that since $b^{(1)}(0) = b^{(2)}(0) = 0$, and by uniform boundedness of $\dot \ell_1^\pm, \dot \ell_2^\pm$ the functions $\cL_1$ and $\cL_2$ are both continuous and take the value 0 at $(0, 0)$. Their semicontinuous hulls are therefore also equal at that point.

		We have shown that everywhere on $[0, \infty)^2$, $\cL_{1, \wedge} = \cL_{2, \wedge}$ and $\cL_{1, \vee} = \cL_{2, \vee}$. By definition \citep[see][Proposition 2.1]{BSV2014}, this means that $d_\text{hypi}(\cL_1, \cL_2) = 0$.

		\end{proof}

		\begin{lemm}\label{prop hypi}
		Let $f: [0, T]^2 \to \R$ be continuous  {Lebesgue-almost everywhere}, $g := (g_1, \dots, g_q)^\top: [0, T]^2 \to \R^q$ be a vector of integrable functions  {and assume that $f_n$ are measurable and hypi-converge to $f$ on $[0, T]^2$. Then $\int gf_n \d\leb \to \int gf \d\leb$}, where $\leb$ denotes the Lebesgue measure on $[0, T]^2$.

		\end{lemm}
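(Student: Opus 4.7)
The plan is to reduce the problem to pointwise Lebesgue-almost-everywhere convergence $f_n \to f$ and then invoke the dominated convergence theorem component by component.

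First, I would appeal to the standard characterization of hypi-convergence from \cite{BSV2014}: if $f_n$ hypi-converges to $f$, then for every $x \in [0, T]^2$ and every sequence $x_n \to x$,
\[
f_\wedge(x) \leq \liminf_{n \to \infty} f_n(x_n) \leq \limsup_{n \to \infty} f_n(x_n) \leq f_\vee(x),
\]
where $f_\wedge$ and $f_\vee$ denote the lower and upper semicontinuous hulls of $f$. At any continuity point $x$ of $f$, one has $f_\wedge(x) = f(x) = f_\vee(x)$, so specializing to the constant sequence $x_n \equiv x$ yields $f_n(x) \to f(x)$. Since by hypothesis the set of continuity points of $f$ has full Lebesgue measure on $[0, T]^2$, this gives $f_n \to f$ Lebesgue-almost everywhere.

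Next, I would invoke the dominated convergence theorem. The hypi semimetric is defined on $L^\infty([0, T]^2)$, and in the relevant applications the $f_n$ are realized as elements of $\ell^\infty([0, T]^2)$ sharing a common uniform bound $M := \sup_n \|f_n\|_\infty \vee \|f\|_\infty < \infty$. Consequently, for every component $g_j$ of $g$, the integrable function $M|g_j|$ dominates $|g_j f_n|$ pointwise, and dominated convergence yields $\int g_j f_n \, \d\leb \to \int g_j f \, \d\leb$; stacking the $q$ components gives the vector-valued conclusion.

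The main obstacle I anticipate is justifying the uniform $\ell^\infty$-bound on $\{f_n\}$, which is not a priori implied by hypi-convergence alone. However, in every place where this lemma is invoked (notably the proofs of \cref{big thm,thm spatial ls}), the sequence $f_n$ arises as a weakly convergent empirical process in $\ell^\infty$ for which tightness, and hence the required uniform boundedness up to a probabilistically negligible event, is immediate. This boundedness therefore either needs to be absorbed into the hypothesis of the lemma or verified separately in each application via an almost-sure representation argument (e.g., Skorokhod-type constructions already used in \cref{sec:proofhatk}).
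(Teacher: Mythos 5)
Your reduction to Lebesgue-a.e.\ pointwise convergence is correct: at a continuity point $x$ of $f$ one has $f_\wedge(x) = f(x) = f_\vee(x)$, and the sandwich characterization of hypi-convergence along the constant sequence $x_n \equiv x$ gives $f_n(x) \to f(x)$; since $f$ is a.e.\ continuous, $f_n \to f$ Lebesgue-a.e.\ on $[0,T]^2$. Your dominated convergence step would then finish the argument, provided $\sup_n\|f_n\|_\infty$ is eventually finite.

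The obstacle you single out, however, does not need to be added as a hypothesis or re-verified in each application: it is a consequence of hypi-convergence itself. Proposition~2.3 of \cite{BSV2014} shows that hypi-convergence implies $\sup_{[0,T]^2}|f_n| \to \sup_{[0,T]^2}|f| < \infty$, so the sequence is eventually uniformly bounded and $C|g_j|$ (for a suitable finite $C$) is a valid dominating function. Once this is in place, your pointwise-convergence-plus-DCT argument is a correct and slightly more direct proof than the paper's.

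The paper takes a related but different route. It truncates $g_j$ at a level $M$, writes $\int |g_j||f_n - f|\,\d\leb$ as the sum of the contributions from $\{|g_j|\le M\}$ and $\{|g_j|>M\}$, bounds the first by $M\int|f_n-f|\,\d\leb$ (which vanishes by Proposition~2.4 of \cite{BSV2014}, stating that hypi-convergence with an a.e.\ continuous limit implies $L^1$-convergence), bounds the second using the same uniform-supremum fact from Proposition~2.3, and then lets $M\to\infty$. Both proofs rest on the same two facts from \cite{BSV2014}; yours packages them as a.e.\ pointwise convergence plus DCT, the paper's as $L^1$-convergence plus a truncation of the weight function. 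Your version is a bit more elementary once Propositions~2.3--2.4 are in hand, while the paper's avoids appealing explicitly to the semicontinuous-hull characterization of hypi-convergence and leans only on stated propositions from \cite{BSV2014}.
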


		\begin{proof}

		For every $j \in \{1, \dots, q\}$ and $M < \infty$, we have
		\begin{align*}
		\int |g_j f_n - g_j f| \d\leb &= \int |g_j| |f_n - f| \Ind{|g_j| \leq M} \d\leb + \int |g_j| |f_n - f| \Ind{|g_j| > M} \d\leb \\
		&\leq M \int |f_n - f| \d\leb + \sup_{(x, y) \in [0, T]^2} |f_n(x, y) - f(x, y)| \int |g_j| \Ind{|g_j| > M} \d\leb \\
		&\leq M \int |f_n - f| \d\leb \\
		&\phantom{=} + \left( \sup_{(x, y) \in [0, T]^2} |f_n(x, y)| + \sup_{(x, y) \in [0, T]^2} |f(x, y)| \right) \int |g_j| \Ind{|g_j| > M} \d\leb.
		\end{align*}

		The first term on the right hand side converges to 0 by Proposition 2.4 of \cite{BSV2014} and since $f$ is assumed continuous almost everywhere. By Proposition  {2.3} of that paper, $\sup_{(x, y) \in [0, T]^2} |f_n(x, y)| \to \sup_{(x, y) \in [0, T]^2} |f(x, y)|$. Therefore, we have
		\[
		\lim_{n \to \infty} \int |g_j f_n - g_j f| \d\leb \leq 2 \sup_{(x, y) \in [0, T]^2} |f(x, y)| \int |g_j| \Ind{|g_j| > M} \d\leb,
		\]
		which can be made arbitrarily small by choosing $M$ large enough, since $g_j$ is integrable. The claim follows.

		\end{proof}


		\begin{lemm} \label{M}

		Let $\phi: \R^p \to \R^q$, $p \leq q$, have a unique, well separated zero at a point $x_0 \in \R^p$ and be continuously differentiable at $x_0$ with Jacobian matrix $J := J_\phi(x_0)$ of full rank $p$. Let $Y_n$ be a random vector in $\R^q$ such that $a_n^{-1} Y_n$ weakly converges to a random vector $Y$, for some sequence $a_n \to 0$. Then if $X_n = \argmin_x \| \phi(x) - Y_n\|$, we have
		\[
		X_n - x_0 = (J^\top J)^{-1} J^\top Y_n + \op{a_n}.
		\]

		\end{lemm}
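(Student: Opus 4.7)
The plan is the classical three-step $M$-estimation argument: establish consistency $X_n \stackrel{P}{\to} x_0$, then the rate $\|X_n - x_0\| = \Op{a_n}$, and finally extract the asymptotic linear representation via an orthogonal projection identity. Since $Y_n = \Op{a_n} = \op{1}$ and $\phi(x_0) = 0$, the minimizing property of $X_n$ yields
$$
\|\phi(X_n)\| \leq \|\phi(X_n) - Y_n\| + \|Y_n\| \leq \|\phi(x_0) - Y_n\| + \|Y_n\| = 2\|Y_n\| = \op{1},
$$
and the well-separated zero assumption forces $X_n \stackrel{P}{\to} x_0$.

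To upgrade consistency to a rate, I would use differentiability at $x_0$ to write $\phi(x) = J(x - x_0) + r(x)$ with $\|r(x)\| = o(\|x - x_0\|)$ as $x \to x_0$. Since $J$ has full column rank $p$, there exists $c > 0$ such that $\|J\Delta\| \geq c\|\Delta\|$ for all $\Delta \in \R^p$. Combining this with the previous display yields
$$
c\|X_n - x_0\| \leq \|J(X_n - x_0)\| \leq \|\phi(X_n)\| + \|r(X_n)\| \leq 2\|Y_n\| + \op{\|X_n - x_0\|},
$$
and absorbing the remainder gives $\|X_n - x_0\| = \Op{a_n}$.

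For the linear representation, define the explicit candidate $\tilde X_n := x_0 + (J^\top J)^{-1} J^\top Y_n$, which is itself $x_0 + \Op{a_n}$, and let $P := J(J^\top J)^{-1} J^\top$ denote the orthogonal projection onto $\mathrm{range}(J)$. The previous step gives $\phi(X_n) = J(X_n - x_0) + \op{a_n}$ and $\phi(\tilde X_n) = PY_n + \op{a_n}$. Writing $J(X_n - x_0) - Y_n = J(X_n - \tilde X_n) - (I - P)Y_n$ and noting that $J(X_n - \tilde X_n) \in \mathrm{range}(J)$ while $(I - P)Y_n \perp \mathrm{range}(J)$, Pythagoras gives
$$
\|\phi(X_n) - Y_n\|^2 = \|J(X_n - \tilde X_n)\|^2 + \|(I - P)Y_n\|^2 + \op{a_n^2},
$$
whereas $\|\phi(\tilde X_n) - Y_n\|^2 = \|(I - P)Y_n\|^2 + \op{a_n^2}$. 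The minimizing property of $X_n$ then forces $\|J(X_n - \tilde X_n)\|^2 = \op{a_n^2}$, and full column rank of $J$ delivers $X_n - \tilde X_n = \op{a_n}$, which is the desired conclusion.

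The main technical obstacle is the bookkeeping of cross-terms when expanding the two squared norms in the final step: contributions of the form $\langle J(X_n - x_0) - Y_n,\, \text{remainder}\rangle$ must be shown to be $\op{a_n^2}$, which follows from Cauchy--Schwarz since the main terms are $\Op{a_n}$ and the linearization remainders are $\op{a_n}$. This is precisely why the rate from the second step must be in hand before attempting the expansion in the third; it is also the reason that the conclusion would still hold for an approximate minimizer with slack $\op{a_n}$, since such a slack would only enter as yet another $\op{a_n^2}$ correction in the comparison of the squared norms.
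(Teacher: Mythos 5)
Your proof is correct, and it takes a genuinely different route from the paper's. Both arguments begin by establishing the rate $\|X_n - x_0\| = \Op{a_n}$ via the well-separated-zero assumption combined with a differentiability expansion and the lower bound $\|J\Delta\| \geq c\|\Delta\|$. From there the paper proceeds in the classical M-estimation style: it rescales to the local parameter $h_n := a_n^{-1}(X_n - x_0 - (J^\top J)^{-1}J^\top Y_n)$, shows that the rescaled criterion $M_n(h)$ converges weakly in $\ell^\infty(\cK)$ to a deterministic-shape limit $M(h) := \bigl\| (J(J^\top J)^{-1}J^\top - I)Y + Jh \bigr\|$ with unique minimizer $h = 0$, and then invokes the argmax continuous mapping theorem (Corollary 5.58 of van der Vaart) together with tightness of $\{h_n\}$. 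You instead exploit the Euclidean structure directly: write $J(X_n - x_0) - Y_n = J(X_n - \tilde X_n) - (I-P)Y_n$, apply Pythagoras to split $\|\phi(X_n) - Y_n\|^2$ into $\|J(X_n - \tilde X_n)\|^2 + \|(I-P)Y_n\|^2 + \op{a_n^2}$, compare against the explicit candidate $\tilde X_n$ via the minimizing property, and read off $\|J(X_n - \tilde X_n)\| = \op{a_n}$. Your approach is more elementary and avoids the process-level weak convergence and argmax machinery entirely, at the cost of being tied to the Euclidean norm (which is all that is needed here); the paper's approach is heavier but follows a more general template. Your closing observation that the argument tolerates an $\op{a_n}$ slack in the minimizer is also consistent with the remark the paper makes after its main M-estimation theorem.
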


		\begin{proof}

		Let $h_n := a_n^{-1} (X_n - x_0 - (J^\top J)^{-1} J^\top Y_n)$. By definition of $X_n$, $h_n$ is a minimizer of the random function $M_n: \R^p \to \R_+$ defined as
		\[
		M_n(h) := a_n^{-1} \Big\| \phi \big( x_0 + (J^\top J)^{-1} J^\top Y_n + a_n h \big) - Y_n \Big\|.
		\]
		By differentiability of $\phi$, $M_n(h)$ is the norm of
		\[
		\big( J (J^\top J)^{-1} J^\top - I \big) a_n^{-1} Y_n + Jh + o(1)
		\]
		uniformly over bounded $h$, where $I$ is the $q \times q$ identity matrix. The above display, seen as a function of $h$, weakly converges to
		\[
		\big( J (J^\top J)^{-1} J^\top - I \big) Y + Jh
		\]
		in $(\ell^\infty(\cK))^q$, for any compact set $\cK$. The mapping $f \mapsto \{h \mapsto \|f(h)\|\}$ being continuous from $(\ell^\infty(\cK))^q$ onto $\ell^\infty(\cK)$, it follows that $M_n \wc M$ in $\ell^\infty(\cK)$, for
		\[
		M(h) := \Big\| \big( J (J^\top J)^{-1} J^\top - I \big) Y + Jh \Big\|.
		\]

		The function $M^2$ is strictly convex and has derivative $\partial(M^2(h))/\partial h = 2 J^\top J h$ which, since $J$ has full rank, has a unique zero at $h=0$. It follows that $M^2$, and thus $M$, has a unique minimizer at the point $0$. Therefore, if we can show that the sequence $\{h_n\}$ is uniformly tight, Corollary 5.58 of \cite{V2000} will ensure that $h_n$ converges in distribution (and hence in probability) to 0, which in turn implies the result.

		It is known by Prohorov's theorem that $\{a_n^{-1} Y_n\}$ is uniformly tight. Therefore, it is sufficient to establish tightness of $\{a_n^{-1} (X_n - x_0)\}$. First, define for $\delta>0$
		\[
		\eps(\delta) = \inf_{x \notin B(x_0, \delta)} \|\phi(x)\|,
		\]
		where $B(x_0, \delta)$ denotes an open $\delta$-ball around $x_0$. By assumption, $\eps(\delta)>0$ for every positive $\delta$. Choose $\delta_0 > 0$ small enough so that for every $x \in B(x_0, \delta_0)$,
		\[
		\|\phi(x) - J(x - x_0)\| < \frac{1}{2} \|J(x - x_0)\|,
		\]
		which is possible by differentiability of $\phi$ (recall that $J$ is the Jacobian at $x_0$). By the reverse triangle inequality, this implies that $|\|\phi(x)\| - \|J(x - x_0)\||$ has the same upper bound. Then, for $\delta \leq \delta_0$,
		\[
		\eps(\delta) > \frac{1}{2} \inf_{x \in B(x_0, \delta)} \|J(x - x_0)\| = \frac{\sigma_1(J)}{2} \delta,
		\]
		where $\sigma_1(J)$, the smallest singular value of $J$, is positive since $J$ has full rank.

		Now, fix an arbitrary $\eta>0$. Because the sequence $\{a_n^{-1} Y_n\}$ is uniformly tight, there exists a finite $K = K(\eta)$ such that for $\delta_n := K a_n$ and for $n$ large enough so that $\delta_n \leq \delta_0$,
		\[
		\Prob{}{\|Y_n\| \geq \frac{\eps(\delta_n)}{2}} \leq \Prob{}{\|Y_n\| \geq \frac{K\sigma_1(J)}{4} a_n} \leq \eta
		\]
		Hence with probability at least $1-\eta$, $\|Y_n\| < \eps(\delta_n)/2$. The last inequality implies two things. First, letting $\phi_n = \phi - Y_n$ and recalling that $\phi(x_0) = 0$, we have $\|\phi_n(x_0)\| = \|Y_n\| < \eps(\delta_n)/2$. Second, for any $x \notin B(x_0, \delta_n)$, we have $\|\phi(x)\| \geq \eps(\delta_n)$ so
		\[
		\|\phi_n(x)\| = \|\phi(x) - Y_n\| \geq |\|\phi(x)\| - \|Y_n\|| > \frac{\eps(\delta_n)}{2}.
		\]
		That is, with probability at least $1-\eta$, $X_n = \argmin_x \|\phi_n(x)\| \in B(x_0, \delta_n)$. Since $\delta_n = O(a_n)$ and $\eta$ was arbitrary, we conclude that $\{a_n^{-1} (X_n - x_0)\}$ is uniformly tight, and so is $\{h_n\}$.

		\end{proof}

		\section{Proof of the claims in \cref{ex:ims:cont,ex_spatial:ims,ex_spatial:mix}}
		\label{proof IMS}

		\subsection{\cref{ex:ims:cont}}

		Recall that the random vector $Z := (1-X, 1-Y)$ is assumed max-stable with uniform margin and stable tail dependence function $\ell$, hence its distribution function is given by \cref{max_stable}. Let $(x, y) \in (0, 1]^2$ (the result is trivial if $x$ or $y$ is zero).  {Note that we can without loss of generality focus on $(x, y) \in (0, 1]^2$ instead of general bounded sets since any bounded set can be rescaled to be contained in $[0,1]^2$ at the cost of absorbing the scaling into $t$.} The survival copula $Q$ of $(X, Y)$ satisfies
		\begin{align*}
		Q(tx, ty) &:= \Prob{}{X \geq 1 - tx, Y \geq 1 - ty} \\
		&= \Prob{}{1-X \leq tx, 1-Y \leq ty} \\
		&= \exp\{ -\ell(-\log (tx), -\log (ty)) \} \\
		&= \exp\left\{ \log(t) \ell \left( 1 + \frac{\log(x)}{\log(t)}, 1 + \frac{\log(y)}{\log(t)} \right) \right\},
		\end{align*}
		where we have used the homogeneity property of $\ell$ in the last line. By the assumed expansion of the function $\ell$,
		$$
		\ell \left( 1 + \frac{\log(x)}{\log(t)}, 1 + \frac{\log(y)}{\log(t)} \right) = \ell(1, 1) + \dot\ell_1(1, 1) \frac{\log(x)}{\log(t)} + \dot\ell_2(1, 1) \frac{\log(y)}{\log(t)} + \delta(t, x, y),
		$$
		where $\dot\ell_1$ and $\dot\ell_2$ are the right partial derivatives of $\ell$ with respect to its first and second argument, respectively, and
		$$
		\delta(t, x, y) \lesssim \left( \frac{\log(x)}{\log(t)} \right)^2 + \left( \frac{\log(y)}{\log(t)} \right)^2.
		$$

		This is a linear approximation of the function $\ell$; since that function is convex, it lies above its sub gradient, so the error term $\delta(t, x, y)$ is non-negative. Plugging this in our expression for $Q(tx, ty)$ yields
		$$
		Q(tx, ty) = t^{\ell(1, 1)} x^{\dot\ell_1(1, 1)} y^{\dot\ell_2(1, 1)} e^{\delta'(t, x, y)},
		$$
		where $\delta'(t, x, y) = \log(t) \delta(t, x, y)$ satisfies
		$$
		\frac{\log(x)^2 + \log(y)^2}{\log(t)} \lesssim \delta'(t, x, y) \leq 0.
		$$
		Letting $q(t) = t^{\ell(1, 1)}$ and $c(x, y) = x^{\dot\ell_1(1, 1)} y^{\dot\ell_2(1, 1)}$, we obtain
		\begin{align*}
		\left| \frac{Q(tx, ty)}{q(t)} - c(x, y) \right| &= x^{\dot\ell_1(1, 1)} y^{\dot\ell_2(1, 1)} \left( 1 - e^{\delta'(t, x, y)} \right) \\
		&\leq x^{\dot\ell_1(1, 1)} y^{\dot\ell_2(1, 1)} |\delta'(t, x, y)|
		\\
		&\lesssim \frac{ x^{\dot\ell_1(1, 1)} y^{\dot\ell_2(1, 1)} (\log(x)^2 + \log(y)^2)}{\log(1/t)},
		\end{align*}
		where we used the fact that $0 \leq 1 - e^x \leq |x|$ for all $x \leq 0$. Since $\dot\ell_1(1, 1)$ and $\dot\ell_2(1, 1)$ are positive it follows that this upper bound is of order $1/\log(1/t)$ uniformly over $x,y$ in bounded sets. The claim in \cref{ex:ims:cont} is proved.
		\hfill $\square$

		\subsection{\cref{ex_spatial:ims}}

		Now, recall the setting of \cref{ex_spatial:ims}. The expression for $\Gamma^{(s, s)}$ is trivial. We shall treat the case where $s$ and $s'$ are two pairs that share an element, i.e. $s = (s_1, s_2)$ and $s' = (s_1, s_3)$. One similarly deals with different combinations of $s, s'$, including the case where they are disjoint.

		Let $\ell$ be the stable tail dependence function of the max-stable, trivariate random vector $(1-X^{(s_1)}, 1-X^{(s_2)}, 1-X^{(s_3)})$. By assumption and by the calculations above for the bivariate case, the pairs $(X^{(s_1)}, X^{(s_2)})$ and $(X^{(s_1)}, X^{(s_3)})$ satisfy \cref{con-proc}(i) with scaling functions $q^{(s)}(t) = t^{\ell(1, 1, 0)}$ and $q^{(s')}(t) = t^{\ell(1, 0, 1)}$, respectively. Since those functions are invertible, we may choose any diverging sequence $m = o(\log(n)^2)$ and invert them, setting $k^{(s)}/n = (m/n)^{1/\ell(1, 1, 0)}$ and $k^{(s')}/n = (m/n)^{1/\ell(1, 0, 1)}$. In fact, we may do so with every pair and obtain, as claimed, a universal sequence $m$.

		Without loss of generality, let us assume that $\ell(1, 1, 0) \leq \ell(1, 0, 1)$ so that $k^{(s)} \leq k^{(s')}$. Let $t_n = k^{(s)}/n$ and $\alpha = \ell(1, 1, 0)/\ell(1, 0, 1) \in (0, 1]$; observe that $k^{(s')}/n = t_n^\alpha$. By definition, for fixed $x^1, x^2 \in (0, 1]^2$ (we can restrict our attention to this setting by similar arguments as in the bivariate case), we have 
		\begin{equation} \label{eq:limit_Gamma}
		\Gamma^{(s, s')}(x^1, x^2) = \lim_{n \to \infty} \frac{n}{m} \Prob{}{ 1 - X^{(s_1)} \leq t_n x, 1 - X^{(s_2)} \leq t_n y, 1 - X^{(s_3)} \leq t_n^\alpha z },
		\end{equation}
		where $x$ is equal to $x_1^1 \wedge x_2^1$ if $\alpha=1$ and to $x_1^1$ otherwise, $y = x_2^1$ and $z = x_2^2$. Using the same reasoning as in the bivariate case above (including the homogeneity property of $\ell$), the probability in \cref{eq:limit_Gamma} can be written as
		\begin{align*}
		&\exp\left\{ -\ell(-\log(t_n x), -\log(t_n y), -\log(t_n^\alpha z)) \right\} \\
		&\quad = \exp\left\{ \log(t_n) \ell\left( 1 + \frac{\log(x)}{\log(t_n)}, 1 + \frac{\log(y)}{\log(t_n)}, \alpha + \frac{\log(z)}{\log(t_n)} \right) \right\} \\
		&\quad = t_n^{\ell(1, 1, \alpha)} \exp\left\{ \log(t_n) \left[ \ell\left( 1 + \frac{\log(x)}{\log(t_n)}, 1 + \frac{\log(y)}{\log(t_n)}, \alpha + \frac{\log(z)}{\log(t_n)} \right) - \ell(1, 1, \alpha) \right] \right\}.
		\end{align*}

		Eventually, $\log(t_n)$ is negative, which makes the difference in the square brackets non-negative by monotonicity of $\ell$. This eventually upper bounds the exponential by 1 and the entire expression by $t_n^{\ell(1, 1, \alpha)}$, for any $x, y, z \in (0,1]$. Considering \cref{eq:limit_Gamma}, it follows that for every fixed $x^1, x^2 \in (0, 1]^2$,
		\[
		\Gamma^{(s, s')}(x^1, x^2) \leq \lim_{n \to \infty} \frac{n}{m} t_n^{\ell(1, 1, \alpha)} = \lim_{n \to \infty} \left( \frac{m}{n} \right)^{\frac{\ell(1, 1, \alpha)}{\ell(1, 1, 0)} - 1} = 0,
		\]
		since the assumption that $\ell$ is component-wise strictly increasing means that $\ell(1, 1, \alpha) > \ell(1, 1, 0)$.
		\hfill $\square$

		\subsection{\cref{ex_spatial:mix}}

		We present here the main ideas, as most of the precise calculations are similar to the preceding section. As before, let $X^{(j)} = Y(u_j)$, and write $Z^{(j)}$ and $Z'^{(j)}$ for $Z(u_j)$ and $Z'(u_j)$. Consider a pair $s := (s_1, s_2)$ and let $F$ be the distribution function of the unit Fr\'echet distribution. Recall that $X^{(j)} = \max\{aZ^{(j)}, (1-a)Z'^{(j)}\}$. We have for $t \downarrow 0$
		\begin{align}
			&\Prob{}{F(X^{(s_1)}) \geq 1 - tx, F(X^{(s_2)}) \geq 1 - ty } \notag
			\\
			&= \Prob{}{ F(Z^{(s_1)})^{1/a} \vee F(Z'^{(s_1)})^{1/(1-a)} \geq 1-tx, F(Z^{(s_2)})^{1/a} \vee F(Z'^{(s_2)})^{1/(1-a)} \geq 1-ty } \notag
			\\
			&= \Prob{}{F(Z^{(s_1)}) \geq (1-tx)^a, F(Z^{(s_2)}) \geq (1-ty)^a} \notag
			\\
			&\quad\quad + \Prob{}{F(Z'^{(s_1)}) \geq (1-tx)^{1-a}, F(Z'^{(s_2)}) \geq (1-ty)^{1-a}} + O(t^2), \label{eq:two probs}
		\end{align}
		where the term $O(t^2)$ is uniform over bounded $x, y$. Note that $(1 -tx)^a = 1 - t(ax + O(tx^2))$. The first term of \cref{eq:two probs} is equal to
		\[
		a \chi^{Z, (s)} t (x + y - \ell^{Z, (s)}(x, y)) + O(t^2)
		\]
		uniformly over bounded $x, y$, where $\chi^{Z, (s)}$ and $\ell^{Z, (s)}$ are the extremal dependence coefficient and stable tail dependence function, respectively, corresponding to the random vector $(Z^{(s_1)}, Z^{(s_2)})$. From previous calculations, the second term of \cref{eq:two probs} is equal to
		\[
		((1-a) t)^{\ell^{Z', (s)}(1, 1)} x^{\dot \ell_1^{Z', (s)}(1, 1)} y^{\dot \ell_2^{Z', (s)}(1, 1)} + O\Big( t^{\ell^{Z', (s)}(1, 1)} / \log(1/t) \Big),
		\]
		uniformly over bounded $x, y$, where $\ell^{Z', (s)}$ is the stable tail dependence function corresponding to the max-stable random vector $(1/Z'^{(s_1)}, 1/Z'^{(s_2)})$. It follows that \cref{con-proc}(i) is satisfied for every pair of locations; depending on whether $(Z^{(s_1)}, Z^{(s_2)})$ is dependent or independent, either the first of the second of the last two expressions dominates. This determines that $q^{(s)}(t)$ is proportional to $t$ for asymptotically dependent pairs and to $t^{1/\eta'^{(s)}}$ for asymptotically independent ones, where $\eta'^{(s)}$ is the coefficient of tail dependence of $(1/Z'^{(s_1)}, 1/Z'^{(s_2)})$, satisfying $1 < 1/\eta'^{(s)} < 2$ by assumption --- for any inverted max-stable distribution, its coefficient of tail dependence $\eta$ is always in $[1/2, 1)$, and can only be equal to $1/2$ under perfect independence. The coefficient of tail dependence $\eta^{(s)}$ of $(X^{(s_1)}, X^{(s_2)})$ is equal to 1 if $\chi^{Z, (s)} > 0$ and to $\eta'^{(s)}$ otherwise.

		We now show how to obtain an expression for the functions $\Gamma^{(s, s')}$. First, since the functions $q^{(s)}$ are proportional to simple powers, for a sufficiently slow intermediate sequence $m$, we let $k^{(s)}/n$ be proportional to $m/n$ if $s$ is an asymptotically dependent pair and to $(m/n)^{\eta^{(s)}}$ otherwise, so that all $m^{(s)}$ are equal to $m$.

		The case $s = s'$ follows trivially from the previous developments; $\Gamma^{(s, s)}$ can be derived from $c^{(s)}$. Next consider the case where $s,s'$ share one element, i.e. $s = (s_1, s_2)$ and $s' = (s_1, s_3)$. Letting $t_n = k^{(s)}/n$ and $t_n' = k^{(s')}/n$, assume without loss of generality that $t_n' \lesssim t_n$. The probability of interest is of the form
		\begin{align*}
			&\Prob{}{ F(X^{(s_1)}) \geq 1 - (t_n x \wedge t_n' x'), F(X^{(s_2)}) \geq 1 - t_n y, F(X^{(s_3)}) \geq 1 - t_n' z }
			\\
			&= \Prob{}{ F(Z^{(s_1)}) \geq (1 - (t_n x \wedge t_n' x'))^a, F(Z^{(s_2)}) \geq (1 - t_n y)^a, F(Z^{(s_3)}) \geq (1 - t_n' z)^a }
			\\
			&\quad + \PP \left( F(Z'^{(s_1)}) \geq (1 - (t_n x \wedge t_n' x'))^{1-a}, F(Z'^{(s_2)}) \geq (1 - t_n y)^{1-a}, \right.
			\\
			&\quad\quad\quad \left. F(Z'^{(s_3)}) \geq (1 - t_n' z)^{1-a} \right) + O(t_n^2).
		\end{align*}
		Indeed, the third term above is the probability of a certain event that requires at least one of the $Z$ and one of the $Z'$ to be large, which has probability at most $O(t_n^2)$ since $Z$ and $Z'$ are assumed independent (recall that we assumed $t_n' = O(t_n)$). We note that the term in front of this probability in the definition of $\Gamma^{(s, s')}$ is equal to $q^{(s)}(t_n)^{-1} = t_n^{-1/\eta^{(s)}}$. However $t_n^2 = o(t_n^{1/\eta^{(s)}})$ since $\eta^{(s)} > 1/2$, and the second probability above is also $o(t_n^{1/\eta^{(s)}})$, following the calculations for \cref{ex_spatial:ims}. Therefore, in this case, $\Gamma^{(s, s')}((x, y), (x', z))$ is equal to the limit
		\begin{align*}
			\lim_{n \to \infty} t_n^{-1/\eta^{(s)}} &\PP\left( F(Z^{(s_1)}) \geq (1 - (t_n x \wedge t_n' x'))^a, \right.
			\\
			&\quad \left. F(Z^{(s_2)}) \geq (1 - t_n y)^a, F(Z^{(s_3)}) \geq (1 - t_n' z)^a \right)
			\\
			= \lim_{n \to \infty} t_n^{-1/\eta^{(s)}} &\PP\left( F(Z^{(s_1)}) \geq 1 - a(t_n x \wedge t_n' x'), \right.
			\\
			&\quad \left. F(Z^{(s_2)}) \geq 1 - at_n y, F(Z^{(s_3)}) \geq 1 - at_n' z \right)
		\end{align*}
		which is non-zero if and only if $(Z^{(s_1)}, Z^{(s_2)}, Z^{(s_3)})$ is fully dependent (i.e., it contains no pairwise independence).

		For the case where the pairs $s = (s_1, s_2)$ and $s' = (s_3, s_4)$ are disjoint, let $t_n = k^{(s)}/n$ and $t_n' = k^{(s')}/n$ and assume as before that $t_n' \lesssim t_n$. By similar arguments as above, one obtains that $\Gamma^{(s, s')}((x, y), (x', y'))$ is equal to the limit
		\begin{align*}
			\lim_{n \to \infty} t_n^{-1/\eta^{(s)}} &\PP\left( F(Z^{(s_1)}) \geq 1 - at_n x, F(Z^{(s_2)}) \geq 1 - at_n y, \right.
			\\
			&\quad \left. F(Z^{(s_3)}) \geq 1 - at_n' x', F(Z^{(s_4)}) \geq 1 - at_n' y' \right),
		\end{align*}
		which is non-zero if and only if $(Z^{(s_1)}, Z^{(s_2)}, Z^{(s_3)}, Z^{(s_4)})$ has no independent pairs.

		Using the same ideas and after straightforward computations, one may calculate the limits $\Gamma^{(s, s', j)}$, for $s' \in \cP_D$. First, consider the case where $s = (s_1, s_2)$ and $s_j' = s_1$, that is the element $s_j'$ is in the pair $s$. Defining $t_n$ and $t_n'$ as above, we still have $t_n' \lesssim t_n$ since $s'$ is an asymptotically dependent pair. Then $\Gamma^{(s, s', j)}((x, y), (x', y'))$ is equal to
		\[
			\chi^{Z, (s')} \lim_{n \to \infty} t_n^{-1/\eta^{(s)}} \Prob{}{ F(Z^{(s_1)}) \geq 1 - a(t_n x \wedge t_n' x'), F(Z^{(s_2)}) \geq 1 - at_n y },
		\]
		which is non-zero if and only if $(Z^{(s_1)}, Z^{(s_2)})$ is dependent. Now if $s_3 := s_j'$ is not an element of $s$, $\Gamma^{(s, s', j)}((x, y), (x', y'))$ becomes
		\[
			\chi^{Z, (s')} \lim_{n \to \infty} t_n^{-1/\eta^{(s)}} \Prob{}{ F(Z^{(s_1)}) \geq 1 - at_n x, F(Z^{(s_2)}) \geq 1 - at_n y, F(Z^{(s_3)}) \geq 1 - at_n' x' },
		\]
		which is non-zero if and only if $(Z^{(s_1)}, Z^{(s_2)}, Z^{(s_3)})$ is fully dependent.

		Finally, for $s, s' \in \cP_D$, again letting $t_n = k^{(s)}/n$ and $t_n' = k^{(s')}/n$, note that this time $t_n'/t_n$ is constant. Without loss of generality, let $j=j'=1$. Then $\Gamma^{(s, j, s', j')}((x, y), (x', y'))$ is equal to
		\[
			\chi^{Z, (s)} \chi^{Z, (s')} \lim_{n \to \infty} t_n^{-1} \Prob{}{ F(Z^{(s_1)}) \geq 1 - t_n x, F(Z^{(s_1')}) \geq 1 - t_n' y' },
		\]
		which is non-zero if and only if $(Z^{(s_1)}, Z^{(s_1')})$ is dependent.
		\hfill $\square$

		\section{Proof of the claims in \cref{ex2:cont}}
		\label{proof rsc}

		\newcommand{\aR}{{\alpha_R}}
		\newcommand{\aW}{{\alpha_W}}
		\newcommand{\amin}{{\alpha_\wedge}}
		\newcommand{\amax}{{\alpha_\vee}}

		The multiplicative constant appearing in the scaling function $q$, as a function of $\lambda$, is given by
		\begin{equation} \label{eq:K_lambda}
		K_\lambda = \begin{cases}
		2 \frac{1-\lambda}{2-\lambda}, \quad &\lambda \in (0, 1) \\
		2, &\lambda = 1 \\
		\left( 1 - \frac{1}{\lambda} \right)^{\lambda-1} \frac{2(\lambda-1)}{\lambda(2-\lambda)}, &\lambda \in (1, 2) \\
		\frac{1}{2}, &\lambda = 2 \\
		\frac{\left( 1 - \frac{1}{\lambda} \right)^2}{1 - \frac{2}{\lambda}}, &\lambda \in (2, \infty)
		\end{cases};
		\end{equation}
		it can be deduced from the proof.

		The argument must be separated in two cases depending on whether $\lambda=1$.

		\subsection{The case \texorpdfstring{$\lambda \neq 1$}{lambda not equal 1}}

		For now, assume that $\alpha_R \neq \alpha_W$. Let $\bar F_R$ denote the survival function of $R$. Then $\bar F_R(x) = x^{-\alpha_R}$ for $x>1$, and $\bar F_R(x) = 1$ for $x \leq 1$. The first step in calculationg $Q$ is to find an expression for the survival function $\bar F$ of $X$ (and equivalently of $Y$) and its inverse. We have, for $x \geq 1$,
		\begin{align*}
		\bar F(x) &= \Prob{}{RW_1 > x} \\
		&= \Prob{}{R > \frac{x}{W_1}} \\
		&= \E{}{\bar F_R \left( \frac{x}{W_1} \right)} \\
		&= \Prob{}{W_1 > x} + \int_1^x \left( \frac{w}{x} \right)^{\alpha_R} \frac{\alpha_W}{w^{\alpha_W+1}} \d w \\
		&= x^{-\alpha_W} + \alpha_W x^{-\alpha_R} \frac{x^{\alpha_R - \alpha_W} - 1}{\alpha_R - \alpha_W} \\
		&= \frac{\alpha_R}{\alpha_R - \alpha_W} x^{-\alpha_W} - \frac{\alpha_W}{\alpha_R - \alpha_W} x^{-\alpha_R} \\
		&= \frac{\amax}{\amax - \amin} x^{-\amin} \left( 1 - \frac{\amin}{\amax - \amin} x^{\amin - \amax} \right),
		\end{align*}
		where $\amin$ and $\amax$ denote the smallest and the largest of the two $\alpha$'s, respectively. Although not easily invertible, this function is close to $\frac{\amax}{\amax - \amin} x^{-\amin}$, which has an analytical inverse. We now argue that this inverse is close to that of $\bar F$. First, for any $X \in (1, \infty)$, we have for $x \in [X, \infty)$
		$$
		\underbrace{\frac{\amax}{\amax - \amin} x^{-\amin} \left( 1 - \frac{\amin}{\amax - \amin} X^{\amin - \amax} \right)}_{f_1(x)} \leq \bar F(x) \leq \underbrace{\frac{\amax}{\amax - \amin} x^{-\amin}}_{f_2(x)}.
		$$

		Now note that for two decreasing, invertible functions $g_1$ and $g_2$, $g_1 \leq g_2$ is equivalent to $g_1^{-1} \leq g_2^{-1}$. This means that as soon as $y \leq f_1(X)$, $f_1^{-1}(y) \leq \bar F^{-1}(y) \leq f_2^{-1}(y)$. In other words, for such $y$,
		$$
		\left( 1 - \frac{\amin}{\amax - \amin} X^{\amin - \amax} \right)^{1/\amin} \left( \frac{\amax}{\amax - \amin} \right)^{1/\amin} y^{-1/\amin} \leq \bar F^{-1}(y) \leq \left( \frac{\amax}{\amax - \amin} \right)^{1/\amin} y^{-1/\amin}.
		$$

		Because these inequalities are true as soon as $y \leq f_1(X)$, they are true if $y = f_1(X)$. If $y$ is small enough, choosing $X = \left( \frac{1}{2} \frac{\amax}{\amax - \amin} \right)^{1/\amin} y^{-1/\amin}$ is sufficient to have $y \leq f_1(X)$. Therefore, if $y$ is small enough, the first inequality in the last display becomes
		$$
		\bar F^{-1}(y) \geq \left( 1 - O \left( y^{\frac{\amax}{\amin} - 1} \right) \right) \left( \frac{\amax}{\amax - \amin} \right)^{1/\amin} y^{-1/\amin}.
		$$

		Combining this with the upper bound (the second inequality) yields
		\begin{equation} \label{quantile function}
		\bar F^{-1}(y) = \left( 1 + O \left( y^\tau \right) \right) \left( \frac{\amax}{\amax - \amin} \right)^{1/\amin} y^{-1/\amin},
		\end{equation}
		where $\tau = \frac{\amax}{\amin} - 1$.

		The copula $Q$ can now be expressed as
		\begin{align*}
		Q(tx, ty) &= \Prob{}{X \geq \bar F^{-1}(tx), Y \geq \bar F^{-1}(ty)} \\
		&= \Prob{}{RW_1 \geq \bar F^{-1}(tx), RW_2 \geq \bar F^{-1}(ty)} = \Prob{}{R \geq Z} = \E{}{\bar F_R(Z)},
		\end{align*}
		where
		$$
		Z := Z(tx, ty) = \frac{ \bar F^{-1}(tx)}{W_1} \vee \frac {\bar F^{-1}(ty)}{W_2}.
		$$

		Recalling the definition of $\bar F_R$, we have
		\begin{align*}
		Q(tx, ty) &= \Prob{}{Z \leq 1} + \E{}{Z^{-\alpha_R} ; Z>1} \\
		&= \Prob{}{Z \leq 1} + \int_0^\infty \Prob{}{Z^{-\alpha_R} > a, Z > 1} \d a \\
		&= \Prob{}{Z \leq 1} + \int_0^\infty \Prob{}{1 < Z \leq a^{-1/\aR}} \d a \\
		&= \Prob{}{Z \leq 1} + \int_0^1 \Prob{}{1 < Z \leq a^{-1/\aR}} \d a \\
		&= \Prob{}{Z \leq 1} + \int_0^1 \left( \Prob{}{Z \leq a^{-1/\aR}} - \Prob{}{Z \leq 1} \right) \d a \\
		&= \int_0^1 \Prob{}{Z \leq a^{-1/\aR}} \d a.
		\end{align*}

		In order to compute the previous integral, we need to derive the CDF of $Z$. From the definition of $Z$ and by independence of $W_1$ and $W_2$, it is clear that, for any $z > 0$,
		$$
		\Prob{}{Z \leq z} = \Prob{}{W_1 \geq \frac{\bar F^{-1}(tx)}{z}} \Prob{}{W_2 \geq \frac{\bar F^{-1}(ty)}{z}}.
		$$

		From now on, assume without loss of generality that $x \geq y$ since $c(x, y) = c(y, x)$ (because the random variables $X$ and $Y$ are exchangeable). Then $\bar F^{-1}(tx) \leq \bar F^{-1}(ty)$. The previous probability can take 3 different forms:
		$$
		\Prob{}{Z \leq z} = \begin{cases}
		\left( \bar F^{-1}(tx) \bar F^{-1}(ty) \right)^{-\aW} z^{2\aW}, \quad &\text{if } z \leq \bar F^{-1}(tx) \\
		\left( \bar F^{-1}(ty) \right)^{-\aW} z^\aW, &\text{if } \bar F^{-1}(tx) < z \leq \bar F^{-1}(ty) \\
		1, &\text{if } z > \bar F^{-1}(ty)
		\end{cases}.
		$$

		When substituting $z = a^{-1/\aR}$, for $a \in (0, 1)$, notice that we are in the three preceding cases, respectively, when
		$$
		\begin{cases}
		a \geq \left( \bar F^{-1}(tx) \right)^{-\aR} \\
		\left( \bar F^{-1}(ty) \right)^{-\aR} \leq a < \left( \bar F^{-1}(tx) \right)^{-\aR} \\
		a < \left( \bar F^{-1}(ty) \right)^{-\aR}
		\end{cases}.
		$$

		This allows us to write
		\begin{align}
		Q(tx, ty) = \int_0^{\left( \bar F^{-1}(ty) \right)^{-\aR}} \d a &+ \left( \bar F^{-1}(ty) \right)^{-\aW} \int_{\left( \bar F^{-1}(ty) \right)^{-\aR}}^{\left( \bar F^{-1}(tx) \right)^{-\aR}} a^{-\frac{\aW}{\aR}} \d a \notag \\
		&+ \left( \bar F^{-1}(tx) \bar F^{-1}(ty) \right)^{-\aW} \int_{\left( \bar F^{-1}(tx) \right)^{-\aR}}^1 a^{-2 \frac{\aW}{\aR}} \d a. \label{3 int}
		\end{align}

		Since we only need \cref{Q} to hold uniformly over a sphere, we may assume that $y \leq x \leq 1$. Then, \cref{quantile function} yields
		$$
		\bar F^{-1}(tx) = (1 + O(t^\tau)) \left( \frac{\amax}{\amax - \amin} \right)^{1/\amin} (tx)^{-1/\amin}
		$$
		and the same for $\bar F^{-1}(ty)$. Moreover, the term $O(t^\tau)$ is uniform over all $(x, y) \in [0, 1]^2$. The first term in \cref{3 int} is then equal to
		$$
		\left( \bar F^{-1}(ty) \right)^{-\aR} = (1 + O(t^\tau)) \left( 1 - \frac{\amin}{\amax} \right)^{\frac{\aR}{\amin}} t^{\frac{\aR}{\amin}} y^{\frac{\aR}{\amin}} =: Q^{(1)}(tx, ty),
		$$
		the second one is equal to
		\begin{align*}
		&\left( \bar F^{-1}(ty) \right)^{-\aW} \left. \frac{a^{1 - \frac{\aW}{\aR}}}{1 - \frac{\aW}{\aR}} \right|_{a = \left( \bar F^{-1}(ty) \right)^{-\aR}}^{\left( \bar F^{-1}(tx) \right)^{-\aR}} \\
		&\quad = \frac{1}{1 - \frac{\aW}{\aR}} \left( \bar F^{-1}(ty) \right)^{-\aW} \left( \bar F^{-1}(tx)^{-(\aR - \aW)} - \bar F^{-1}(ty)^{-(\aR - \aW)} \right) \\
		&\quad = (1 + O(t^\tau)) \frac{ \left( 1 - \frac{\amin}{\amax} \right)^{\frac{\aR}{\amin}} }{1 - \frac{\aW}{\aR}} t^{\frac{\aR}{\amin}} y^{\frac{\aW}{\amin}} \left( x^{\frac{\aR - \aW}{\amin}} - y^{\frac{\aR - \aW}{\amin}} \right) \\
		&\quad =: Q^{(2)}(tx, ty)
		\end{align*}
		and finally the third one is equal to
		\begin{align*}
		&\left( \bar F^{-1}(tx) \bar F^{-1}(ty) \right)^{-\aW} \left. \frac{a^{1 - 2\frac{\aW}{\aR}}}{1 - 2\frac{\aW}{\aR}} \right|_{a = \left( \bar F^{-1}(tx) \right)^{-\aR}}^1 \\
		&\quad = \frac{1}{1 - 2\frac{\aW}{\aR}} \left( \bar F^{-1}(tx) \bar F^{-1}(ty) \right)^{-\aW} \left( 1 - \left( \bar F^{-1}(tx) \right)^{2\aW - \aR} \right) \\
		&\quad = (1 + O(t^\tau)) \frac{ \left( 1 - \frac{\amin}{\amax} \right)^{2\frac{\aW}{\amin}} }{1 - 2\frac{\aW}{\aR}} t^{2\frac{\aW}{\amin}} (xy)^{\frac{\aW}{\amin}} \left( 1 - \left( 1 - \frac{\amin}{\amax} \right)^{\frac{\aR - 2\aW}{\amin}} (tx)^{\frac{\aR - 2\aW}{\amin}} \right) \\
		&\quad =: Q^{(3a)}(tx, ty)
		\end{align*}
		in the case where $\aR \neq 2\aW$, and if $\aR = 2\aW$, it is equal to
		\begin{align*}
		&-\left( \bar F^{-1}(tx) \bar F^{-1}(ty) \right)^{-\aW} \log\left( \left( \bar F^{-1}(tx) \right)^{-\aR} \right) \\
		&\quad = (1 + O(t^\tau)) \left( 1 - \frac{\amin}{\amax} \right)^{2\frac{\aW}{\amin}} t^{2\frac{\aW}{\amin}} (xy)^{\frac{\aW}{\amin}} \\
		&\quad\quad \times \left( -\log \left( (1 + O(t^\tau)) \left( 1 - \frac{\amin}{\amax} \right)^{\frac{\aR}{\amin}} \right) + \frac{\aR}{\amin} \left( \log(1/x) + \log(1/t) \right) \right) \\
		&\quad = \frac{1}{2} t^2 \log(1/t) xy + O(t^2) \\
		&\quad =: Q^{(3b)}(tx, ty),
		\end{align*}
		where the term $O(t^2)$ is uniform over $(x, y) \in [0, 1]^2$. We now  divide the possible values of $\lambda = \aR/\aW$ in four ranges and determine which of the three terms $Q^{(1)}$, $Q^{(2)}$ or $Q^{(3)}$ dominates.

		\subsubsection{\texorpdfstring{$\lambda \in (0, 1)$}{lambda in (0, 1)}}

		This is the case where we obtain asymptotic dependence. All three terms are of the order of $t$, so they all matter. In this case, $\amin = \aR$, $\amax = \aW$ and $\tau = 1/\lambda - 1$. Therefore,
		\begin{align*}
		Q^{(1)}(tx, ty) &= (1 + O(t^\tau)) \left( 1 - \frac{\aR}{\aW} \right) ty = (1-\lambda) ty + O \left( t^{1 + \tau} \right), \\
		Q^{(2)}(tx, ty) &= (1 + O(t^\tau)) \frac{1 - \frac{\aR}{\aW}}{1 - \frac{\aW}{\aR}} t y^{\frac{\aW}{\aR}} \left( x^{1 - \frac{\aW}{\aR}} - y^{1 - \frac{\aW}{\aR}} \right) \\
		&= (1 + O(t^\tau)) \frac{\aR}{\aW}  t \left( y - x^{1 - \frac{\aW}{\aR}} y^{\frac{\aW}{\aR}} \right) \\
		&= \lambda t \left( y - x^{1 - 1/\lambda} y^{1/\lambda} \right) + O \left( t^{1 + \tau} \right), \\
		Q^{(3a)}(tx, ty) &= (1 + O(t^\tau)) \frac{ \left( 1 - \frac{\aR}{\aW} \right)^{2\frac{\aW}{\aR}} }{2\frac{\aW}{\aR} - 1} t^{2\frac{\aW}{\aR}} (xy)^{\frac{\aW}{\aR}} \left( \left( 1 - \frac{\aR}{\aW} \right)^{1 - 2\frac{\aW}{\aR}} (tx)^{1 - 2\frac{\aW}{\aR}} - 1 \right) \\
		&= (1 + O(t^\tau)) \frac{1 - \frac{\aR}{\aW}}{2\frac{\aW}{\aR} - 1} t x^{1 - \frac{\aW}{\aR}} y^{\frac{\aW}{\aR}} + O \left( t^{2\frac{\aW}{\aR}} \right) \\
		&= \lambda \frac{1-\lambda}{2 - \lambda} t x^{1 - 1/\lambda} y^{1/\lambda} + O \left( t^{1+\tau} + t^{2/\lambda} \right) = \lambda \frac{1-\lambda}{2 - \lambda} t x^{1 - 1/\lambda} y^{1/\lambda} + O \left( t^{1+\tau} \right),
		\end{align*}
		where in the last line we have used $1+\tau = \amax/\amin = 1/\lambda < 2/\lambda$. Therefore in this case we get
		\begin{align*}
		Q(tx, ty) &= Q^{(1)}(tx, ty) + Q^{(2)}(tx, ty) + Q^{(3a)}(tx, ty) \\
		&= (1-\lambda) ty + \lambda t \left( y - x^{1 - 1/\lambda} y^{1/\lambda} \right) + \lambda \frac{1-\lambda}{2 - \lambda} t x^{1 - 1/\lambda} y^{1/\lambda} + O \left( t^{1+\tau} \right) \\
		&= t \left( y + \left( -\lambda + \lambda \frac{1-\lambda}{2 - \lambda} \right) x^{1 - 1/\lambda} y^{1/\lambda} \right) + O \left( t^{1+\tau} \right) \\
		&= t \left( y - \frac{\lambda}{2-\lambda} x^{1 - 1/\lambda} y^{1/\lambda} \right) + O \left( t^{1+\tau} \right).
		\end{align*}

		\subsubsection{\texorpdfstring{$\lambda \in (1, 2)$}{lambda in (1, 2)}}

		Here again, all three terms are of the order of $t^\lambda$ so they all matter. Note that here and in the next two cases, $\amin = \aW$, $\amax = \aR$ and $\tau = \lambda-1$. Through similar calculations as before, we obtain this time
		\begin{align*}
		Q^{(1)}(tx, ty) &= (1 + O(t^\tau)) \left( 1 - \frac{\aW}{\aR} \right)^{\frac{\aR}{\aW}} t^{\frac{\aR}{\aW}} y^{\frac{\aR}{\aW}} = \left( 1 - \frac{1}{\lambda} \right)^\lambda t^\lambda y^\lambda + O \left( t^{\lambda+\tau} \right), \\
		Q^{(2)}(tx, ty) &= (1 + O(t^\tau)) \frac{\left( 1 - \frac{\aW}{\aR} \right)^{\frac{\aR}{\aW}}}{1 - \frac{\aW}{\aR}} t^{\frac{\aR}{\aW}} y \left( x^{\frac{\aR}{\aW} - 1} - y^{\frac{\aR}{\aW} - 1} \right) \\
		&= \left( 1 - \frac{1}{\lambda} \right)^{\lambda-1} t^\lambda \left( x^{\lambda - 1} y - y^\lambda \right) + O \left( t^{\lambda+\tau} \right), \\
		Q^{(3a)}(tx, ty) &= (1 + O(t^\tau)) \frac{ \left( 1 - \frac{\aW}{\aR} \right)^2 }{2\frac{\aW}{\aR} - 1} t^2 xy \left( \left( 1 - \frac{\aW}{\aR} \right)^{\frac{\aR}{\aW} - 2} (tx)^{\frac{\aR}{\aW} - 2} - 1 \right) \\
		&= (1 + O(t^\tau)) \frac{ \left( 1 - \frac{1}{\lambda} \right)^2 }{\frac{2}{\lambda} - 1} t^2 xy \left( \left( 1 - \frac{1}{\lambda} \right)^{\lambda - 2} (tx)^{\lambda - 2} - 1 \right) \\
		&= (1 + O(t^\tau)) \frac{ \left( 1 - \frac{1}{\lambda} \right)^\lambda }{\frac{2}{\lambda} - 1} t^\lambda x^{\lambda-1} y + O \left( t^2 \right) \\
		&= \lambda \frac{ \left( 1 - \frac{1}{\lambda} \right)^\lambda }{2 - \lambda} t^\lambda x^{\lambda-1} y + O \left( t^{\lambda+\tau} + t^2 \right) \\
		&= \lambda \frac{ \left( 1 - \frac{1}{\lambda} \right)^\lambda }{2 - \lambda} t^\lambda x^{\lambda-1} y + O \left( t^{(2\lambda - 1) \wedge 2} \right).
		\end{align*}

		Therefore, $Q$ can be calculated as
		\begin{align*}
		Q(tx, ty) &= Q^{(1)}(tx, ty) + Q^{(2)}(tx, ty) + Q^{(3a)}(tx, ty) \\
		&= \left( 1 - \frac{1}{\lambda} \right)^{\lambda-1} t^\lambda \left( \left( 1 - \frac{1}{\lambda} \right) y^\lambda + x^{\lambda - 1} y - y^\lambda + \lambda \frac{1 - \frac{1}{\lambda}}{2-\lambda} x^{\lambda - 1} y \right) + O \left( t^{(2\lambda - 1) \wedge 2} \right) \\
		&= \left( 1 - \frac{1}{\lambda} \right)^{\lambda-1} t^\lambda \left( -\frac{1}{\lambda} y^\lambda + \left( 1 + \lambda \frac{1 - \frac{1}{\lambda}}{2-\lambda} \right) x^{\lambda - 1} y \right) + O \left( t^{(2\lambda - 1) \wedge 2} \right) \\
		&= \left( 1 - \frac{1}{\lambda} \right)^{\lambda-1} t^\lambda \left( \frac{1}{2-\lambda} x^{\lambda-1} y - \frac{1}{\lambda} y^\lambda \right) + O \left( t^{(2\lambda - 1) \wedge 2} \right).
		\end{align*}

		\subsubsection{\texorpdfstring{$\lambda = 2$}{lambda = 2}}

		In this case, $\aR/\amin = 2$, so we easily see that both $Q^{(1)}(tx, ty)$ and $Q^{(2)}(tx, ty)$ are $O(t^2)$. Because the term $Q^{(3b)}$ is of the order of $t^2 \log(1/t)$, it dominates the preceding two by a factor of $\log(1/t)$. Therefore,
		$$
		Q(tx, ty) = Q^{(3b)}(tx, ty) + O \left( t^2 \right) = \frac{1}{2} t^2 \log(1/t) xy + O \left( t^2 \right).
		$$

		\subsubsection{\texorpdfstring{$\lambda \in (2, \infty)$}{lambda in (2, inf)}}

		Once again, the terms $Q^{(1)}$ and $Q^{(2)}$ are dominated by the third term; they are both of the order of $t^\lambda$, whereas the third term is of the order of $t^2$. Therefore,
		\begin{align*}
		Q(tx, ty) &= Q^{(3a)}(tx, ty) + O \left( t^{\frac{\aR}{\aW}} \right) \\
		&= (1 + O(t^\tau)) \frac{ \left( 1 - \frac{\aW}{\aR} \right)^2 }{1 - 2\frac{\aW}{\aR}} t^2 xy \left( 1 - \left( 1 - \frac{\aW}{\aR} \right)^{\frac{\aR}{\aW} - 2} (tx)^{\frac{\aR}{\aW} - 2} \right) + O \left( t^{\frac{\aR}{\aW}} \right) \\
		&= (1 + O(t^\tau)) \frac{ \left( 1 - \frac{1}{\lambda} \right)^2 }{1 - \frac{2}{\lambda}} t^2 xy + O \left( t^\lambda \right) \\
		&= \frac{ \left( 1 - \frac{1}{\lambda} \right)^2 }{1 - \frac{2}{\lambda}} t^2 xy + O \left( t^{(2+\tau) \wedge \lambda} \right) \\
		&= \frac{ \left( 1 - \frac{1}{\lambda} \right)^2 }{1 - \frac{2}{\lambda}} t^2 xy + O \left( t^\lambda \right),
		\end{align*}
		because, in the last line, $2+\tau = \lambda+1 > \lambda$.

		\subsection{The case \texorpdfstring{$\lambda = 1$}{lambda = 1}}

		\newcommand{\W}{W_{-1}}

		From now on, we assume that $\aR = \aW = \alpha$. That is, $R, W_1, W_2$ are iid with a $\Par{\alpha}$ distribution. Like before, we denote by $\bar F_R$ and $\bar F$ the survival functions of $R$ and of $X$ (and equivalently $Y$), respectively. As before, we first find an expression for $\bar F$. For any $x \geq 1$,
		\begin{align*}
		\bar F(x) &= \Prob{}{RW_1 > x} \\
		&= \Prob{}{R > \frac{x}{W_1}} \\
		&= \E{}{\bar F_R \left( \frac{x}{W_1} \right)} \\
		&= \Prob{}{W_1 > x} + \int_1^x \left( \frac{w}{x} \right)^{\alpha} \frac{\alpha}{w^{\alpha+1}} \d w \\
		&= x^{-\alpha} + \alpha x^{-\alpha} \int_1^x \frac{\d w}{w} \\
		&= x^{-\alpha} \left( 1 + \alpha\log(x) \right).
		\end{align*}

		The inverse of this function is given by
		$$
		\bar F^{-1}(y) = \left( \frac{-\W(-y/e)}{y} \right)^{1/\alpha},
		$$
		where $\W$ denotes the lower branch of the Lambert $W$ function; for $y \in [-e^{-1}, 0)$, $\W(y)$ denotes the only solution in $x \in (-\infty, -1]$ of the equation $y = xe^x$. Indeed, it can be seen by a simple plug-in argument that for any $y \in (0, 1]$,
		$$
		\bar F \left( \left( \frac{-\W(-y/e)}{y} \right)^{1/\alpha} \right) = y.
		$$

		Repeating the steps leading to \cref{3 int}, we obtain the following similar integral representation for $Q$:
		\begin{align*}
		Q(tx, ty) &= \int_0^{\left( \bar F^{-1}(ty) \right)^{-\alpha}} \d a + \left( \bar F^{-1}(ty) \right)^{-\alpha} \int_{\left( \bar F^{-1}(ty) \right)^{-\alpha}}^{\left( \bar F^{-1}(tx) \right)^{-\alpha}} a^{-1} \d a \\
		&\quad + \left( \bar F^{-1}(tx) \bar F^{-1}(ty) \right)^{-\alpha} \int_{\left( \bar F^{-1}(tx) \right)^{-\alpha}}^1 a^{-2} \d a \\
		&= \left( \bar F^{-1}(ty) \right)^{-\alpha} + \left( \bar F^{-1}(ty) \right)^{-\alpha} \log \left( \frac{\left( \bar F^{-1}(tx) \right)^{-\alpha}}{\left( \bar F^{-1}(ty) \right)^{-\alpha}} \right) \\
		&\quad + \left( \bar F^{-1}(tx) \bar F^{-1}(ty) \right)^{-\alpha} \left( \left( \bar F^{-1}(tx) \right)^\alpha - 1 \right) \\
		&= \left( \bar F^{-1}(ty) \right)^{-\alpha} \left( 2 + \log \left( \frac{\left( \bar F^{-1}(tx) \right)^{-\alpha}}{\left( \bar F^{-1}(ty) \right)^{-\alpha}} \right) \right) - \left( \bar F^{-1}(tx) \bar F^{-1}(ty) \right)^{-\alpha}.
		\end{align*}

		The last term in this expression is negligible, compared to the first one, by a factor of at least $\left( \bar F^{-1}(ty) \right)^{-\alpha}$, which (we shall see) is small enough to be absorbed by the term $O(q_1(t))$.

		Now, by \cite{corless1996lambertw}, Section 4, we may obtain the following expansion of $\left( \bar F^{-1}(t) \right)^{-\alpha}$ as $t \to 0$:
		\begin{align*}
		\left( \bar F^{-1}(t) \right)^{-\alpha} &= \frac{t}{-\W(-t/e)} \\
		&= \frac{t}{\log(e/t) + \log\log(e/t) + o(1)} \\
		&= \frac{t}{\log(1/t) + \log\log(1/t) + O(1)} \\
		&= \left( 1 + O \left( \frac{1}{\log(1/t)} \right) \right) \frac{t}{\log(1/t) + \log\log(1/t)}.
		\end{align*}

		Note that, since we are only interested in $(x, y) \in (0, 1]^2$ and since we assume $y \leq x$, $1/\log(1/ty) \leq 1/\log(1/tx) \leq 1/\log(1/t)$. Plugging the expansion in our expression for $Q$ yields
		\begin{align}
		Q(tx, ty) &= \left\{ 1 + O \left( \frac{1}{\log(1/t)} \right) \right\} \frac{ty}{\log(1/ty) + \log\log(1/ty)} \notag \\
		&\quad\quad \times \left( 2 + \log \left( \frac{ \left\{ 1 + O \left( \frac{1}{\log(1/t)} \right) \right\} \frac{tx}{\log(1/tx) + \log\log(1/tx)} }{ \left\{ 1 + O \left( \frac{1}{\log(1/t)} \right) \right\} \frac{ty}{\log(1/ty) + \log\log(1/ty)} } \right) \right) \notag \\
		&\quad + O \left( \left( \frac{t}{\log(1/t) + \log\log(1/t)} \right)^2 \right) \notag \\
		&= \left\{ 1 + O \left( \frac{1}{\log(1/t)} \right) \right\} \frac{ty}{\log(1/t) + \log\log(1/t) + O(\log(1/y))} \notag \\
		&\quad\quad \times \left( 2 + \log \left( \left\{ 1 + O \left( \frac{1}{\log(1/t)} \right) \right\} \frac{x}{y} \frac{\log(1/t) + \log\log(1/t) + O(\log(1/y))}{\log(1/t) + \log\log(1/t) + O(\log(1/x))} \right) \right) \label{Q expansion}
		\\
		&\quad + O \left( \left( \frac{t}{\log(1/t) + \log\log(1/t)} \right)^2 \right) \notag
		\end{align}

		Note that the first term thereof can be written as
		\begin{align*}
		\frac{ty}{\log(1/t) + \log\log(1/t) + O(\log(1/y))} &= \frac{ty}{\log(1/t) + \log\log(1/t)} \left\{ 1 + O \left( \frac{\log(1/y)}{\log(1/t)} \right) \right\} \\
		&= \frac{ty}{\log(1/t) + \log\log(1/t)} \left\{ 1 + O \left( \frac{1}{\log(1/t)} \right) \right\}
		\end{align*}
		because as $y$ approaches 0, the term $\log(1/y)$ gets absorbed by the term $y$ on the numerator. Furthermore,
		\begin{align*}
		\frac{x}{y} \frac{\log(1/t) + \log\log(1/t) + O(\log(1/y))}{\log(1/t) + \log\log(1/t) + O(\log(1/x))} &= \frac{x}{y} \left\{ 1 + O \left( \frac{\log(1/x) + \log(1/y)}{\log(1/t)} \right) \right\} \\
		&= \frac{x}{y} \left\{ 1 + O \left( \frac{\log(1/y)}{\log(1/t)} \right) \right\}.
		\end{align*}

		Thus the $\log$ term in \cref{Q expansion} equals
		\begin{align*}
		\log \left( \frac{x}{y} + O \left( \frac{x}{y} \frac{\log(1/y)}{\log(1/t)} \right) \right) = \log \left( \frac{x}{y} \right) + O \left( \frac{ \frac{x}{y} \frac{\log(1/y)}{\log(1/t)} }{x/y} \right) = \log \left( \frac{x}{y} \right) + O \left( \frac{\log(1/y)}{\log(1/t)} \right),
		\end{align*}
		where we have used the fact that, for any $a \geq 1$ and $b \geq 0$, $\log(a + b) \leq \log(a) + b/a$ (recall that $x/y \geq 1$). Piecing everything together, \cref{Q expansion} may be rewritten as
		\begin{align*}
		Q(tx, ty) &= \frac{ty}{\log(1/t) + \log\log(1/t)} \left( 2 + \log \left( \frac{x}{y} \right) + O \left( \frac{\log(1/y)}{\log(1/t)} \right) \right) \left\{ 1 + O \left( \frac{1}{\log(1/t)} \right) \right\} \\
		&= \frac{ty}{\log(1/t) + \log\log(1/t)} \left( 2 + \log \left( \frac{x}{y} \right) \right) \left\{ 1 + O \left( \frac{1}{\log(1/t)} \right) \right\},
		\end{align*}
		once again because the term $\log(1/y)$ is absorbed by $y$ as $y$ approaches 0. Recalling that we assumed $y \leq x$, the claim follows.
		\hfill $\square$

		\section{A few words on the computational complexity of the method in spatial problems}
		\label{sec:cc}

		Both estimators we propose in the spatial setting (defined in \cref{eq:defthetahatls,eq:defthetahat}) essentially rely on the evaluation of bivariate functions and as such are much faster than methods based on full likelihood (especially if the number of locations is large). A comparison with pairwise likelihood depends on the cost of likelihood evaluations in the particular model under consideration and the type of weight functions that we choose. For the sake of brevity we will focus on the estimator $\hat \vartheta$ from \cref{eq:defthetahatls}; similar arguments apply to $\tilde \vartheta$ from \cref{eq:defthetahat} with obvious modifications.

		Typically, we expect that $\hat \vartheta$ can be computed faster than a pairwise likelihood-based estimator. The main computational burden arises when computing the pairwise empirical integrals $\int g(x,y) \hat Q^{(s)}(kx/n, ky/n) dxdy$ and the corresponding estimators $\hat\theta_n^{(s)}$. In computing those estimators, when finding the minimizer of
		\[
		\Big\| \int g(x, y) \hat Q^{(s)}(kx/n, ky/n) dxdy - \zeta\int g(x, y) c_\theta(x, y) dxdy \Big\|
		\] 
		through numerical optimization, only population level integrals $\int g(x,y) c_{\theta}(x,y) dxdy$ need to be re-computed for each optimization step. For specific models (such as the inverted Brown--Resnick process considered in our application) those integrals have simple analytic expressions, which additionally speeds up the computation. In comparison, the likelihood of a bivariate extreme value model may be substantially more costly to compute, and it needs to be evaluated at every optimization step.

		The above procedure only needs to be completed once and can easily be parallelized by considering pairs independently. Once the estimators $\hat\theta_n^{(s)}$ are available, the objective function in \cref{eq:defthetahatls} only depends on evaluating the low-dimensional functions $h^{(s)}$. Again, in our example those are very simple analytic functions.

		To give a rough idea of the computation times for the proposed methods in a specific example, we report below average computation times for the spatial simulation study in \cref{sim spatial}, with $d=40$ locations (corresponding to 780 pairs), $n=5000$ and a few different values of $m$. All computation times are for computing both spatial estimators simultaneously (but the time to compute only one is not so different since most of the ``pairwise" steps leading to each estimator are the same). The values given are averaged based on 100 repetitions and the values in parenthesis are standard deviations. All computations were executed on a personal laptop with a 2.5GHz Intel Core i5-7200U processor without utilizing parallel computation.

		\begin{center}
		\begin{tabular}{c|c|c|c|c|c}
		$m$ & 25 & 100 & 250 & 500 & 1000 \\
		\hline
		time (seconds) & 9.6 (0.6) & 9.5 (0.3) & 9.6 (0.4) & 9.8 (0.3) & 9.8 (0.3)
		\end{tabular}
		\end{center}

		\section{Additional simulation results}
		\label{simulations-add}

		This section contains additional simulation results not included in \cref{simulations}.

		\subsection{Bivariate distributions}

		The following scatter plots represent data from each of the three bivariate models M1--M3 found in \cref{sim bivariate}. For illustration purposes, there is no additive noise and the marginals are transformed to unit exponential.

		\begin{figure}[H]
			\centering
			\includegraphics[scale = 0.35, trim = 0 50 0 60]{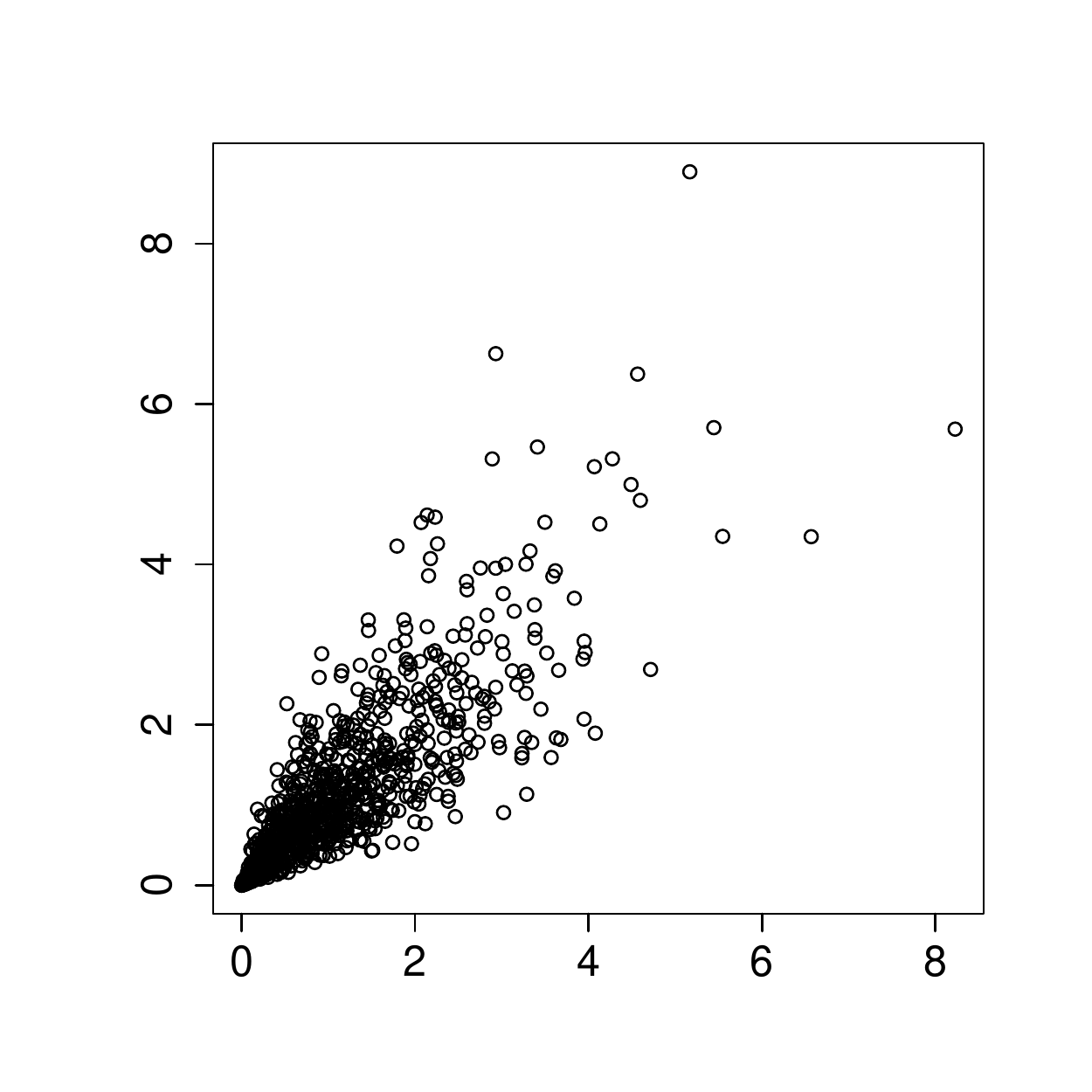}
			\includegraphics[scale = 0.35, trim = 0 50 0 60]{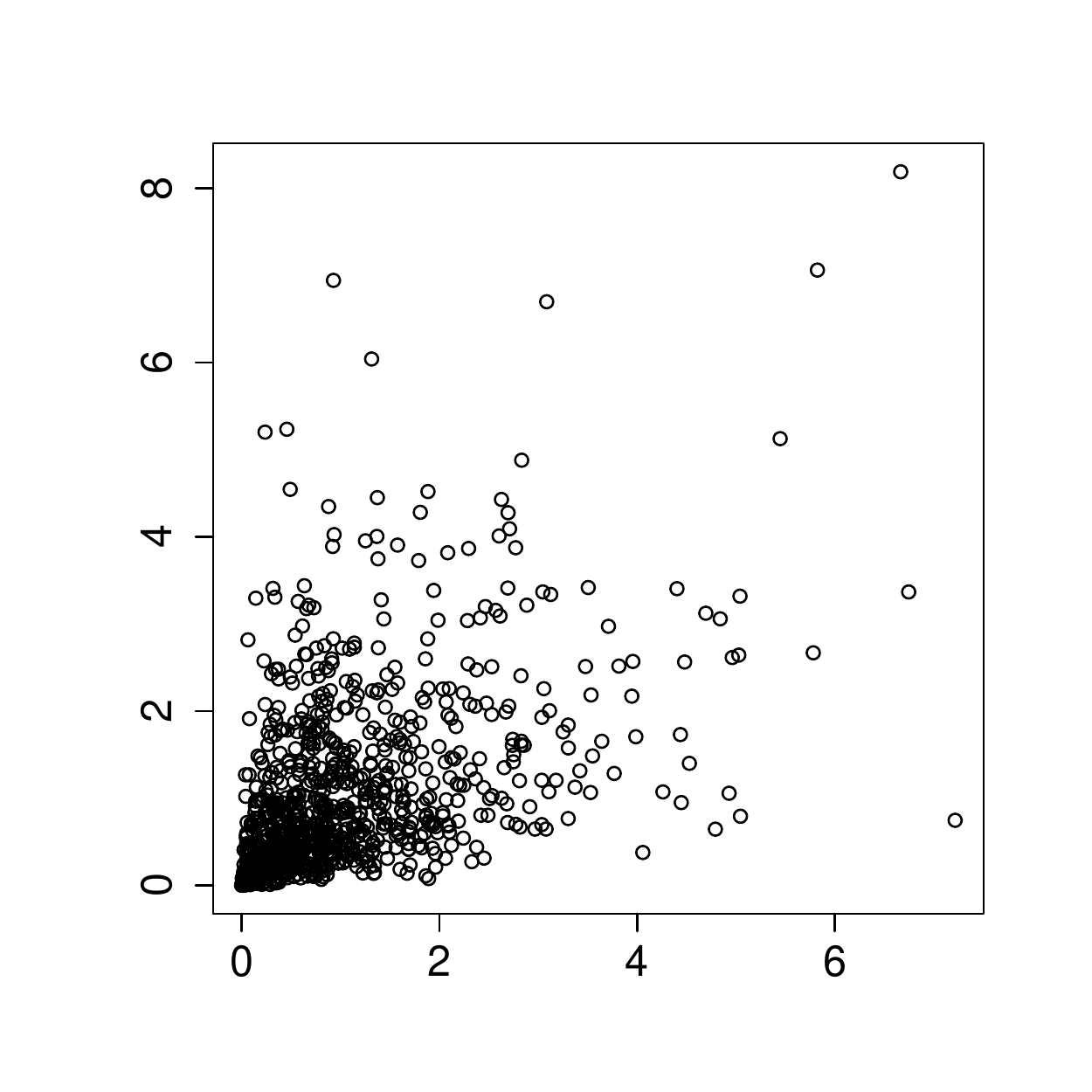}
			\includegraphics[scale = 0.35, trim = 0 50 0 60]{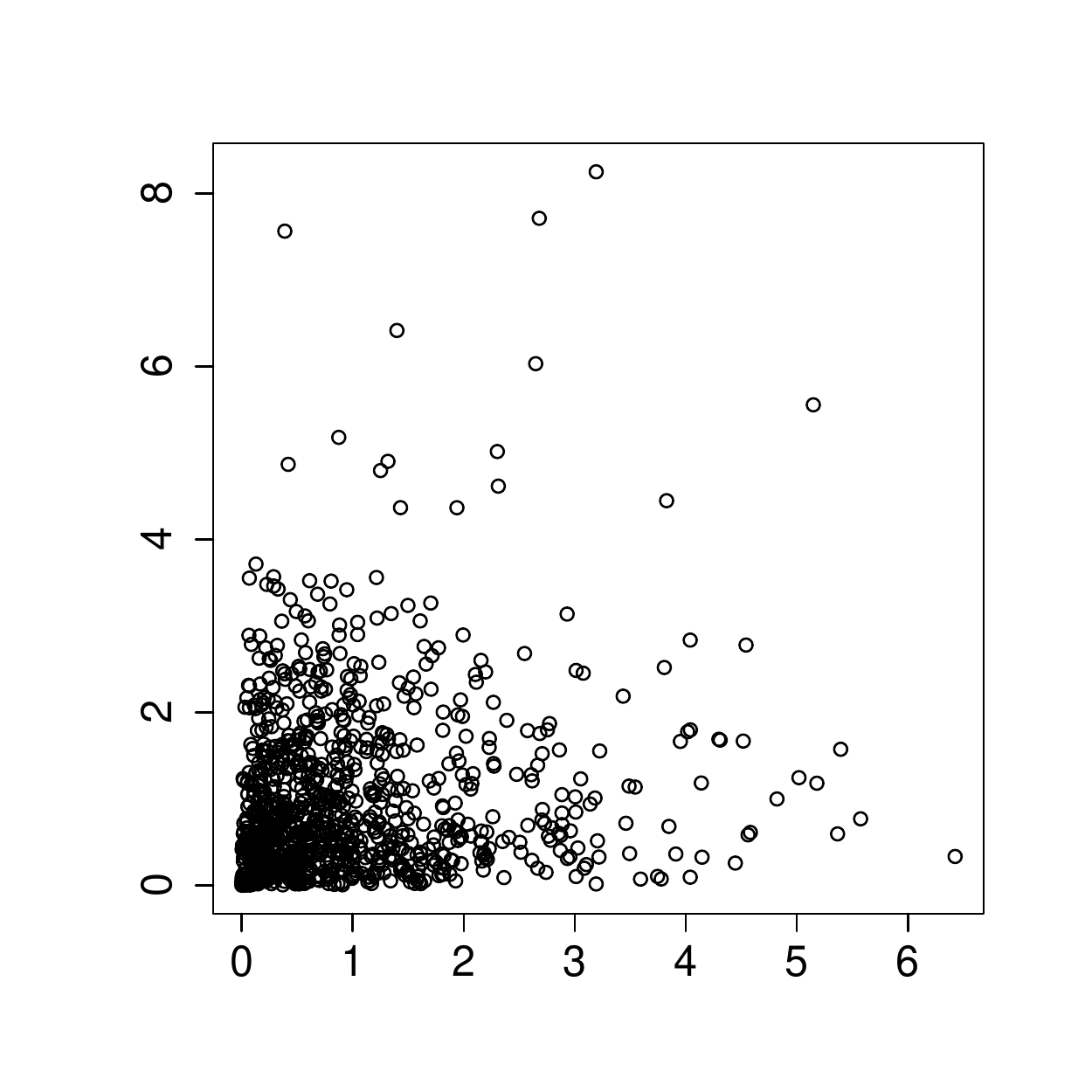}
			\caption{Samples of 1\,000 data points from the inverted H\"usler--Reiss distribution with parameter $\theta$ equal to 0.6, 0.75 and 0.9, from left to right. The marginal distributions are scaled to unit exponential.
			\label{fig:IHR-rea}}
		\end{figure}

		\begin{figure}[H]
			\centering
			\includegraphics[scale = 0.35, trim = 0 50 0 60]{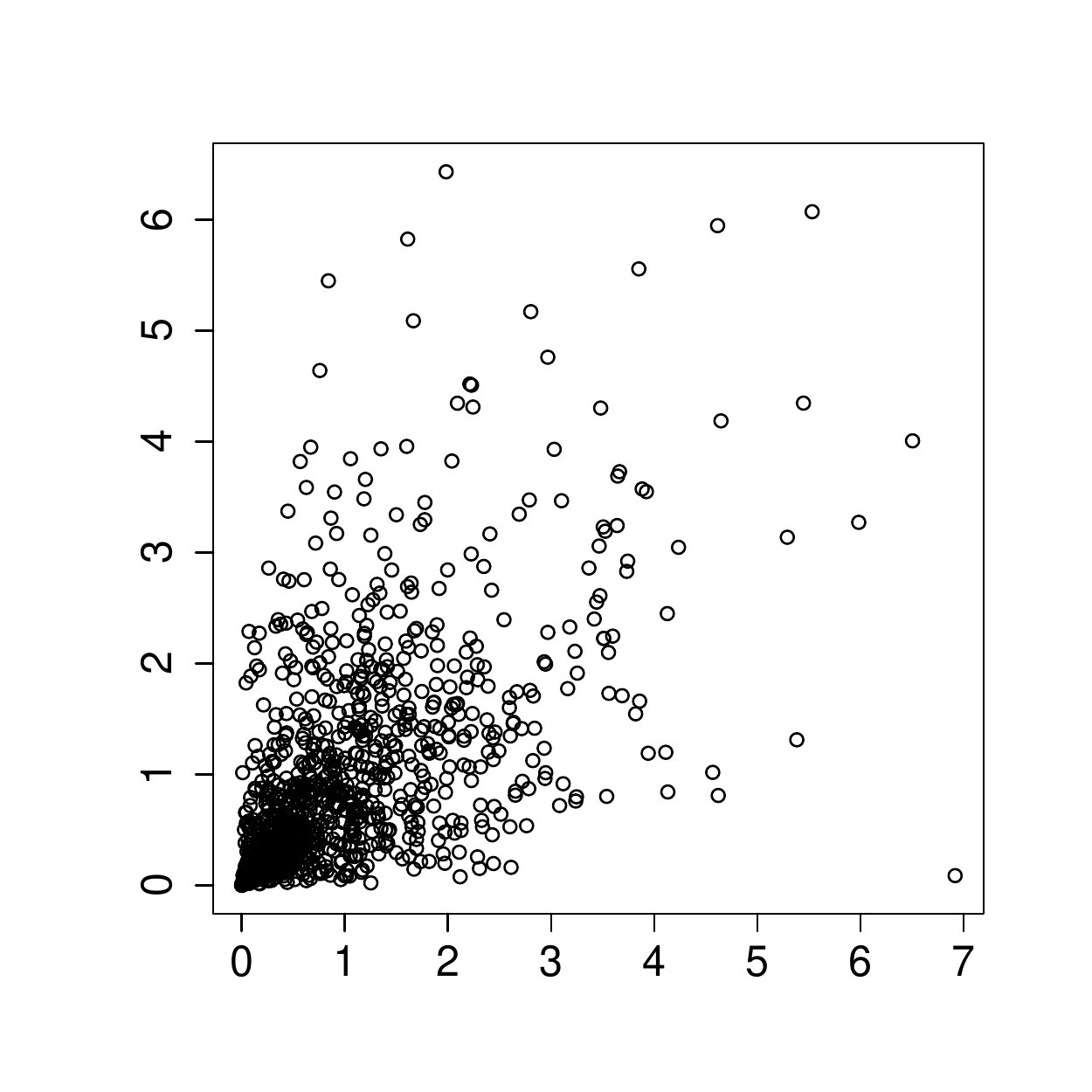}
			\includegraphics[scale = 0.35, trim = 0 50 0 60]{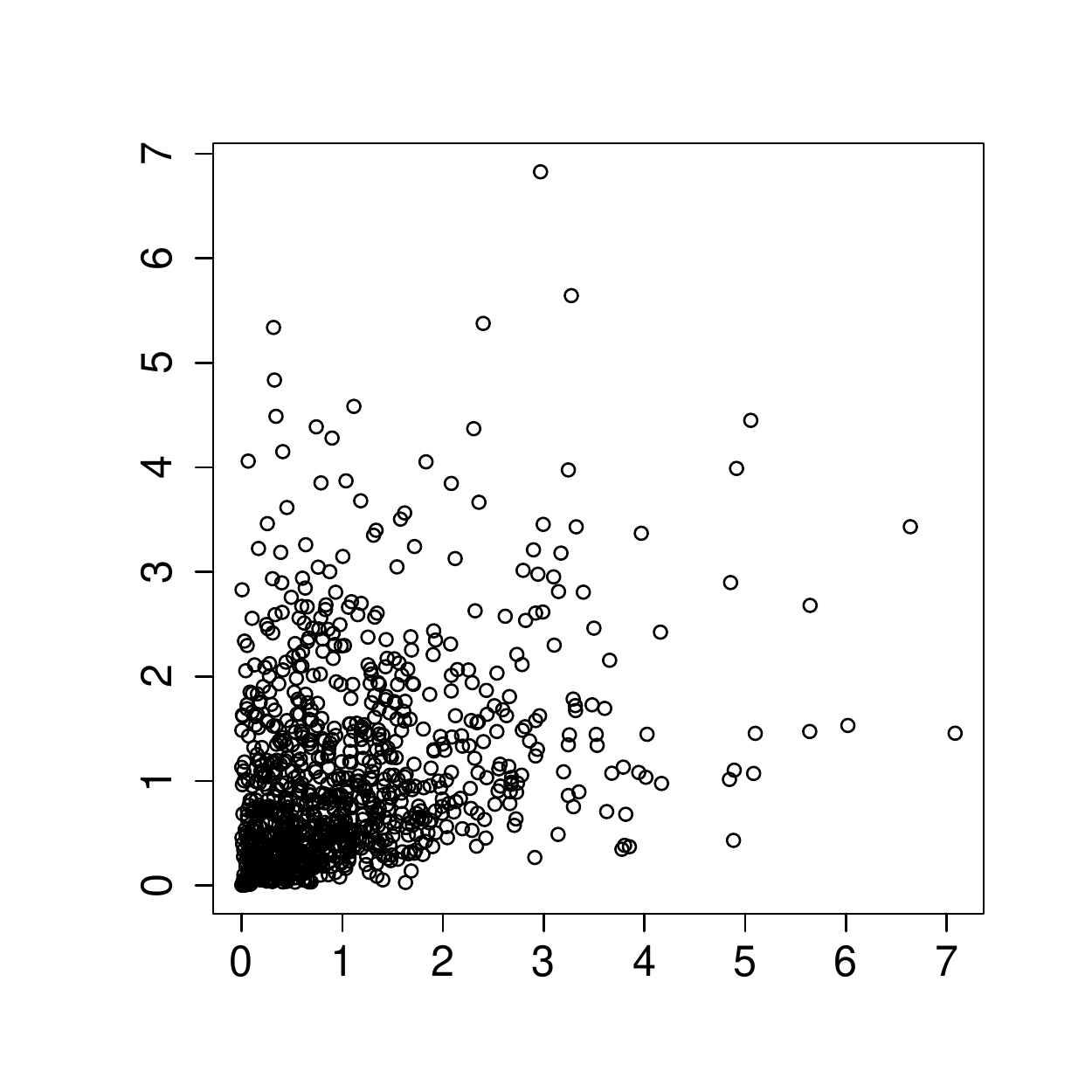}
			\includegraphics[scale = 0.35, trim = 0 50 0 60]{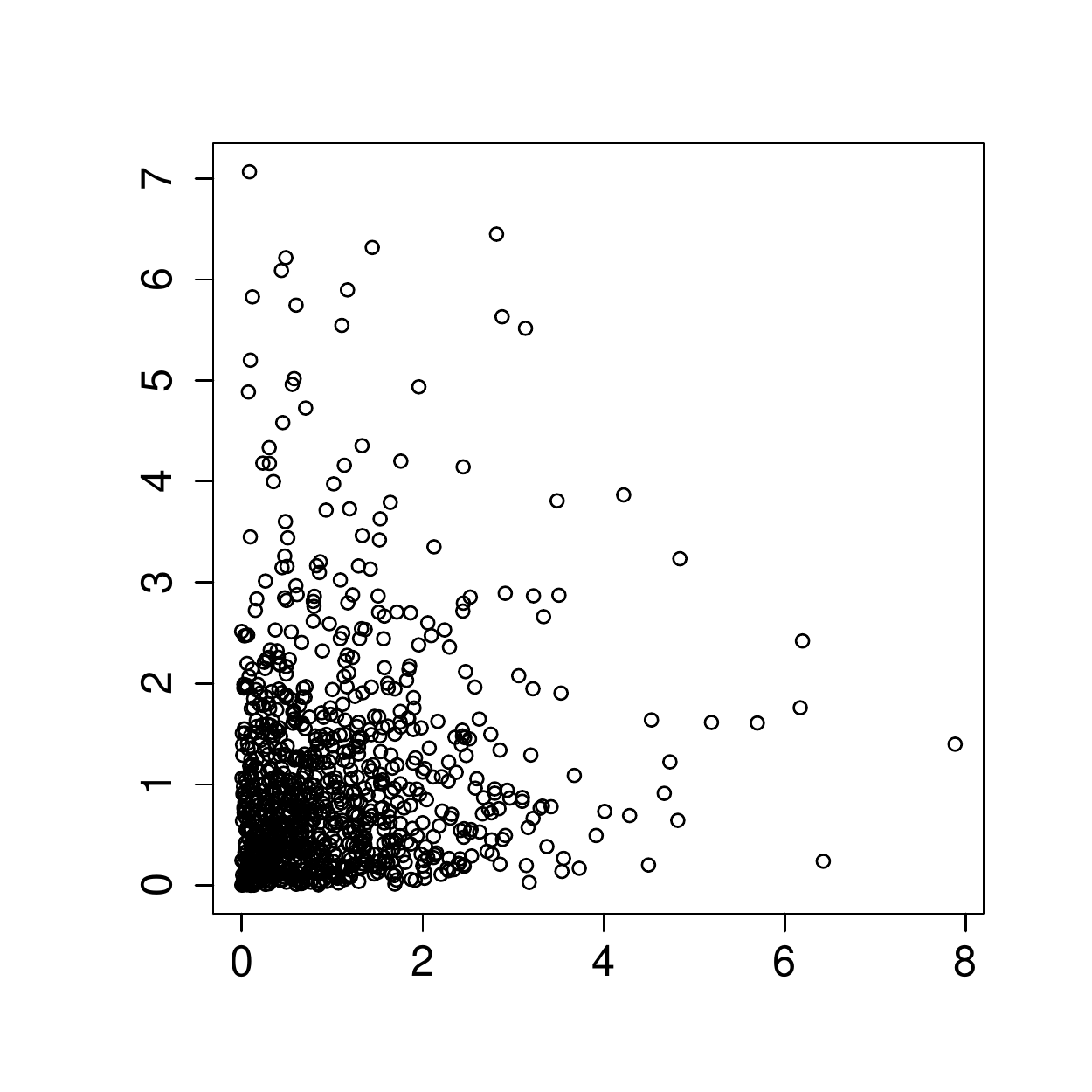}
			\caption{Samples of 1\,000 data points from the inverted asymmetric logistic distribution with parameter $\theta$ equal to $(0.72, 0.72)$, $(0.75, 0.91)$ and $(0.91, 0.91)$, from left to right. The marginal distributions are unit exponential.} \label{fig:IAlog-rea}
		\end{figure}

		\begin{figure}[H]
			\centering
			\includegraphics[scale = 0.35, trim = 0 50 0 60]{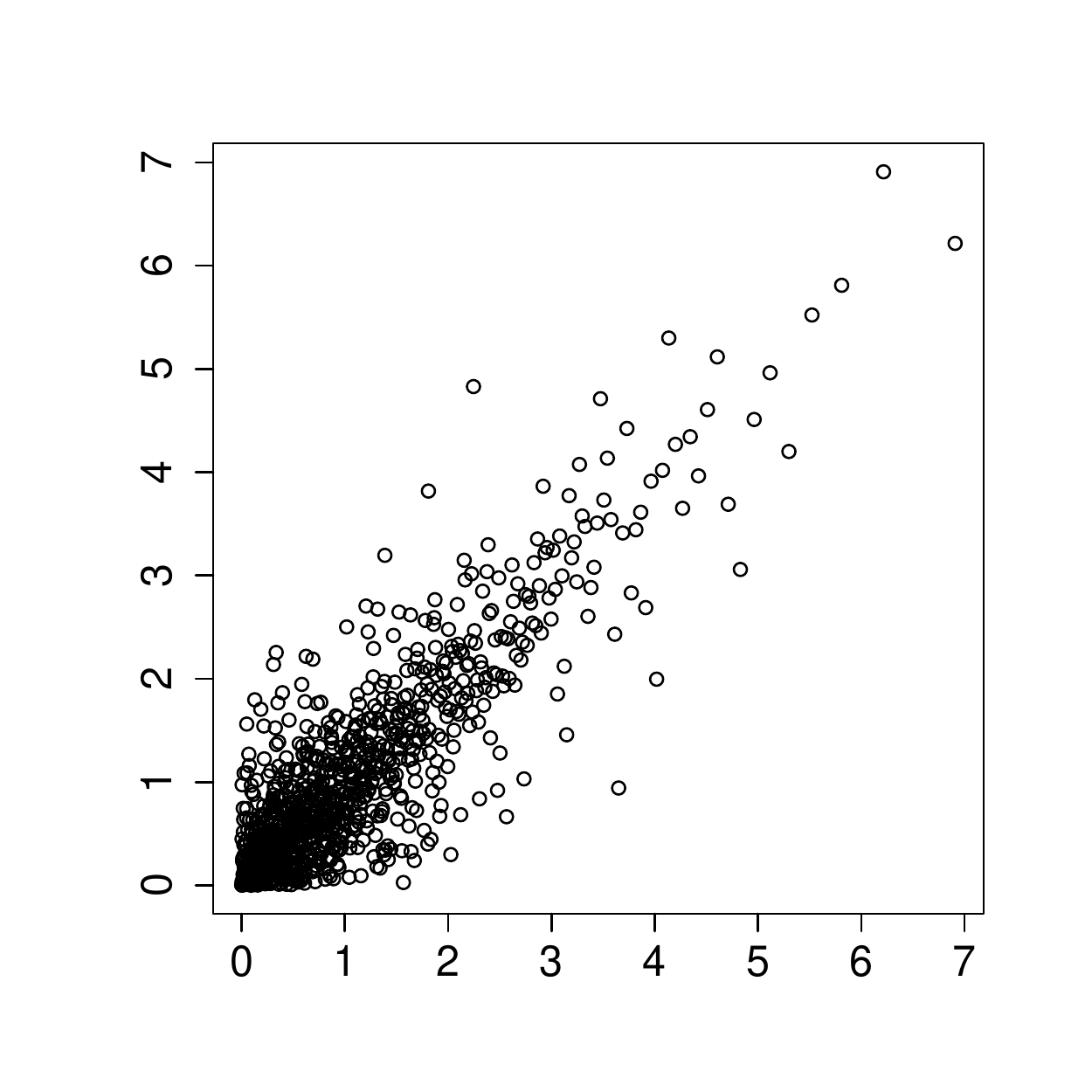}
			\includegraphics[scale = 0.35, trim = 0 50 0 60]{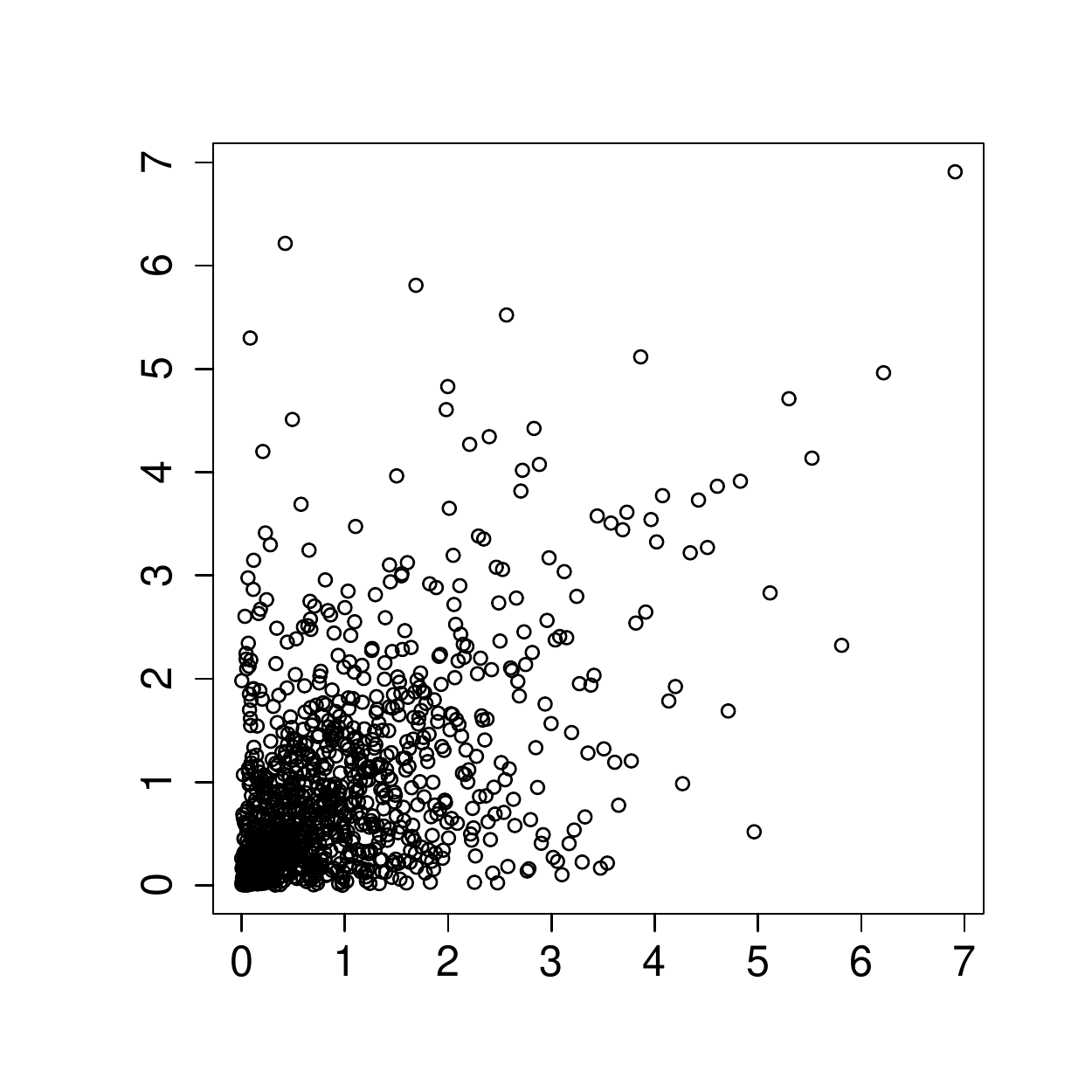}
			\includegraphics[scale = 0.35, trim = 0 50 0 60]{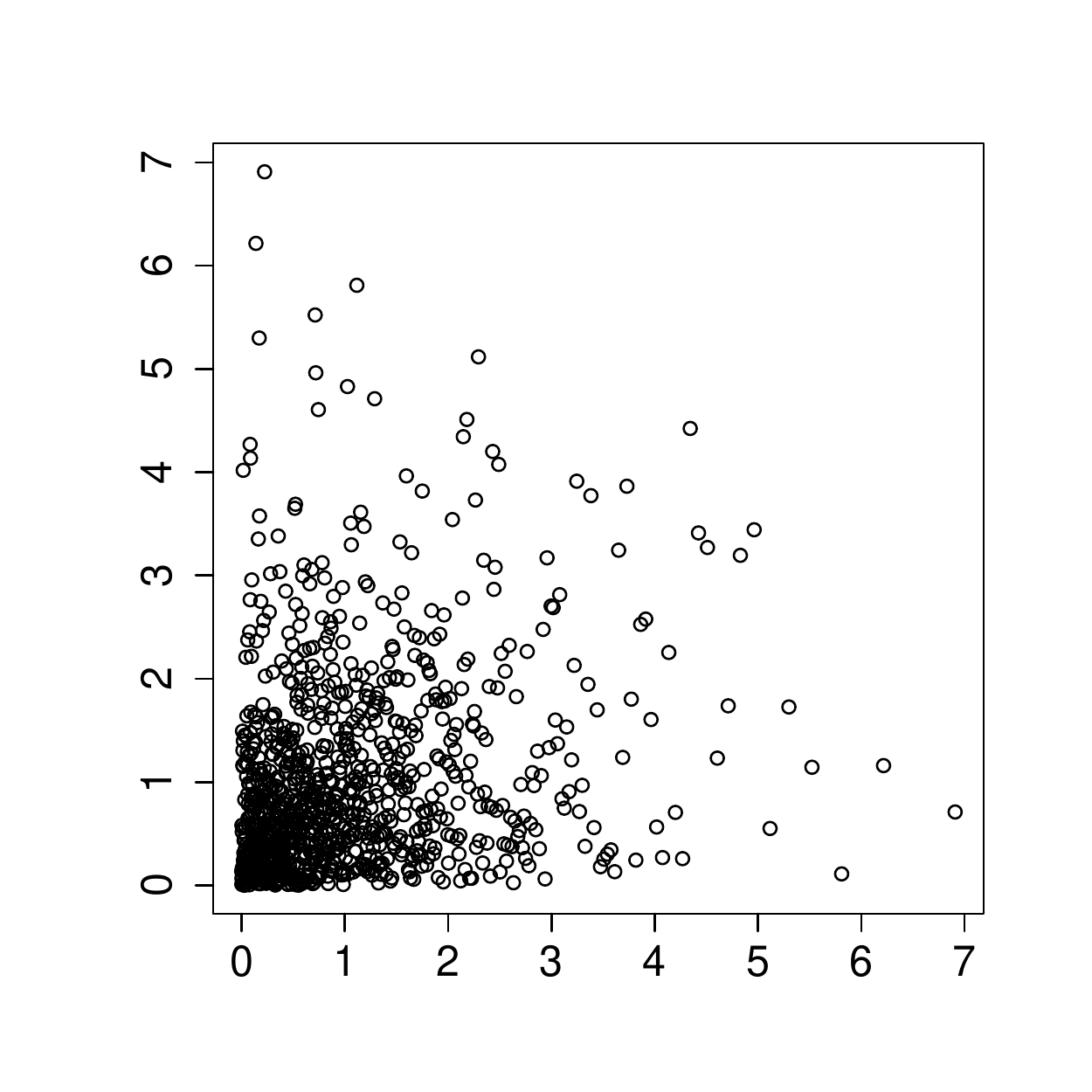}
			\caption{Samples of 1\,000 data points from the Pareto random scale model with parameter $\lambda$ equal to 0.4, 1 and 1.6, from left to right. The marginal distributions are approximately unit exponential.} \label{fig:P-rea}
		\end{figure}

		\subsubsection{Sensitivity with respect to the weight function}

		Recall the weight function in \cref{eq:defg} that is used throughout the paper. It is composed of the weighted indicator functions of the five rectangles $I_1 := [0, 1]^2$, $I_2 := [0, 2]^2$, $I_3 := [1/2, 3/2]^2$, $I_4 := [0, 1] \times [0, 3]$ and $I_5 := [0, 3] \times [0, 1]$. As explained in \cref{sim bivariate}, those rectangles are chosen specifically to ensure identifiability in every model, so that a unique weight function may be used for all simulations.

		We now consider different subsets of the five rectangles above and repeat the simulation study with each of the associated lower dimensional weight functions. Precisely, we define $g^{(1)}$ as the function $g$ in \cref{eq:defg} and by the same principle we construct $g^{(2)}, \dots, g^{(7)}$, using the rectangles in \cref{table:wf}.

		\begin{table}[H]
		\begin{center}
		\begin{tabular}{l|c|c|c|c|c|c|c}
			Weight fct. & $g^{(1)}$ & $g^{(2)}$ & $g^{(3)}$ & $g^{(4)}$ & $g^{(5)}$ & $g^{(6)}$ & $g^{(7)}$ \\
			\hline
			Rectangles & $I_1, I_2, I_3, I_4, I_5$ & $I_1, I_2$ & $I_1, I_3$ & $I_1, I_4, I_5$ & $I_1, I_2, I_3$ & $I_1, I_2, I_4, I_5$ & $I_1, I_3, I_4, I_5$
		\end{tabular}
		\end{center}
		\caption{Rectangles used to construct each weight function.}
		\label{table:wf}
		\end{table}

		We repeat the simulation study from \cref{sim bivariate}; 1\,000 data sets of size $n=5\,000$ are drawn from each of the three models, with the same noise mechanism as before, and from each data set seven estimators are computed based on the seven weight functions. We use the values $k$ that were deemed good previously, that is 800 for the two inverted max-stable models (M1 and M2) and 400 for the Pareto random scale model (M3). For each model and each parameter value, we compare the weight functions based on the estimated RMSE of the M-estimator in \cref{fig:wf}.

		In the inverted H\"usler--Reiss model, the parameter has a one-to-one relation with the coefficient of homogeneity $1/\eta$ of $c$. In order to identify that coefficient, it is sufficient to compare the integral of $c$ over the rectangles $I_1$ and $I_2$. It can moreover be deduced from the developments in \cref{proof IMS} that in this model, the bias arising from the pre-asymptotic approximation of $c$ is largest around the axes. Thus, as can be observed below, adding the non required rectangles $I_4$ and $I_5$, which contain a large portion of the axes, adds bias to the estimator. The best strategy for this model seems to be using $I_1$, $I_2$ and possibly $I_3$.

		In contrast, the parameter in the inverted asymmetric logistic model is not identifiable if the rectangles used are all symmetric, since then $(\theta_1, \theta_2)$ cannot be distinguished from $(\theta_2, \theta_1)$. Therefore the estimator is not uniquely defined when neither $I_4$ nor $I_5$ is used, so the functions $g^{(2)}$, $g^{(3)}$ and $g^{(5)}$ were not included. It is to be noted that $g^{(4)}$ does not include either of $I_2$ and $I_3$, and as such is not able to estimate the homogeneity coefficient $\theta_1 + \theta_2$ well, even if it is able to recover the asymmetry. This explains the monotonic behavior of the error with respect to $\theta_1 + \theta_2$. The other three weight functions perform similarly to each other.

		Finally, in the Pareto random scale model, the weight function $g^{(2)}$ only estimates the homogeneity and as such, it is unable to distinguish the parameters in the range $(0, 1)$, corresponding to asymptotic dependence. It was thus ignored. Among the other functions, the ones that use $I_4$ and $I_5$ ($g^{(1)}$, $g^{(4)}$, $g^{(6)}$, $g^{(7)}$) all have a similar performance whereas the other two ($g^{(3)}$ and $g^{(5)}$) incur a noticeably larger error. It seems that those rectangles help estimating characteristics that are strongly different from the coefficient of homogeneity, which explains why they significantly reduce the RMSE under asymptotic dependence ($\lambda<1$).

		\begin{figure}[H]
			\centering
			\includegraphics[scale = 0.55, trim = 0 35 0 50]{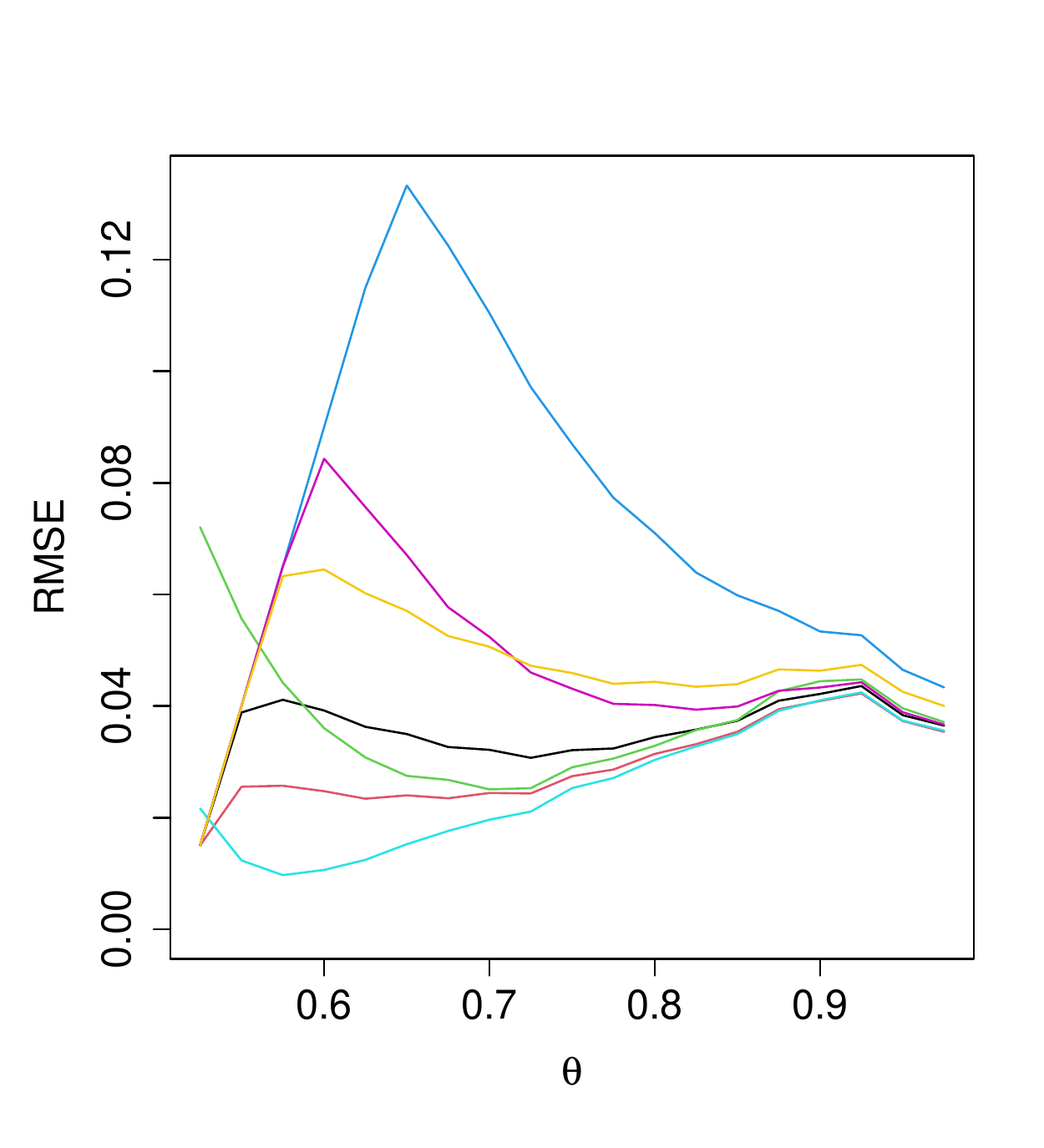}
			\includegraphics[scale = 0.55, trim = 0 35 0 50]{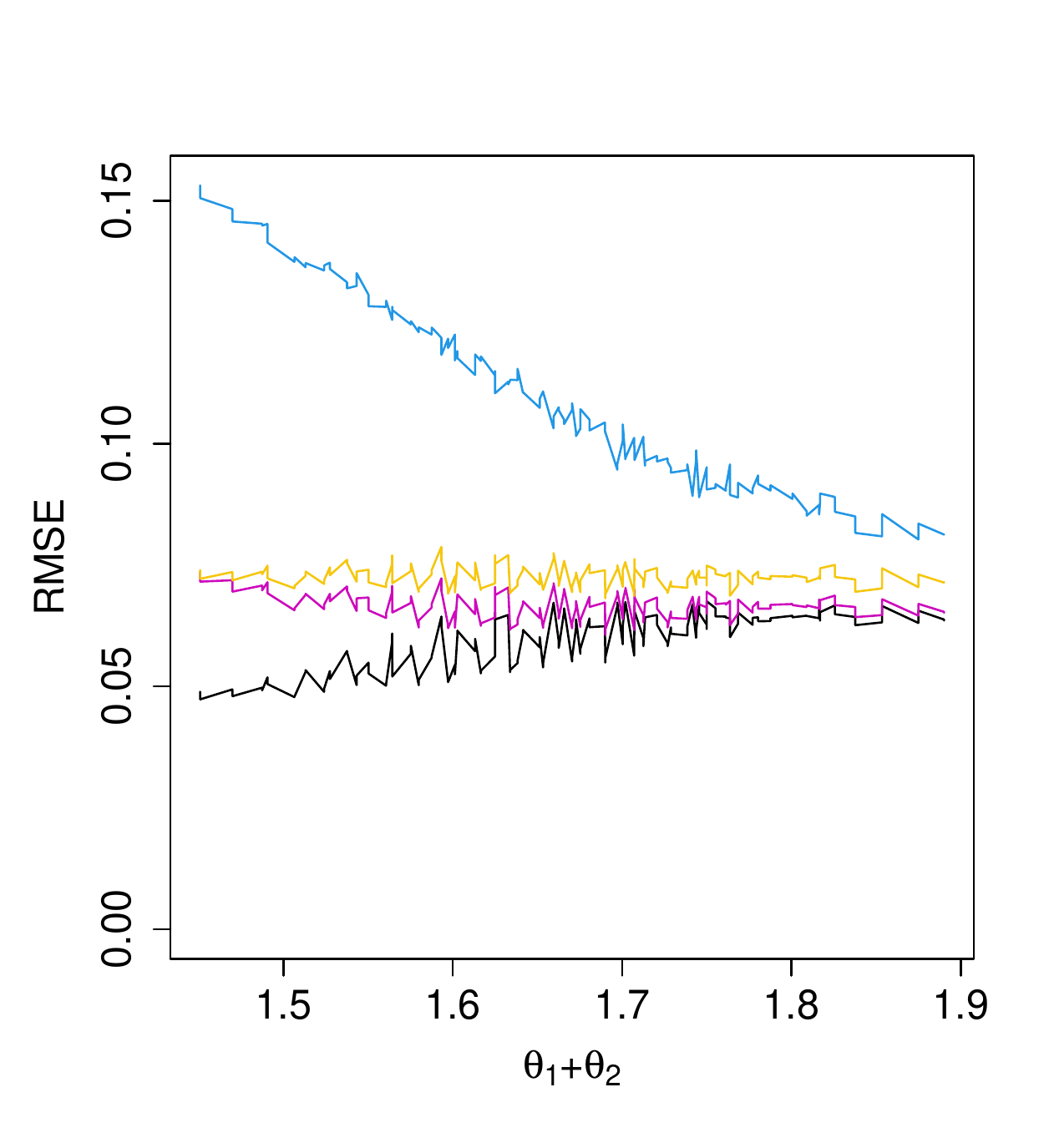}
			\includegraphics[scale = 0.55, trim = 0 35 0 50]{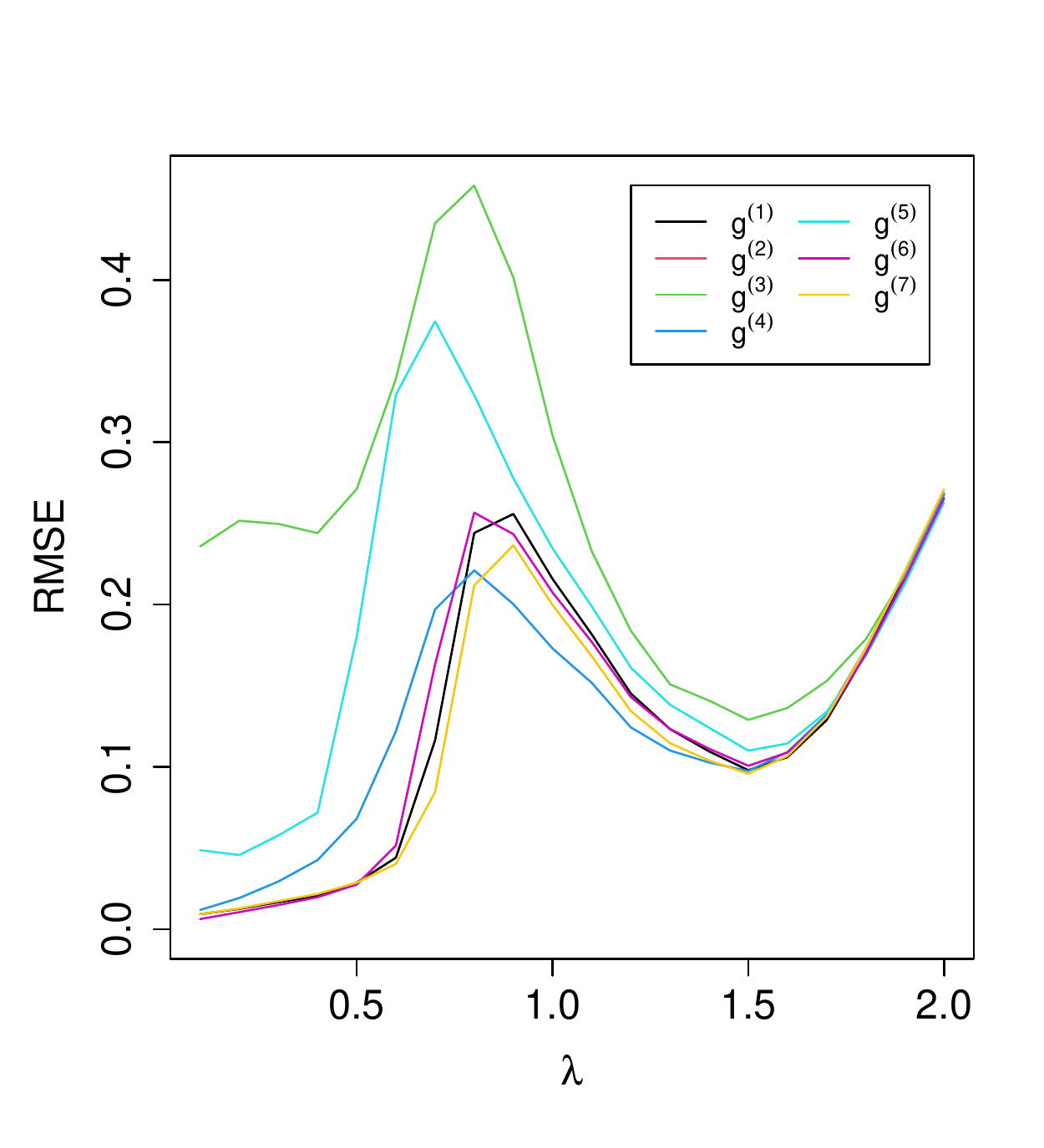}
			\caption{RMSE of the M-estimator in the models M1--M3 as a function of the parameter, based on 1\,000 data sets of size $n=5\,000$, $k=800$ (for M1 and M2) and $k=400$ (for M3). Colors represent the seven weight functions from \cref{table:wf}.}
			\label{fig:wf}
		\end{figure}

		\subsection{Spatial models}

		\Cref{fig:distances} shows the distribution of the distances of all the pairs that are used in the analysis in \cref{sim spatial}. \Cref{Choice of k sp 2,fig:sp-multibox-2} present the same results as in \cref{sim spatial} when the estimator \eqref{eq:defthetahat} is used instead of \eqref{eq:defthetahatls}.

		\begin{figure}[H]
		\centering
		\includegraphics[scale = 0.55, trim = 0 35 0 50]{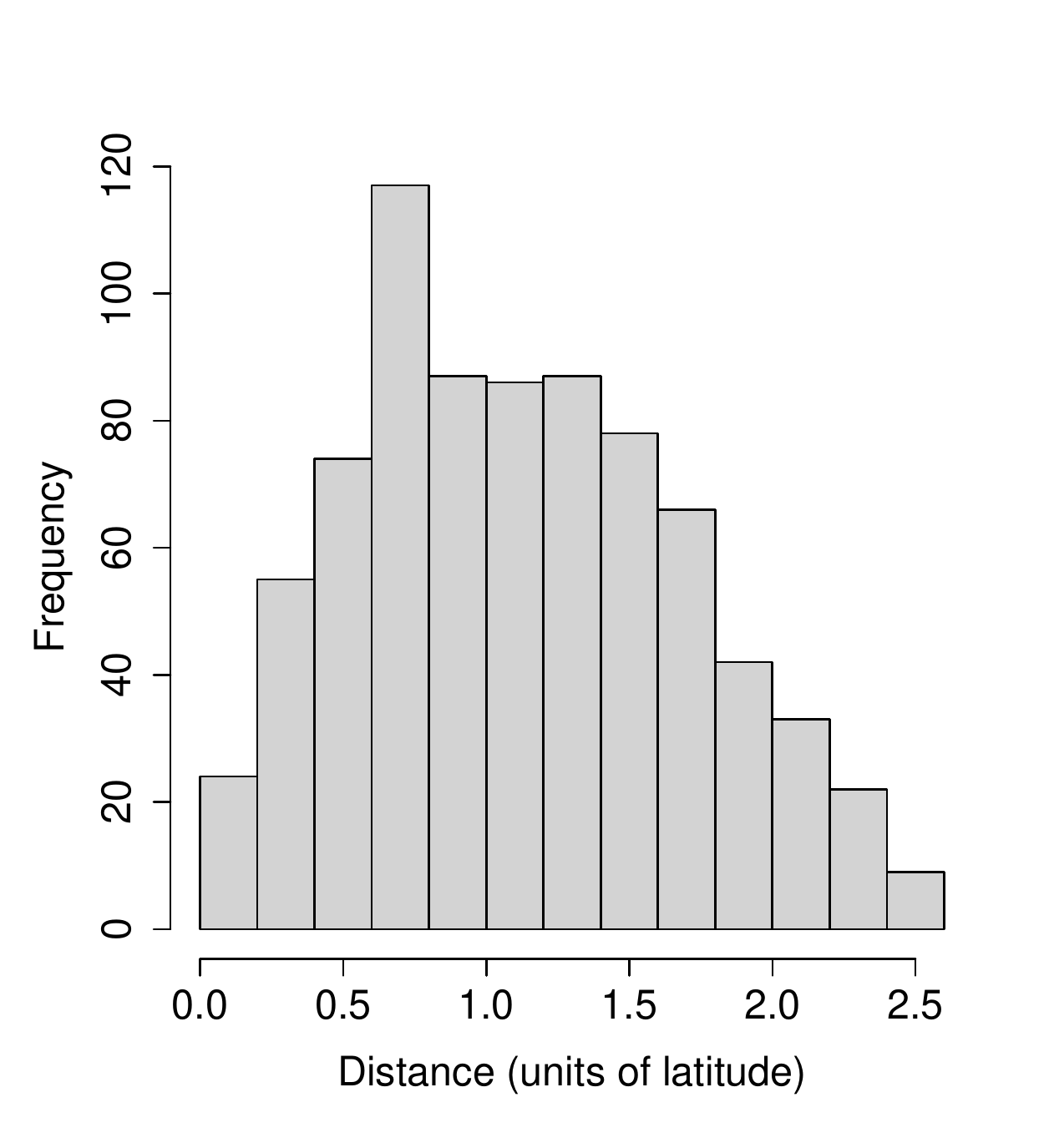}
		\caption{Distribution of the distances $\Delta^{(s)}$ for the 780 pairs used.}
		\label{fig:distances}
		\end{figure}

		\begin{figure}[H]
		\centering
		\includegraphics[scale = 0.35, trim = 0 50 0 60]{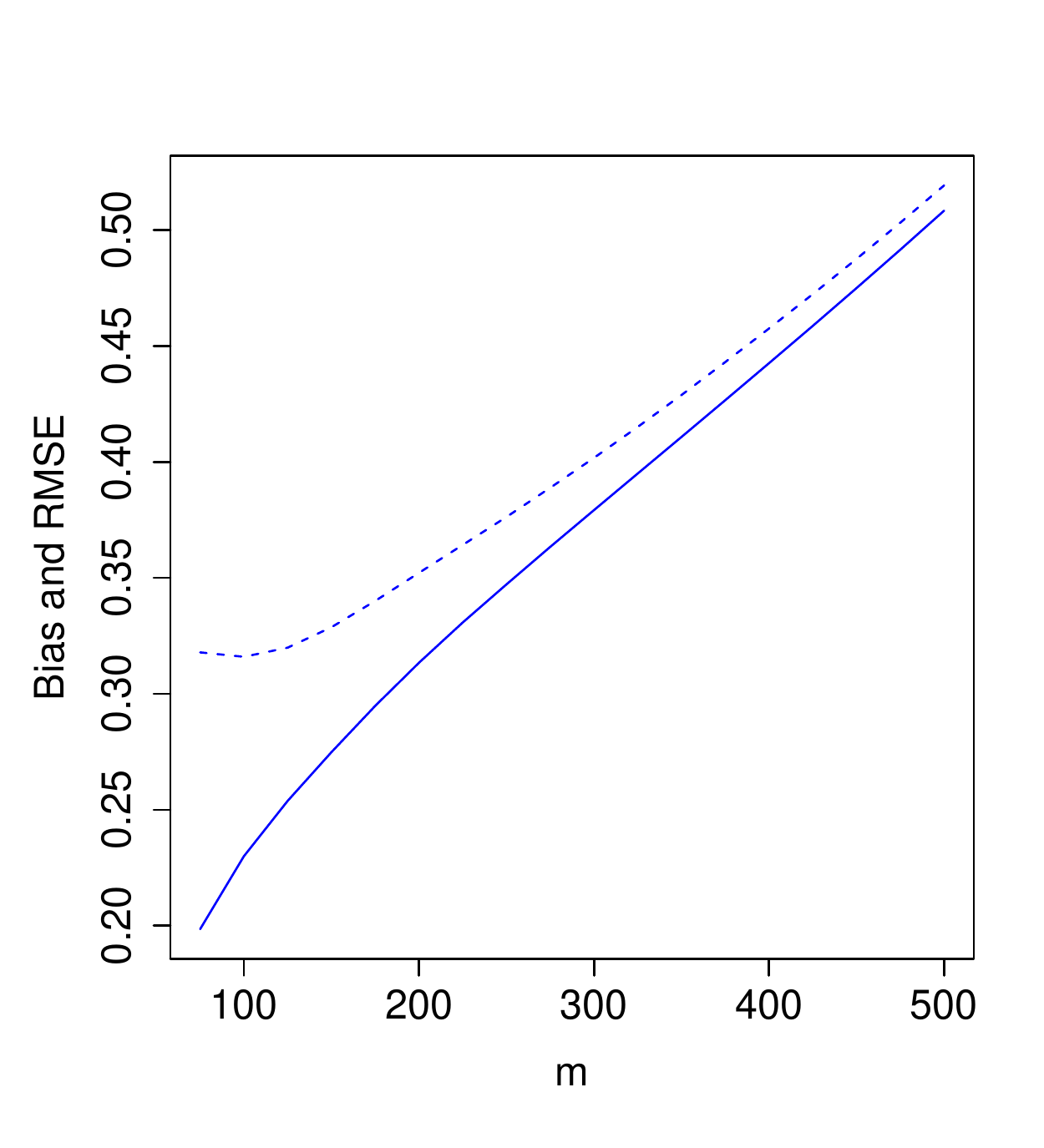}
		\includegraphics[scale = 0.35, trim = 0 50 0 60]{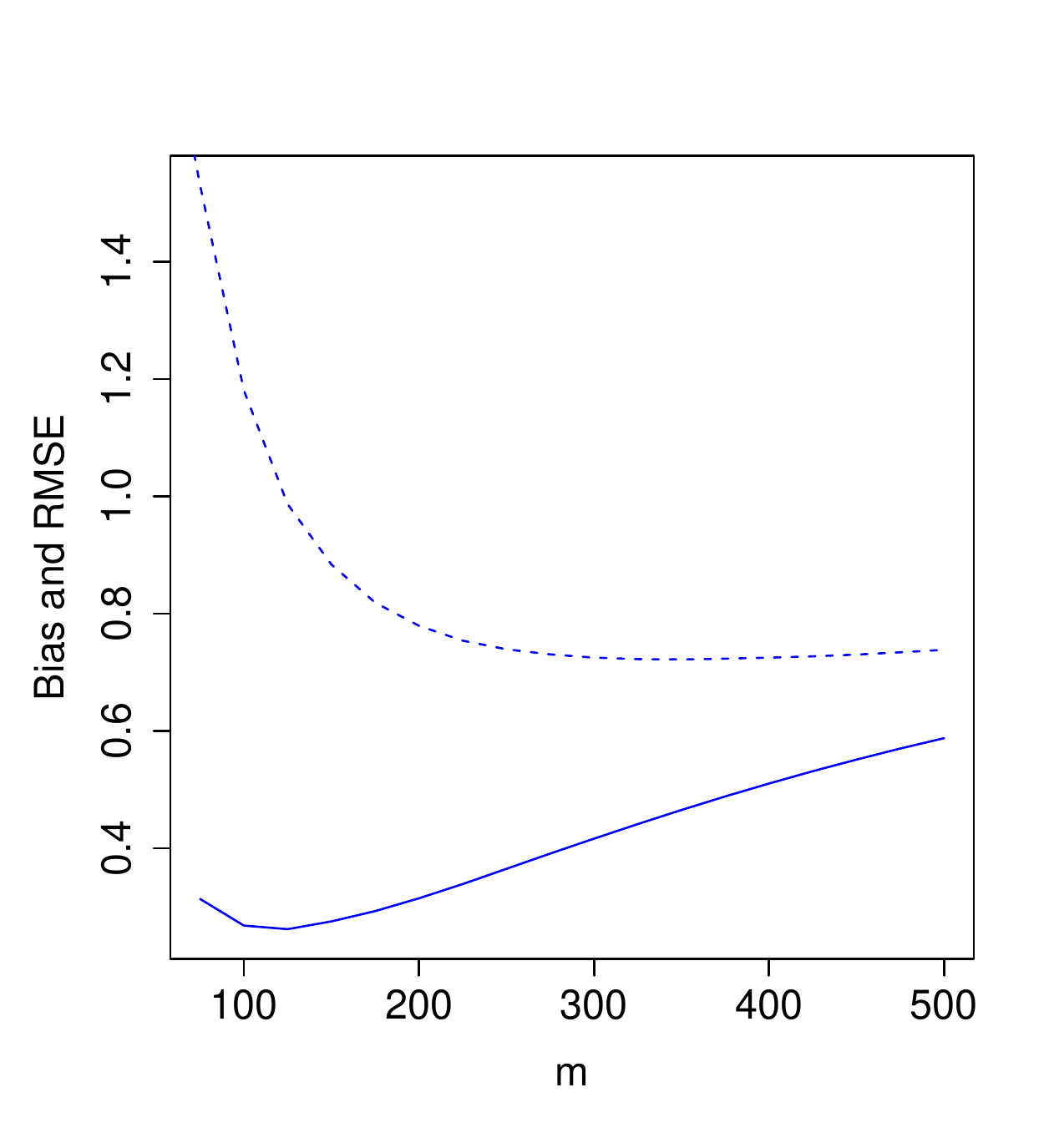}
		\includegraphics[scale = 0.35, trim = 0 50 0 60]{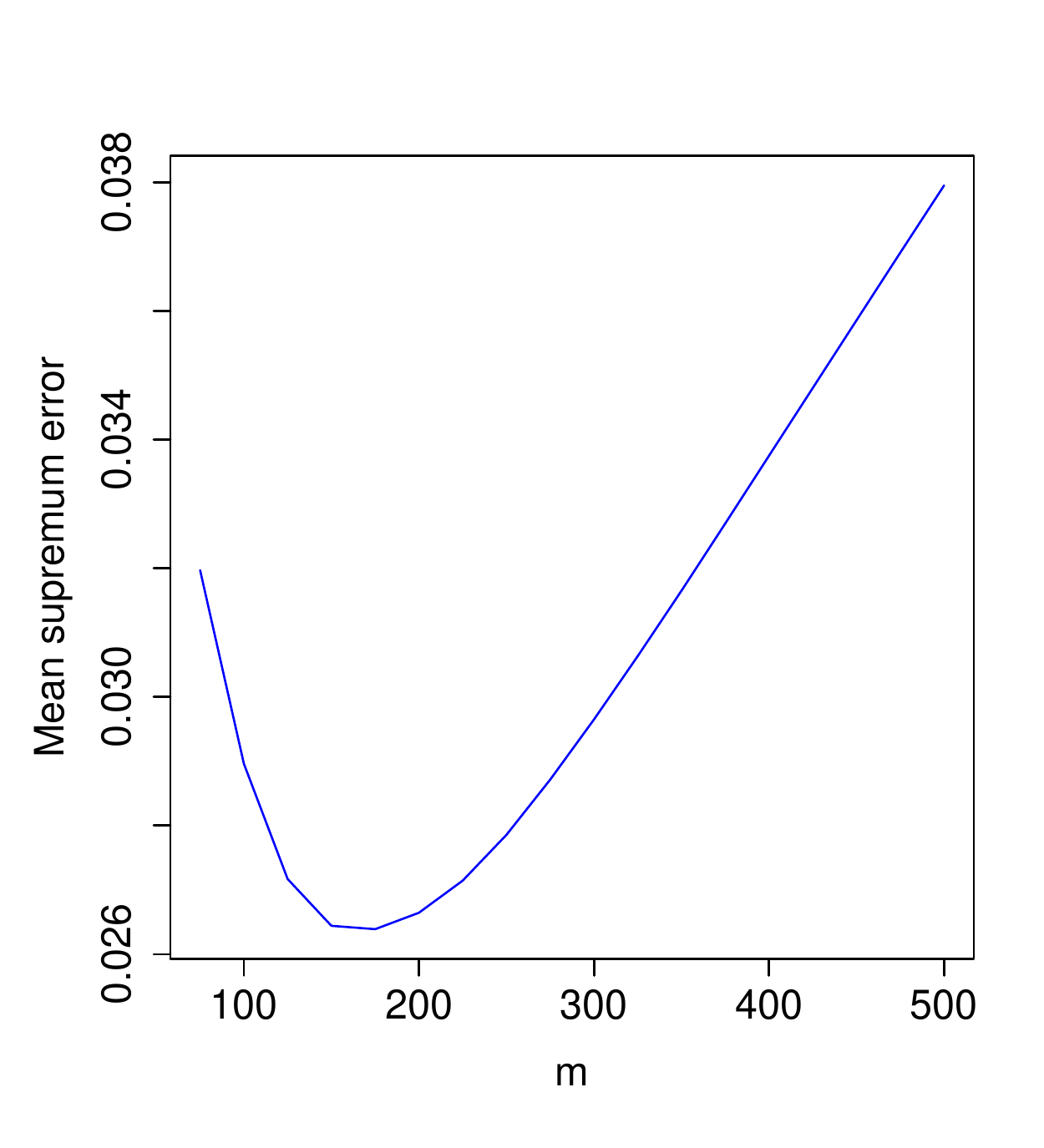}
		\caption{Left and middle columns: Bias (solid line) and RMSE (dotted line) of the estimators of the two spatial parameters $\alpha$ (left) and $\beta$ (middle) as a function of $m$. Right: Mean of the supremum error $\sup_{0 \leq \Delta \leq 3} |\theta(\Delta; \hat\alpha, \hat\beta) - \theta(\Delta; \alpha, \beta)|$ as a function of $m$.}
		\label{Choice of k sp 2}
		\end{figure}

		\begin{figure}[H]
		\centering
		\includegraphics[scale = 0.55, trim = 0 35 0 50]{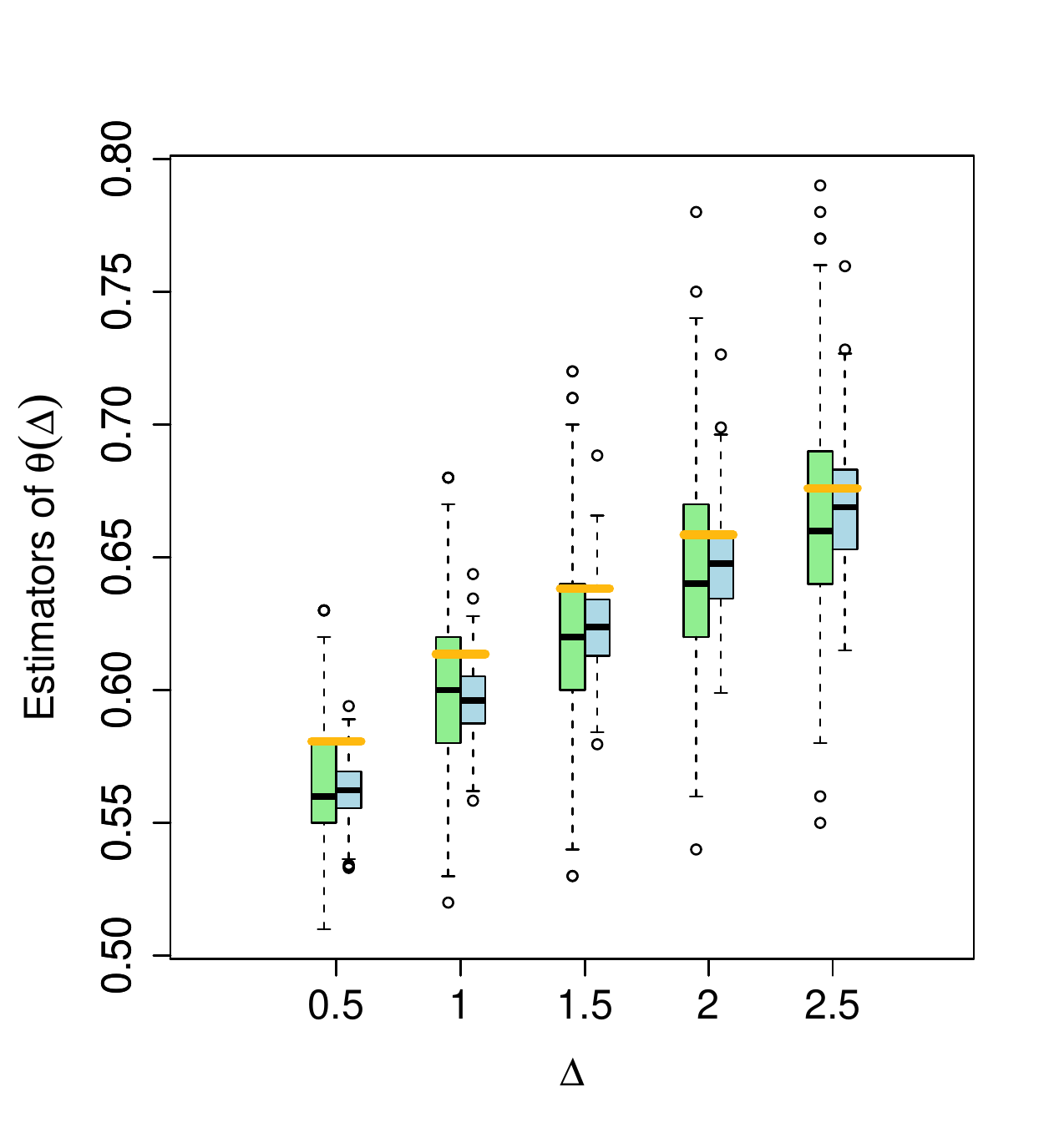}
		\includegraphics[scale = 0.55, trim = 0 35 0 50]{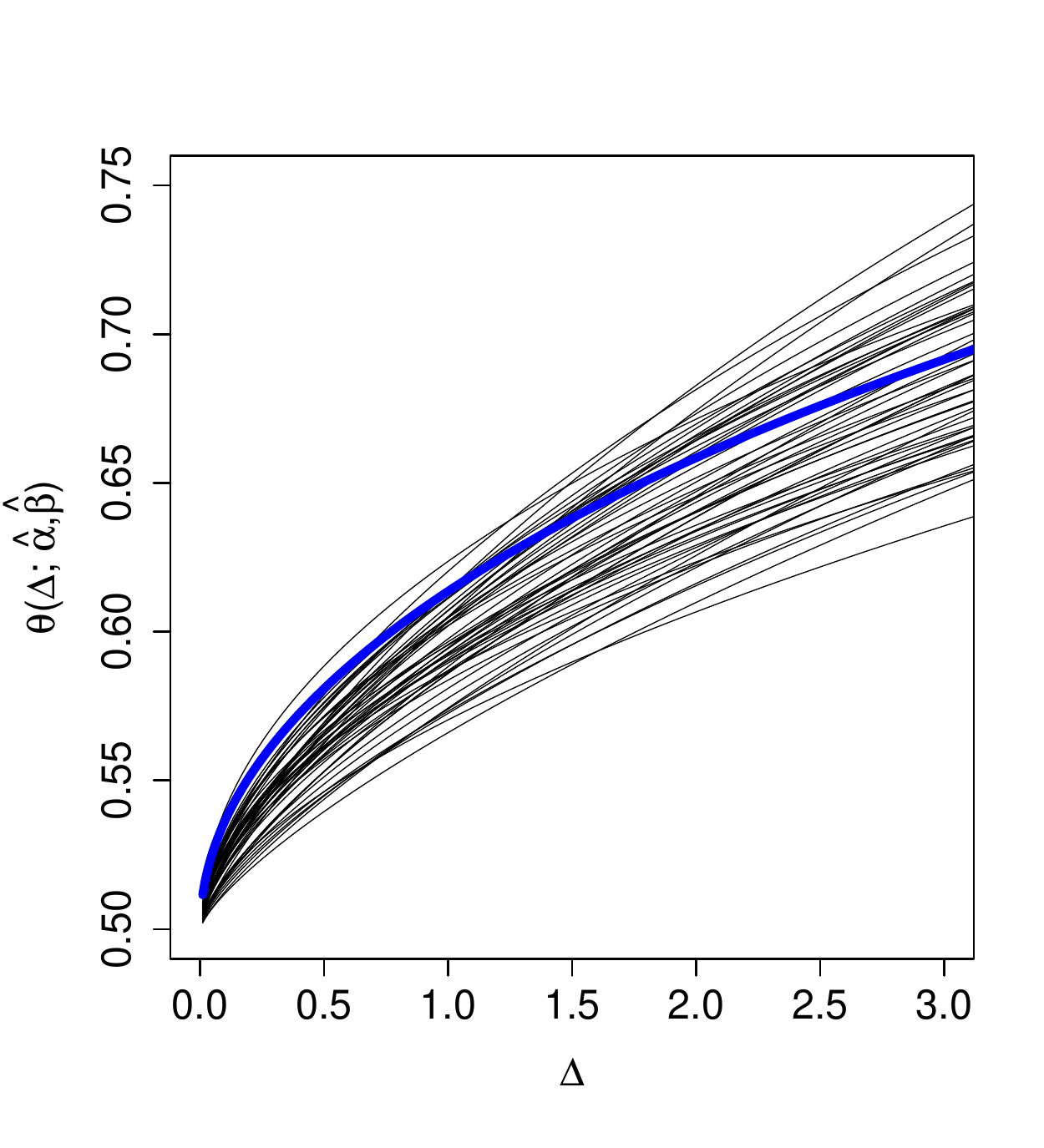}
		\caption{Left panel: Estimators of $\theta(\Delta)$ for 5 different distances. For each distance, bivariate M-estimator $\hth_n^{(s)}$ (green) and spatial estimator $\theta(\Delta^{(s)}; \hat\alpha, \hat\beta)$ (blue) based on the $d=40$ locations. Right panel: 50 sampled curves $\theta(\cdot; \hat\alpha, \hat\beta)$. Blue represents the true curve $\theta(\cdot; \alpha, \beta)$.}
		\label{fig:sp-multibox-2}
		\end{figure}


\end{document}